\tikzset{cross/.style={cross out, draw=black, minimum size=2.5*(#1-\pgflinewidth), inner sep=2pt, outer sep=0.5pt},
	cross/.default={1pt}}
\newtheorem{theorem}{Theorem}[section]
\newtheorem{lemma}[theorem]{Lemma}
\newtheorem{claim}[theorem]{Claim}
\newtheorem{corollary}[theorem]{Corollary}
\newtheorem{proposition}[theorem]{Proposition}
\newtheorem*{prop-non}{Proposition}
\newtheorem{thm}{Theorem}[section]
\numberwithin{equation}{section}
\theoremstyle{definition}
\newtheorem{definition}[theorem]{Definition}
\newtheorem{remark}[theorem]{Remark}
\newtheorem*{remark-non}{Remark}
\begin{document}

\author{Danny Nam and Allan Sly}
\address[Danny Nam]{Department of Mathematics, Princeton University, Princeton, NJ 08540, USA.}
\email{dhnam@math.princeton.edu}
\address[Allan Sly]{Department of Mathematics, Princeton University, Princeton, NJ 08540, USA.}
\email{asly@math.princeton.edu}

\title{Cutoff for the Swendsen-Wang dynamics\\ on the lattice}

\setlength{\parindent}{0pt}

\begin{abstract}
	We study the Swendsen-Wang dynamics for the $q$-state Potts model on the lattice.
	Introduced as an alternative algorithm of the classical single-site Glauber dynamics, the Swendsen-Wang dynamics is a non-local Markov chain that recolors many vertices at once based on the random-cluster representation of the Potts model.
	In this work we derive strong enough bounds on the mixing time, proving that 
	the Swendsen-Wang dynamics on the lattice at sufficiently high temperatures exhibits a sharp transition from ``unmixed" to ``well-mixed," which is called the cutoff phenomenon.
	In particular, we establish that at high enough temperatures the Swendsen-Wang dynamics on the torus $(\mathbb{Z}/n\mathbb{Z})^d$  has cutoff at time $\frac{d}{2 } \left( -\log (1-\gamma) \right)^{-1} \log n $, where $\gamma(\beta)$ is the spectral gap of the infinite-volume dynamics.
\end{abstract}
\maketitle

\vspace{-0.5cm}

\section{Introduction}

A finite ergodic Markov chain is said to exhibit \textit{cutoff phenomenon} if its total-variation distance from stationarity decreases sharply from near its maximum to near 0 over a negligible period of time. In other words, the cutoff phenomenon of a Markov chain describes the \textit{abrupt} transition of the chain from ``unmixed" to ``well-mixed." Here by \textit{abrupt} we mean that this transition takes place during a time period of a strictly smaller order than the mixing time.
Although the cutoff phenomenon is believed to be ubiquitous among a broad class of Markov chains, verifying the existence of cutoff requires a much finer control of the Markov chain than just establishing the order of the mixing time.  Such an analysis proving cutoff typically yields more information as well as it must identify the main obstruction or bottleneck to mixing.

\quad Markov chains, such as the Glauber dynamics give a dynamical version of spin systems that both models the evolution of a physical system and provides an MCMC algorithm to sample from its stationary distribution.  As such their rate of convergence has always been studied in mathematics, physics and computer science communities. While cutoff is expected for many such models and dynamics, establishing it for such  chains is challenging due to their complicated equilibrium distributions. Only recently has which cutoff been verified for a few such chains, particularly the single-site Glauber dynamics on the Ising model at high temperatures (\cite{llp} on the complete graph, and \cite{ls13, ls14, ls15, ls16} on the lattice).

\quad The Swendsen-Wang dynamics is a non-local Markov chain in which a constant fraction of spins change in each step.  For the ferromagnetic Ising model and its $q$-state generalization, the Potts model, we show cutoff on the torus at high enough temperatures, which is the first proof of cutoff for a non-local chain sampling spin-systems.

\begin{thm}\label{thm1}
	Let $d\geq1$ and $q \geq 2$ be fixed integers and consider the Swendsen-Wang dynamics for the ferromagnetic $q$-state Potts model on the torus ${(\mathbb{Z}/n\mathbb{Z})}^d$ at inverse temperature $\beta$. Then there exists $\beta_0 = \beta_0 (d,q) >0$ such that for any $0<\beta < \beta_0$, the dynamics exhibits cutoff at 
	$$t_{\textnormal{mix}}= \frac{d}{2 } \left( \log \left(\frac{1}{1-\gamma}\right) \right)^{-1} \log n $$ with a window of $O(\log \log n)$, where $\gamma$ is the spectral gap of the infinite-volume dynamics.
\end{thm}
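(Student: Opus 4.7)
The plan is to adapt the information percolation / $L^2$ mixing framework developed by Lubetzky--Sly for the Glauber dynamics to the non-local Swendsen--Wang setting. The core observation is that at sufficiently high temperature the random-cluster components generated by a single SW step are, with overwhelming probability, of diameter $O(\log n)$. Conditioned on this event, one SW sweep behaves as an essentially \emph{local} update, so the finite-volume chain on $(\mathbb{Z}/n\mathbb{Z})^d$ can be compared to a product of copies of the infinite-volume dynamics acting on mesoscopic blocks. The target cutoff time is the unique $t$ for which $(1-\gamma)^{2t} \cdot n^d = \Theta(1)$, which is precisely the threshold for $L^2$ mixing of $n^d$ nearly-independent copies of a chain with spectral gap $\gamma$.

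\textbf{Upper bound.} I would begin with a burn-in of $O(\log\log n)$ steps, coupling to stationarity all but a sparse set of ``exceptional'' vertices by iterating a local grand-coupling and using exponential decay of random-cluster connectivities. Then I would partition the torus into boxes of side $K\log n$ (with $K$ large) and argue that, modulo an event of probability $o(1)$, SW updates on different boxes evolve independently. A spectral decomposition on each block gives an $L^2$-contraction of factor $(1-\gamma+o(1))$ per SW step, so
\[
\chi^2\bigl(P^{t}(x,\cdot),\pi\bigr) \;\lesssim\; (1-\gamma)^{2t}\cdot n^{d}\cdot e^{O(\log\log n)} ,
\]
which is $o(1)$ as soon as $t \geq \frac{d}{2}\bigl(\log\tfrac{1}{1-\gamma}\bigr)^{-1}\log n + C\log\log n$.

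\textbf{Lower bound.} For the matching lower bound, I would exhibit a global statistic $F = \sum_{v} f_v$, where each $f_v$ is a bounded local function approximately aligned with the top non-trivial eigenfunction of the infinite-volume dynamics around $v$. Under the stationary measure, a block-decomposition and a CLT yield $\mathrm{Var}_\pi(F) \asymp n^d$, while from a plus-like initial condition $\mathbb{E}_x[F(X_t)] - \mathbb{E}_\pi[F] \asymp (1-\gamma)^{t} n^d$. For $t \le \frac{d}{2}\bigl(\log\tfrac{1}{1-\gamma}\bigr)^{-1}\log n - C\log\log n$ the mean gap exceeds the standard deviation by a factor tending to infinity, so Chebyshev forces $\|P^t(x,\cdot)-\pi\|_{\mathrm{TV}} \to 1$.

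\textbf{Main obstacle.} The principal difficulty is that Swendsen--Wang is genuinely non-local: even at very high temperatures, a rare large random-cluster component could in a single step equalize the spin on a macroscopic region, ruining any naive block-independence argument. I expect the technical core of the proof to lie in (i) a quantitative cluster-expansion / coupling estimate showing that no random-cluster component crosses a mesoscopic box throughout the relevant time horizon, and (ii) the reduction from one finite-volume SW step to a genuine tensor product of local SW steps with spectral contraction matching that of the infinite-volume chain up to additive $o(1)$ error. Coupling the top of the finite-volume spectrum to $\gamma$ precisely enough to avoid losing the constant $\tfrac{d}{2}$ is the subtlest quantitative estimate required.
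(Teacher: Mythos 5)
Your outline correctly identifies the $L^1$-to-$L^2$ reduction strategy of Lubetzky--Sly that the paper also follows, but it leaves unproved exactly the two steps where that strategy breaks for Swendsen--Wang and where the paper does its real work. First, your upper bound asserts $\chi^2\bigl(P^t(x,\cdot),\pi\bigr)\lesssim (1-\gamma)^{2t} n^d e^{O(\log\log n)}$ via ``spectral decomposition on each block,'' but spectral contraction alone cannot produce this: from a deterministic start the $\chi^2$ distance on a block of side $K\log n$ is of order $q^{(K\log n)^d}$, so contraction at rate $1-\gamma$ would need time polynomial in $\log n$, not $O(\log n)$. One needs an a priori bound showing the $L^2$ distance is already $O(1)$ after a burn-in of order $\log(\text{block volume})$. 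For Glauber dynamics this comes from a uniform log-Sobolev constant (Diaconis--Saloff-Coste), but no such estimate is available for Swendsen--Wang; the paper's substitute is the information-percolation analysis (red/blue/green clusters, the bound on $\Psi_A$, and the Miller--Peres lemma) yielding Proposition \ref{l2prop} and Theorem \ref{l2thm}. Your burn-in step only gives a coupling/TV statement on a sparse set, which does not convert into the needed $L^2$ bound without this machinery. Relatedly, your blocks of side $K\log n$ are supposed to evolve independently over the \emph{entire} horizon $\Theta(\log n)$; since SW information spreads at least a constant distance per step, the dependence range over that horizon is comparable to the block size, which is why the paper only decouples blocks over a short $O(\log\log n)$ window (barrier dynamics, update support, sparse sets) and pushes the long-time behavior into the $L^2$ contraction of the small-torus chain.

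Second, both your upper and lower bounds silently replace the spectral gap of the finite block by the infinite-volume gap $\gamma$ (``contraction factor $(1-\gamma+o(1))$,'' and eigenfunction approximants ``aligned with the top non-trivial eigenfunction of the infinite-volume dynamics''). You flag this as the subtlest point but give no mechanism, and it is not a soft perturbation statement: for Ising Glauber it was proved using monotonicity, which SW lacks. In the paper this is Theorem \ref{thm2}, proved by first obtaining cutoff at $\frac{d}{2}\gamma_\star(r)^{-1}\log n$ for a whole range of poly-logarithmic $r$, comparing the locations for different $r$ to show $\gamma_\star(r)$ is Cauchy (Proposition \ref{convprop}), and then identifying the limit with the infinite-volume gap via an infinite-volume information-percolation $L^2$ estimate (Theorem \ref{l2thminfty}) together with the variational characterization of the gap. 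Your lower bound also presumes the existence of suitably decaying infinite-volume eigenfunctions and an initial state with macroscopic projection onto them, neither of which is established; the paper instead runs the lower bound through $(1-\gamma(r))^t\le 2d(t)$ for the small-torus chain and part (2) of the reduction theorem, avoiding any eigenfunction construction. As written, then, the proposal reproduces the known skeleton but has genuine gaps at the $L^2$ burn-in bound and at the finite-to-infinite volume gap identification, which are precisely the contributions needed to make the argument close.
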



\quad Introduced by Swendsen and Wang \cite{swd} in the 1980s, the Swendsen-Wang dynamics was proposed as an alternative of the classical single-site Glauber dynamics. In each step it maps the current spin configuration to a bond percolation configuration on the graph and then colours each component of the configuration uniformly from the $q$ colours (for the formal definitions of the Potts model and the Swendsen-Wang dynamics, see \S\ref{secprelim}).  It has been the preferred algorithm to simulate ferromagnetic spin systems in practice as it is highly efficient in most regimes (see, e.g., \cite{dzw, os, swd}). In the classical case of the $d$-dimensional lattice cube of side-length $n$, it is believed that the Swendsen-Wang dynamics mixes in time of order $\log n$ for all temperatures except at criticality. However, in contrast to the single-site Glauber dynamics, less progress has been made analysing the rate of convergence of the Swendsen-Wang dynamics due to its non-local behavior. 

\subsection{Related works}
There has been a huge effort from mathematics, physics, and computer science communities to understand the rate of convergence of the Swendsen-Wang dynamics. The work started from dealing with special kinds of graphs, proving polynomial bounds on the mixing time at all temperatures. These are on trees, cycles \cite{cooper2}, narrow grids and complete graph \cite{cooper1}, where the latter studied the case of the Ising model (i.e., $q=2$).

\quad In particular, a substantial improvement has been made in the case of the complete graph. Long, Nachmias and Peres \cite{swdtree} established the precise order of the mixing time of the Swendsen-Wang dynamics for the Ising model: $\Theta(1)$ in high temperature, $\Theta(\log n)$ in low temperature, and $\Theta(n^{1/4})$ at criticality. The Potts case ($q\geq 3$) is known to present a different behavior compared to the case of the Ising model. For $q\geq 3$, Galanis, $\check{\textnormal{S}}$tefankovi$\check{\textnormal{c}}$ and Vigoda \cite{gsv} verified that the mixing time exhibits an $\Theta{(n^{1/3})}$-power law at the threshold which is strictly below the ordered/disordered transition point. They also showed the quasi-exponential slowdown of the dynamics in the critical region, extending the result of Gore and Jerrum \cite{gorej}. Recently, Gheissari, Lubetzky and Peres \cite{glp} improved the quasi-exponential lower bound into a pure exponential lower bound of $\exp(cn)$.

\quad On the other hand, if our underlying graph is the lattice cube, less results are known compared to the case of the complete graph. In \cite{bct, bfk}, Borgs et al. showed the exponential lower bound on the mixing time when the dynamics is defined on the $d$-dimensional torus at critical temperature with sufficiently large $q$. Later on, Ullrich \cite{ullrich1, ullrichphd} developed a nice comparison technique that enables to compare the spectral gaps of the Swendsen-Wang dynamics and the heat-bath Glauber dynamics. He proved that the mixing time of the Swendsen-Wang dynamics in $\mathbb{Z}^2$ with $q=2$ is at most polynomial at all temperature, and that if $q\geq3$ then the same estimate holds except at criticality. Guo and Jerrum \cite{gj} extended this result to general dimensions $d$ when $q=2$. At critical temperature in two dimensions, recent work of Gheissari and Lubetzky \cite{gl1, gl2} established a polynomial upper bound for $q=3$, a quasi-polynomial upper bound for $q=4$, and an exponential lower bound for $q>4$, implementing the results of Duminil-Copin et al. \cite{dc1, dc2} who settled the continuity of the phase transition of the two-dimensional Potts model. We finally mention another recent work of Blanca et al. \cite{bcsv} where they demonstrated that the spectral gap of the Swendsen-Wang dynamics on $\mathbb{Z}^2$ has a $\Omega{(1)}$ lower bound in the high temperature regime.

\quad Although the cutoff phenomenon for the Swendsen-Wang dynamics is established in this paper for the first time, there are still a plenty of interesting problems left unanswered. For instance, we conjecture that the dynamics defined on the lattice exhibits cutoff at all temperature except at criticality, in contrast to the current work which only deals with sufficiently high temperatures. In fact, demonstrating an $O(\log n)$ upper bound on the mixing time at general non-critical temperature remains to be an unsolved problem.

\subsection{Main Techniques} In contrast to the monotonicity property present in the Glauber dynamics for the Ising model, the Swendsen-Wang dynamics is non-monotone.  Two previous approaches have been developed for cutoff for single-site Markov chains on spin systems, a spatial decomposition method~\cite{ls13} and information percolation~\cite{ls15}.  The former proof crucially relied on log-Sobolev estimates to bound $L^2$ distances which are not available for the Swendsen-Wang dynamics.  On the other hand, monotonicity is essential to prove cutoff using information percolation.  To overcome this, our approach blends the two techniques, using a new application of information percolation to control the  $L^2$ distance locally and combines this with the coupling of the system to a product chain following the ideas of~\cite{ls13}.  The proof consists of the following major steps which we detail.
\vspace{2mm}


\vspace{1mm}
\noindent $\bullet$ \textit{Coupling the Swendsen-Wang dynamics.} ~As  in the Glauber dynamics for the Ising model, we construct a grand coupling for the Swendsen-Wang dynamics in terms of an ``update sequence" which consists of random variables that describe the evolution of the dynamics. The Swendsen-Wang dynamics is then a deterministic function of its starting configuration and the (random) update sequence.    


\vspace{1mm}
\noindent $\bullet$ \textit{Information percolation and the $L^2$ bound.} 
%
%
~To apply the information percolation framework, we construct the ``history diagram" of the Swendsen-Wang dynamics, by revealing the update sequence backwards in time in the $(d+1)$-dimensional space-time slab, and by connecting each pair of the nearest-neighboring vertices if there exists a possible dependency between the two. Thus it describes the propagation of information backwards in time; formal definitions of the framework will be given in \S4.  At high enough temperatures, the process of the history diagram is stochastically dominated by a sub-critical branching process. We compare the distance to stationarity using Propp and Wilson's coupling from the past method~\cite{cftp} and derive a bound on the $L^2$-distance from stationarity of the chain which turns out to be a crucial quantity in proving cutoff.

\vspace{1mm}
\noindent $\bullet$ \textit{Breaking the dependencies and the $L^1$ to $L^2$ reduction.} ~To establish cutoff in Theorem \ref{thm1}, we implement the technique of \cite{ls13} to relate the $L^1$-mixing time of the dynamics on $\mathbb{Z}_n^d$  in terms of the $L^2$-mixing of the dynamics on a product chain of smaller lattices $\mathbb{Z}_r^d$. Our starting point is observing that the subcriticality of our coupling implies that although the chain is non-local, information does not travel too fast, so that any two distant vertices do not affect each other unless the time period is long enough. Therefore, one might hope to understand the original dynamics on $\mathbb{Z}_n^d$ approximately as a product chain on smaller blocks $\mathbb{Z}_r^d$ by breaking the dependencies between remote sites.  Following the ideas of \cite{ls13}, this is achieved by showing that for most realisations of the update sequence, one only needs the configuration of $X_t$ on a ``sparse'' set of vertices to determine the configuration at time $t+s$ if $s\asymp \log\log n$.  Projected onto this sparse set, called the ``update support,'' the dynamics can be coupled to a product chain of much smaller lattices, and the $L^1$-mixing of the primary chain is controlled by the $L^2$-distance from stationarity of the dynamics defined on $\mathbb{Z}_r^d$ with $r \asymp \log^5 n.$  Finally, we utilize the estimate on the $L^2$-distance derived using information percolation to bound this distance and establish cutoff.


\quad This approach establishes cutoff in terms of the spectral gap on the small chain $\mathbb{Z}_r^d$.  In~\cite{ls13} it was shown that for the Glauber dynamics the cutoff location could in fact be given in terms of the spectral gap of the infinite volume dynamics.  This proof used monotonicity in a crucial way and a follow up work~\cite{ls14} extending the techniques to Potts and other spin systems established the existence of cutoff but could not relate the location to the infinite volume dynamics.  The same issue arises for the Swendsen-Wang dynamics but we are able to relate the spectral gap of $\mathbb{Z}_r^d$ with the infinite volume dynamics by controlling the $L^2$-distance and using the representation of the spectral gap as the exponential rate of convergence to stationarity.


\begin{thm}\label{thm2}
	For $d \geq1$ and $q \geq 2$,  let $\gamma_n$ (resp. $\gamma$) denote the spectral gap of the Swendsen-Wang dynamics on $(\mathbb{Z}/n\mathbb{Z})^d$ (resp. $\mathbb{Z}^d$). Then there exists $\beta_0 = \beta_0(d,q) >0$ such that for any $0<\beta < \beta_0$, we have
	\begin{equation*}
	\lim_{n\rightarrow \infty}  \gamma_n = \gamma ~~~\textnormal{and} ~~~ 0<\gamma<1.
	\end{equation*}
\end{thm}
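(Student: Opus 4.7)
The plan is to deduce Theorem~\ref{thm2} from the $L^{2}$-bounds furnished by the information-percolation machinery of the earlier sections, combined with finite-speed-of-propagation of information: at sufficiently high temperatures the history diagram of the Swendsen--Wang chain is dominated by a sub-critical branching process, so typical dependencies remain confined to random, almost surely finite, space-time clusters.

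First I would invoke the standard representation of the spectral gap of a reversible ergodic chain as the exponential $L^{2}\to L^{2}$ decay rate,
\[
1-\gamma_n \;=\; \lim_{t\to\infty}\bigl\|P_n^{t}-\pi_n\bigr\|_{L^{2}(\pi_n)\to L^{2}(\pi_n)}^{1/t},
\]
and the analogous identity for $\gamma$ on $\mathbb{Z}^{d}$. The information-percolation estimate produces constants $C,c>0$ independent of $n$ such that $\max_{x}\|P_n^{t}(x,\cdot)-\pi_n\|_{L^{2}(\pi_n)}\le C e^{-ct}$, which by reversibility also bounds the operator norm. This immediately gives $\gamma_n\ge c$, hence $\liminf_n \gamma_n\ge c>0$, and running the same analysis directly on $\mathbb{Z}^{d}$ yields $\gamma\ge c>0$. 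The bound $\gamma<1$ is automatic since one step of the Swendsen--Wang chain is not the projection onto $\pi$.

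For the convergence $\gamma_n\to\gamma$, the key step is to establish that for each fixed $t$,
\[
\bigl\|P_n^{t}-\pi_n\bigr\|_{L^{2}(\pi_n)\to L^{2}(\pi_n)} \;\longrightarrow\; \bigl\|P_\infty^{t}-\pi\bigr\|_{L^{2}(\pi)\to L^{2}(\pi)} \qquad (n\to\infty).
\]
Given a local test function $f$ with a fixed finite support $S$, sub-criticality of the history cluster implies that $P_n^{t}f(x)$ depends only on the update sequence in a random, almost surely finite, neighborhood of $S$; hence with probability tending to $1$ in $n$ one can couple the finite- and infinite-volume chains on that neighborhood. Combined with convergence of the Gibbs measures $\pi_n\to\pi$ in finite-dimensional marginals, which is standard at small $\beta$, this yields $P_n^{t}f\to P_\infty^{t}f$ in $L^{2}$ for every local $f$ and every fixed $t$. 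The uniform exponential bound $Ce^{-ct}$ allows the interchange of the limits in $n$ and $t$, and taking $t\to\infty$ delivers $\gamma_n\to\gamma$.

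The principal obstacle is promoting convergence on local functions to convergence of the full operator norm, since the latter is a supremum over all of $L^{2}(\pi_n)$ while our control really lies on a dense subclass. The resolution is to use the same sub-critical information-percolation coupling to show that any $f\in L^{2}(\pi_n)$ can, up to an $L^{2}$-error vanishing uniformly in $n$ as the truncation window grows, be replaced by its conditional expectation on a bounded window; so the supremum is effectively attained on local functions. Carefully controlling these truncation errors and ensuring they vanish before the limit $t\to\infty$ is the main technical work, but all of the needed ingredients are exactly those developed for the proof of Theorem~\ref{thm1}.
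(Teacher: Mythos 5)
Your plan breaks down at exactly the point you flag as the principal obstacle, and the proposed resolution does not work. The hard half of your fixed-$t$ claim is $\limsup_n\|P_n^{t}-\pi_n\|_{2\to2}\le\|P_\infty^{t}-\pi\|_{2\to2}$ (equivalently $\liminf_n\gamma_n\ge\gamma$), and your localization step --- that any $f\in L^{2}(\pi_n)$ can be replaced by its conditional expectation on a bounded window with an $L^{2}$-error vanishing uniformly in $n$ --- is false. Take $f(\sigma)=n^{-d/2}\sum_{v}\left(\mathbbm{1}\{\sigma(v)=1\}-\tfrac1q\right)$: at high temperature $\operatorname{Var}_{\pi_n}(f)\asymp 1$, while its conditional expectation on a window of side $L$ has variance $O(L^{d}/n^{d})$, so the truncation error is of the same order as $f$ itself; worse, the near-optimal test functions for $\gamma_n$ are precisely such delocalized observables, so the operator-norm supremum is not ``effectively attained on local functions'' in any sense uniform in $n$. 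Two further inaccuracies: the asserted uniform bound $\max_x\|P_n^{t}(x,\cdot)-\pi_n\|_{L^2(\pi_n)}\le Ce^{-ct}$ cannot hold for fixed $t$ and growing $n$ (the worst-case $L^2$ distance necessarily carries a burn-in of order $\log n$, cf.\ Theorem \ref{l2thm}; the uniform gap lower bound comes instead from path coupling, Proposition \ref{prop1}); and $\gamma<1$ is not ``automatic'' --- one needs a uniform-in-$n$ upper bound on the finite-volume gaps, which the paper proves by a coupling argument for the edge dynamics (Proposition \ref{specgapprop}).

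The paper's argument is structurally different and never localizes test functions for the hard direction. It first shows that the gaps $\gamma_\star(r)$ of boxes of polylogarithmic side converge at all, by running the cutoff argument with two different box sizes $r_1\le r_2\le r_1^2$ and noting that both must yield the same cutoff location $\frac{d}{2\gamma_\star(r)}\log n$ up to $O(\log\log n)$ (Corollary \ref{cutoffcor}, Proposition \ref{convprop}). It then identifies the limit $\hat\gamma$ with $\gamma$ in two steps: the direction $\gamma\ge\hat\gamma$ is essentially your local-function argument, done through the variational characterization (\ref{specgapvar}) with finitely supported test functions and convergence of Gibbs measures; the direction $\gamma\le\hat\gamma$ is obtained by planting a worst-case configuration on $\Lambda^+$ inside a stationary infinite-volume background, coupling the infinite-volume chain to the box dynamics via finite speed of propagation (Lemma \ref{speedofpropa}), using weak spatial mixing, the lower bound on the projected distance $\textbf{m}_t^*$ (Lemma \ref{mtstarbd}), and --- crucially --- an infinite-volume $L^2$ estimate after a $\Theta(\log\log n)$ burn-in (Theorem \ref{l2thminfty}), proved by redoing the information-percolation analysis on $\mathbb{Z}^d$. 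Your proposal contains none of these ingredients for that direction, so as written it does not establish $\lim_n\gamma_n=\gamma$.
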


Our approach works in great generality for dynamics at high temperature including the Potts Glauber dynamics, allowing one to give the cutoff locations in~\cite{ls14}  in terms of the infinite volume spectral gap. We discuss this generalization in Remark \ref{rmkspecgapgen}.

\subsection{Organization}
The rest of this article is organized as follows. \S\ref{secprelim} consists of an introduction on the background. In \S\ref{secglobalcoup}, we introduce a coupling of the Swendsen-Wang dynamics and deduce estimates on the spectral gap and the mixing time. The information percolation framework is explained in \S\ref{secinfoperc}, and the bound on the $L^2$-distance from equilibrium is derived in this section. In \S\ref{secl1l2}, we describe the reduction argument from $L^1$-mixing on $\mathbb{Z}_n^d$ to $L^2$-mixing on a smaller lattice $\mathbb{Z}_r^d$. The final section, \S\ref{secfinal}, is devoted to proving Theorem \ref{thm1} by implementing the results from the previous sections.

\section{Preliminaries}\label{secprelim}

\subsection{The $q$-state Potts model and the random-cluster model}\label{secprelim1} Let $G=(V,E)$ be a finite graph. Let $q\geq 2$ be an integer and $\beta$ be a nonnegative number. Then the $q$\textit{-state Potts model} on $G$ with inverse temperature $\beta$ is the probability distribution on the configuration space $\Omega_V := {\left\lbrace 1,2,\ldots,q \right\rbrace}^V$, where its formula given by 
\begin{equation*}
\pi(\sigma) := \frac{1}{Z_P(\beta,q)} \exp (\beta |E_m(\sigma)|),
\end{equation*}
where $E_m(\sigma) := \left\lbrace (uv) \in E : \sigma(u)=\sigma(v) \right\rbrace$ and $Z_P(\beta,q)$ is the normalizing constant. Each configuration denotes an assignment of colors to the sites in $V$. For $\beta \geq 0$ we say that the model is \textit{ferromagnetic}, otherwise it is \textit{anti-ferromagnetic}. In particular if $q=2$, this is equivalent to the Ising model. Throughout this paper, we will focus only on the ferromagnetic case.

\quad Another model that shows a rich connection with the Potts model is the random-cluster model. Also called as FK-Ising model, this model is introduced by Fortuin and Kasteleyn in \cite{fk1, fk2}. Here, the  configuration space is $\Sigma_E := \left\lbrace 0,1 \right\rbrace^E $, and 
for each configuration $\omega \in \Sigma_E$, we say that an edge $e$ is \textit{open} in $\omega$ if $\omega(e)=1$ and is \textit{closed} otherwise. The  random-cluster model with parameters $p\in [0,1]$ and $q> 0$ is the probability distrubition on $\Sigma_E$ defined by
\begin{equation*}
\phi(\omega) := \frac{1}{Z_{RC}(p,q)}  p^{\vert E(\omega)\vert} (1-p)^{\vert E\setminus E(\omega) \vert}q^{k(\omega)},
\end{equation*}
where $E(\omega)$ is the set of the open edges of $\omega$, $k(\omega)$ denotes the number of connected components in the subgraph $(V, E(\omega))$  (note that we also count each isolated vertex as a component) and $Z_{RC}(p,q)$ is the normalizing constant.

\quad One can observe the relations between the $q$-state Potts model and the random-cluster model by considering the Edwards-Sokal measure \cite{es}, which is the joint distribution of the two defined as
\begin{equation*}
\nu (\sigma,\omega) := p^{\left\vert E(\omega) \right\vert} (1-p)^{\left\vert E \setminus E(\omega)\right\vert } \mathbbm{1}_{\left\lbrace E(\omega) \subset E_m(\sigma) \right\rbrace}.
\end{equation*}

\quad Indeed, one can check that when $p,\beta$ satisfies the relation $p=1-e^{-\beta}$, the marginal distribution of $\nu$ on $\Omega_V$ (resp. $\Sigma_E$) is equal to $\pi$ (resp. $\phi$). A detailed illustration on this fact can be found in \cite{grimmettrc}. Throughout the rest of the paper, we always assume that $p$ and $\beta$ satisfy
\begin{equation*}
p=1-e^{-\beta}.
\end{equation*}

\subsection{The Swendsen-Wang dynamics}\label{secprelim2}
One interesting feature about the Edwards-Sokal measure is that it provides an insight to sample a random cluster configuration from a Potts configuration, and vice versa. This is closely related with the formulation of the Swendsen-Wang dynamics which is first introduced in \cite{swd}. Given a Potts configuration $X_t \in \Omega$, a step of the \textit{Swendsen-Wang dynamics} results in a new configuration $X_{t+1}$ as follows:	
\vspace{2mm}

\begin{enumerate}
	\item Sample $\omega_t \in \Sigma_E$ by setting $\omega_t(e) = 1 $ with probability $p=1-e^{-\beta}$ and $\omega_t(e) = 0$ with probability $1-p$ for each $e\in E_m(X_t)$, independently of $e$. For $e \notin E_m(X_t)$, set $\omega_t (e)=0$. Hence we obtain a joint configuration $(X_t, \omega_t)$;
	\vspace{2mm}
	
	\item Assign to each connected component of $(V,E(\omega_t))$ independently a new color from $Q= \left\lbrace 1,2,\ldots,q \right\rbrace$ uniformly at random and obtain the new Potts configuration $X_{t+1}$.
	
\end{enumerate}
\vspace{1mm}

\quad Not only is it well-known, but also is a simple fact that the Markov chain defined as above is reversible and stationary with respect to the Potts measure. Similarly, if we run the dynamics by 2$\rightarrow$1 starting from an edge configuration $\omega_t$, then it defines a reversible Markov chain with respect to the random-cluster measure. 



\subsection{Mixing time, cutoff and spectral gap}\label{secprelim3}
The total-variation ($L^1$) distance is arguably the most fundamental notion of convergence in the theory of Markov chains. For two probability measures $\mu_1, \mu_2$ on a finite state space $S$ the \textit{total-variation distance} is defined as
\begin{equation*}
\| \mu_1 - \mu_2 \|_{{ \textnormal{\tiny TV}}} := \max_{A \subset S} \vert \mu_1 (A) - \mu_2 (A)\vert = \frac{1}{2} \sum_{x\in S} \vert \mu_1 (x) - \mu_2(x)\vert.
\end{equation*}
For an ergodic Markov chain $(Y_t)$ with stationary distribution $\mu$, we define the worst case total-variation distance from equilibrium as
\begin{equation*}
d(t) := \max_{y_0 \in S} \| \mathbb{P}_{y_0} ( Y_t \in \cdot) - \mu \|_{{ \textnormal{\tiny TV}}},
\end{equation*}
where $\mathbb{P}_{y_0} ( Y_t \in \cdot)$ denotes the probability distribution of $Y_t$ starting from $y_0$. Then, \textit{the mixing time} of the chain $(Y_t)$ is defined as the minimal time when $d(t)$ gets below some given threshold, i.e., for each $\epsilon \in (0,1)$,
\begin{equation*}
t_{\textnormal{mix}} (\epsilon) := \min \left\lbrace t \geq 0 : d(t) \leq \epsilon \right\rbrace.
\end{equation*}

\quad A family of chains $\{(Y_t^{(n)}) \}_n$ is said to exhibit \textit{cutoff} if for every fixed $\epsilon \in (0,1)$ we have 
$$\lim_{n\rightarrow \infty} \frac{t^{(n)}_{\textnormal{mix}}(\epsilon)}{  t^{(n)}_{\textnormal{mix}}(1-\epsilon)} = 1.$$
Here, we typically consider the family $\{(Y_t^{(n)}) \}_n$ that consists of the same type of Markov chains whose system size grows in $n$ (e.g., the Swendsen-Wang dynamics on $\mathbb{Z}_n^d$).
A sequence $(w_n)$ is said to be a \textit{cutoff window} if $t^{(n)}_{\textnormal{mix}}(\epsilon) - t^{(n)}_{\textnormal{mix}}(1-\epsilon) = O(w_n)$ for every $\epsilon \in (0, 1)$.
Then, the existence of cutoff is equivalent to the existence of such sequence $(w_n)$ that satisfies $w_n = o(t^{(n)}_{\textnormal{mix}}(\epsilon))$.

\vspace{2mm}
\quad For a discrete-time reversible Markov chain $(Y_t)$, the transition matrix $P$ of the chain  has real eigenvalues which we denote by $1=\lambda_1  \geq \ldots \geq \lambda_{|S|}\geq -1$. Then the \textit{spectral gap} of the Markov chain $(Y_t)$ is defined as
\begin{equation*}
\gamma := 1- \max \left\lbrace \lambda_2, |\lambda_{|S|}| \right\rbrace.
\end{equation*}
Spectral gap of a Markov chain provides some fundamental results on the mixing time. We point out a well-known property of it as follows. For a proof, see, e.g., \cite{lpwmcmt}.

\begin{proposition}{\label{specgap}}
	Let $(Y_t)$ be a discrete-time, ergodic and reversible Markov chain with stationary distribution $\mu$ and spectral gap $\gamma$. Define $d(t) = \max_{y_0 \in S} \| \mathbb{P}_{y_0} ( Y_t \in \cdot) - \mu \|_{{ \textnormal{\tiny TV}}}$. Then the following holds true for all $t>0$:
	\begin{equation*}
	(1-\gamma)^t \leq 2d(t) \leq \mu_{\textnormal{min}}^{-1} (1-\gamma)^t,
	\end{equation*}
	where $\mu_{\textnormal{min}} = \min_{x\in S} \mu(x)$.
\end{proposition}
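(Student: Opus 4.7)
The plan is to prove the two inequalities separately, in both cases working with the spectral decomposition of $P$ in $L^2(\mu)$, which is available thanks to reversibility. Let $\{f_i\}_{i=1}^{|S|}$ be an orthonormal eigenbasis of $P$ acting on $L^2(\mu)$ with eigenvalues $\lambda_1=1 \geq \lambda_2 \geq \cdots \geq \lambda_{|S|}$ and $f_1\equiv 1$.

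For the upper bound I would first use Cauchy--Schwarz to pass from $L^1$ to $L^2$:
$$2 \| P^t(y,\cdot) - \mu \|_{{\textnormal{\tiny TV}}} = \sum_{x} \mu(x)\bigl| P^t(y,x)/\mu(x) - 1 \bigr| \leq \Bigl\| P^t(y,\cdot)/\mu - 1 \Bigr\|_{L^2(\mu)}.$$
Then I would expand the density $x\mapsto P^t(y,x)/\mu(x)$ in the orthonormal basis, noting that its $i$-th Fourier coefficient equals $\lambda_i^t f_i(y)$; subtracting the constant $1$ removes the $i=1$ term, so Parseval combined with the reproducing identity $\sum_i f_i(y)^2 = 1/\mu(y)$ gives
$$\Bigl\| P^t(y,\cdot)/\mu - 1 \Bigr\|_{L^2(\mu)}^2 = \sum_{i\geq 2} \lambda_i^{2t} f_i(y)^2 \leq (1-\gamma)^{2t} \sum_{i\geq 1} f_i(y)^2 = \frac{(1-\gamma)^{2t}}{\mu(y)}.$$
Taking square roots and maximising over $y$ yields the sharper bound $(1-\gamma)^t \mu_{\min}^{-1/2}$, which already implies the stated $\mu_{\min}^{-1}(1-\gamma)^t$ since $\mu_{\min}\leq 1$.

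For the lower bound I would take an $L^2(\mu)$-unit eigenfunction $f_*$ corresponding to an eigenvalue $\lambda_*$ achieving $|\lambda_*| = 1-\gamma$. Orthogonality to $f_1\equiv 1$ gives $\sum_x \mu(x) f_*(x) = 0$, so
$$\lambda_*^t f_*(y) = \sum_x P^t(y,x) f_*(x) = \sum_x \bigl( P^t(y,x) - \mu(x) \bigr) f_*(x),$$
and the right-hand side is bounded in absolute value by $2\|f_*\|_\infty \cdot \|P^t(y,\cdot) - \mu\|_{{\textnormal{\tiny TV}}}$. Choosing $y$ to attain $\|f_*\|_\infty$ cancels that factor and produces $(1-\gamma)^t \leq 2 d(t)$.

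No step of this plan looks genuinely hard: the whole argument is a textbook computation once one has the orthonormal eigenbasis of the reversible kernel, and no feature specific to the Swendsen--Wang dynamics enters. The only points requiring small care are (i) choosing the eigenvector associated with $\max\{\lambda_2, |\lambda_{|S|}|\}$ rather than just $\lambda_2$ in the lower bound, so that $|\lambda_*|^t = (1-\gamma)^t$ matches the definition of $\gamma$ used in the proposition, and (ii) not bothering to optimise the $L^2\to L^1$ constant, since the proposition only records the slightly lossy $\mu_{\min}^{-1}$.
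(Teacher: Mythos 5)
Your proof is correct. The paper does not prove Proposition \ref{specgap} itself but simply cites \cite{lpwmcmt}, and your spectral-decomposition argument is precisely the standard one found there: the Cauchy--Schwarz reduction to the $\chi^2$-type $L^2(\mu)$ distance together with Parseval and the identity $\sum_i f_i(y)^2 = 1/\mu(y)$ for the upper bound, and testing against an extremal eigenfunction (for the eigenvalue realizing $\max\{\lambda_2,|\lambda_{|S|}|\}$, as you note) for the lower bound; your version even yields the sharper constant $\mu_{\min}^{-1/2}$, which trivially implies the stated $\mu_{\min}^{-1}$ bound.
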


\quad Moreover, if  $P$ is non-negative definite, then the spectral gap $\gamma = 1- \lambda_2$ can be written by the following formula using variational approach: 
\begin{equation}\label{specgapvar}
\gamma : = \inf_{\substack{ f \in L^2(\pi) \\ f \neq 0} } \frac{\mathcal{E}_\pi (f,f)}{\textnormal{Var}_\pi (f) },
\end{equation}
where $\mathcal{E}_\pi: L^2(\pi) \times L^2(\pi) \rightarrow \mathbb{R}$ denotes the Dirichlet form defined as
\begin{equation*}
\mathcal{E}_\pi (f,g) : = \frac{1}{2} \int_{\Omega \times \Omega} (f(x)-f(y))^2 P(x,dy) \pi(dx).
\end{equation*}


\section{Global Coupling of the Swendsen-Wang Dynamics}\label{secglobalcoup}

Throughout this section, $d\geq 2$ will be any fixed integer, and $G=(V,E)$ will be a finite graph of maximal degree $d$. 

\quad The purpose of the following subsection is to define a global coupling for the Swendsen-Wang dynamics. This coupling method gives a simple proof of the constant lower bound of the spectral gap  in \S\ref{secglobalcoup2}.

\subsection{A global coupling for the Swendsen-Wang dynamics}\label{secglobalcoup1} We introduce the update sequence of the Swendsen-Wang dynamics which consists of three types of elements that determine the updates.
For an edge configuration $\omega \in \Sigma_E$ on $G$, let $(V, \omega)$ denote the subgraph of $G$ induced by the edge set $\{e\in E : \omega(e) =1 \}$.

\begin{definition}[Update sequence]\label{updseq}
	Let $(X_t)_{0\leq t\leq t_{\star}}$ be the Swendsen-Wang dynamics for the $q$-state Potts model on $G=(V,E)$. The \textbf{update sequence} of $(X_t)_{0\leq t\leq t_{\star}}$ is defined by  $\mathfrak{H}_{t_{\star}} = \left\lbrace (\bar{\omega}_t, (c_{v,t})_{v \in V} , \mathcal{A}_{\bar{\omega}_t} ) \right\rbrace_{t=0}^{t_{\star}-1}$, where the elements of $\mathfrak{H}_{t_{\star}}$ are given as follows:
	
	\begin{enumerate}
		\item Let $(\bar{\omega}_t)_{t\geq 0}$ be the collection of i.i.d. Bernoulli (bond-)percolation configurations on $G=(V,E)$ with probability $p$. In other words, for each edge $e$, $\bar{\omega}_t(e)$ is set to $1$ with probability $p$ and to 0 with probability $1-p$ independently of $e$, and $\bar{\omega}_t$'s are independent. \vspace{1mm}
		
		\item Let $(c_{v,t})_{v\in V, t\geq 0}$ be i.i.d. \textnormal{Unif}$\left\lbrace{1,\ldots,q}\right\rbrace$ random variables which are independent with $(\bar{\omega}_t)_{t\geq 0}$.\vspace{1mm}
		
		\item For each $t\geq 0$, let $k(\bar{\omega}_t)$ be the number of connected components in  $(V, \bar{\omega}_t)$ and let $C_1^{\bar{\omega}_t}, \ldots, C_{k(\bar{\omega}_t)}^{\bar{\omega}_t}$ denote its components. For each $j \in \left\lbrace 1,\ldots,k(\bar{\omega}_t) \right\rbrace$, define $\alpha_t(C_j^{\bar{\omega}_t})$ to be any bijective function that maps $C_j^{\bar{\omega}_t}$ onto $\left\lbrace 1,\ldots, |C_j^{\bar{\omega}_t}|\right\rbrace$. 
		Then, we combine all the information of $\alpha_t(C_j^{\bar{\omega}_t})$'s, by defining $\mathcal{A}_{\bar{\omega}_t} : V \rightarrow \mathbb{N}$ as
		\begin{equation*}
		\mathcal{A}_{\bar{\omega}_t} ( v) = \alpha_t(C_j^{\bar{\omega}_t})(v) ~~~~\textnormal{  if } v\in C_j^{\bar{\omega}_t} \textnormal{ for some } j.
		\end{equation*}
	\end{enumerate}
\end{definition}

\quad Note that in step 3, the specific choice of the function $\alpha_t(C_j^{\bar{\omega}_t})$ is unimportant. Any function that is bijective from $C_j^{\bar{\omega}_t}$ onto $\left\lbrace 1,\ldots, |C_j^{\bar{\omega}_t}|\right\rbrace$ leads to the desired coupling. However, the functions $\alpha_t(C_j^{\bar{\omega}_t})$'s should be deterministic.

\quad It turns out that the combination of the three types of random variables constructed above can actually govern the evolution of the Swendsen-Wang dynamics. We describe how it is done in the following definition.

\begin{definition}[Global coupling]\label{globalcoupling}
	Let $\mathfrak{H}_{t_{\star}} = \left\lbrace (\bar{\omega}_t, (c_{v,t})_{v \in V} , \mathcal{A}_{\bar{\omega}_t} )  \right\rbrace_{t=0}^{t_\star-1}$. The the Swendsen-Wang dynamics $(X_t)_{0\leq t\leq t_\star}$ for the $q$-state Potts model on $G$ is coupled with $\mathfrak{H}_{t_\star}$ as follows:
	
	\begin{enumerate}
		\item Given the Potts configuration $X_t$ at time $t$, the corresponding edge configuration $\omega_t$ in the first step of the dynamics follows $\bar{\omega}_t$ on monochromatic edges, i.e., $\omega_t(e) = \bar{\omega}_t (e)$ if $X_t(u) = X_t(v)$ with $e=(uv)$, and $\omega_t(e) = 0$ otherwise.
		\vspace{1mm}
		
		\item Let $C_1^{\omega_t},\ldots, C_{k(\omega_t)}^{\omega_t}$ denote the connected components of $(V,\omega_t)$. For each  $C_j^{\omega_t}$, pick a vertex $v_j\in C_j^{\omega_t}$ that satisfies $\mathcal{A}_{\bar{\omega}_t} (v_j) = \min \lbrace \mathcal{A}_{\bar{\omega}_t} (u) : u \in C_j^{\omega_t} \rbrace$, i.e., for each connected component of $\omega_t$, we pick the vertex having the smallest label with respect to $\mathcal{A}_{\bar{\omega}_t}$.
		\vspace{1mm}
		
		\item Obtain $X_{t+1}$ by assigning to each component $C_j^{\omega_t}$  a new color $c_{v_j,t}$.
	\end{enumerate}
\end{definition}

\quad Let us briefly check how this procedure is actually identical with the law of the Swendsen-Wang dynamics. Firstly, since $\bar{\omega}_t \sim$ i.i.d.$\,$Perc$(G,p)$, generating the edge configuration $\omega_t$ in Step 1 is indeed the same in law as the first step of the Swendsen-Wang dynamics. In the second and third steps, no matter how $\alpha_t $ and $\mathcal{A}_{\bar{\omega}_t}$ are defined, each connected component in $(V,\omega_t)$ receives a color $c_{v_j,t} \sim$ i.i.d.$\,$Unif$\left\lbrace 1,\ldots,q \right\rbrace$, matching the definition of the Swendsen-Wang dynamics.


\begin{remark}
	\textnormal{Unlike the monotone coupling of the heat-bath Glauber dynamics for the Ising model or for the random-cluster model, the coupling is not monotone in the following sense:  Consider the edge Swendsen-Wang dynamics on $\Sigma_E$ by proceeding $2 \rightarrow 3 \rightarrow 1$ instead of $1 \rightarrow 2 \rightarrow 3$ in the Definition \ref{globalcoupling}. If we define the order between the edge configurations by  $\omega, \, \omega' \in \Sigma_E$, $\omega \leq \omega'$ if and only if $\omega(e) \leq \omega'(e)$ for all $e\in E$, then one can observe that $\omega_0 \leq \omega_0' $ does not imply $\omega_1 \leq \omega_1'$.  	}
\end{remark}

\subsection{Lower bound on the spectral gap}\label{secglobalcoup2} In this subsection, we implement the global coupling given in the Definition \ref{globalcoupling} to prove a constant lower bound on the spectral gap of the Swendsen-Wang dynamics. The proof will be an application of the path coupling method which is first introduced by Bubley and Dyer \cite{pathcoupling}. In this procedure, an upper bound on the mixing time will natuarally be derived as well.


\begin{proposition}\label{prop1}
	Let $p=1-e^{-\beta}$ and $q\geq 2$. For any $p$ such that $edp \leq 1 - \frac{1}{\sqrt{2}}$, the spectral gap $\gamma$ of the Swendsen-Wang dynamics for the ferromagnetic $q$-Potts model on $G$ at inverse temperature $\beta$ satisfies $\gamma \geq 1- 2edp$. Moreover, the mixing time has an upper bound
	\begin{equation*}
	t_{\textnormal{mix}} \left( 1/2e \right) \leq \frac{\log(2en)}{\log(1/2edp)}.
	\end{equation*}
\end{proposition}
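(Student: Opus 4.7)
The plan is to apply the path coupling method to the global coupling of Definition \ref{globalcoupling} using the Hamming distance $d_H$ on Potts configurations. It will suffice to prove the one-step contraction $\mathbb{E}[d_H(X_1, X_1')] \leq 2edp$ whenever $d_H(X_0, X_0') = 1$. The standard consequence of path coupling will then give $\mathbb{E}[d_H(X_t, X_t')] \leq (2edp)^t d_H(X_0, X_0') \leq n(2edp)^t$ for any starting pair, and the bounds $\gamma \geq 1 - 2edp$ and $t_{\textnormal{mix}}(1/(2e)) \leq \log(2en)/\log(1/(2edp))$ will follow by setting $n(2edp)^t = 1/(2e)$.

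I would begin with $X_0, X_0'$ differing only at a vertex $v$, say $X_0(v) = a$ and $X_0'(v) = b$, and couple both chains by the same update sequence $\mathfrak{H}$. Since monochromaticity with respect to $X_0$ and $X_0'$ coincides on every edge not incident to $v$, the edge configurations $\omega_0, \omega_0'$ agree away from $v$. Write $S_a, S_b$ for the $\omega_0$- and $\omega_0'$-components of $v$ and $C_v$ for the $\bar{\omega}_0$-cluster of $v$; then $S_a \cup S_b \subseteq C_v$, and any vertex outside $S_a \cup S_b$ has identical components in the two chains and thus inherits the same updated color. The key refinement is a \emph{min-label observation}: if $w^*$ is the vertex in $S_a \cup S_b$ minimizing $\mathcal{A}_{\bar{\omega}_0}$, then a case check (depending on whether $w^*$ lies in $S_a \cap S_b$, $S_a \setminus S_b$, or $S_b \setminus S_a$) shows that $w^*$ is also the minimum-label vertex of both its $\omega_0$-component and its $\omega_0'$-component. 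The off-diagonal cases exploit the fact that the smaller of the two components containing $w^*$ is a subset of the larger one, so it inherits $w^*$ as its minimum. Hence $X_1(w^*) = X_1'(w^*)$ deterministically, improving the naive bound to $d_H(X_1, X_1') \leq |S_a \cup S_b| - 1 \leq |C_v| - 1$.

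The last step is a cluster-size estimate via subtree enumeration. Since $G$ has maximum degree $d$, the number of subtrees of $G$ containing $v$ with exactly $k$ vertices is at most $(ed)^{k-1}$, and a first-moment bound then gives
\[
\mathbb{E}\bigl[|C_v| - 1\bigr] \;=\; \sum_{u \neq v}\mathbb{P}(u \in C_v) \;\leq\; \sum_{k \geq 2}(k-1)(edp)^{k-1} \;=\; \frac{edp}{(1-edp)^2}.
\]
The hypothesis $edp \leq 1 - 1/\sqrt{2}$ is equivalent to $(1-edp)^2 \geq 1/2$, which sharpens this to $\mathbb{E}[d_H(X_1, X_1')] \leq 2edp$ as required. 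The main obstacle is securing the $-1$ improvement through the min-label argument; without it the subtree bound only produces $\mathbb{E}[|C_v|] \leq 1/(1-edp)^2 > 1$, which is incompatible with the strict contraction a path coupling argument demands.
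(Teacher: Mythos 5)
Your proposal is correct and is essentially the paper's own argument: the same global coupling and path coupling in Hamming distance, the same min-label observation producing the $|C_v|-1$ improvement, and the same cluster-size estimate via subtree counting with $edp\le 1-1/\sqrt{2}$ (the paper merely routes the count through domination by the $d$-regular cover tree, which amounts to the identical enumeration). The only imprecision is your closing sentence: the bound $\gamma\ge 1-2edp$ does not follow from setting $n(2edp)^t=1/(2e)$ (that only gives the mixing-time bound) but from the decay rate, i.e., Proposition \ref{specgap} gives $(1-\gamma)^t\le 2d(t)\le 2n(2edp)^t$, and taking $t$-th roots and letting $t\to\infty$ yields $1-\gamma\le 2edp$.
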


\begin{proof}
	With Proposition \ref{specgap} in mind, the following lemma directly implies Proposition \ref{prop1}.
\end{proof}

\begin{lemma}\label{gapofswd}
	Let $(X_t)$ be the Swendsen-Wang dynamics on $G=(V,E)$ with $|V|=n$ and maximal degree bounded by $d$. Then for any $p$ such that $edp \leq 1- \frac{1}{\sqrt{2}}$ and $t>0$,
	\begin{equation*}
	\max_{x_0} \| \mathbb{P}_{x_0} \left( X_t \in \cdot \right) - \pi \|_{\textnormal{\tiny TV}} \leq n (2edp)^t.
	\end{equation*}
\end{lemma}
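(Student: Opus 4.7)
The plan is to apply the path coupling method of Bubley and Dyer with the Hamming distance on $\Omega_V$. By the path-coupling lemma it suffices to exhibit a one-step coupling of $(X_1, X_1')$ started from configurations $X_0, X_0'$ at Hamming distance one such that $\mathbb{E}[\textnormal{dist}(X_1,X_1')] \leq 2edp$; since the Hamming diameter is $n$, iteration of this contraction yields $\mathbb{E}[\textnormal{dist}(X_t,X_t')] \leq n(2edp)^t$ for arbitrary starting pairs, and Markov's inequality applied to $\mathbbm{1}_{X_t \neq X_t'}$ then converts this into the stated total-variation bound.

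For the one-step coupling I would run both chains under the global coupling of Definition \ref{globalcoupling} driven by the same update data $\mathfrak{H}_1 = (\bar{\omega}_0,(c_{v,0})_v,\mathcal{A}_{\bar{\omega}_0})$. Let $v^\star$ denote the unique disagreement vertex. The induced edge configurations $\omega_0$ and $\omega_0'$ can disagree only on edges incident to $v^\star$, and even then only on the set $M$ of such edges that are monochromatic in exactly one of $X_0, X_0'$; in particular $|M| \leq d$. Writing $\widetilde{C}_{v^\star}$ for the connected component of $v^\star$ in the full Bernoulli-$p$ percolation $\bar{\omega}_0$, any vertex $u \notin \widetilde{C}_{v^\star}$ lies in the same $\omega_0$- and $\omega_0'$-component (the differing edges are all incident to $v^\star$), and since the representative-choosing function $\mathcal{A}_{\bar{\omega}_0}$ and the palette $(c_{v,0})_v$ are shared between the two chains, such a $u$ receives the same color in both $X_1$ and $X_1'$. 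Moreover, if no edge of $M$ is $\bar{\omega}_0$-open then $\omega_0 = \omega_0'$ and $X_1 \equiv X_1'$. Combining these observations gives
\[
\textnormal{dist}(X_1,X_1') \;\leq\; |\widetilde{C}_{v^\star}|\sum_{e \in M}\mathbbm{1}_{\bar{\omega}_0(e)=1}.
\]

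The quantitative core of the proof is then to bound $\mathbb{E}[|\widetilde{C}_{v^\star}|\mathbbm{1}_{\bar{\omega}_0(e)=1}]$ for each $e \in M$, and this is where the only real obstacle lies, because the cluster size and the edge indicator are positively correlated. I would resolve this with a ``split along $e$'' trick: for $e=(v^\star,u)$, whenever $\bar{\omega}_0(e)=1$ one has $\widetilde{C}_{v^\star} \subseteq \widetilde{C}_{v^\star}^{-e} \cup \widetilde{C}_u^{-e}$, where the superscript $-e$ denotes the cluster computed using $\bar{\omega}_0$ on $E \setminus \{e\}$. Both pieces on the right are independent of $\bar{\omega}_0(e)$, so
\[
\mathbb{E}\bigl[|\widetilde{C}_{v^\star}|\mathbbm{1}_{\bar{\omega}_0(e)=1}\bigr] \;\leq\; p\bigl(\mathbb{E}[|\widetilde{C}_{v^\star}^{-e}|] + \mathbb{E}[|\widetilde{C}_u^{-e}|]\bigr) \;\leq\; \frac{2p}{1-dp}.
\]
The last inequality is the standard BFS/Galton-Watson comparison dominating cluster size by a branching process with $\textnormal{Binomial}(d,p)$ offspring (mean $dp$, total progeny of mean $1/(1-dp)$). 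The hypothesis $edp \leq 1-1/\sqrt{2}$ is stronger than $dp \leq 1-1/e$, so $1/(1-dp)\leq e$; summing over the at most $d$ edges of $M$ then yields $\mathbb{E}[\textnormal{dist}(X_1,X_1')] \leq 2edp$. The same hypothesis also guarantees $2edp \leq 2-\sqrt{2}<1$, so the contraction can be iterated and the lemma follows.
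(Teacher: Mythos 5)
Your proof is correct, and at the top level it follows the same route as the paper: path coupling under the global coupling of Definition \ref{globalcoupling}, a one-step contraction factor $2edp$ at Hamming distance one, and then the coupling inequality plus Markov. Where you diverge is in how the one-step estimate is organized. The paper bounds the expected disagreement by $\mathbb{E}[|\bar C|-1]$, where $\bar C$ is the $\bar\omega$-cluster of the disagreement vertex (the $-1$ coming from the observation that the vertex of $\bar C$ with minimal $\mathcal{A}_{\bar\omega}$-label is recolored identically in both chains), and then dominates $|\bar C|-1$ by the edge count of a percolation cluster on the $d$-regular tree, using $\mathbb{P}(|E(\Gamma)|=k)\le (edp)^k$ so that $\sum_k k(edp)^k\le 2edp$. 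You instead extract the factor $p$ from the explicit indicator that some edge at the disagreement vertex, monochromatic in exactly one of the two configurations, is $\bar\omega$-open; you decorrelate this indicator from the cluster size by the split-along-$e$ trick; and you bound the expected cluster size by the Galton--Watson total-progeny mean $1/(1-dp)$, so that $2dp/(1-dp)\le 2edp$ under $edp\le 1-1/\sqrt2$. Both arguments are valid and yield the same contraction constant: the paper's version gives the marginally sharper intermediate bound $\mathbb{E}[|\bar C|-1]$ via the minimal-label observation, while yours dispenses with that observation and with the subtree-counting tail estimate, at the cost of the slightly cruder pointwise bound $d(X_1,X_1')\le |\widetilde C_{v^\star}|\sum_{e\in M}\mathbbm{1}_{\{\bar\omega_0(e)=1\}}$.
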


\begin{proof}
	Let $(X_t)$ and $(X_t') $ be two copies of the Swendsen-Wang dynamics that are coupled according to the global coupling in Definition \ref{globalcoupling}. Let $\omega_t$ and $\omega_t'$ be the edge configuarations generated after the first step of the dynamics, corresponding to $X_t$ and $X_t'$, respectively. Also, let $d(X_t, X_t')$ be the Hamming distance between the two configurations, i.e., 
	\begin{equation*}
	d(X_t, X_t') = \vert \{ u \in V : X_t(u) \neq X_t'(u) \} \vert.
	\end{equation*}
	
\quad 	Suppose that at time $t$, the two configurations satisfy $d(X_t, X_t') =1$. Let $v$ be the vertex such that $X_t(v) \neq X_t'(v)$. Let $\bar{\omega}_t$ be the percolation configuration at time $t$ included in the update sequence and let $\bar{C}$ be the connected component of $(V, \bar{\omega}_t)$ containing $v$.
	
	\quad At the second step of the coupling, every connected component of $\omega_t$ that is not contained in $\bar{C}$ receives the same color as the corresponding component of $\omega_t'$, since $\omega_t$ and $\omega_t'$ are the same except at $N(v) = \{e\in E: \exists u,~ e=(uv) \}$.
	This implies that $X_{t+1}(V\setminus \bar{C}) = X_{t+1}'(V \setminus \bar{C})$. Moreover, if we let $u$ to be the vertex in $\bar{C}$ such that $\mathcal{A}_{\bar{\omega}_t} (u) =1 $, then $X_{t+1}(u) = X_{t+1}'(u)$ by the definition of the global coupling. As a result, we deduce the following inequality:
	\begin{equation}\label{pathcoupeq}
	\mathbb{E} \left[ d(X_{t+1}, X_{t+1}') \vert d(X_t, X_t') =1 \right] \leq \mathbb{E} [ \vert \bar{C} \vert -1].
	\end{equation}
	
	\quad We bound the right hand side of (\ref{pathcoupeq}) by dominating our graph $G$ by its cover tree. Let $(\mathbb{T}_d, \bullet)$ be the infinite $d$-regular tree rooted at $\bullet$. Consider a Bernoulli percolation with edge inclusion probability $p$ on $(\mathbb{T}_d, \bullet)$, and let $\Gamma$ be the connected component of the percolation containing $\bullet$. Since the maximal degree of our graph $G$ is $d$, we can consider a projection $\varphi$ from  $(\mathbb{T}_d, \bullet)$ onto $(G,v)$, i.e., $\varphi(\bullet) = v$ and $\varphi$ preserves the edge relations of $\mathbb{T}_d$ and $G$. Then there is a natural coupling between $\Gamma$ and $\bar{C}$ via this projection, and the size of $\Gamma$ stochastically dominates the size of $\bar{C}$.
	
	\quad Hence we have $\mathbb{E} [ \vert \bar{C} \vert -1] \leq \mathbb{E} [ \vert \Gamma \vert -1] = \mathbb{E} [ \vert E(\Gamma) \vert ] $, where $E(\Gamma)$ is the set of edges in $\Gamma$. For each $k$, we have $\mathbb{P}(\vert E(\Gamma) \vert = k) \leq (edp)^k$, since we should select $k$ edges to be open (probability $p^k$) and the number of choosing a $k$-subtree containing $\bullet$ is at most $ {dk \choose k} \leq (ed)^k $. Thus, our assumption $edp \leq 1- 1/\sqrt{2}$ implies
	$\mathbb{E}[\vert E(\Gamma)\vert] \leq \sum_{k\geq 1} k(edp)^k \leq 2edp$,
	and hence
	\begin{equation}\label{pathcoupineq}
	\mathbb{E} \left[ d(X_{t+1}, X_{t+1}') \:\vert \: d(X_t, X_t') =1 \right] \leq 2edp.
	\end{equation}
	
	\quad Then the path coupling argument (see, e.g., \cite{lpwmcmt}) implies that (\ref{pathcoupineq}) can be extended to the case when $d(X_t,X_t')$ is arbitrary. Hence we deduce that
	\begin{equation*}
	\mathbb{E} \left[ d(X_{t}, X_{t}')\: \vert \: d(X_0, X_0')  \right] \leq d(X_0, X_0')(2edp)^t.
	\end{equation*}
	
	\noindent Finally, the basic coupling inequality and Markov's inequality imply that
	\begin{equation*}
	\max_{x_0} \| \mathbb{P}_{x_0} \left( X_t \in \cdot \right) - \pi \|_{\textnormal{\tiny TV}} \leq \mathbb{E}[d(X_t, X_t')] \leq n (2edp)^t,
	\end{equation*}
	which concludes the proof.
\end{proof}

\section{Information Percolation and Estimating the $L^2$ Distance}\label{secinfoperc}
In this section, we analyze the distance from stationarity measured by the $L^2$-norm. Detailed understanding on the $L^2$ distance will turn out to be crucial in establishing cutoff for the Swendsen-Wang dynamics. Indeed in \S\ref{secl1l2}, we will see that the total-variation ($L^1$) distance from stationarity can essentially be controlled by the $L^2$-distance of the chain defined on a smaller lattice $\mathbb{Z}_r^d$ from its stationarity.

\quad In the previous work \cite{ls13} on the Glauber dynamics for the Ising model, a similar $L^1$ to $L^2$ reduction technique is used to prove cutoff, and estimating the aforementioned $L^2$-distance is an important issue there as well. In \cite{ls13},  the condition of \textit{strong spatial mixing} is assumed, which provides a strong enough control on the $L^2$-distance based on the results from \cite{dsc96}.


\quad However in the case of the Swendsen-Wang dynamics, following the same program seems to be difficult (see Remark \ref{rmklogsob}), and hence we need a different approach. We implement the concept of \textit{information percolation} which is first introduced in \cite{ls15, ls16}. To be specific, we reveal the update sequence backwards in time to develop the \textit{history diagram}, whose purpose is to describe the information flow on the space-time slab. In this section, we explain how this framework is applied to the Swendsen-Wang dynamics and deduce an exponential decay of the $L^2$-distance.


\quad Throughout this section, the underlying graph $G=(V,E)$ will be a degree-$d$ \textit{transitive} graph on $n$ vertices. In other words, we will work with a $d$-regular graph with a nice symmetry such that for any two vertices $u$ and $v$ of $G$, there exists a graph isomorphism $f:G\rightarrow G$ that maps $u$ to $v$ and preserves the graph structure.

\subsection{Information percolation for the Swendsen-Wang dynamics}\label{secinfoperc1}

\quad Recall that the update sequence for the Swendsen-Wang dynamcis from time $0$ to $t_*$ is defined by $\mathfrak{H}_{t_{\star}} = \left\lbrace (\bar{\omega}_t, (c_{v,t})_{v \in V} , \mathcal{A}_{\bar{\omega}_t} )  \right\rbrace_{t=0}^{t_\star-1}$. According to Definition \ref{globalcoupling}, $\mathfrak{H}_{t_\star}$ and the initial condition determines the dynamics at time $t \leq t_\star$. We first introduce the notion of \textit{oblivious vertex} as follows.

\begin{definition}[Oblivious vertices] Given the update sequence $\mathfrak{H}_{t_\star} $ of the Swendsen-Wang dynamics from time $0$ to time $t_\star$, we say that $v$ is an \textbf{oblivious vertex} at time $t$ if $v$ is an isolated vertex in $(V, \bar{\omega}_t)$. Otherwise, $v$ is said to be \textbf{non-oblivious} at time $t$.
\end{definition}

\begin{remark}
	We have two simple observations on (non-)oblivious vertices as follows.
	\begin{enumerate}
		\item [1.] The term \textit{oblivious} comes from the following observation:
		If $u$ is an isolated vertex in $V,\bar{\omega}_t$ at time $t$, then $X_{t+1}(u)$ becomes independent of the initial state $X_0$. Hence, at $u$,
		it forgets all the information from the past when it proceeds to time $t+1$ from $t$.
		
		\item[2.] On the other hand, for a non-oblivious vertex $u$ at time $t$, we should also look at $X_t$ in order to determine $X_{t+1}(u)$. To be precise, if $C$ is the connected component of $(V,\omega_t)$ containing $u$, then $X_{t+1}(u)$ is determined by $C$, $(c_{v,t})_{v \in C}$ and $\mathcal{A}_{\bar{\omega}_t}$. If  $\bar{C}$ is the connected component of $(V, \bar{\omega}_t)$ containing $u$, then $C \subset \bar{C}$ and $C$ is determined by $\bar{\omega}_t$ and $X_{t}(\bar{C})$. Therefore, we conclude that $X_{t+1}(u)$ is possibly dependent on the colors of sites in $\bar{C}$ at time $t$, but independent on the colors of the rest.
	\end{enumerate}
\end{remark}




\quad Based on the observation, we can develop the \textit{history diagram}  of the Swendsen-Wang dynamics, which describes the information flow backwards in time. We will consider a space-time diagram on the underlying domain $V \times \{\frac{k}{2} : k=0,1,\ldots,2t_\star \}$. A layer at an integer time, say $t$, will describe the history at time $t$, and a layer at a half-integer time, say ${t + \frac{1}{2}}$, will contain the information of the edge configuaration $\bar{\omega_t}$.

\quad Let $\mathfrak{G}=(\mathcal{V},\mathcal{E})$ denote the graph with the vertex set $\mathcal{V} = V \times \{\frac{k}{2} : k=0,1,\ldots,2t_\star \}$ and the edge set which is defined as follows: $(u,t), \, (v,s) \in \mathcal{V} $ are adjacent if and only if we either have $(uv)\in E$ and $s=t$, or $u=v$ and $|t-s| = \frac{1}{2}$. In other words, the edges of $\mathfrak{G}$ are the nearest neighbors of $\mathcal{V}$.

\begin{definition}[The history diagram]\label{historydiag}
	Let $(X_t)_{0\leq t \leq t_\star}$ be the Swendsen-Wang dynamics on $G$, and suppose that
	$\mathfrak{H}_{t_{\star}} = \left\lbrace (\bar{\omega}_t, (c_{v,t})_{v \in V} , \mathcal{A}_{\bar{\omega}_t} )  \right\rbrace_{t=0}^{t_\star-1}$, the update sequence for $(X_t)$, is given. For each $v\in V$, the \textbf{history diagram }of $v$ (in short, the history of $v$) is the connected subgraph $\mathscr{H}_v$ of $\mathfrak{G}$ defined by the following recursive procedure that starts at  the vertex $(v,t_\star) $: \vspace{1mm}
	
	\begin{enumerate}
		\item[0.] $(v,t_\star)$  is the unique vertex of $\mathscr{H}_v$ at time $t_\star$.
		
		\item[1.] At time $t \in \mathbb{N}$, connect $(u,t)$ with $(u,t-\frac{1}{2}) $ with an edge for all $u$ such that $(u,t  ) \in \mathscr{H}_v$, and include $(u, t-\frac{1}{2})$ as a vertex of $\mathscr{H}_v $.
		
		\item[2.] Let $u$ be such that $( u,t-\frac{1}{2}) \in \mathscr{H}_v$, and let $C$ be the connected component of $(V,\bar{\omega}_{t-1})$ that contains $u$. If $u$ is non-oblivious at time $t-1$, then include every vertex and edge of $C$ in $\mathscr{H}_v$ as a vertex and an edge at time $t-\frac{1}{2}$, respectively. Note that edges of $C$ are given according to $\bar{\omega}_{t-1}$.) 
		
		\item[2$'$.] If $u$ satisfies $( u,t-\frac{1}{2}) \in \mathscr{H}_v$ but is oblivious at time $t-1 $, we do not take any more action for this vertex, i.e., we stop branching from $(u, t-\frac{1}{2})$. 
		
		\item[3.] For each $u$ such that $( u,t-\frac{1}{2}) \in \mathscr{H}_v$, connect $( u,t-\frac{1}{2})$ and $(u,t-1)$ with an edge if and only if $u$ is non-oblivious at time $t-1$.
		
		\item[4.] Return to step 1 with time set to be $t-1$ and repeat the process until there are no more vertices nor edges to be added to $\mathscr{H}_v$.\vspace{2mm}
	\end{enumerate}

	
	
	
	
	
	
\end{definition}

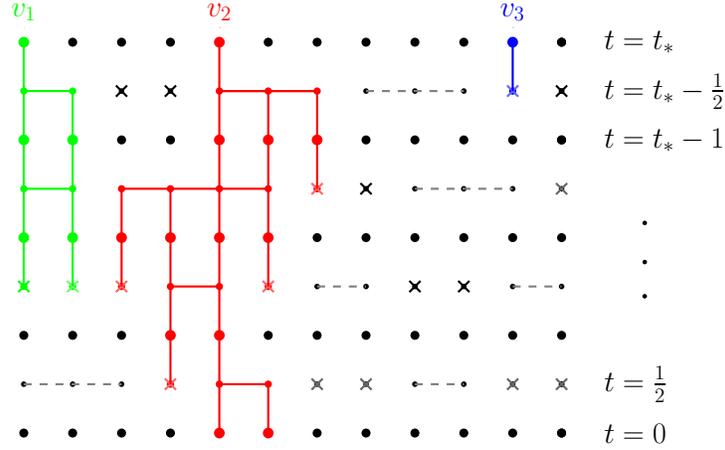
\begin{figure}
	\centering
	\begin{tikzpicture}[thick,scale=0.65, every node/.style={transform shape}]
	\foreach \x in {0,...,11}{
		\foreach \y in {0,...,3}{
			\filldraw[black] (\x,2*\y) circle (2pt);
			\filldraw[black] (\x,2*\y+1) circle (1pt);
		}
		\filldraw[black] (\x, 8) circle (2pt);
	}
	\foreach \x in {0,...,2}{
		
		\filldraw[green] (0,8-2*\x) circle (2.5pt);
		\filldraw[green] (0,7-2*\x) circle (1.5pt);
		
	}
	\filldraw[green] (1,6) circle (2.5pt);
	\filldraw[green] (1,4) circle (2.5pt);
	\filldraw[green] (1,7) circle (1.5pt);
	\filldraw[green] (1,5) circle (1.5pt);
	\filldraw[green] (1,3) circle (1.5pt);
	
	\filldraw[blue] (10,8) circle (2.5pt);
	\filldraw[black] (12.7,3.5) circle (0.7pt);
	\filldraw[black] (12.7,4.3) circle (0.7pt);
	\filldraw[black] (12.7,2.8) circle (0.7pt);
	\filldraw[black] (11,0) circle (2pt) node[black, anchor=west] {\LARGE{~~~~$t=0 $}};
	\filldraw[black] (11,1) circle (1pt) node[black, anchor=west] {\LARGE{~~~~$t=\frac{1}{2} $}};
	\filldraw[black] (11,6) circle (2pt) node[black, anchor=west] {\LARGE{~~~~$t=t_*-1 $}};
	\filldraw[black] (11,8) circle (2pt) node[black, anchor=west] {\LARGE{~~~~$t=t_*$}};
	\filldraw[blue] (10,7) circle (1.5pt);
	\filldraw[black] (11,7) circle (1pt) node[black, anchor=west] {\LARGE{~~~~$t=t_* - \frac{1}{2}$}};
	\foreach \x in {0,...,3}{
		\filldraw[red] (4,2*\x) circle (2.5pt);
		\filldraw[red] (4,2*\x+1) circle (1.5pt);
		
	}
	\filldraw[red] (4,8) circle (2.5pt);
	\filldraw[red] (5,7) circle (1.5pt);
	\filldraw[red] (6,7) circle (1.5pt);
	\filldraw[red] (6,6) circle (2.5pt);
	\filldraw[red] (6,5) circle (1.5pt);
	\filldraw[red] (5,5) circle (1.5pt);
	\filldraw[red] (5,4) circle (2.5pt);
	\filldraw[red] (5,3) circle (1.5pt);
	\filldraw[red] (3,3) circle (1.5pt);
	\filldraw[red] (3,2) circle (2.5pt);
	\filldraw[red] (3,4) circle (2.5pt);
	\filldraw[red] (3,1) circle (1.5pt);
	\filldraw[red] (3,5) circle (1.5pt);
	\filldraw[red] (5,0) circle (2.5pt);
	\filldraw[red] (5,1) circle (1.5pt);
	\filldraw[red] (2,3) circle (1.5pt);
	\filldraw[red] (2,4) circle (2.5pt);
	\filldraw[red] (2,5) circle (1.5pt);
	
	\draw (1,3) node[cross=3pt,green!60]{};
	\draw (3,1) node[cross=3pt,red!60]{};
	\draw (6,1) node[cross=3pt,black!60]{};
	\draw (7,1) node[cross=3pt,black!60]{};
	\draw (10,1) node[cross=3pt,black!60]{};
	\draw (11,5) node[cross=3pt,black!60]{};
	\draw (2,3) node[cross=3pt,red!60]{};
	\draw (5,3) node[cross=3pt,red!60]{};
	\draw (6,5) node[cross=3pt,red!60]{};
	\draw (0,3) node[cross=3pt,green]{};
	\draw (7,5) node[cross=3pt,black]{};
	\draw (8,3) node[cross=3pt,black]{};
	
	\draw (3,7) node[cross=3pt,black]{};
	\draw (2,7) node[cross=3pt,black]{};
	\draw (10,7) node[cross=3pt,blue!60]{};
	\draw (11,7) node[cross=3pt,black]{};
	\draw[thick, red] (5,3) -- (5,5);
	\draw[thick, blue] (10,7) -- (10,8);
	\draw[thick, red] (4,0) -- (4,8);
	\draw[thick, red] (6,5) -- (6,7);
	\draw[thick, red] (4,7) -- (6,7);
	\draw[thick, red] (4,7) -- (6,7);
	\draw[thick, red] (2,5) -- (5,5);
	\draw[thick, red] (2,5) -- (2,3);
	\draw[thick, red] (4,1) -- (5,1);
	\draw[thick, red] (5,0) -- (5,1);
	\draw[thick, red] (3,3) -- (4,3);
	
	\draw[dashed, gray] (7,7) -- (9,7);
	\draw[thick, green] (0,3) -- (0,8);
	\draw[thick, green] (1,3) -- (1,7);
	\draw[thick, green] (0,5) -- (1,5);
	\draw[thick, green] (0,7) -- (1,7);
	\draw[dashed, gray] (8,5) -- (10,5);
	
	\draw[dashed, gray] (0,1) -- (2,1);
	\draw[dashed, gray] (8,1) -- (9,1);
	\draw[dashed, gray] (6,3) -- (7,3);
	\draw[dashed, gray] (10,3) -- (11,3);
	\filldraw[green] (0,8.3) circle (0pt) node[green, anchor=south] {\LARGE{$v_1 $}};
	\filldraw[red] (4,8.3) circle (0pt) node[red, anchor=south] {\LARGE{$v_2 $}};
	\filldraw[blue] (10,8.3) circle (0pt) node[blue, anchor=south] {\LARGE{$v_3 $}};
	
	\draw[thick, red] (3,1) -- (3,5);
	\draw[thick, red] (5,5) -- (5,7);
	\filldraw[red] (5,6) circle (2.5pt);
	\draw (9,3) node[cross=3pt,black]{};
	\draw (11,1) node[cross=3pt,black!60]{};
	\end{tikzpicture}
	\caption{An example of the history diagram of the Swendsen-Wang dynamics on $\{1,2,\ldots,12\}$ until time $t_\star = 4.$ The figure illustrates the history diagrams of $v_1=1, \; v_2=5$ and $v_3=11$. The  crossed-out points at a half-integer time $t-\frac{1}{2}$ denote the oblivious vertices, and the dashed-lines describe the edges of $\bar{\omega}_{t-1}$ which are not included in the history. History diagrams of different vertices are distinguished by different colors.} \label{fig1}
\end{figure}


\quad We now introduce several notations on the history diagram as follows:

\begin{itemize}
	\item For any subset $A\subset V$, the history of $A$ is defined by $\mathscr{H}_A = \cup_{v \in A} \mathscr{H}_v $.
	
	\item For each $t\in \{0,\frac{1}{2},\,1,\ldots,t_\star - \frac{1}{2},\,t_\star \}$, we define $\mathscr{H}_v (t) = \mathscr{H}_v \cap (V \times \{t\})$. For convenience, we will regard $\mathscr{H}_v (t)$ as a subset of $V$, since they all have the same time element $t$.
	
	\item For a subset $A \subset V$ and time $t$, we define $\mathscr{H}_A (t) = \mathscr{H}_A \cap (V \times \{t\})$. As above, we will consider $\mathscr{H}_A (t)$ as a subset of $V$.
	
\end{itemize}

\quad Now we are ready to derive a new graph structure on $V$, which will lead us to the definition of the \textit{information percolation clusters}:

\begin{definition}\label{graphstruc}
	For any $u,v \in V$, we write $u \sim_i v$ if and only if $\mathscr{H}_u (s) \cap \mathscr{H}_v(s) \neq \emptyset $ for some half-integer $s \leq t_\star$.
\end{definition}
Note that we only check the intersections at half-integer times, since $\mathscr{H}_u (t) \cap \mathscr{H}_v(t) \neq \emptyset $ implies $\mathscr{H}_u (t+\frac{1}{2}) \cap \mathscr{H}_v(t+\frac{1}{2}) \neq \emptyset $ for an integer $t$.

\begin{definition}[Information percolation clusters]\label{ipcluster}
	Let $(V, \sim_i)$ be the graph with edges induced by the relation $\sim_i$. The connected components of this graph are called the \textbf{information percolation clusters}. Moreover, information percolation clusters are classified into three types. For each information percolation cluster $C \subset V$,
	
	\begin{enumerate}
		\item  $C$ is marked $\textsc{Red}$ if $\mathscr{H}_C(0) \neq \emptyset$.
		
		\item $C$ is marked $\textsc{Blue}$ if $\mathscr{H}_C (0) = \emptyset$ and $|C|=1$.
		
		\item $C$ is marked $\textsc{Green}$ if otherwise, i.e., $\mathscr{H}_C (0) = \emptyset$ and $|C| \geq 2$.
	\end{enumerate}
	
\end{definition}

\quad Let us introduce some notations for the information percolation clusters. For a given history diagram $\{ \mathscr{H}_v : v\in V \}$, let $\mathcal{C}_{\mathcal{R}}$ denote the collection of red clusters, and let $V_{\mathcal{R}}$ be the union of red clusters. We define $\mathcal{C}_{\mathcal{B}}$, $V_{\mathcal{B}}$, $\mathcal{C}_{\mathcal{G}}$ and $V_{\mathcal{G}}$ analogously for blue and green clusters, respectively. We also write $\mathscr{H}_{\mathcal{R}}$ for $\mathscr{H}_{V_{\mathcal{R}}}$ for convenience (and similarly for blue and green). 


\begin{remark}\label{bluermk}
	The only possible case of a vertex $v \in V$ being $\{v\} \in \mathcal{C}_{\mathcal{B}}$ is when it is an isolated vertex in the graph $(V, \bar{\omega}_{t_\star -1})$. If there was another vertex $u$ that belongs to the same connected component as $v$ in the graph $(V, \bar{\omega}_{t_\star -1})$, then one can observe that $v \sim_i u$, hence implying that the size of the information percolation cluster containing $v$ is at least $2$.
\end{remark}

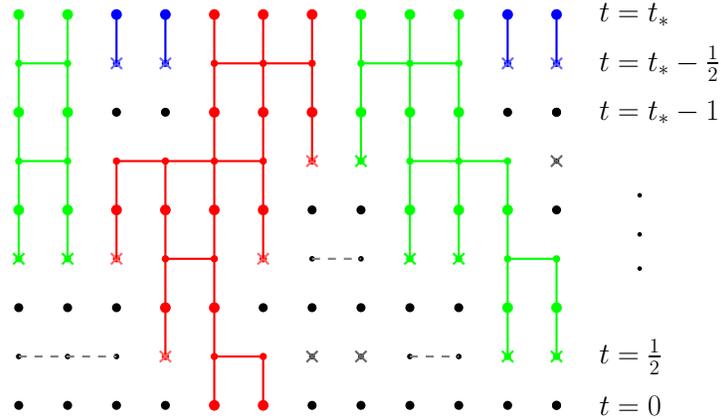
\begin{figure}
	\centering
	\begin{tikzpicture}[thick,scale=0.65, every node/.style={transform shape}]
	\foreach \x in {0,...,11}{
		\foreach \y in {0,...,3}{
			\filldraw[black] (\x,2*\y) circle (2pt);
			\filldraw[black] (\x,2*\y+1) circle (1pt);
		}
		\filldraw[black] (\x, 8) circle (2pt);
	}
	\foreach \x in {0,...,2}{
		
		\filldraw[green] (0,8-2*\x) circle (2.5pt);
		\filldraw[green] (0,7-2*\x) circle (1.5pt);
		\filldraw[green] (8,8-2*\x) circle (2.5pt);
		\filldraw[green] (8,7-2*\x) circle (1.5pt);
	}
	\filldraw[green] (1,8) circle (2.5pt);
	\filldraw[green] (1,7) circle (1.5pt);
	\filldraw[green] (1,5) circle (1.5pt);
	\filldraw[green] (7,8) circle (2.5pt);
	\filldraw[green] (7,7) circle (1.5pt);
	\filldraw[green] (7,6) circle (2.5pt);
	\filldraw[green] (7,5) circle (1.5pt);
	\filldraw[green] (9,8) circle (2.5pt);
	\filldraw[green] (1,4) circle (2.5pt);	
	\filldraw[green] (1,6) circle (2.5pt);	
	\filldraw[green] (9,7) circle (1.5pt);
	\filldraw[green] (1,3) circle (1.5pt);
	\filldraw[green] (9,5) circle (1.5pt);
	\filldraw[green] (10,3) circle (1.5pt);
	\filldraw[green] (10,4) circle (2.5pt);
	\filldraw[green] (10,5) circle (1.5pt);
	
	\filldraw[blue] (2,8) circle (2.5pt);
	\filldraw[blue] (3,8) circle (2.5pt);
	\filldraw[blue] (2,7) circle (1.5pt);
	\filldraw[blue] (3,7) circle (1.5pt);
	\filldraw[blue] (10,8) circle (2.5pt);
	\filldraw[black] (12.7,3.5) circle (0.7pt);
	\filldraw[black] (12.7,4.3) circle (0.7pt);
	\filldraw[black] (12.7,2.8) circle (0.7pt);
	\filldraw[black] (11,0) circle (2pt) node[black, anchor=west] {\LARGE{~~~~$t=0 $}};
	\filldraw[black] (11,1) circle (1pt) node[black, anchor=west] {\LARGE{~~~~$t=\frac{1}{2} $}};
	\filldraw[black] (11,6) circle (2pt) node[black, anchor=west] {\LARGE{~~~~$t=t_*-1 $}};
	\filldraw[blue] (11,8) circle (2.5pt) node[black, anchor=west] {\LARGE{~~~~$t=t_*$}};
	\filldraw[blue] (10,7) circle (1.5pt);
	\filldraw[blue] (11,7) circle (1.5pt) node[black, anchor=west] {\LARGE{~~~~$t=t_* - \frac{1}{2}$}};
	\foreach \x in {0,...,3}{
		\filldraw[red] (4,2*\x) circle (2.5pt);
		\filldraw[red] (4,2*\x+1) circle (1.5pt);
		
	}
	\filldraw[red] (4,8) circle (2.5pt);
	\filldraw[red] (5,8) circle (2.5pt);
	\filldraw[red] (6,8) circle (2.5pt);
	\filldraw[red] (5,7) circle (1.5pt);
	\filldraw[red] (6,7) circle (1.5pt);
	\filldraw[red] (6,6) circle (2.5pt);
	\filldraw[red] (6,5) circle (1.5pt);
	\filldraw[red] (5,5) circle (1.5pt);
	\filldraw[red] (5,4) circle (2.5pt);
	\filldraw[red] (5,3) circle (1.5pt);
	\filldraw[red] (3,3) circle (1.5pt);
	\filldraw[red] (3,1) circle (1.5pt);
	\filldraw[red] (3,5) circle (1.5pt);
	\filldraw[red] (5,0) circle (2.5pt);
	\filldraw[red] (3,2) circle (2.5pt);
	\filldraw[red] (3,4) circle (2.5pt);
	\filldraw[red] (5,1) circle (1.5pt);
	\filldraw[red] (2,3) circle (1.5pt);
	\filldraw[red] (2,4) circle (2.5pt);
	\filldraw[red] (2,5) circle (1.5pt);
	
	\draw (6,1) node[cross=3pt,black!60]{};
	\draw (7,1) node[cross=3pt,black!60]{};
	\draw (10,1) node[cross=3pt,green]{};
	\draw (11,5) node[cross=3pt,black!60]{};
	\draw (2,3) node[cross=3pt,red!60]{};
	\draw (5,3) node[cross=3pt,red!60]{};
	\draw (6,5) node[cross=3pt,red!60]{};
	\draw (0,3) node[cross=3pt,green]{};
	\draw (1,3) node[cross=3pt,green]{};
	\draw (7,5) node[cross=3pt,green]{};
	\draw (8,3) node[cross=3pt,green]{};
	
	\draw (3,7) node[cross=3pt,blue!60]{};
	\draw (2,7) node[cross=3pt,blue!60]{};
	\draw (10,7) node[cross=3pt,blue!60]{};
	\draw (11,7) node[cross=3pt,blue!60]{};
	\draw[thick, red] (5,3) -- (5,5);
	\draw[thick, blue] (2,8) -- (2,7);
	\draw[thick, blue] (3,8) -- (3,7);
	\draw[thick, blue] (10,7) -- (10,8);
	\draw[thick, blue] (11,7) -- (11,8);
	\draw[thick, red] (4,0) -- (4,8);
	\draw[thick, red] (5,7) -- (5,8);
	\draw[thick, red] (6,5) -- (6,8);
	\draw[thick, red] (4,7) -- (6,7);
	\draw[thick, red] (2,5) -- (5,5);
	\draw[thick, red] (2,5) -- (2,3);
	\draw[thick, red] (4,1) -- (5,1);
	\draw[thick, red] (5,0) -- (5,1);
	\draw[thick, red] (3,3) -- (4,3);
	
	\draw[thick, green] (10,1) -- (10,3);
	\draw[thick, green] (11,1) -- (11,3);
	\draw[thick, green] (7,7) -- (9,7);
	\draw[thick, green] (0,3) -- (0,8);
	\draw[thick, green] (0,5) -- (1,5);
	\draw[thick, green] (0,7) -- (1,7);
	\draw[thick, green] (7,8) -- (7,5);
	\draw[thick, green] (1,3) -- (1,7);
	\draw[thick, green] (8,8) -- (8,3);
	\draw[thick, green] (9,8) -- (9,7);
	\draw[thick, green] (8,5) -- (10,5);
	\draw[thick, green] (10,5) -- (10,3);
	\draw[thick, green] (1,8) -- (1,7);
	\draw[thick, green] (9,3) -- (9,7);
	
	\draw[dashed, gray] (0,1) -- (2,1);
	\draw[dashed, gray] (8,1) -- (9,1);
	\draw[dashed, gray] (6,3) -- (7,3);
	\draw[thick, green] (10,3) -- (11,3);
	
	\draw[thick, red] (3,1) -- (3,5);
	\draw[thick, red] (5,5) -- (5,7);
	\filldraw[red] (5,6) circle (2.5pt);
	\filldraw[green] (9,4) circle (2.5pt);
	\filldraw[green] (9,6) circle (2.5pt);	
	\filldraw[green] (9,3) circle (1.5pt);
	\filldraw[green] (10,2) circle (2.5pt);
	\filldraw[green] (11,2) circle (2.5pt);
	\filldraw[green] (10,1) circle (1.5pt);
	\filldraw[green] (11,1) circle (1.5pt);
	\filldraw[green] (11,3) circle (1.5pt);
	\draw (9,3) node[cross=3pt,green]{};
	\draw (11,1) node[cross=3pt,green]{};
	\draw (3,1) node[cross=3pt,red!60]{};
	
	\end{tikzpicture}
	\caption{An example of the history diagram of the Swendsen-Wang dynamics on $\{1,2,\ldots,12\}$ until time $t_* = 4.$ The crossed-out points at half-integer times denote the oblivious vertices. The dashed horizontal edges indicate the edges of $\bar{\omega}_t$ which are not included in the history. Colors are drawn with respect to the colors of the information percolation clusters that contain the vertices at the top.} \label{fig2}
\end{figure}

\quad Note that for a subset $A \subset V$, $\mathscr{H}_A (0) = \emptyset$ implies that the recursive procedure described in the Definition \ref{historydiag} terminates before reaching at time $0$. Therefore if an information percolation cluster $C$ satisfies $C\in \mathcal{C}_{\mathcal{B}} \cup \mathcal{C}_{\mathcal{G}}$ , then $X_{t_\star}(C)$ is independent of the starting configuration $X_0$. In particular, if $C\in \mathcal{C}_{\mathcal{B}}$, then the cluster itself is equal to a single vertex $v$ and $X_{t_\star}(v) $ is distributed according to Unif$\{1,\ldots,q\}$. 

\quad On the other hand, while $X_{t_\star}(C)$ is independent of $X_0$ when $C$ is a green cluster, configuration on $C$ can have a highly non-trivial distribution due to the dependencies between the intersecting update histories. It is these green clusters that contain the complicated structure of the Potts measure. 
In order to avoid this complication, we adopt the following strategy:
\vspace{2mm}

\noindent \textbf{Strategy 1}. Condition on the histories of the green clusters and study the remaining red and blue clusters. \vspace{1mm}

\quad By conditioning on the histories of the green clusters, each remaining vertex is either a blue singleton or a member of a red cluster. Since the law of the blue singletons are i.i.d. uniform distributions on $\{1,\ldots,q\}$, this approach turns our focus solely to the red clusters.

\quad Red clusters encode the information of possible dependency of $X_{t_\star}$ on the starting configuration. For instance, if $ \mathcal{C}_{\mathcal{R}} = \emptyset$, then $X_{t_\star} \sim \pi$; since $X_{t_\star}$ is independent of $X_0$, we would have the same configuration at time $t_\star$ even if we start with $X_0 \sim \pi$.

\quad Therefore at the point when $\mathbb{P}_{x_0} ( X_{t_\star} \in \cdot) $ is close to $\pi$, it is fair to expect the size of $V_{\mathcal{R}}$ to be small. In fact, it turns out that conditioned on $\mathscr{H}_{\mathcal{G}}$, the $L^2$-distance from $X_{t_\star}$ to stationarity can be controlled by the size of $V_{\mathcal{R}}$. \vspace{2mm}

\noindent \textbf{Strategy 2}. Estimate the size of the red clusters.\vspace{1mm}

\quad The key step in the analysis of the red clusters is controlling a conditional probability that $A \in \mathcal{C}_{\mathcal{R}}$, in which we condition not only on $\mathscr{H}_{\mathcal{G}}$, but on the entire histories outside of $A$, and that $A$ itself is either a full red cluster or a union of blue singletons. To write it formally, for any subset $A\subset V$ let $\mathscr{H}_A^- := \bigcup \{\mathscr{H}_v : v\in V \setminus A \}~ = \mathscr{H}_{V\setminus A}  $, and define
\begin{equation}\label{psidef}
\Psi_A := \sup_{\mathscr{H}_A^-: \: \mathscr{H}_A^- \in \mathscr{H}_{com}(A) } \mathbb{P} \left( A \in \mathcal{C}_{\mathcal{R}} \mid \mathscr{H}_A^-, \mspace{5mu} \{A \in \mathcal{C}_{\mathcal{R}  }\} \cup \{A \subset V_{\mathcal{B}} \} \right),
\end{equation}
where $\mathscr{H}_A^- \in \mathscr{H}_{com}(A)$ is the shorthand notation for $\mathscr{H}_A^- \cap \,(A \times \{t_\star-\frac{1}{2} \}) =\emptyset  $, which imposes a compatibility condition on $\mathscr{H}_A^-$. This is introduced to prevent $\{A \in \mathcal{C}_{\mathcal{R}}\} \cup \{A \subset V_{\mathcal{B}} \}  $ from being an empty event. Note that aiming to estimating the probability of $\{A \in \mathcal{C}_{\mathcal{R}}\} $, we may require that $\mathscr{H}_A$ must not intersect $\mathscr{H}_A^-$, since otherwise $\{A \in \mathcal{C}_{\mathcal{R}} \}$ is an empty event.

\begin{remark}\label{compatrmk}
	\textnormal{It is worth noting that when conditioning on the collective history of green clusters,  $\mathscr{H}_{\mathcal{G}}$ should always satisfy the condition $\mathscr{H}_{\mathcal{G}} \in \mathscr{H}_{com} (V \setminus V_{\mathcal{G}}) $. Indeed, the green histories must be disjoint from the histories of blue and red clusters. In what follows, therefore, we automatically impose $\mathscr{H}_{\mathcal{G}} \in \mathscr{H}_{com} (V \setminus V_{\mathcal{G}}) $ to hold whenever we condition on $\mathscr{H}_{\mathcal{G}}$. However, we refuse to write this notation explicitly for the sake of the simplicity of notation.}
\end{remark}
An important bound on $\Psi_A$ is described by the following lemma which will be proven in \S\ref{secinfoperc3}.

\begin{lemma}\label{psilem}
	Let $A\neq \emptyset$ be an arbitrary subset of $V$. For any $\theta >0$, there exist constants $M=M(\theta)$ and $p_0 = p_0 (\theta, d)$ such that for any $p < p_0$,
	\begin{equation*}
	\Psi_A \leq M (3edp)^{t_\star -\frac{1}{2}} e^{-\theta \mspace{3mu}\mathfrak{M}(A)},
	\end{equation*}
	where $\mathfrak{M}(A)$ is the size of the smallest connected subgraph containing $A$.
\end{lemma}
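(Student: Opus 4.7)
The plan is to estimate the numerator and denominator of
\[
\Psi_A \leq \frac{\mathbb{P}(A \in \mathcal{C}_{\mathcal{R}} \mid \mathscr{H}_A^-)}{\mathbb{P}(A \subset V_{\mathcal{B}} \mid \mathscr{H}_A^-)}
\]
separately. The denominator bound is straightforward: the event $\{A \subset V_{\mathcal{B}}\}$ is implied by ``every $v \in A$ is isolated in $\bar{\omega}_{t_\star - 1}$,'' because then, using the compatibility condition $\mathscr{H}_A^- \in \mathscr{H}_{com}(A)$ to rule out collisions at the half-integer level just below $t_\star$, each $v \in A$ becomes a singleton blue cluster. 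Conditioning on $\mathscr{H}_A^-$ only reveals some edges of $\bar{\omega}_{t_\star-1}$ adjacent to $A$ as closed while leaving the remaining ones independent Bernoulli$(p)$, which yields $\mathbb{P}(A \subset V_{\mathcal{B}} \mid \mathscr{H}_A^-) \geq (1-p)^{d|A|}$.

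The upper bound on $\mathbb{P}(A \in \mathcal{C}_{\mathcal{R}} \mid \mathscr{H}_A^-)$ uses a witness-tree argument. If $A \in \mathcal{C}_{\mathcal{R}}$ then $\mathscr{H}_A$ is a connected subgraph of $\mathfrak{G}$ containing $A \times \{t_\star\}$ and reaching time $0$; fix any spanning tree $T$ of such a subgraph. The key structural fact is that the spatial projection of $T$ onto $V$ is a connected subgraph of $G$ containing $A$---since every path in $T$ induces a walk on $G$ through its horizontal edges---and hence has at least $\mathfrak{M}(A)$ vertices, forcing $T$ to contain at least $h_T \geq \mathfrak{M}(A) - 1$ horizontal edges. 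The temporal depth separately forces at least $2 t_\star$ vertical edges in $T$. Moreover, the probability $T \subset \mathscr{H}_A$ is at most $p^{h_T}(dp)^{b_T}$, where $b_T$ counts the vertical edges $(u, t - 1/2) \to (u, t - 1)$ of $T$ whose top vertex has no companion horizontal edge at level $t - 1/2$ (so that the non-obliviousness of $u$ in $\bar{\omega}_{t-1}$ must be witnessed by an extra open edge, at cost $\leq dp$). Summing over all witness trees using the standard counting bound $(c(d+2))^N$ on subtrees of $\mathfrak{G}$ of size $N$ containing a fixed vertex set, and splitting the resulting sum into its spatial and temporal contributions, yields
\[
\mathbb{P}(A \in \mathcal{C}_{\mathcal{R}} \mid \mathscr{H}_A^-) \leq (3edp)^{t_\star - 1/2}(C_1 dp)^{\mathfrak{M}(A) - 1}.
\]
Combining with the denominator bound and absorbing $(1-p)^{-d|A|} \leq e^{2dp\,\mathfrak{M}(A)}$ into the spatial factor, one chooses $p_0 = p_0(\theta, d)$ small enough that $(C_1 dp)^{\mathfrak{M}(A) - 1} e^{2dp\,\mathfrak{M}(A)} \leq M(\theta) e^{-\theta \mathfrak{M}(A)}$, giving the desired bound.

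The hardest step, in my view, is the combinatorial bookkeeping in the witness-tree sum: one must decouple the spatial and temporal contributions cleanly, and identify the precise branching constant $3e$, which should emerge from a cover-tree computation on $\mathfrak{G}$ (of maximal degree $d+2$), analogous to the argument in the proof of Lemma \ref{gapofswd}. A secondary subtlety is confirming that conditioning on $\mathscr{H}_A^-$---which a priori reveals some closed edges in $\bar{\omega}_{t_\star - 1}$---does not inflate the numerator estimate; this uses that the unrevealed percolation edges remain independent Bernoulli$(p)$.
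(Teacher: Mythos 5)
Your overall frame (lower-bound the denominator by $(1-p)^{d|A|}$, upper-bound the numerator by a union bound over witness structures, then trade the spatial factor for $e^{-\theta\mathfrak{M}(A)}$ by shrinking $p_0(\theta,d)$) is sound in outline, and your structural observations — the projection of a spanning tree of $\mathscr{H}_A$ is connected and contains $A$, forcing at least $\mathfrak{M}(A)-1$ horizontal edges, while reaching time $0$ forces at least $2t_\star$ vertical edges, and the conditioning on $\mathscr{H}_A^-$ can be discarded by independence of the unrevealed updates — all match the paper. But there are two genuine gaps. First, the per-tree estimate $\mathbb{P}(T\subset\mathscr{H}_A)\le p^{h_T}(dp)^{b_T}$ is false as stated: the non-obliviousness events attached to two unaccompanied descending vertical edges whose top vertices $u\sim u'$ are adjacent in $G$ can be witnessed by the single shared edge $(uu')$ of $\bar{\omega}_{t-1}$, so their joint probability is at least $p$, not $(dp)^2$; these are increasing events, so independence/BK does not give the product bound directly. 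This is repairable (e.g.\ by decorating each unaccompanied vertical with an explicit witness edge and re-counting, or by noting the shared witness is itself a horizontal edge of $\mathscr{H}_A$ and enlarging the witness graph), but as written the central probability estimate does not hold.

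Second, and more seriously, the statement you must prove has the specific base $3edp$ raised to the power $t_\star-\frac12$, and your scheme cannot deliver it: with the crude count of subtrees of $\mathfrak{G}$ of size $N$ by $(c(d+2))^N$, every unit of time forces at least two vertical tree edges, each carrying a counting factor of order $d$, so the per-time-step base you obtain is of order $d^2p$ or $d^3p$ rather than $3edp$. Since this base is exponentiated in $t_\star$, the discrepancy cannot be absorbed into $M(\theta)$ or into a smaller $p_0$, so even a repaired version of your argument proves a strictly weaker inequality than the lemma as stated; your remark that the constant ``should emerge from a cover-tree computation'' is exactly the step you have not supplied. The paper avoids this by never counting trees at all: it dominates the generation sizes $W_t=|\mathscr{H}_A(t_\star-t)|$ by a Galton--Watson process with offspring law $r(k+1)=(edp)^k$ (mean at most $3edp$), introduces the last level $T$ at which the history has size two, bounds the survival from level $T$ to time $0$ by $2(3edp)^T$ via Markov's inequality, and pays for the remaining levels and for the spatial spread $\mathfrak{M}(A)$ simultaneously through a two-parameter Chernoff bound (the $e^{\lambda}$ and $(pC)^{-1}$ bookkeeping in (\ref{mainlemcomputation2})--(\ref{mainlemcomputation3})). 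If you want to keep a witness-type argument, you would need an encoding fine enough to reproduce this per-level branching count, which essentially amounts to redoing the branching-process computation.
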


\quad For intuition, recall that if $A$ is a red cluster, then the histories $\{ \mathscr{H}_v : v\in A \}$ are all connected and at least one of them survives until time $0$. It turns out that the term $(3edp)^{t_\star -\frac{1}{2}} $ bounds the probability of $\mathscr{H}_v (0) \neq \emptyset$. The last term $e^{-\theta \mspace{4mu} \mathfrak{M}(A)}$ comes from the observation that the histories $\{ \mathscr{H}_v : v\in A \}$ must be spatially connected, thus the projection of the history diagram on $V$ is a connected subgraph containing $A$ (whose size is at least $\mathfrak{M}(A)$).


\subsection{Estimating the $L^2$ distance}\label{secinfoperc2} The goal of this section is to prove the following theorem that estimates the $L^2$ distance from stationarity of the Swendsen-Wang dynamics.

\begin{theorem}\label{l2thm}
	Let $d\geq 2$ and $q \geq2$ be fixed integers, $G = (V,E)$ be a degree-$d$ transitive graph on $n$ vertices, and let $X_t$ be the Swendsen-Wang dynamics for the $q$-state Potts model on $G$.  Then there exists a positive constant $p_0 = p_0 (d)$ and $C=C(d,p)$ such that for any $p <p_0$ the following inequality holds true for any large enough $n$ and $ t \geq C \log n$ :
	\begin{equation}\label{l2ineq}
	\max_{x_0} \| \mathbb{P}_{x_0} ( X_t \in \cdot) - \pi \|_{L^2(\pi)} \leq 2 \exp \left( - \log \left( \frac{1}{1-\gamma} \right) \left( t- C \log n \right)  \right).
	\end{equation}
\end{theorem}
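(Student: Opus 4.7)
The plan is to split the proof into two stages. First, use the information percolation machinery developed in this section to establish a constant-order $L^2$ bound at a burn-in time $t_0 := C(d,p)\log n$. Second, use the $L^2$-contraction by the spectral gap to promote this bound to the exponential-decay rate $(1-\gamma)^{t-t_0}$ for all $t \geq t_0$. The two-step approach is essential: a purely spectral-gap bound carries a prefactor $1/\sqrt{\pi_{\min}}$ that is exponentially large in $n$, but after $C \log n$ steps of information-percolation burn-in the distance is already $O(1)$, so the contraction can be run at the sharp rate for the remaining time.

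For Stage 1, fix $t_\star = t_0$ and construct the update sequence $\mathfrak{H}_{t_\star}$ and the history diagram. Following Strategy 1, condition on $\mathscr{H}_{\mathcal{G}}$: the configuration of $X_{t_\star}$ on $V_{\mathcal{B}}$ is then i.i.d.\ uniform on $\{1,\ldots,q\}$ and $X_{t_\star}(V_{\mathcal{G}})$ is determined by $\mathscr{H}_{\mathcal{G}}$ alone, so the only dependence of $\mathbb{P}_{x_0}(X_{t_\star}=y)$ on $x_0$ flows through the red clusters. Combining this with the stationarity identity $\pi(y) = \sum_\sigma \pi(\sigma)\mathbb{P}_\sigma(X_{t_\star}=y)$ and conditioning successively on the history outside each candidate red cluster $A$ (as encoded in the definition of $\Psi_A$), a standard information-percolation computation reduces the $L^2$ distance to a bound of the form
\begin{equation*}
\max_{x_0}\|\mathbb{P}_{x_0}(X_{t_\star}\in\cdot) - \pi\|_{L^2(\pi)}^2 \;\leq\; C_q \sum_{\emptyset\neq A\subset V} q^{|A|}\,\Psi_A.
\end{equation*}
Inserting Lemma \ref{psilem} with a parameter $\theta$ chosen larger than $\log((1+q)ed)$, using the transitivity of $G$ to reduce to a vertex-rooted sum producing a factor of $n$, and estimating the number of connected subgraphs of size $k$ rooted at a vertex by $(ed)^k$, the combinatorial sum converges and the bound becomes $C_1(d,p)\,n\,(3edp)^{t_\star}$. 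Taking $C$ large enough that this is at most $4$ gives $\|\mathbb{P}_{x_0}(X_{t_0}\in\cdot)-\pi\|_{L^2(\pi)}\leq 2$ uniformly in $x_0$.

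For Stage 2, the Swendsen-Wang dynamics is reversible with respect to $\pi$, so the spectral decomposition of its transition operator on $L^2(\pi)$ yields the standard contraction $\|\mu P^s - \pi\|_{L^2(\pi)} \leq (1-\gamma)^s \|\mu - \pi\|_{L^2(\pi)}$ for every probability measure $\mu$. Applying this with $\mu = \mathbb{P}_{x_0}(X_{t_0}\in\cdot)$ and $s = t-t_0$ for $t \geq t_0$, together with the Stage 1 bound, yields $\|\mathbb{P}_{x_0}(X_t\in\cdot)-\pi\|_{L^2(\pi)} \leq 2(1-\gamma)^{t-C\log n}$, which is precisely \eqref{l2ineq}.

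The hardest step will be establishing the core reduction $\|\mathbb{P}_{x_0}(X_{t_\star}\in\cdot) - \pi\|_{L^2(\pi)}^2 \leq C_q \sum_A q^{|A|}\Psi_A$ in Stage 1. Unlike the monotone Glauber Ising setting, the Swendsen-Wang chain has no coupling monotonicity, so the dependence of $X_{t_\star}$ on $x_0$ within a red cluster must be unwound through a non-trivial recursion along the cluster's history; a clean product-form bound only emerges after carefully exploiting the conditional independence of distinct red clusters given $\mathscr{H}_{\mathcal{G}}$ and the compatibility constraint on $\mathscr{H}_A^-$ (see Remark \ref{compatrmk}). Secondary technical points include tracking the $q^{|A|}$ factors arising in the ratio $\mathbb{P}_{x_0}(X_{t_\star}=y)/\pi(y)$ and tuning $\theta$ above the combinatorial threshold $\log((1+q)ed)$, which in turn calibrates the constants $M$, $p_0$, and ultimately $C$ in the final bound.
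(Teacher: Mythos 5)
Your proposal follows essentially the same route as the paper: an information-percolation burn-in giving $\max_{x_0}\|\mathbb{P}_{x_0}(X_{t_0}\in\cdot)-\pi\|_{L^2(\pi)}\le 2$ at $t_0=C\log n$ (conditioning on the green clusters, Lemma \ref{psilem} with a suitably large $\theta$, transitivity producing the factor $n$, and the $(ed)^k$ count of rooted connected sets), followed by the reversible contraction $\|P_t-\pi\|_{2\to 2}\le(1-\gamma)^t$ — which is precisely the paper's Proposition \ref{l2prop} combined with the semigroup step \eqref{scl2ineq}. The only (harmless) discrepancy is your intermediate bound $C_q\sum_A q^{|A|}\Psi_A$, linear in the $\Psi_A$'s: the paper fills in the step you flag as hardest via Lemma \ref{jensen}, the comparison of the conditioned stationary measure with the uniform measure (Lemma \ref{uniflem}, whose Claim \ref{boundeta} already uses $t\ge C\log n$), the Miller--Peres Lemma \ref{mplem}, and the independent-indicator domination of Corollary \ref{couplingcor}, arriving at the quadratic form $2\exp\bigl(n\bigl(\sum_{A\ni v}q^{|A|}\Psi_A\bigr)^2\bigr)-1$; either form is $1+o(1)$ at $t_0$, so your plan goes through.
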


\begin{remark}\label{rmklogsob}
	\textnormal{Diaconis and Saloff-Coste proves an inequality that is essentially the same as (\ref{l2ineq}) when a Markov chain has log-Sobolev constant that is bounded uniformly in the size of the system (\cite{dsc96}, Theorem 3.7). Therefore in \cite{ls13},  Theorem \ref{l2thm} for the single-site Glauber dynamics comes for free as the ``strong spatial mixing'' implies the log-Sobolev constant being bounded uniformly.  In our case, we prove Theorem \ref{l2thm} based on the information percolation framework rather than bounding the log-Sobolev constant of the Swendsen-Wang dynamics.}
\end{remark}

\begin{remark}\label{windowrep}
	\textnormal{It turns out in Proposition \ref{l2prop} that the constant $C=C(d,p)$ given by $C= 2({\log (\frac{1}{3edp})})^{-1}$ satisfies (\ref{l2ineq}). This fact will be used in sections \ref{secl1l2} and \ref{secfinal} when determining the cutoff window.}
\end{remark}

\begin{proof}
	Let $P_t: L^2(\pi) \rightarrow L^2(\pi)$ be the semigroup operator defined by $P_t f(x) = \mathbb{E}_x [f(X_t)]$, and let $p_{t,x} (y) = \frac{\mathbb{P}_x(X_t = y)}{\pi(y)}$  .  Since the Swendsen-Wang dynamics is reversible, a simple calculation yields that
	\begin{equation*}
	\begin{split}
	P_t (p_{s,x})(y) =  \sum_z \mathbb{P}_y(X_t = z) \frac{\mathbb{P}_x (X_s =z)}{\pi(z)} = \sum_z \frac{\mathbb{P}_z(X_t = y)}{\pi(y)} \mathbb{P}_x (X_s =z) = p_{t+s,x}(y).
	\end{split}
	\end{equation*}
	
	\quad Therefore, the $L^2$ distance from stationarity at time $t+s$ becomes
	\begin{equation}\label{scl2ineq}
	\begin{split}
	\| \mathbb{P}_{x_0} ( X_{t+s} \in \cdot) - \pi \|_{L^2(\pi)} &= 	\| P_t(p_{s,x_0}) - \pi(p_{s,x_0}) \|_{L^2(\pi)} \\&\leq \| P_t - \pi \|_{2 \rightarrow 2} \| p_{s,x_0} - \mathbbm{1} \|_{L^2(\pi)},
	\end{split}
	\end{equation}
	where $\pi(f) := \mathbb{E}_{\pi}f$ and $\mathbbm{1} $ denotes the identity map. Since the operator norm of $P_t - \pi$ satisfies $ \| P_t - \pi \|_{2 \rightarrow 2} \leq (1-\gamma)^t$ (see e.g., Chapter 2 of \cite{sc}), we readily obtain the conclusion from the following proposition.
\end{proof}

\begin{proposition}\label{l2prop}
	Under the same assumption as Theorem \ref{l2thm}, let $t_0 = \frac{ \log n}{\log (1/3edp)}$. Then there exists a positive constant $p_0 = p_0(d)$  such that for any $p<p_0$ the following inequality holds true for any large enough $n$:
	\begin{equation}\label{mainpropineq}
	\max_{x_0} \| \mathbb{P}_{x_0} ( X_{t_0} \in \cdot) - \pi \|_{L^2(\pi)} \leq 2.
	\end{equation}
\end{proposition}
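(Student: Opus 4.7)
The plan is to bound the $L^2$-distance at time $t_0 = (\log n)/\log(1/3edp)$ using the information percolation decomposition of \S\ref{secinfoperc1} together with Lemma \ref{psilem}. Consider the two-copy interpretation: run $(X_t)$ started from $x_0$ and an auxiliary copy $(X_t')$ started from $\pi$ under the same update sequence $\mathfrak{H}_{t_0}$. By the definition of the blue and green clusters, every vertex in $V_{\mathcal{B}} \cup V_{\mathcal{G}}$ has its time-$t_0$ color determined by $\mathfrak{H}_{t_0}$ alone---independent of the initial configuration---so the two copies automatically agree on $V \setminus V_{\mathcal{R}}$. Hence the entire $L^2$-deviation from stationarity is attributable to the red clusters.

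I would then condition on the green history $\mathscr{H}_{\mathcal{G}}$ and on the partition of $V \setminus V_{\mathcal{G}}$ into red clusters and blue singletons. Under this conditioning the law of $X_{t_0}$ factorizes: blue singletons receive independent uniform colors---matching the law of $X_{t_0}'$ on the same vertices and hence cancelling in the $L^2$-calculation---while each red cluster $A$ contributes a distribution on $\{1,\dots,q\}^A$ that depends on $x_0$. Using the trivial bound $q^{|A|}$ on the local Radon--Nikodym derivative between the conditional law on $A$ and the corresponding stationary marginal (both supported on at most $q^{|A|}$ configurations), an expansion of the squared $L^2$-norm yields
\begin{equation*}
\|\mathbb{P}_{x_0}(X_{t_0}\in \cdot) - \pi\|_{L^2(\pi)}^{2} \le \mathbb{E}\Bigl[\prod_{A \in \mathcal{C}_{\mathcal{R}}} q^{|A|}\Bigr] - 1.
\end{equation*}

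The final step is to pass from the expected product to an exponential via a Peierls-type / cluster-expansion argument in the spirit of \cite{ls15}. The supremum in the definition \eqref{psidef} of $\Psi_A$, conditioning on the entire outside history $\mathscr{H}_A^-$, is precisely what supports the multiplicative-to-additive estimate
\begin{equation*}
\mathbb{E}\Bigl[\prod_{A \in \mathcal{C}_{\mathcal{R}}} q^{|A|}\Bigr] \le \exp\Bigl(\sum_{A \subset V} \Psi_A \, (q^{|A|}-1)\Bigr).
\end{equation*}
I then apply Lemma \ref{psilem}: since $(3edp)^{t_0-1/2}=1/(n\sqrt{3edp})$ and since the number of connected subsets of size $k$ containing any fixed vertex of the degree-$d$ graph $G$ is at most $(ed)^k$, choosing $\theta$ large enough that $edq\,e^{-\theta}<1$ gives
\begin{equation*}
\sum_{A \subset V} \Psi_A\,(q^{|A|}-1) \le n \cdot \frac{M(\theta)}{n\sqrt{3edp}} \sum_{k \ge 1}(edq\,e^{-\theta})^k,
\end{equation*}
a finite constant depending only on $d,q,\theta,p$ and not on $n$. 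Selecting $\theta$ sufficiently large and then $p_0 = p_0(d)$ sufficiently small drives this exponent below $\log 5$, yielding $\|\mathbb{P}_{x_0}(X_{t_0}\in\cdot) - \pi\|_{L^2(\pi)} \le 2$.

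The hardest part will be the factorization step: rigorously deriving the product-over-red-clusters bound on the $L^2$-norm, because $\pi$ is not a product measure, and the cancellation between the blue/green contributions of $\mathbb{P}_{x_0}(X_{t_0}\in\cdot)$ and $\pi$ requires carefully matching both distributions against the common update sequence under the grand coupling, respecting the compatibility conditions of Remark \ref{compatrmk}. A secondary subtlety is the cluster-expansion step: although the template is classical, one must ensure that the supremum form of $\Psi_A$ in \eqref{psidef} genuinely yields the multiplicative estimate despite the strong dependencies between red clusters induced by the underlying update sequence.
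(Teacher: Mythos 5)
Your setup (grand coupling, red/blue/green decomposition, conditioning on $\mathscr{H}_{\mathcal{G}}$, and control via $\Psi_A$ and Lemma \ref{psilem}) matches the paper's strategy, but there is a genuine quantitative gap at the central step: you bound the $L^2$-distance by $\mathbb{E}\bigl[\prod_{A\in\mathcal{C}_{\mathcal{R}}}q^{|A|}\bigr]-1$, i.e.\ by an exponential moment of the size of a \emph{single} copy of the red set. The paper instead uses the Miller--Peres bound (Lemma \ref{mplem}) together with Corollary \ref{couplingcor}, which reduces everything to $\mathbb{E}\bigl[q^{|V_{\mathcal{R}}\cap V_{\mathcal{R}'}|}\bigr]-1$ for \emph{two independent} copies of the red set. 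This is not a cosmetic difference: at $t_0=\frac{\log n}{\log(1/3edp)}$ one has $(3edp)^{t_0}=n^{-1}$, so the two-copy overlap produces the exponent $n\bigl(\sum_{A\ni v}q^{|A|}\Psi_A\bigr)^2\le nM^2(3edp)^{2t_0-1}=M^2/(3edp\,n)\to 0$, whereas your one-copy exponent is $\sum_A\Psi_A(q^{|A|}-1)\le nM(3edp)^{t_0-1/2}=M(3edp)^{-1/2}$, which does not decay in $n$ at all. Worse, your proposed fix is backwards: shrinking $p_0$ makes $(3edp)^{-1/2}$ larger, and the constraint $p<p_0(\theta,d)$ in Lemma \ref{psilem} ties $\theta$ to $p$ in a way that prevents $M(\theta)(3edp)^{-1/2}$ from ever being small (with the paper's constants it grows like a negative power of $p$). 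Since the proposition must hold for every $p<p_0$, the claim that the exponent can be driven below $\log 5$ fails, and indeed the whole point of working at time $t_0$ is that a single red set is still of constant order while the overlap of two independent red sets is $O(1/n)$ per vertex; discarding the second copy throws away exactly the factor that makes the bound work.

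Two secondary points. First, your ``trivial $q^{|A|}$ Radon--Nikodym bound'' against the stationary marginal is not justified as stated: the paper handles the comparison with $\pi$ by Lemma \ref{jensen}, projection off $V_{\mathcal{G}}$, and then Lemma \ref{uniflem} (via Claim \ref{boundeta}, which itself needs $\sum_{A\neq\emptyset}\Psi_A\le\frac12$, again using the $n^{-1}$ gain at time of order $C\log n$) to replace $\widetilde{\pi}$ by the uniform measure at the cost of a factor $2$ and $+1$; this is where the non-product nature of $\pi$ is actually dealt with. Second, your multiplicative-to-additive step is legitimate in spirit---it is precisely the stochastic domination by independent indicators of Lemma \ref{couplinglem}---but it must be applied to the two-copy indicators $J_{A,A'}$ with $\mathbb{P}(J_{A,A'}=1)=\Psi_A\Psi_{A'}$ as in Corollary \ref{couplingcor} to retain the crucial square of $\Psi$.
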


\quad In the remaining section, we discuss the proof of Proposition \ref{l2prop}.

\quad Let $(X_t)$ and $(Y_t)$ be two copies Swendsen-Wang dynamics started from the initial configurations $X_0 = x_0$ and $Y_0 \sim \pi$, respectively. We couple their update sequence via thel coupling in Definition \ref{globalcoupling}, and hence they share the same history diagram. The following lemma is a simple variant of Jensen's inequality which enables us to work with $X_t$ and $Y_t$ conditioned on $\mathscr{H}_{\mathcal{G}}$.

\begin{lemma}\label{jensen}
	Let $\mu, \nu$ be two probability measures on the same finite sample space and $H$ be a random variable. Then the following inequality holds true:
	\begin{equation*}
	\| \mu - \nu \|_{L^2(\nu)}^2 \leq \int \| \mu(~\cdot~ \vert H) - \nu(~\cdot ~ \vert H) \|_{L^2(\nu(\cdot \vert H))}^2 d \mathbb{P}(H).
	\end{equation*}
\end{lemma}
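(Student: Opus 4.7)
The plan is to recognize Lemma~\ref{jensen} as a direct consequence of the joint convexity of the ``quadratic-over-linear'' function $\varphi(a,b) = a^2/b$ on $\mathbb{R} \times \mathbb{R}_{>0}$, which is equivalent to the Cauchy--Schwarz inequality $\bigl(\int f\, d\mathbb{P}\bigr)^2 \le \bigl(\int f^2/g \,d\mathbb{P}\bigr)\bigl(\int g \,d\mathbb{P}\bigr)$. Before starting, I would note that in the paper's intended application, $H = \mathscr{H}_{\mathcal{G}}$ is determined by the update sequence $\mathfrak{H}_{t_\star}$, whose law does not depend on the starting configuration; consequently, $H$ has the \emph{same} marginal distribution $\mathbb{P}_H$ under both $\mu$ and $\nu$. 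This is the implicit hypothesis that makes the notation $d\mathbb{P}(H)$ unambiguous, and in particular gives the disintegrations $\mu(x) = \int \mu(x\vert H)\, d\mathbb{P}(H)$ and $\nu(x) = \int \nu(x\vert H)\, d\mathbb{P}(H)$.

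With this in hand, the actual computation is quick. I would expand the definition
\[
\| \mu - \nu \|_{L^2(\nu)}^2 \;=\; \sum_{x} \frac{(\mu(x) - \nu(x))^2}{\nu(x)}
\]
and, for each fixed $x$ in the (finite) sample space, apply Cauchy--Schwarz to the representation $\mu(x) - \nu(x) = \int \bigl(\mu(x\vert H) - \nu(x\vert H)\bigr)\, d\mathbb{P}(H)$ with the weights $f(H) = \mu(x\vert H) - \nu(x\vert H)$ and $g(H) = \nu(x\vert H)$. This yields
\[
(\mu(x) - \nu(x))^2 \;\le\; \left( \int \frac{(\mu(x\vert H) - \nu(x\vert H))^2}{\nu(x\vert H)}\, d\mathbb{P}(H) \right) \cdot \left( \int \nu(x\vert H)\, d\mathbb{P}(H)\right),
\]
and the second factor on the right is exactly $\nu(x)$. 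Dividing by $\nu(x)$, summing over $x$, and exchanging the finite sum with the integral then delivers
\[
\sum_{x} \frac{(\mu(x) - \nu(x))^2}{\nu(x)} \;\le\; \int \sum_{x} \frac{(\mu(x\vert H) - \nu(x\vert H))^2}{\nu(x\vert H)}\, d\mathbb{P}(H),
\]
which is the claim.

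There is essentially no obstacle; the only bookkeeping concern is what happens when $\nu(x\vert H) = 0$ for some $x, H$, but this is resolved by the standard convention $0/0 := 0$ and the observation that if $\nu(x) = 0$ then the corresponding term on the left contributes nothing (and $\mu \ll \nu$ on the support we care about, since otherwise both sides are $+\infty$ and the inequality is trivially an equality). Since the sample space is finite, no measure-theoretic subtleties arise in Fubini-exchanging the order of summation and integration.
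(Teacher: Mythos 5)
Your proof is correct and takes essentially the same approach as the paper: the paper expands $\| \mu - \nu \|_{L^2(\nu)}^2 + 1 = \sum_x \mu(x)^2/\nu(x)$ and applies Jensen's inequality for $t \mapsto t^2$ under the tilted probability measure $\nu(x \vert H)\, d\mathbb{P}(H) / \int \nu(x \vert H')\, d\mathbb{P}(H')$, which is precisely the Cauchy--Schwarz step you perform pointwise in $x$ on the centered difference before summing and exchanging sum and integral. The only differences are cosmetic (working with $(\mu(x)-\nu(x))^2$ rather than $\mu(x)^2$ with the $+1$ cancellation, and your explicit remark that $H$ must have the same marginal law under both measures, which the paper uses implicitly), so the two arguments coincide in substance.
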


\begin{proof}
	Proving this statement is a simple consequence of Jensen's inequality. Calculation yields that
	\begin{equation*}
	\begin{split}
	\| \mu - \nu \|_{L^2(\nu)}^2 + 1 &= \sum_x \left( \frac{\mu(x)}{\nu(x)} \right)^2 \nu(x)\\
	&= \sum_x \left[ \frac{\int \mu(x \vert H) d\mathbb{P}(H)}{\int \nu(x \vert H) d\mathbb{P}(H)} \right]^2 \int \nu(x \vert H) d\mathbb{P}(H) \\
	&= \sum_x \left[ \int \frac{\mu(x \vert H)}{\nu(x \vert H)} \frac{\nu(x \vert H)d\mathbb{P}(H) }{\int \nu(x \vert H') d\mathbb{P}(H')}  \right]^2 \int \nu(x \vert H) d\mathbb{P}(H)\\
	&\leq \sum_x \int{ \left( \frac{\mu(x \vert H)}{\nu(x \vert H)} \right)^2 \nu(x \vert H) d\mathbb{P}(H)} \\
	&= \int \sum_x \left( \frac{\mu(x \vert H)}{\nu(x \vert H)} \right)^2 \nu(x \vert H) d\mathbb{P}(H) \\
	&= \int \| \mu(~\cdot~ \vert H) - \nu(~\cdot ~ \vert H) \|_{L^2(\nu(\cdot \vert H))}^2 d\mathbb{P}(H) +1.
	\end{split}
	\end{equation*}
	
	\quad The inequality in the third line follows from Jensen's inequality, which is applied with respect to the probability measure $\frac{\nu(x \vert H) dH}{\int \nu(x \vert H') dH'} $.
\end{proof}

\quad Applying Lemma \ref{jensen} to our case, we get
\begin{equation*}
\begin{split}
\| \mathbb{P}_{x_0}( X_t \in \cdot) - \mathbb{P}(Y_t \in \cdot) \|_{L^2(\pi)}^2
&\leq \int	\| \mathbb{P}_{x_0}( X_t \in \cdot \mspace{3mu} \vert \mathscr{H}_{\mathcal{G}}) - \mathbb{P}(Y_t \in \cdot \mspace{3mu} \vert \mathscr{H}_{\mathcal{G}}) \|_{L^2(\pi(\: \cdot \: \vert \mathscr{H}_{\mathcal{G}}))}^2  d\mathscr{H}_{\mathcal{G}} \\
&\leq \sup_{\mathscr{H}_{\mathcal{G}}} 	\| \mathbb{P}_{x_0}( X_t \in \cdot \mspace{3mu} \vert \mathscr{H}_{\mathcal{G}}) - \mathbb{P}(Y_t \in \cdot \mspace{3mu} \vert \mathscr{H}_{\mathcal{G}}) \|_{L^2(\pi(\: \cdot \: \vert \mathscr{H}_{\mathcal{G}}))}^2 ,
\end{split}
\end{equation*}
where we take the supremum over all $\mathscr{H}_{\mathcal{G}} \in \mathscr{H}_{com} (V \setminus V_{\mathcal{G}})$, as discussed in Remark \ref{compatrmk}.

\quad Since $X_t (V_{\mathcal{G}})$ is independent of $X_0$, the coupling between $X_t$ and $Y_t$ implies that they are identical on $V_{\mathcal{G}}$. Moreover, conditioned on $\mathscr{H}_{\mathcal{G}}$, $X_t ( V \setminus V_{\mathcal{G}})$ and $X_t (V_{\mathcal{G}})$ are independent from each other, since the two histories $\mathscr{H}_{V_{\mathcal{G}}}$ and $\mathscr{H}_{V\setminus V_{\mathcal{G}}} $ are disjoint and $X_t (V_{\mathcal{G}})$ is independent of $X_0$. Therefore, the projection onto $V \setminus V_{\mathcal{G}}$ does not decrease the $L^2$ distance, and hence
\begin{equation}\label{l2distwithoutunif}
\begin{split}
\max_{x_0}\| \mathbb{P}_{x_0}( X_t \in \cdot) -\pi \|_{L^2(\pi)}&= \max_{x_0}\| \mathbb{P}_{x_0}( X_t \in \cdot) - \mathbb{P}(Y_t \in \cdot) \|_{L^2(\pi)} \\
&\leq \max_{x_0} \mspace{6mu} \sup_{\mathscr{H}_{\mathcal{G}}} 	\| \widetilde{\mu}_t - \widetilde{\pi} \|_{L^2(\widetilde{\pi}) },
\end{split}
\end{equation}
where we define $\widetilde{\mu}_t$ and $\widetilde{\pi}$ by
\begin{equation*}
\begin{split}
\widetilde{\mu}_t &:=  \mathbb{P}_{x_0}( X_t(V\setminus V_{\mathcal{G}}) \in \cdot \mspace{3mu} \vert \mathscr{H}_{{\mathcal{G}}});\\ \widetilde{\pi}&:=\mathbb{P}(Y_t (V \setminus V_{\mathcal{G}})\in \cdot \mspace{3mu} \vert \mathscr{H}_{{\mathcal{G}}}). 
\end{split}
\end{equation*}

\quad Assume that we have $V_{\mathcal{R}} = \emptyset$. Then $V \setminus V_{\mathcal{G}}$ consists only of blue vertices, and hence the law of $X_t(V \setminus V_{\mathcal{G}}) $ conditioned on $\mathscr{H}_{\mathcal{G}}$ is identical to $\nu_{V\setminus V_{\mathcal{G}}}$, where $\nu_{U}$ denotes the uniform distribution on $\{1,\ldots,q\}^U$. Therefore, if the effect of the red clusters is negligible, it is reasonable to predict that the law of $X_t(V \setminus V_{\mathcal{G}}) $ conditioned on $\mathscr{H}_G$ is close to $\nu_{V\setminus V_{\mathcal{G}}}$. Based on this intuition, we will modify (\ref{l2distwithoutunif}) using the following lemma, switching our attention to the distance between $X_t(V \setminus V_{\mathcal{G}}) $ and $\nu_{V\setminus V_{\mathcal{G}}}$.

\begin{lemma}\label{uniflem}
	Consider the Swendsen-Wang dynamics $X_t$ on $G$ starting from the initial configuration $x_0$. Let $Y_t$ be the coupled chain with initial distribution $Y_0 \sim \pi$. Then there exists $p_0 = p_0(d)$ such that the following holds true: for all $p\in(0,p_0)$, there exists a constant $C = C(d,p)$ such that conditioned on the history of green clusters, we have
	\begin{equation}\label{l2distwithunif}
	\begin{split}
	\max_{x_0} \mspace{6mu} \sup_{\mathscr{H}_{\mathcal{G}}} \| \widetilde{\mu}_t - \widetilde{\pi} \|^2_{L^2(\widetilde{\pi}) }
	\leq 2 \max_{x_0} \mspace{6mu} \sup_{\mathscr{H}_{\mathcal{G}}} 	\| \widetilde{\mu}_t -\nu_{V \setminus V_{\mathcal{G}}} \|^2_{L^2(\nu_{V \setminus V_{\mathcal{G}}})} ~+1,
	\end{split}
	\end{equation}
	for all $ t>C \log n$ and any large enough $n$.
\end{lemma}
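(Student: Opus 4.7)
The plan is to establish the pointwise lower bound $\widetilde{\pi}(x) \geq \tfrac{1}{2}\, \nu_{V \setminus V_{\mathcal{G}}}(x)$ for every configuration $x \in \{1,\ldots,q\}^{V\setminus V_{\mathcal{G}}}$, uniformly over $x_0$ and admissible $\mathscr{H}_{\mathcal{G}}$, whenever $t > C \log n$. Once such a bound is in place, the identity $\|\mu - \rho\|_{L^2(\rho)}^2 + 1 = \sum_x \mu(x)^2/\rho(x)$ for probability measures on a common finite space combined with the pointwise substitution $\widetilde{\pi}(x)^{-1} \leq 2\,\nu_{V \setminus V_{\mathcal{G}}}(x)^{-1}$ yields
$$\|\widetilde{\mu}_t - \widetilde{\pi}\|_{L^2(\widetilde{\pi})}^2 + 1 \;=\; \sum_x \frac{\widetilde{\mu}_t(x)^2}{\widetilde{\pi}(x)} \;\leq\; 2 \sum_x \frac{\widetilde{\mu}_t(x)^2}{\nu_{V \setminus V_{\mathcal{G}}}(x)} \;=\; 2\bigl(\|\widetilde{\mu}_t - \nu_{V\setminus V_{\mathcal{G}}}\|_{L^2(\nu_{V \setminus V_{\mathcal{G}}})}^2 + 1\bigr)$$
for each fixed $x_0$ and $\mathscr{H}_{\mathcal{G}}$, and taking the maxima and suprema on both sides delivers (\ref{l2distwithunif}).

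To prove the pointwise bound, I would condition on the event $\{V_{\mathcal{R}} = \emptyset\}$. On this event every vertex of $V \setminus V_{\mathcal{G}}$ lies in a blue singleton cluster; distinct blue singletons have pairwise disjoint histories in $\mathfrak{G}$, and each receives a fresh \textnormal{Unif}$\{1,\ldots,q\}$ color from the update sequence that is independent of $Y_0$ and of the colors assigned to every other blue singleton. Hence the conditional law of $Y_t(V \setminus V_{\mathcal{G}})$ given $\mathscr{H}_{\mathcal{G}}$ and $\{V_{\mathcal{R}} = \emptyset\}$ is exactly $\nu_{V \setminus V_{\mathcal{G}}}$, so
$$\widetilde{\pi}(x) \;\geq\; \mathbb{P}\bigl(V_{\mathcal{R}} = \emptyset \bigm| \mathscr{H}_{\mathcal{G}}\bigr)\, \nu_{V \setminus V_{\mathcal{G}}}(x),$$
and the problem reduces to showing $\mathbb{P}(V_{\mathcal{R}} = \emptyset \mid \mathscr{H}_{\mathcal{G}}) \geq \tfrac{1}{2}$ uniformly in $\mathscr{H}_{\mathcal{G}}$. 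For this I would take a union bound: since $V_{\mathcal{R}} \neq \emptyset$ if and only if $\mathscr{H}_v(0) \neq \emptyset$ for some $v \notin V_{\mathcal{G}}$,
$$\mathbb{P}\bigl(V_{\mathcal{R}} \neq \emptyset \bigm| \mathscr{H}_{\mathcal{G}}\bigr) \;\leq\; \sum_{v \notin V_{\mathcal{G}}} \mathbb{P}\bigl(\mathscr{H}_v(0) \neq \emptyset \bigm| \mathscr{H}_{\mathcal{G}}\bigr),$$
and I would control each summand by an amount of order $(3edp)^{t-1/2}$ using the single-vertex case of Lemma \ref{psilem} (where $\mathfrak{M}(\{v\}) = 1$). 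Taking $C := 2/\log(1/(3edp))$ then forces $n\cdot(3edp)^{t-1/2} \leq \tfrac{1}{2}$ for all $t > C\log n$ and sufficiently large $n$, which gives the required lower bound.

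The principal obstacle I anticipate lies in translating Lemma \ref{psilem}—which conditions on the complementary history $\mathscr{H}_A^-$ together with the disjunction $\{A \in \mathcal{C}_{\mathcal{R}}\} \cup \{A \subset V_{\mathcal{B}}\}$—into the cleaner conditioning on $\mathscr{H}_{\mathcal{G}}$ alone appearing in the union bound. The resolution should exploit that, for $v \notin V_{\mathcal{G}}$, the compatibility $\mathscr{H}_{\mathcal{G}} \in \mathscr{H}_{com}(V \setminus V_{\mathcal{G}})$ of Remark \ref{compatrmk} forces $\mathscr{H}_v$ to be disjoint from $\mathscr{H}_{\mathcal{G}}$, so that revealing $\mathscr{H}_v$ backwards in time after $\mathscr{H}_{\mathcal{G}}$ can only further restrict the subcritical branching structure governing $\mathscr{H}_v$ and hence cannot inflate its survival probability beyond a constant multiple of the unconditioned bound.
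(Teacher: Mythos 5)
Your high-level route coincides with the paper's: both arguments lower-bound $\widetilde{\pi}(x)$ pointwise by the conditional probability of having no red clusters times $\nu_{V\setminus V_{\mathcal{G}}}(x)$ (the paper writes this as $\widetilde{\pi}(x)\geq \eta(\emptyset)\,\nu_{V\setminus V_{\mathcal{G}}}(x)$ in (\ref{boundtildepi})), reduce to showing that this probability is at least $\tfrac12$ once $t>C\log n$ (Claim \ref{boundeta}), and then conclude via the $\chi^2$ expansion exactly as you do. The gap is in how you propose to prove the $\tfrac12$ bound. First, the ``single-vertex case of Lemma \ref{psilem}'' does not control the quantity you sum. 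The event $\{\mathscr{H}_v(0)\neq\emptyset\}$ for $v\notin V_{\mathcal{G}}$ says that $v$ belongs to \emph{some} red cluster, possibly of large size, whereas $\Psi_{\{v\}}$ only bounds the conditional probability that $\{v\}$ \emph{by itself} forms a red cluster, and moreover only under the very specific conditioning on the full complementary history together with the event $\{\{v\}\in\mathcal{C}_{\mathcal{R}}\}\cup\{v\in V_{\mathcal{B}}\}$. So with $\mathfrak{M}(\{v\})=1$ alone you cannot conclude $\mathbb{P}(\mathscr{H}_v(0)\neq\emptyset\mid\mathscr{H}_{\mathcal{G}})\lesssim(3edp)^{t-1/2}$; you would need to account for all candidate clusters $A\ni v$.

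Second, and more seriously, the step you yourself flag as the ``principal obstacle'' is where the real content lies, and the proposed resolution is not an argument. Conditioning on $\mathscr{H}_{\mathcal{G}}$ is not merely the constraint that $\mathscr{H}_v$ avoids a fixed space-time set; it also encodes that the green clusters are \emph{exactly} these, i.e.\ that every cluster formed among the remaining vertices is either red or a blue singleton. This conditioning can bias $\mathscr{H}_v$ \emph{toward} survival (e.g.\ given that $v$'s history merges with a neighbour's, the joint cluster is forced to be red), so ``revealing $\mathscr{H}_v$ after $\mathscr{H}_{\mathcal{G}}$ can only restrict the branching'' is false as stated; quantifying this bias is precisely why $\Psi_A$ is defined with the conditioning on $\{A\in\mathcal{C}_{\mathcal{R}}\}\cup\{A\subset V_{\mathcal{B}}\}$ and why its proof must lower-bound the ``all-blue'' alternative by $(1-p)^{d|A|}$ as in (\ref{lowerbddenom}). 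The paper sidesteps your difficulty by bounding $\eta(\{\emptyset\}^c)\leq\sum_{A\neq\emptyset}\mathbb{P}(A\in\mathcal{C}_{\mathcal{R}}\mid\mathscr{H}_{\mathcal{G}})\leq\sum_{A\neq\emptyset}\Psi_A$ (equation (\ref{etaeq1})), i.e.\ by taking the union bound over \emph{all} nonempty subsets $A$, and then uses the full strength of Lemma \ref{psilem} with $\theta=\log(4ed)$ together with the count $|\{A\ni v:\mathfrak{M}(A)=k\}|\leq(2ed)^k$ to get the bound $nM(3edp)^{t-1/2}$, which is $<\tfrac12$ for $t>C\log n$. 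Your proof becomes correct once you replace your single-vertex union bound by this sum over clusters $A\ni v$; as written, the key estimate is unsupported.
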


\begin{proof}
	By expanding the $L^2$-distance, we have
	\begin{equation}\label{tildemu}
	\| \widetilde{\mu} - \widetilde{\pi} \|^2_{L^2 ( \widetilde{\pi})}=	\sum_{x \in \Omega_{V\setminus V_{\mathcal{G}}}} \frac{\widetilde{\mu}(x)^2}{\widetilde{\pi}(x)} -1 .
	\end{equation}
	
	\quad Observe that for any $x \in \Omega_{V\setminus V_{\mathcal{G}}}$, $\widetilde{\pi}(x)$ can be sampled as follows: Sample $V_{\mathcal{R}} \subset V\setminus V_{\mathcal{G}}$ via the law $\eta $ that generates the red clusters,  then sample the configuration $x_{V_{\mathcal{R}}} $ on the red clusters. Finally, generate $x_{V_{\mathcal{B}}}$ according to $\nu_{V_{\mathcal{B}}}$, where $V_{\mathcal{B}} = V \setminus (V_{\mathcal{G}} \cup V_{\mathcal{R}})$. Therefore, $\widetilde{\pi}(x)$ can be written as
	\begin{equation*}
	\widetilde{\pi}(x) = \sum_{R \subset V\setminus V_{\mathcal{G}}} \eta(R) \varphi_R (x_{R}) \nu_B(x_B),
	\end{equation*}
	where $\varphi_R$ represents the law of $x_R$ given that $R$ is the union of red clusters, and $B$ being the shorthand for $V\setminus (V_{\mathcal{G}} \cup R) $. Hence, $\widetilde{\pi}(x)$ satisfies the following trivial inequality when we just take $R= \emptyset$:
	\begin{equation}\label{boundtildepi}
	\widetilde{\pi}(x) \geq \eta(\emptyset) ~\nu_{V \setminus V_{\mathcal{G}}} (x).
	\end{equation}
	
	\begin{claim}\label{boundeta}
		There exists a constant $C=C(d,p)$ such that $\eta(\emptyset) \geq \frac{1}{2}$ for all $t > C \log n$.
	\end{claim}
	
	\quad Assume Claim \ref{boundeta} for the moment. Then (\ref{tildemu}) can be rewritten as
	\begin{equation*}
	\| \widetilde{\mu} - \widetilde{\pi} \|^2_{L^2 ( \widetilde{\pi})} \leq 2 \sum_{x \in \Omega_{V\setminus V_{\mathcal{G}}}} \frac{\widetilde{\mu}(x)^2}{\nu_{V\setminus V_{\mathcal{G}}} (x)} -1 = 2 \| \widetilde{\mu} - \nu_{V\setminus V_{\mathcal{G}}} \|^2_{L^2 (\nu_{V\setminus V_{\mathcal{G}}})} +1,
	\end{equation*}
	when $t > C \log n$. By taking supremum over $\mathscr{H}_{\mathcal{G}}$ and $x_0$, we deduce (\ref{l2distwithunif}).
\end{proof}
\vspace{2mm}
\textit{Proof of Claim \ref{boundeta}}. ~We start by observing that
\begin{equation}\label{etaeq1}
\eta(\{\emptyset\}^c) 
\leq
\sum_{A \neq \emptyset} \mathbb{P}(A =  V_\mathcal{R} \,\vert\, \mathscr{H}_\mathcal{G} )
\leq
\sum_{A \neq \emptyset} \mathbb{P}(A\in \mathcal{C}_\mathcal{R} \,\vert\, \mathscr{H}_\mathcal{G} )
\leq
\sum_{A \neq \emptyset} \Psi_A,
\end{equation}
by a union bound and the definition of $\eta$. The last inequality is clear by the definition of $\Psi_A$ in (\ref{psidef}). Using Lemma \ref{psilem} with the choice of $\theta = \log (4ed)$, we obtain that
\begin{equation}\label{sumpsi}
\begin{split}
\sum_{A\neq \emptyset} \Psi_A 
&\leq
\sum_{v\in V} \sum_{A \ni v} \Psi_A
\leq
n \sum_{k\geq 1} \sum_{\substack{A\ni v \\ \mathfrak{M}(A)=k }} M(3edp)^{t-\frac{1}{2}} e^{-\theta k}\\
&\leq
nM(3edp)^{t-\frac{1}{2}} \sum_{k\geq 1} (2e^{-\theta +1}d)^k 
\leq 
nM(3edp)^{t-\frac{1}{2}},
\end{split}
\end{equation}
where we used the fact that $|\{A: A\ni v, \,\mathfrak{M}(A)=k \}| \leq (2ed)^k $;  the number of connected subsets of size $k$ containing a given vertex $v$ is at most ${dk \choose k} \leq (ed)^k$ and each such subset includes at most $2^k$ subsets satisfying $\mathfrak{M}(A)=k$. 

\quad Thus, we can choose a positive constant $C=C(d,p)$ such that the r.h.s. of (\ref{sumpsi}) is smaller than $\frac{1}{2}$ for all $t > C\log n$. Together with (\ref{etaeq1}), we readily deduce the conclusion. \qed \vspace{2mm}

\quad There is a simple but beautiful lemma due to Miller and Peres \cite{mp} to  bound the $L^2$-distance of a measure from the uniform measure. Here, we use the version in \cite{ls15, ls16} of this lemma. Although  they deal with the case of $q=2$, generalizing it to arbitrary $q$ is straightforward, and hence we omit its proof. We use this lemma as a key ingredient in proving Proposition \ref{l2prop}.

\begin{lemma}[\cite{mp, ls15, ls16}]\label{mplem}
	Let $\Omega = \{1,\ldots,q\}^V$ for a finite set $V$. For each $R \subset V$, let $\varphi_R$ be a measure on $\{1,\ldots,q\}^R$. Let $\nu$ be the uniform measure on $\Omega$, and let $\mu $ be the measure on $\Omega$ obtained by sampling a subset $R \subset V$ via some measure $\eta$, generating the colors on $R$ via $\varphi_R$, and finally sampling the colors on $V \subset R$ uniformly. Then
	\begin{equation}\label{mpineq}
	\| \mu - \nu \|_{L^2(\nu)}^2 \leq \mathbb{E} \left[q^{| R \cap R' |} \right] -1 ,
	\end{equation}
	where $R$ and $ R'$ are i.i.d. with law $\widetilde{\mu}$.
\end{lemma}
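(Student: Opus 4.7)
The plan is to expand the squared $L^2$-distance and reduce it to a calculation involving two independent copies $R, R'$ of the random subset, where the exponential factor $q^{|R\cap R'|}$ arises naturally from the uniform parts on $V\setminus R$ and $V\setminus R'$.

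First I would write
\begin{equation*}
\| \mu - \nu \|_{L^2(\nu)}^2 \,=\, \sum_{x\in \Omega} \frac{\mu(x)^2}{\nu(x)} - 1,
\end{equation*}
and use the mixture description of $\mu$ to write $\mu(x) = \sum_R \eta(R) \varphi_R(x_R) q^{-|V\setminus R|}$. Since $\nu(x) = q^{-|V|}$, the density ratio becomes
\begin{equation*}
\frac{\mu(x)}{\nu(x)} \,=\, \sum_R \eta(R) q^{|R|} \varphi_R(x_R) \,=\, \mathbb{E}_R[g_R(x)], \qquad g_R(x) := q^{|R|} \varphi_R(x_R).
\end{equation*}
Introducing two independent copies $R, R'$ sampled from $\eta$ and exchanging the order of expectations,
\begin{equation*}
\| \mu - \nu \|_{L^2(\nu)}^2 \,=\, \mathbb{E}_\nu\!\left[ \left(\tfrac{\mu}{\nu}\right)^{\!2} \right] - 1 \,=\, \mathbb{E}_{R, R'} \!\left[ \mathbb{E}_\nu[g_R(x) g_{R'}(x)] \right] - 1.
\end{equation*}

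Next I would carry out the inner calculation explicitly. Summing over $x$ and using $|V|-|V\setminus(R\cup R')| = |R\cup R'|$ and $|R|+|R'|-|R\cup R'| = |R\cap R'|$,
\begin{equation*}
\mathbb{E}_\nu[g_R\, g_{R'}] \,=\, q^{|R|+|R'|-|V|} \sum_x \varphi_R(x_R) \varphi_{R'}(x_{R'}) \,=\, q^{|R\cap R'|} \sum_{x_{R\cup R'}} \varphi_R(x_R) \varphi_{R'}(x_{R'}).
\end{equation*}

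The only substantive step is to show
\begin{equation*}
\sum_{x_{R\cup R'}} \varphi_R(x_R) \varphi_{R'}(x_{R'}) \,\leq\, 1.
\end{equation*}
Setting $S := R\cap R'$, I would sum first over the coordinates in $R\setminus S$ and $R'\setminus S$, collapsing each $\varphi_R, \varphi_{R'}$ to its marginal on $S$:
\begin{equation*}
\sum_{x_{R\cup R'}} \varphi_R(x_R) \varphi_{R'}(x_{R'}) \,=\, \sum_{x_S} \varphi_R|_S(x_S) \,\varphi_{R'}|_S(x_S) \,\leq\, \max_{x_S} \varphi_{R'}|_S(x_S) \cdot \sum_{x_S} \varphi_R|_S(x_S) \,\leq\, 1,
\end{equation*}
since both marginals are probability distributions bounded by $1$. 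Plugging this back yields $\mathbb{E}_\nu[g_R g_{R'}] \leq q^{|R\cap R'|}$, and taking expectation over $(R, R')$ gives the claimed bound. I do not anticipate any real obstacle: the only observation that is not purely algebraic is the trivial fact that summing a product of two probability marginals is at most $1$, and everything else is bookkeeping of the exponents of $q$.
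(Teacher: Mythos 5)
Your proof is correct. The paper itself omits the proof of this lemma (deferring to \cite{mp, ls15, ls16}, where the $q=2$ case is proved), and your argument is exactly the standard one used there, generalized to general $q$: expand $\|\mu-\nu\|_{L^2(\nu)}^2$ via two independent copies $R,R'$ of the cluster set, integrate out the uniform coordinates to produce the factor $q^{|R\cap R'|}$, and bound $\sum_{x_{R\cup R'}}\varphi_R(x_R)\varphi_{R'}(x_{R'})\le 1$ through the marginals on $R\cap R'$ (note the statement's ``$V\subset R$'' and ``law $\widetilde{\mu}$'' are typos for $V\setminus R$ and the law $\eta$, which is how you correctly read them).
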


\quad By applying Lemma \ref{mplem} to (\ref{l2distwithunif}) and combining with (\ref{l2distwithoutunif}), we obtain
\begin{equation}\label{l2distwithmp}
\max_{x_0}\| \mathbb{P}_{x_0}( X_t \in \cdot) -\pi \|^2_{L^2(\pi)} \leq 2 ~\sup_{\mathscr{H}_{\mathcal{G}}} \mathbb{E} \left[ q^{|V_R \cap V_{R'}|} ~\vert~ \mathscr{H}_{\mathcal{G}} \right] - 1,
\end{equation}
where $V_R$, $V_{R'}$ are i.i.d. copies of the variable $\cup_{A \in \mathcal{C}_{\textsc{Red} }} A $ conditioned on $\mathscr{H}_{\mathcal{G}}$.

\quad We will reduce the quantity $|V_{R}  \cap V_{R'} | $ to one that involves the $\Psi_A$ variables defined in Definition \ref{psidef}. This is done by the following lemma and its corollary due to  Lemma 2.3 and Corollary 2.4 of \cite{ls15}. 
Even though our definitions of update histories, history diagram and information percolation clusters differ in details from  \cite{ls15, ls16},   the two lemmas below can be proven exactly in the same way. 

\begin{lemma}[\cite{ls15}, Lemma 2.3]\label{couplinglem}
	Let $\{J_A : A\in V \} $ be a family of independent indicators satisfying $\mathbb{P}(J_A = 1) = \Psi_A$. The conditional distribution of red clusters given $\mathscr{H}_{{\mathcal{G}}}$ can be coupled such that
	\begin{equation*}
	\{ A: A \in \mathcal{C}_{{\mathcal{R}}} \} \subset \{A : J_A =1  \}.
	\end{equation*}
\end{lemma}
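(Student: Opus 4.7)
The plan is to implement the sequential-revelation coupling introduced in \cite{ls15}. Adjoin to the probability space an independent family of uniforms $\{U_A\}_{A \subset V}$, independent of the update sequence, and set $J_A := \mathbb{1}\{U_A \leq \Psi_A\}$; this immediately produces an independent Bernoulli family with $\mathbb{P}(J_A = 1) = \Psi_A$. The content of the lemma is then to realize the indicators $\mathbb{1}\{A \in \mathcal{C}_{\mathcal{R}}\}$ on the same enlarged space so that the event $A \in \mathcal{C}_{\mathcal{R}}$ forces $U_A \leq \Psi_A$.

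Fix an arbitrary enumeration $A_1, A_2, \ldots$ of the nonempty subsets of $V$ and define inductively a filtration with $\mathcal{F}_0 = \sigma(\mathscr{H}_{\mathcal{G}})$. At step $i$, set $p_i^\star := \mathbb{P}(A_i \in \mathcal{C}_{\mathcal{R}} \mid \mathcal{F}_{i-1})$, reveal $\mathbb{1}\{A_i \in \mathcal{C}_{\mathcal{R}}\} := \mathbb{1}\{U_{A_i} \leq p_i^\star\}$, and extend $\mathcal{F}_i := \sigma(\mathcal{F}_{i-1}, \mathbb{1}\{A_i \in \mathcal{C}_{\mathcal{R}}\})$. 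Since $U_{A_i}$ is independent of $\mathcal{F}_{i-1}$ and uniform on $[0,1]$, the revealed indicator has the correct conditional distribution, and hence the joint law of $(\mathbb{1}\{A_i \in \mathcal{C}_{\mathcal{R}}\})_{i \geq 1}$ matches the original law given $\mathscr{H}_{\mathcal{G}}$. The pointwise inequality $\mathbb{1}\{A_i \in \mathcal{C}_{\mathcal{R}}\} \leq J_{A_i}$ then reduces to the key estimate $p_i^\star \leq \Psi_{A_i}$.

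The bound $p_i^\star \leq \Psi_{A_i}$ is obtained by further conditioning on $\mathscr{H}_{A_i}^-$ together with the compatibility event $\{A_i \in \mathcal{C}_{\mathcal{R}}\} \cup \{A_i \subset V_{\mathcal{B}}\}$. The crucial observation is that, under this compatibility event, $A_i$ admits no $\sim_i$-connection to its complement; consequently, for any $A_j$ disjoint from $A_i$---which includes every index $j<i$ for which $\mathbb{1}\{A_j \in \mathcal{C}_{\mathcal{R}}\} = 1$, since distinct red clusters form a partition---the indicator $\mathbb{1}\{A_j \in \mathcal{C}_{\mathcal{R}}\}$ becomes a function of $\mathscr{H}_{A_i}^-$ alone. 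Hence, restricted to realizations compatible with $A_i$ being red or entirely blue, $\mathcal{F}_{i-1}$ is a coarsening of $\sigma(\mathscr{H}_{A_i}^-)$, and the tower property yields
\[
p_i^\star \leq \mathbb{E}\bigl[\,\mathbb{P}\bigl(A_i \in \mathcal{C}_{\mathcal{R}} \mid \mathscr{H}_{A_i}^-,\, \{A_i \in \mathcal{C}_{\mathcal{R}}\} \cup \{A_i \subset V_{\mathcal{B}}\}\bigr) \,\Big|\, \mathcal{F}_{i-1}\,\bigr] \leq \Psi_{A_i},
\]
where the final step invokes the supremum in \eqref{psidef}. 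The main obstacle is precisely this measurability argument: one must carefully track which prior revelations become $\mathscr{H}_{A_i}^-$-measurable under the compatibility conditioning, exploiting that the recursive construction of the history diagram localizes the dependence of $\mathbb{1}\{A_j \in \mathcal{C}_{\mathcal{R}}\}$ on $\mathscr{H}_{A_i}$ entirely to interactions between $A_i$ and its complement. Once this point is handled exactly as in \cite[Lemma~2.3]{ls15}, the monotone uniform coupling completes the proof.
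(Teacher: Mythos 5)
Your proposal is correct and follows essentially the same route as the paper, which itself proves nothing here beyond invoking the sequential-revelation coupling of \cite{ls15} (Lemma 2.3): i.i.d.\ uniforms drive both the monotone indicators $J_A=\mathbbm{1}\{U_A\leq \Psi_A\}$ and the recursively revealed cluster indicators, with the key bound $p_i^\star\leq \Psi_{A_i}$ coming from the definition of $\Psi_A$ once one notes that $\{A_i\in\mathcal{C}_{\mathcal{R}}\}$ is contained in the compatibility event and that, on that event, all previously revealed information (including $\mathscr{H}_{\mathcal{G}}$ and indicators of sets meeting $A_i$, which are then forced to $0$) is determined by $\mathscr{H}_{A_i}^-$. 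The only informality is that $\mathcal{F}_{i-1}$ is a sub-$\sigma$-algebra of $\sigma(\mathscr{H}_{A_i}^-)$ only in the trace sense on the compatibility event, so the tower-property display should be read as $\mathbb{P}(A_i\in\mathcal{C}_{\mathcal{R}},F)\leq \Psi_{A_i}\,\mathbb{P}(F)$ for $F\in\mathcal{F}_{i-1}$; this is exactly the bookkeeping carried out in the cited lemma, which you correctly identify.
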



\begin{corollary}[\cite{ls15}, Corollary 2.4]\label{couplingcor}
	Let $\{J_{A, A'} : A, A' \subset V \}$ be a family of independent indicators satisfying
	\begin{equation}\label{jdef}
	\mathbb{P}(J_{A,A'}=1) = \Psi_{A} \Psi_{A'} ~~~ for ~any~ A, A' \subset V.
	\end{equation}
	The conditional distribution of $(V_{\mathcal{R}}, V_{{\mathcal{R}}'} )  $ given $\mathscr{H}_{{\mathcal{G}}}$ can be  coupled to $J_{A,A'} $'s such that
	\begin{equation}\label{stochdom}
	| V_{{\mathcal{R}}} \cap V_{{\mathcal{R}}'} | ~\preceq ~
	\sum_{A \cap A' \neq \emptyset} |A \cup A' | J_{A, A'}.
	\end{equation}
\end{corollary}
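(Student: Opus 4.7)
The plan is to derive Corollary~\ref{couplingcor} as essentially a bivariate extension of Lemma~\ref{couplinglem}, exploiting the key structural fact that, conditioned on the green history $\mathscr{H}_{\mathcal{G}}$, the red-cluster structures of the two coupled copies $(X_t)$ and $(Y_t)$ are independent. (The pair $(V_{\mathcal{R}},V_{\mathcal{R}'})$ in the statement is, after all, defined in~\eqref{l2distwithmp} as an i.i.d.\ copy of the red-union variable conditioned on $\mathscr{H}_{\mathcal{G}}$.) Hence after conditioning we are really studying two independent realisations of the same red-cluster law, and we want to couple both simultaneously to a suitable independent family of Bernoulli indicators.

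First I would apply Lemma~\ref{couplinglem} to each realisation separately, using independent sources of randomness for the two copies: this produces two independent families $\{J_A\}_{A\subset V}$ and $\{J'_{A'}\}_{A'\subset V}$, each consisting of independent indicators with $\mathbb{P}(J_A=1)=\Psi_A$ and $\mathbb{P}(J'_{A'}=1)=\Psi_{A'}$, and satisfying the domination
$$\{A: A\in \mathcal{C}_{\mathcal{R}}\}\subset\{A: J_A=1\}, \qquad \{A': A'\in \mathcal{C}_{\mathcal{R}'}\}\subset\{A': J'_{A'}=1\}.$$
The natural candidate is then $J_{A,A'}:=J_A\cdot J'_{A'}$, which automatically has product marginal $\Psi_A\Psi_{A'}$ by the independence between the two families. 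To actually enforce that the whole family $\{J_{A,A'}\}_{A,A'}$ is independent (rather than merely having correct marginals, since the product family has shared factors), I would instead follow the order-by-order reveal argument used to prove Lemma~\ref{couplinglem}, applied to pairs of subsets: order all pairs $(A,A')$ arbitrarily and reveal them sequentially, noting that by the conditional independence of $V_{\mathcal{R}}$ and $V_{\mathcal{R}'}$ given $\mathscr{H}_{\mathcal{G}}$ together with the defining bound~\eqref{psidef},
$$\mathbb{P}\bigl(A\in\mathcal{C}_{\mathcal{R}},\, A'\in\mathcal{C}_{\mathcal{R}'}\,\big|\,\mathscr{H}_{\mathcal{G}},\text{earlier revealed pairs}\bigr)\le \Psi_A \Psi_{A'}$$
on the event that both clusters are still compatible with the past. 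This permits setting $J_{A,A'}=1$ whenever $A\in \mathcal{C}_{\mathcal{R}}$ and $A'\in \mathcal{C}_{\mathcal{R}'}$, and otherwise flipping an independent extra coin to top up to probability $\Psi_A\Psi_{A'}$, yielding a genuinely independent family with the desired coupling.

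For the stochastic domination~\eqref{stochdom}, observe that whenever $v\in V_{\mathcal{R}}\cap V_{\mathcal{R}'}$, there exist $A\in \mathcal{C}_{\mathcal{R}}$ and $A'\in \mathcal{C}_{\mathcal{R}'}$ with $v\in A\cap A'$; in particular $A\cap A'\neq \emptyset$ and the coupling above forces $J_{A,A'}=1$. Thus
$$V_{\mathcal{R}}\cap V_{\mathcal{R}'}\;\subset\; \bigcup_{\substack{A\cap A'\neq\emptyset\\ J_{A,A'}=1}} (A\cap A'),$$
and taking cardinalities with the trivial bound $|A\cap A'|\le |A\cup A'|$ yields the desired inequality.

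The main obstacle is the precise independence of the family $\{J_{A,A'}\}$: the obvious construction $J_{A}\cdot J'_{A'}$ does \emph{not} give independent indicators across overlapping pairs, so one cannot just invoke Lemma~\ref{couplinglem} twice and be done. The resolution, as sketched above, is to replay the proof of Lemma~\ref{couplinglem} in the bivariate setting, which works because of the conditional independence of the two coupled red-cluster structures given $\mathscr{H}_{\mathcal{G}}$ and because $\Psi_A$ is defined as a supremum over all conditioning outside $A$, so the pair-bound $\Psi_A\Psi_{A'}$ persists under any additional conditioning on previously revealed pairs. This explains the authors' remark that the proof is the same as in~\cite{ls15,ls16} despite the differing definitions of the update histories here.
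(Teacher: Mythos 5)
Your proof is correct and is essentially the argument the paper relies on: the paper itself gives no proof of this corollary, deferring to \cite{ls15} (Corollary 2.4), and that argument is exactly your bivariate replay of the sequential revealing scheme behind Lemma \ref{couplinglem} — the conditional independence of the two red-cluster structures given $\mathscr{H}_{\mathcal{G}}$ factorizes the pair probability into two single-copy bounds $\Psi_A$ and $\Psi_{A'}$, and the counting step $|V_{\mathcal{R}}\cap V_{\mathcal{R}'}|\le \sum_{A\cap A'\neq\emptyset}|A\cap A'|\,\mathbbm{1}_{\{A\in\mathcal{C}_{\mathcal{R}}\}}\mathbbm{1}_{\{A'\in\mathcal{C}_{\mathcal{R}'}\}}\le \sum_{A\cap A'\neq\emptyset}|A\cup A'|\,J_{A,A'}$ concludes, with your correct observation that the naive product family $J_AJ'_{A'}$ fails independence being precisely the reason the bivariate revealing is needed. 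The one phrase to tighten is ``conditioned on earlier revealed pairs'': the factorization via conditional independence is legitimate only because the revealing exposes per-copy information separately (a product-form filtration to which the earlier pair indicators are adapted), not because one conditions merely on the values of the earlier product indicators, which would destroy the product structure; your construction, replaying the single-copy revealing in each coordinate, does provide exactly this.
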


\quad We continue analysing (\ref{l2distwithmp}). Using $|A \cup A'|$  into $|A| + |A'|$, we get
\begin{equation}\label{maineq2}
\begin{split}
\sup_{\mathscr{H}_{{\mathcal{G}}}} \mathbb{E} \left. \left[q^{| V_{R} \cap V_{R'} | }  ~ \right\vert \mathscr{H}_{{\mathcal{G}}} \right]
&\leq \mathbb{E} \left[ \exp \left( 	\log q  \mspace{4mu} \sum_{A \cap A' \neq \emptyset}( |A|+ |A'|) \: J_{A,A'} \right)  \right] \\
&= \prod_{A \cap A' \neq \emptyset}  \mathbb{E} \left[ \exp \left( 	\log q  \mspace{4mu} (|A|+| A' |) \: J_{A,A'}  \right)  \right] ,
\end{split}
\end{equation}
with the equality due to the independence of $J_{A,A'}$'s. Note that we have eliminated the conditioning on $\mathscr{H}_{\mathcal{G}}$. By the definition of $J_{A,A'}$'s in (\ref{jdef}),
\begin{equation}\label{maineq3}
\begin{split}
\prod_{A \cap A' \neq \emptyset}  \mathbb{E} \left[ \exp \left( 	\log q  \mspace{4mu} (|A|+| A' |)  J_{A,A'}  \right)  \right]&\leq \prod_{v}\prod_{A \cap A' \neq \emptyset} \left[ \left( q^{|A|+|A'|} -1 \right) \Psi_A \Psi_{A'} +1 \right]\\
&\leq \exp \left(n \left(\sum_{A \ni v} q^{|A|} \Psi_A \right)^2 \right) ,
\end{split}
\end{equation}
where the last inequality follows from the symmetry of $G$.
Thus, by referring to (\ref{l2distwithmp}), we conclude that
\begin{equation}\label{maineq4}
\max_{x_0} \| \mathbb{P}_{x_0} ( X_{t} \in \cdot ) - \pi \|^2_{L^2(\pi)} \leq 2 \mspace{4mu} \exp \left(n \left(\sum_{A \ni v} q^{|A|} \Psi_A \right)^2 \right) -1 .
\end{equation}

\quad Implementing Lemma \ref{psilem} with $\theta = \log (4q ed) $ we obtain that
\begin{equation*}
\begin{split}
\sum_{A\ni v} q^{|A|} \Psi_A &\leq M (3edp)^{t -\frac{1}{2}} \sum_{k \geq 1} \sum_{\substack{A\ni v \\ \mathfrak{M}(A)=k }  } q^{k} e^{-\theta k} \\
&\leq  M (3edp)^{t -\frac{1}{2}} \sum_{k \geq 1} (2q d e^{-\theta+1})^k \leq M (3edp)^{t -\frac{1}{2}} ,
\end{split}
\end{equation*}
where we bounded the number $|\{A: A\ni v, \, \mathfrak{M}(A)=k \}| $ in the same way as (\ref{sumpsi}). Thus, plugging in $t = t_0$ which is defined in the statement of Proposition \ref{l2prop} gives us that
\begin{equation*}
2 \mspace{3mu} \exp \left(n \left(\sum_{A \ni v} q^{|A|} \Psi_A \right)^2 \right) -1 \leq 2 \mspace{3mu} \exp \left(\frac{M^2}{n (3edp)} \right) -1 \leq 2 ,
\end{equation*}
for all large enough $n$. Together with (\ref{maineq4}), we deduce (\ref{mainpropineq}). \qed

\subsection{Proof of Lemma \ref{psilem}}\label{secinfoperc3} Fix an arbitrary subset $A$ of $V$ and recall the definition of $\Psi_A$ in (\ref{psidef}). To estimate $\Psi_A$, we will first fix $\mathscr{H}_A^- $ to be equal to some history $\mathcal{X}$ and later take supremum over $\mathcal{X}$. For the validity of the definition of $\Psi_A$, pick any history diagram $\mathcal{X}$ that is disjoint with $ A \times \{t -\frac{1}{2} \}  $.

\quad For a given subset $S \subset V$, let $\mathcal{C}_{{\mathcal{R}}(S)}^* $ denote the collection of red clusters that arise when exposing the joint histories of $S $ (for instance, $\mathcal{C}_{{\mathcal{R}}} = \mathcal{C}_{{\mathcal{R}}(V)}^*  $). Let $V_{{\mathcal{R}}(S)}^*$ be the union of the members of $\mathcal{C}_{{\mathcal{R}}(S)}^* $ and define $\mathcal{C}_{{\mathcal{B}}(S)}^*, ~V_{{\mathcal{B}}(S)}^*$ analogously. Observe the following two identities given that $\mathscr{H}_A^- = \mathcal{X} $:
\begin{equation*}
\begin{split}
\{ A \in \mathcal{C}_{\mathcal{R}}  \} ~&=~ \{ A\in \mathcal{C}_{\mathcal{R}(A)}^*  \} \cap \{\mathscr{H}_A \cap \mathcal{X} = \emptyset  \};\\
\{ A \subset \mathcal{C}_{\mathcal{B}}  \} ~&=~ \{ A \subset \mathcal{C}_{{\mathcal{B}}(A)}^*  \} \cap \{\mathscr{H}_A \cap \mathcal{X} = \emptyset  \}
=~ \{ A \subset \mathcal{C}_{{\mathcal{B}}(A)}^*  \}.
\end{split}
\end{equation*}

\noindent Indeed, $A \in \mathcal{C}_{{\mathcal{R}}}$ clearly implies both $A\in \mathcal{C}_{{\mathcal{R}}(A)}^* $ and $\mathscr{H}_A \cap \mathcal{X} = \emptyset  $. On the other hand, if $A\in \mathcal{C}_{{\mathcal{R}}(A)}^* $, then $A$ should be a subset of a red cluster in $\mathcal{C}_{{\mathcal{R}}}$. Then $\mathscr{H}_A \cap \mathcal{X} = \emptyset$ imposes the history of $A$ to be disjoint with that of $V\setminus A $, and hence $A$ itself is a red cluster in $ \mathcal{C}_{{\mathcal{R}}}$.
Thus, we find out that  $\mathbb{P} ( A \in \mathcal{C}_{{\mathcal{R}}} \mid \mathscr{H}_A^- = \mathcal{X} , ~ \{A \in \mathcal{C}_{{\mathcal{R}}} \} \cup \{A \subset V_{{\mathcal{B}}} \} )$ is equal to
\begin{equation}\label{quotientrep}
\frac{\mathbb{P} ( A \in \mathcal{C}_{{\mathcal{R}}(A)}^* , ~\mathscr{H}_A \cap \mathcal{X} = \emptyset  \mid \mathscr{H}_A^- = \mathcal{X} )}{\mathbb{P}( \{A \in \mathcal{C}_{{\mathcal{R}}(A)}^* \} \cup \{A \subset V_{{\mathcal{B}}(A)}^* \} , ~\mathscr{H}_A \cap \mathcal{X} = \emptyset  \mid \mathscr{H}_A^- = \mathcal{X})}.
\end{equation}

\quad We start by estimating $\mathbb{P} ( A \subset V_{{\mathcal{B}}(A)}^*, ~ \mathscr{H}_A \cap \mathcal{X} = \emptyset \mid \mathscr{H}_A^- = \mathcal{X}) $, which is at most the denominator of (\ref{quotientrep}).
As discussed in Remark \ref{bluermk}, the only possible way for $\{A \subset V_{{\mathcal{B}}(A)}^*\}$ to happen is when every $v \in A$ is isolated in the graph $(V,\bar{\omega}_{t_\star -1})$, which is to close all the edges adjacent to $v$ in $\bar{\omega}_{t_\star -1} $. Since the underlying graph is transitive with degree $d$, we get
\begin{equation}\label{lowerbddenom}
\mathbb{P} ( A \subset V_{{\mathcal{B}}} \mid \mathscr{H}_A^- = \mathcal{X}) \geq (1-p)^{d |A|},
\end{equation}
where the bound being is independent of $\mathcal{X}$.

\quad Let us turn our attention to bounding $\mathbb{P} ( A \in \mathcal{C}_{{\mathcal{R}}(A)}^*, ~\mathscr{H}_A \cap \mathcal{X} = \emptyset \mid \mathscr{H}_A^- = \mathcal{X})$ from above. We start by obtaining
\begin{equation}\label{numtrans}
\begin{split}
&\mathbb{P} ( A \in \mathcal{C}_{{\mathcal{R}}(A)}^* , ~\mathscr{H}_A \cap \mathcal{X} = \emptyset  \mid \mathscr{H}_A^- = \mathcal{X} )\\
&~~~= \mathbb{P} ( A \in \mathcal{C}_{{\mathcal{R}}(A)}^* , ~\mathscr{H}_A \cap \mathcal{X} = \emptyset )~\leq~ \mathbb{P} ( A \in \mathcal{C}_{{\mathcal{R}}(A)}^*),
\end{split}
\end{equation}
where the first equation comes from the observation that  $\{A \in \mathcal{C}_{{\mathcal{R}}(A)}^*\} \cap \{ \mathscr{H}_A \cap \mathcal{X} = \emptyset\}$ is independent of  $ \{\mathscr{H}_A^- = \mathcal{X}\}$.

\quad Now we estimate the quantity $ \mathbb{P} ( A \in \mathcal{C}_{{\mathcal{R}}(A)}^*)$. Set $W_t = |\mathscr{H}_A(t_\star -t)| $ for each integer $t=0,1, \ldots, t_\star$ and $W_{t_\star +1}$=0. We also define
\begin{equation*}
T := \max \{0 \leq t \leq t_\star : ~ |\mathscr{H}_A (t) |=2 \},
\end{equation*}
and let $T = -1$ if $ |\mathscr{H}_A (t) | \geq 3$ for all $0\leq t \leq t_\star$. Note that on the event $ \{A \in \mathcal{C}_{\mathcal{R}(A)}^*\}$, $T$ is well-defined. Also, Recall that there cannot be a case with $W_t = 1$, since it can either be zero or at least $2$. Observe that if  $A \in \mathcal{C}_{\mathcal{R}(A)}^*$, then the following two events should also occur:

\begin{enumerate}
	\item [\textbf{1.}] The history starting from $\mathscr{H}_A (T)$ survives survive until $t=0$.
	
	\item [\textbf{2.}]The total number of spatial edges in $\{\mathscr{H}_A(s+\frac{1}{2}) :  s= T,  \ldots,\, t_\star -1 \} $ is at least $\mathfrak{M}(A)-1.$
\end{enumerate}

\quad The first event comes from the definition of red clusters. For the second one, note that if we project the histories $\{\mathscr{H}_A(s+\frac{1}{2}) : ~ s= T,  \ldots, t_\star -1 \}$ on the spatial space $G=(V,E)$, then we should have a connected subgraph of $G$ that contains $A$. Since the number of edges in such a graph is at least $\mathfrak{M}(A)-1$, the second event  should also take place. (Recall that $\mathfrak{M}$ denotes the size of the minimal connected subgraph containing $A$.)

\begin{claim}\label{claim:expdecay}
	For any $p>0$ satisfying $edp \leq \frac{1}{2}$, the probability of the first event conditioned on $T$ is at most $2(3edp)^T\wedge 1$.
\end{claim}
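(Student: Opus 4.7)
The plan is to reduce the survival probability to a first-moment estimate via Markov's inequality, and then to bound the expected size of $\mathscr{H}_A(t)$ by a geometric contraction as $t$ decreases from $T$ to $0$. Concretely, since the event in question is $\{\mathscr{H}_A(0) \neq \emptyset\}$, we have
\[ \mathbb{P}\bigl(\mathscr{H}_A(0) \neq \emptyset \,\big|\, T\bigr) \;\leq\; \mathbb{E}\bigl[|\mathscr{H}_A(0)| \,\big|\, T\bigr], \]
so it suffices to show $\mathbb{E}[|\mathscr{H}_A(0)| \mid T] \leq 2(3edp)^T$; the minimum with $1$ is then automatic from the trivial upper bound on a probability.

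For the one-step contraction, I unwind Definition~\ref{historydiag} to see that the update $\mathscr{H}_A(t) \mapsto \mathscr{H}_A(t-1)$ depends only on $\bar\omega_{t-1}$. Writing $\bar{C}_u^{s}$ for the connected component of $u$ in $(V,\bar\omega_s)$, one first expands to $\mathscr{H}_A(t-\tfrac{1}{2}) = \mathscr{H}_A(t) \cup \bigcup_{u \in \mathscr{H}_A(t),\,|\bar{C}_u^{t-1}|\geq 2} \bar{C}_u^{t-1}$ and then restricts to the non-oblivious vertices, which yields the clean description
\[ \mathscr{H}_A(t-1) \;=\; \bigcup_{\substack{u \in \mathscr{H}_A(t) \\ |\bar{C}_u^{t-1}| \geq 2}} \bar{C}_u^{t-1}. \]
Hence $|\mathscr{H}_A(t-1)| \leq \sum_{u \in \mathscr{H}_A(t)} |\bar C_u^{t-1}| \mathbbm{1}_{\{|\bar C_u^{t-1}|\geq 2\}}$, and since $\bar\omega_{t-1}$ is independent of $\mathscr{H}_A(t)$, taking conditional expectation reduces the problem to bounding $\mathbb{E}[|\bar C|\mathbbm{1}_{\{|\bar C| \geq 2\}}]$ for the cluster $\bar C$ of a single fixed vertex. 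Using $|\bar C|-1 \leq |E(\bar C)|$ together with $\mathbb{P}(|\bar C|\geq 2) \leq 1-(1-p)^d \leq dp$, and applying the cover-tree domination $\mathbb{E}[|E(\bar C)|]\leq\mathbb{E}[|E(\Gamma)|]\leq 2edp$ already established in the proof of Lemma~\ref{gapofswd}, I obtain
\[ \mathbb{E}\bigl[|\bar C| \mathbbm{1}_{\{|\bar C|\geq 2\}}\bigr] \;\leq\; \mathbb{E}[|E(\bar C)|] + \mathbb{P}(|\bar C|\geq 2) \;\leq\; 2edp + dp \;\leq\; 3edp, \]
so that $\mathbb{E}[|\mathscr{H}_A(t-1)| \mid \mathscr{H}_A(t)] \leq 3edp\cdot|\mathscr{H}_A(t)|$.

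To iterate, let $\mathcal{F}_t := \sigma(\bar\omega_{t_\star-1}, \ldots, \bar\omega_t)$. Then $\mathscr{H}_A(s) \in \mathcal{F}_s \subset \mathcal{F}_t$ for $s \geq t$, so in particular both $T$ and $\mathscr{H}_A(T)$ are $\mathcal{F}_T$-measurable, while $\bar\omega_{t-1}$ is independent of $\mathcal{F}_t$; the one-step bound therefore upgrades to $\mathbb{E}[|\mathscr{H}_A(t-1)| \mid \mathcal{F}_t] \leq 3edp\cdot|\mathscr{H}_A(t)|$. Setting $M_t := (3edp)^{-(T-t)} |\mathscr{H}_A(t)|$ for $0 \leq t \leq T$, the reverse-time sequence $(M_{T-k})_{k=0}^{T}$ is a nonnegative supermartingale with $M_T = 2$, whence $\mathbb{E}[M_0 \mid \mathcal{F}_T] \leq 2$, i.e.\ $\mathbb{E}[|\mathscr{H}_A(0)| \mid \mathcal{F}_T] \leq 2(3edp)^T$. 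Taking conditional expectation with respect to the coarser $\sigma$-algebra $\sigma(T) \subset \mathcal{F}_T$ and applying Markov's inequality completes the proof. The main subtlety is cleanly isolating the Markov structure at the data-dependent time $T$: one must verify that $T$ is $\mathcal{F}_T$-measurable and that conditioning on $T$ does not perturb the law of $\bar\omega_s$ for $s<T$, so that the geometric contraction proceeds uniformly over the realized value of $T$.
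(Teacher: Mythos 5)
Your proof is correct and follows essentially the same route as the paper: Markov's inequality plus a first-moment bound showing that the history size contracts in expectation by a factor $3edp$ per step (via the $(ed)^k p^k$ cluster-count estimate), iterated over the $T$ steps below time $T$ starting from $|\mathscr{H}_A(T)|=2$. The only difference is packaging—the paper phrases the iteration as stochastic domination of $W_t$ by a subcritical Galton--Watson process, while you run the same contraction directly as a conditional-expectation/supermartingale argument at the stopping time $T$.
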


\begin{proof}
	Suppose that the history $\mathscr{H}_v$ from time $t$ to $s$ has been revealed and $(u,s) \in \mathscr{H}_v$. Then for any $k\geq1$, the probability that $(u,s)$ branches out to $k+1$ children at time $s-1$ is bounded by $(edp)^k$, as in Lemma \ref{gapofswd}. Therefore, we can stochastically dominate $W_t$ by  Galton-Watson branching process as follows. 
	
	\quad Let $\{\xi^{(i)} : i\in \mathbb{N} \}$ be the collection of i.i.d. random variables on integers with the distribution given by $\{r(k): k\in \mathbb{N} \}$ such that $r(k+1) = (edp)^k$ for $k\geq 2$, $r(1)=0$ and $r(0)=1-\sum_{k\geq 2} r(k)$. Then for each $t$, conditioned on $W_t$, we have
	\begin{equation}\label{gwrep}
	W_{t+1 } \preceq \xi^{(1)} + \ldots + \xi^{(W_t)}.
	\end{equation}
	Hence by Markov's inequality, the probability of the event \textbf{1} is bounded by $\mathbb{E} [W_{t_{\star}} | T] $. Thus, we conclude the proof by noticing
	\begin{equation}
	\mathbb{E}[W_{t_{\star}} |T] = 2\mathbb{E}[\xi^{(1)}]^T \leq 2(3edp)^T,
	\end{equation}
	where the last inequality holds for any positive $p$ with $edp\leq \frac{1}{2}$.
\end{proof}

\quad Note that the first event is independent of the histories from time $T+\frac{1}{2}$ to $t_\star - \frac{1}{2}$, whereas the second one is measurable with respect to these. Also, the event \textbf{2}  clearly implies 
\begin{equation}
\sum_{j=1}^{t_\star -T} W_j \geq \mathfrak{M}(A)+t_\star -T -1 \geq \mathfrak{M}(A),
\end{equation}
since the graph $\mathscr{H}_A (t_\star -t + \frac{1}{2})$ has at least $W_t-1$ edges. Therefore we obtain,
\begin{equation}\label{mainlemcomputation0}
\begin{split}
\mathbb{P} ( A \in \mathcal{C}_{{\mathcal{R}}(A)}^*) \leq \mathbb{E} \left[ \{2(3edp)^T\wedge 1 \}\; \mathbbm{1}_{\{W_1 + \ldots + W_{t_\star -T} \:\geq\: \mathfrak{M}(A) \} } \right].
\end{split}
\end{equation}
As we proceed, we get
\begin{equation}\label{mainlemcomputation1}
\begin{split}
\mathbb{E} &\left[ \{2(3edp)^T\wedge 1 \} \mspace{3mu} \mathbbm{1}_{\{W_1 + \ldots + W_{t_\star -T} \:\geq\: \mathfrak{M}(A)  \} } \right] \\
&\leq \sum_{k=0}^{t_\star} \mathbb{E} \left[ 2(3edp)^k \mspace{3mu} \mathbbm{1}_{\{W_1 + \ldots + W_{t_\star -T} \:\geq\: \mathfrak{M}(A)  \}} \mspace{3mu} \mathbbm{1}_{\{T=k \}}  \right]\\
&~~~~+\mspace{3mu}\mathbb{E} \left[ \mspace{3mu} \mathbbm{1}_{\{W_1 + \ldots + W_{t_\star } \:\geq\: \mathfrak{M}(A)  \}} \mspace{3mu} \mathbbm{1}_{\{T = -1 \}} \right].
\end{split}
\end{equation}

\noindent Note that the event $\{T =k \}$ implies $\{W_t \geq 3 \}$ for all $0 \leq t \leq t_\star - k -1$ and $\{W_{t_\star -k} \geq 2 \}$. We also introduce a constant $C$ satisfying $pC <1$ whose precise value will be specified later. Then, the r.h.s. of (\ref{mainlemcomputation1}) is at most
\begin{equation}\label{mainlemcomputation2}
\begin{split}
&\sum_{k=0}^{t_\star} \mathbb{E} \left[ 2(3edp)^k \mspace{3mu} \mathbbm{1}_{\left\{\sum_{j=1}^{t_\star-k} W_j \mspace{3mu}\geq\mspace{3mu} \mathfrak{M}(A) \right\}} \mspace{3mu} \mathbbm{1}_{\{W_1 , \mspace{1mu}\ldots\mspace{1mu}, \mspace{1mu} W_{t_\star -k -1} \geq 3 \}}  \mathbbm{1}_{\{ \mspace{1mu} W_{t_\star -k} \geq 2 \}} \right]\\
&~~+\mspace{3mu} \mathbb{E} \left[ \mspace{3mu} \mathbbm{1}_{\left\{\sum_{j=1}^{t_\star} W_j \mspace{3mu}\geq\mspace{3mu} \mathfrak{M}(A) \right\}}\mspace{3mu} \mathbbm{1}_{\{W_1 , \mspace{1mu}\ldots\mspace{1mu}, \mspace{1mu} W_{t_\star} \geq 3 \}} \right]\\
&\leq  \sum_{k=0}^{t_\star} 2(3edp)^k e^{-\lambda \mathfrak{M}(A) } (pC)^{3 t_\star -3k -1} \mathbb{E} \left[ \left(\frac{e^{\lambda}}{ pC} \right)^{ \sum_{j=1}^{t_\star-k} W_j   }  \right]\\
&~~~~~~~~+ e^{-\lambda \mathfrak{M}(A)} (pC)^{3t_\star} \mathbb{E} \left[ \left(\frac{e^{\lambda}}{ pC} \right)^{\sum_{j=1}^{t_\star} W_j   }  \right],
\end{split}
\end{equation}
where we used either  $\mathbbm{1} \{X \geq x \} \leq e^{\lambda (X-x)} $ or $\mathbbm{1} \{X \geq x \} \leq (pC)^{-(X-x)} $. Now we bound the terms $W_t$ as follows, based on the domination by the Galton-Watson process as in (\ref{gwrep}):
\begin{equation}\label{gwbound}
\begin{split}
\mathbb{E}& \left[ \left. \left(e^{2 \lambda} (pC)^{-1} \right)^{W_{t+1}} \right| W_t  \right] \leq \left[ 1 + \sum_{k \geq 1} \left(e^{2 \lambda} (pC)^{-1} \right)^{k+1} (edp)^k \right]^{W_t} \\
&~= \left(1+ \frac{e^{2\lambda}}{pC} \sum_{k \geq 1} \left( \frac{de^{2\lambda +1}}{C}\right)^k \right)^{W_t}
\leq \; \exp \left(\frac{2d e^{4\lambda +1}}{pC^2}  W_t \right) \leq \; e^{\lambda W_t},
\end{split}
\end{equation}
where we picked $C$ such that $\frac{2d e^{4\lambda +1}}{pC^2} = 1 \wedge \lambda$. Using (\ref{gwbound}), we deduce that
\begin{equation*}
\begin{split}
\mathbb{E}& \left[ \left(e^{\lambda} (pC)^{-1} \right)^{\sum_{j=1}^{t_\star-k} W_j  }  \right]~\leq \mathbb{E} \left[ \left(e^{\lambda} (pC)^{-1} \right)^{\sum_{j=1}^{t_\star-k-1} W_j  }  \cdot e^{\lambda W_{t_\star -k-1}} \right]\\
&~~~~~~~~~~~~~~~~=\mathbb{E} \left[ \left(e^{\lambda} (pC)^{-1} \right)^{\sum_{j=1}^{t_\star-k-2} W_j   }  \cdot \left(e^{2\lambda} (pC)^{-1} \right)^{\lambda W_{t_\star -k-1}} \right],
\end{split}
\end{equation*}
and iterating this inequality gives
\begin{equation}\label{mainlemcomputation3}
\begin{split}
\mathbb{E} \left[ \left(e^{\lambda} (pC)^{-1} \right)^{\sum_{j=1}^{t_\star-k} W_j  }  \right]
\leq \mathbb{E} \left[\left(e^{2 \lambda} (pC)^{-1} \right)^{W_1} \right] \leq \left(1 + \frac{2de^{4\lambda +1}}{pC^2} \right)^{W_0} \leq \;2^{|A|}.
\end{split}
\end{equation}
Finally,  (\ref{mainlemcomputation0})--(\ref{mainlemcomputation2}) combined with (\ref{mainlemcomputation3}) implies that
\begin{equation}\label{mainlemcomputaiton4}
\begin{split}
\mathbb{P} ( A \in \mathcal{C}_{{\mathcal{R}}(A)}^*) 
&\leq \left\{ \sum_{k=0}^{t_\star}  (pC)^{ -3k -1} (3edp)^k 2^{|A|}\; + \;  2^{|A|} \right\} e^{-\lambda \mathfrak{M}(A)} (pC)^{3t_\star} \\
&= \left\{  (pC)^{-1} 
\sum_{k=0}^{t_\star} \left(\frac{3ed}{p^2C^3} \right)^k 
+  1 \right\}e^{-(\lambda -1) \mathfrak{M}(A) }(pC)^{3t_\star}\\
&\leq 2 \, (pC)^{-1} (3edp)^{t_\star} e^{-(\lambda -1) \mathfrak{M}(A)} \leq \sqrt{6} e^{-2\lambda} (3edp)^{t_\star -\frac{1}{2}} e^{-(\lambda -1) \mathfrak{M}(A)},
\end{split}
\end{equation}
where we used $2^{|A|} \leq e^{\mathfrak{M}(A)} $ and $(ed/p^2C^3) >1$ by making $p$ smaller if needed. If we substitute $\lambda$ by $\widetilde{\lambda}+1$ in (\ref{mainlemcomputaiton4}) and set
\begin{equation*}
C(\widetilde{\lambda}, d,p)= (2d e^{4\widetilde{\lambda} +5}/p)^{1/2}; \;\;  M(\widetilde{\lambda}) = 3 e^{-2 \widetilde{\lambda}-2}; \;\;  p_0 (\widetilde{\lambda}, d) = \frac{1}{8d e^{12\widetilde{\lambda} + 13}},
\end{equation*}

\noindent then we deduce that for any $p<p_0$,
\begin{equation}\label{mainlemcomputation5}
\mathbb{P} ( A \in \mathcal{C}_{{\mathcal{R}}(A)}^*)
\leq M (3edp)^{t_\star -\frac{1}{2}} e^{\widetilde{\lambda}\mathfrak{M}(A) }.
\end{equation}

\noindent By combining (\ref{mainlemcomputation5}) and (\ref{lowerbddenom}), we have that
\begin{equation*}
\Psi_A ~ \leq ~M (3edp)^{t_\star -\frac{1}{2}} \left(e^{-\widetilde{\lambda}} (1- p)^{-d} \right)^{\mathfrak{M}(A)}.
\end{equation*}

\quad Therefore, by choosing $\widetilde{\lambda}$ that satisfies $e^{-\theta} = e^{-\widetilde{\lambda}} (1- p_0)^{-d} $, we obtain the conclusion that there are $M=M(\theta)$ and $p_0 = p_0 (\theta, d)$ such that
\begin{equation*}
\Psi_A ~ \leq ~ M (3edp)^{t_\star -\frac{1}{2}} e^{-\theta \, \mathfrak{M}(A)}
\end{equation*}
holds true for all $p<p_0$. \qed

\section{Reducing $L^1$ mixing to $L^2$-local mixing}\label{secl1l2}

We turn our attention to the case when the underlying graph has the lattice structure: having the lattice points as vertices and the nearest-neighbor connections as edges. In this section, we show that the total-variation distance of the Swendsen-Wang dynamics on $\mathbb{Z}_n^d$ from stationarity is essentially controlled by the $L^2$-distance of the dynamics on a smaller lattice $\mathbb{Z}_r^d$ from its stationarity.

\quad More precisely, consider the Swendsen-Wang dynamics $(X_t)$ for the $q$-Potts model on $\mathbb{Z}_n^d$, the $d$-dimensional lattice of side-length $n$ and with periodic boundary conditions, and let $\pi$ denote the Potts measure on $\mathbb{Z}_n^d$. Also, we assume that the parameter $p$ satisfies $p<p_0$, where $p_0$ is the constant given in Theorem \ref{l2thm}. Further, consider such a chain defined on a smaller lattice, namely ($X^{\dagger}_t$) on $\mathbb{Z}_r^d$ for $r= 3\log^5 n$, and let $\pi^{\dagger}$ denote its stationary distribution ($\mathbb{Z}_r^d$ is also endowed with periodic boundary conditions). Inside $\mathbb{Z}_r^d$, let $\Lambda \subset \mathbb{Z}_r^d$ be a subcube of side-length $2\log^5 n$, whose precise location is not of an interest due to the periodic boundary conditions. Define
\begin{equation}\label{mtdef}
\textbf{m}_t := \max_{x_0 \,\in \, \Omega_{\mathbb{Z}_r^d}} \left\| \mathbb{P}_{x_0} \left(X^{\dagger}_t (\Lambda) \in \, \cdot \, \right) - \pi^{\dagger}_{\Lambda} \right\|_{L^2 (\pi^{\dagger}_{\Lambda})},
\end{equation}

\noindent where $X_t^{\dagger}(\Lambda)$ and $\pi^{\dagger}_{\Lambda}$ denote the projection of $X_t^{\dagger}$ and $\pi^{\dagger}$ onto the cube $\Lambda$, respectively. Our goal is to prove Theorem \ref{l1l2thm}  which explains how the quantity $\textbf{m}_t$ governs the total-variation distance of $X_t$ from stationarity. Its statement is a variant of Theorem 3.1 of \cite{ls13}.

\begin{remark}\label{rrmk}
	\textnormal{We actually have a lot of freedom in the choice of $r$. Indeed, all the results in this section are identically applicable to any poly-logarithmic quantity which is at least $\log^{4+\delta} n$, for any positive constant $\delta$. This fact turns out to be useful later on in \S\ref{secfinal} when we specify the cutoff location as stated in Theorem \ref{thm1}. }
\end{remark}

\begin{theorem}\label{l1l2thm}
	Let $X_t$ be the Swendsen-Wang dynamics for the $q$-Potts model on $\mathbb{Z}_n^d$ with $p<p_0$, define $\textbf{m}_t$ as in (\ref{mtdef}), and set $p_{\star} = \frac{1}{\log (1/4edp)}$. Then the following holds true:
	
	\begin{enumerate}
		\item  Let $s=s(n)$ and $t=t(n)$ satisfy $(10d p_{\star}) \log \log n \leq s \leq \log^{4/3} n$ and $0<t \leq \log^{4/3} n$. For sufficiently large $n$,
		\begin{equation*}
		\max_{x_0 } \left\| \mathbb{P}_{x_0} \left( X_{t+s} \in \,\cdot \,\right) - \pi \right\|_{{ \textnormal{\tiny TV}}}
		\leq \frac{1}{2} \left[\exp \left(\left(n /\log^7 n \right)^d \textbf{m}_t^2 \right) -1 \right]^{\frac{1}{2}} + n^{-9d}.
		\end{equation*}
		In particular, if $\;(n/\log^7 n)^d \textbf{m}_t^2 \rightarrow 0\;$ as $n \rightarrow \infty$ for the above choice of $s$ and $t$, then
		\begin{equation*}
		\limsup_{n\rightarrow \infty} \max_{x_0 } \left\| \mathbb{P}_{x_0} \left( X_{t} \in \,\cdot \,\right) - \pi \right\|_{{ \textnormal{\tiny TV}}}=0.
		\end{equation*}

		\item If $\; (n/ 3\log^5 n)^d \textbf{m}_t^2 \rightarrow \infty\;$ for some $t \geq (20d p_{\star}) \log\log n$, then
		\begin{equation*}
		\liminf_{n\rightarrow \infty} \max_{x_0 } \left\| \mathbb{P}_{x_0} \left( X_{t} \in \,\cdot \,\right) - \pi \right\|_{{ \textnormal{\tiny TV}}}=1.
		\end{equation*}
	\end{enumerate}
\end{theorem}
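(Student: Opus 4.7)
The plan is to follow the $L^1$-to-$L^2$ reduction strategy of~\cite{ls13}, substituting the information percolation framework of Section~\ref{secinfoperc} for the monotone coupling that drove the analysis there.

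For the upper bound in (1), I first run the dynamics to time $t$ and use the extra $s \asymp \log\log n$ steps to break long-range dependencies. During the interval $[t,t+s]$ I expose histories backwards in time and define an \emph{update support} $U \subset \mathbb{Z}_n^d$ such that $X_{t+s}$ is a deterministic function of $X_t(U)$ together with the update sequence on $[t,t+s]$. The subcriticality of the information percolation (the single-vertex analogue of the estimate behind Lemma~\ref{psilem}) dominates each single-site history by a subcritical Galton--Watson cluster, so with probability at least $1 - n^{-10d}$ the following hold: (a)~every single-vertex history has spatial diameter $O(s)$, and (b)~after tiling $\mathbb{Z}_n^d$ into blocks $\{B_j\}$ of side-length $r = 3\log^5 n$, for all but a $\log^{-2d} n$-fraction of the $B_j$'s the union of histories landing in $B_j$ stays inside $B_j$ and $|U \cap B_j|$ is small. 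Discarding blocks with a boundary-proximity issue leaves $N = (1+o(1))(n/\log^7 n)^d$ \emph{good} blocks, which accounts for the shift from $\log^5$ to $\log^7$.

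Conditioned on a realization of $U$ with these properties, on each good block the restriction of $X_{t+s}$ to $U \cap B_j$ has the same law as the $L^2$-local chain on $\mathbb{Z}_r^d$ run for $t$ steps projected to $\Lambda$, and across distinct good blocks the sub-chains are mutually independent. The $L^2$-distance of each sub-chain from its stationary distribution is bounded by $\textbf{m}_t$, so tensorization of the $\chi^2$-divergence over products yields $\| \mu - \pi \|_{L^2(\pi)}^2 \le (1 + \textbf{m}_t^2)^N - 1 \le \exp(N \textbf{m}_t^2) - 1$. Combining this with $\|\mu - \pi\|_{\mathrm{TV}} \le \tfrac12 \|\mu - \pi\|_{L^2(\pi)}$ and absorbing the $n^{-10d}$ failure probability of the sparse-support event into $n^{-9d}$ gives the stated bound; the second assertion in (1) is an immediate consequence of the first since $s = o(t)$ can be chosen within the allowed window.

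For the lower bound in (2), I would exhibit a distinguishing statistic from per-block witnesses. Pick $x_0^\ast \in \Omega_{\mathbb{Z}_r^d}$ and $f : \Omega_\Lambda \to \mathbb{R}$ mean-zero and unit-variance under $\pi^\dagger_\Lambda$ with $\mathbb{E}_{x_0^\ast}[f(X^\dagger_t(\Lambda))] \ge \textbf{m}_t$. Tile $\mathbb{Z}_n^d$ into blocks of side-length $r$, start from the product initial condition equal to $x_0^\ast$ on each block, and consider $S = \sum_j f(X_t(\Lambda_j))$. Since $t \ge (20d\,p_\star)\log\log n$ is long enough for the same sparse-support construction to couple the blockwise projections of $X_t$ to a product of per-block chains up to error $n^{-\Omega(d)}$, the variance of $S$ is $O((n/r)^d)$ under both the starting measure and $\pi$, while the difference of means is at least $(n/r)^d \textbf{m}_t$. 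The hypothesis $(n/r)^d \textbf{m}_t^2 \to \infty$ makes the mean gap dominate the standard deviation, and Chebyshev separates the two laws of $S$ in total variation.

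The main obstacle will be the rigorous construction of $U$ in the Swendsen--Wang setting: unlike single-site Glauber where each update touches one vertex, a Swendsen--Wang update recolors arbitrarily large percolation clusters in one step, so the locality needed to decouple the chain across blocks must be extracted from a subcritical Galton--Watson domination of the cluster growth together with a union bound over vertices. The high-temperature assumption $p < p_0$ (equivalently $edp \ll 1$) is essential, and the choice $s \asymp \log\log n$ is calibrated so that the survival probability of any single-vertex history falls below $n^{-10d}$, which is what drives every $n^{-\Omega(d)}$ error term in the reduction.
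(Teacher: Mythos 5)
Your overall strategy is the same as the paper's (the $L^1$-to-$L^2$ reduction of \cite{ls13}: an update support for the last $s\asymp\log\log n$ steps, a sparse-geometry event, coupling to a product of chains on copies of $\mathbb{Z}_r^d$, tensorization of the $L^2$ distance, and a Chebyshev-type distinguishing statistic for the lower bound). However, the quantitative mechanism you invoke for the sparse-support event is wrong, and this is not a cosmetic issue. You claim that with probability $1-n^{-10d}$ every single-vertex history over $[t,t+s]$ has spatial diameter $O(s)$, and that $s\asymp\log\log n$ is calibrated so that the survival probability of a single-vertex history falls below $n^{-10d}$. Neither holds: over $s\asymp\log\log n$ steps the survival probability is $(3edp)^{s}=(\log n)^{-\Theta(d)}$ and the probability of diameter exceeding $Ks$ is only polylogarithmically small, so ``Galton--Watson domination plus a union bound over vertices'' yields an additive error of order $(\log n)^{-cd}$, not $n^{-9d}$. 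A polylogarithmic error would ruin the later use of this theorem (in Lemma \ref{mtestim} one needs the error to be negligible against $e^{-\gamma_\star(r)t}\approx n^{-d/2}$ at the cutoff time), so the polynomial error must come from elsewhere: in the paper it comes from (i) a finite-speed-of-propagation estimate at spatial scale $\log^3 n$ with stretched-exponential error $e^{-c\log^{3/2}n}$ (Lemma \ref{speedofpropa}), implemented through the barrier-dynamics coupling (Lemma \ref{barrierdlem}), and (ii) a concentration argument over many blocks whose support-hitting events are independent at distance $\geq 4$ blocks (the events $E^\sharp$, $E^\flat$ in Lemma \ref{sparselem}), which raises the per-block polylog probability to a power $(n/\log^7 n)^d$; this, not a boundary-discarding count, is where the exponent $\log^7$ and the cap $L\leq(n/\log^7 n)^d$ on the number of sparse components come from.

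A second genuine gap is in your tensorization step: you bound $\|\mu-\pi\|_{L^2(\pi)}^2$ by a product over blocks with $\pi$ itself as reference measure, but $\pi_\Delta$ is not a product measure across the components of the sparse set. One must tensorize against the product stationary measure $\pi^*_\Gamma$ of the auxiliary chains on $(\mathbb{Z}_r^d)^L$ (Lemma \ref{prodmsrlem}) and then separately show $\|\pi_\Delta-\pi^*_\Gamma\|_{\textnormal{\tiny TV}}\leq 3n^{-10d}$; in the paper this is done by running both the true chain and the product chain to time $t_{\textnormal{max}}=\log^{4/3}n$ and invoking Lemma \ref{perfectsamp} and Theorem \ref{l2thm} (the chain of inequalities (\ref{tvineq3})--(\ref{tvineq7})). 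Your part (2) sketch is essentially the argument of Theorem 3.1 in \cite{ls13} that the paper cites, but the asserted variance bound $O((n/r)^d)$ for the statistic $S$ under the non-stationary law is exactly the place where the reference uses a log-Sobolev input; here it must be replaced by Theorem \ref{l2thm} or Lemma \ref{perfectsamp}, which you do not supply.
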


\begin{remark}\label{l1l2rmk}
	The first statement of the theorem can be generalized to the case of $X_t'$ defined on a smaller lattice. To be specific, let $\log^5 n \leq m \leq n$ and let $X_t'$ denote the Swendsen-Wang dynamics on $\mathbb{Z}_m^d$. Then under the same assumptions on $s$ and $t$ as in Theorem \ref{l1l2thm},
	\begin{equation*}
	\max_{x_0 } \left\| \mathbb{P}_{x_0} \left( X_{t+s}' \in \,\cdot \,\right) - \pi \right\|_{{ \textnormal{\tiny TV}}}
	\leq \frac{1}{2} \left[\exp \left(m^d \textbf{m}_t^2 \right) -1 \right]^{\frac{1}{2}} + n^{-9d}.
	\end{equation*}
	This variant will be proven along with Theorem \ref{l1l2thm}, and turn out to be useful when establishing the explicit location of cutoff.
\end{remark}

\quad Theorem \ref{l1l2thm} is proven in \S\ref{secl1l22}. In \S\ref{secl1l21}, we describe a modified dynamics defined on smaller blocks and develop an argument to compare this with the original chain. To this end, we focus on explaining two major ingredients: 
\begin{itemize}
	\item  Eliminating the dependencies between distant sites;
	
	\item Restricting our attention on the ``update support'' which typically exhibits a nice ``sparse'' geometry as well as contains all the information of the dynamics.
\end{itemize}

\subsection{$L^1$ to $L^2$ reduction: ingredients}\label{secl1l21}

Throughout this section  we assume $p < p_0$, where $p_0$ is the constant given in Theorem \ref{l2thm}. We begin by observing two important properties of the high temperature Swendsen-Wang dynamics. The first fact we notice is that the information in the Swendsen-Wang dynamics cannot travel too fast.

\begin{lemma}\label{speedofpropa}
	Let $B\subset \mathbb{Z}_n^d$ be a lattice cube with side-length $\log^4 n$. Let
	\begin{equation*}
	B^+ := \{v \in \mathbb{Z}_n^d : \: \textnormal{dist}(v, B) \leq \log^3 n \}
	\end{equation*}
	where \textnormal{dist}$(\cdot,\cdot)$ denotes the $l^\infty$-distance in the lattice. Suppose that $(X_t), (X_t')$ are the two copies of the Swendsen-Wang dynamics coupled by the same update sequence with their initial configurations satisfying $X_t (B^+) = X_t' (B^+) $. Then for $t_{\textnormal{max}} := \log^{4/3}n$, $X_t (B) = X_t' (B)$ holds for all $t \leq t_{\textnormal{max}}$ except with an error probability $n^{-11d}$ for all sufficiently large $n$.
\end{lemma}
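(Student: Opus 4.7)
The plan is to use the history diagram framework of Section \ref{secinfoperc}. Since the two coupled chains share the same update sequence and $X_0$ agrees with $X_0'$ on $B^+$, the identity $X_t(v) = X_t'(v)$ for every $v \in B$ and $t \leq t_{\max}$ follows deterministically once the spatial projection of $\mathscr{H}_v$ is contained in $B^+$. It therefore suffices to show that, with probability at least $1 - n^{-11d}$, no history diagram $\mathscr{H}_v$ for $v \in B$ ever reaches spatial distance more than $\log^3 n$ from its root during its first $t_{\max}$ backward steps.

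Fix $v \in B$ and let $R_v := \max\{\mathrm{dist}(v,u) : (u,s) \in \mathscr{H}_v\}$, where the backward exploration is truncated at $t_{\max}$. By the recursive construction in Definition \ref{historydiag}, each backward step the spatial extent of $\mathscr{H}_v$ grows by at most the diameter of the connected components of $\bar{\omega}_{s-1}$ containing its current non-oblivious sites. Hence $R_v \leq \sum_{k=1}^{t_{\max}} D_k$, where $D_k$ is the maximum diameter of such a component explored at the $k$-th backward step.

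Next I would bound the exponential moment of $R_v$. Letting $W_k := |\mathscr{H}_v(t_{\max} - k + \tfrac{1}{2})|$ denote the number of history sites active at the start of backward step $k$, the sub-critical percolation tail argument of Lemma \ref{gapofswd} gives $\mathbb{P}(\mathrm{diam}(C_u) \geq m) \leq C \rho^m$ for some $\rho < 1$ when $p < p_0$. Taking a union bound over the $W_k$ current sites yields $\mathbb{E}[e^{\lambda D_k} \mid W_k] \leq \exp(c\lambda W_k)$ for sufficiently small $\lambda > 0$. Combining this with the sub-critical Galton--Watson domination of $W_k$ used in Claim \ref{claim:expdecay}, an iterated conditional expectation in the spirit of \eqref{gwbound} produces a uniform bound $\mathbb{E}\bigl[\exp\bigl(\lambda \sum_{k=1}^{t_{\max}} D_k\bigr)\bigr] \leq M^{t_{\max}}$, so by Markov's inequality
$$\mathbb{P}(R_v \geq \log^3 n) \leq M^{t_{\max}}\, e^{-\lambda \log^3 n} \leq \exp\bigl(-\tfrac{\lambda}{2} \log^3 n\bigr)$$
for all large $n$, since $t_{\max} = \log^{4/3} n \ll \log^3 n$. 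A union bound over the $|B| \leq \log^{4d} n$ choices of $v$ delivers the target error probability $n^{-11d}$.

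The main obstacle is executing this exponential-moment estimate cleanly, because $D_k$ and $W_k$ are coupled: each new spatial excursion is a max over clusters whose number is itself a sub-critical branching process. The key observation is that sub-criticality of $edp$ kills both mechanisms simultaneously, so one can close the recursion in a Galton--Watson-style manner, analogous to the bookkeeping already carried out in the proof of Lemma \ref{psilem}.
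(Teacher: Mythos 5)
Your proposal is correct in outline, but it takes a genuinely different route from the paper. The paper argues forward in time: it builds a telescoping family of boxes $\Xi_0=B^+\supset\Xi_1\supset\cdots$, each shrunk by $\log^{3/2}n$, and observes that a discrepancy can invade $\Xi_{t+1}$ at step $t$ only if some cluster of $\bar\omega_t$ crosses from $\Xi_t^c$ to $\Xi_{t+1}$; the exponential decay of subcritical crossing probabilities gives an error $e^{-c\log^{3/2}n}$ per step, and a union bound over the $t_{\textnormal{max}}=\log^{4/3}n$ steps (and over boxes) finishes the proof with no moment-generating-function estimates at all. You instead argue backward in time via the history diagram: your reduction is sound, since the history is a function of the update sequence alone and $X_t(v)$ is determined by the updates along $\mathscr{H}_v$ together with $X_0(\mathscr{H}_v(0))$, so containment of the spatial projection of the history in $B^+$ forces agreement on $B$; the probabilistic input is then a moderate-deviation bound on the cumulative spatial spread $\sum_k D_k$, which you close by Galton--Watson-type exponential moments as in Claim \ref{claim:expdecay} and the computations (\ref{gwbound})--(\ref{mainlemcomputation3}) of Lemma \ref{psilem}. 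This is heavier machinery than the paper needs here, but it is machinery the paper develops anyway, and it has the conceptual advantage of directly bounding the backward speed of information, the same quantity that drives the update-support analysis later. Two bookkeeping points you should make explicit: first, the lemma requires $X_t(B)=X_t'(B)$ for every $t\le t_{\textnormal{max}}$, so you must control histories rooted at $(v,t)$ for all $t\le t_{\textnormal{max}}$, not only at the top time; a union bound over the extra $\log^{4/3}n$ roots is harmless against your $\exp(-\tfrac{\lambda}{2}\log^3 n)$ bound. Second, the cleanest way to decouple $D_k$ from $W_k$ is to note that the diameter of a newly explored component is at most its size minus one, so $\sum_k D_k$ is bounded by the total number of vertices in the history, whose exponential moment is bounded by an absolute constant (not merely $M^{t_{\textnormal{max}}}$) by exactly the iteration in (\ref{gwbound})--(\ref{mainlemcomputation3}) with $|A|=1$; this avoids having to track the joint conditional law of the maximum diameter and the offspring count at each step.
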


\begin{proof}
	Let $\{\Xi_{j} \}_{j=0}^{t_{\textnormal{max}}}$ be the collection of boxes constructed as follows:
	\begin{equation*}
	\Xi_{0} := B^+ , \:\:\:\:
	\Xi_{k+1} := \{v \in \Xi_{k} : \; \textnormal{dist}(v, \Xi_{k}^c) \geq \log^{3/2}n \}
	\:\:\:\: \textnormal{for all } 0 \leq k \leq t_{\textnormal{max}}.
	\end{equation*}
	
	\noindent Note that we have $\Xi_{t_{\textnormal{max}}} \supset B$ by definition. Also let the update sequence of $X_t$ and $X_t'$ be $\mathfrak{H}_{t_{\textnormal{max}}} = \left\lbrace (\bar{\omega}_t, (c_{v,t})_{v \in V} , \mathcal{A}_{\bar{\omega}_t} )  \right\rbrace_{t=0}^{t_{\textnormal{max}}-1}$ (recall Definition \ref{updseq}).
	
	\quad Let us estimate the probability of the event $\{X_{t+1} (\Xi_{t+1}) \neq {X}_{t+1}' (\Xi_{t+1})\}$ conditioned on being $X_t (\Xi_{t}) = {X}_t' (\Xi_{t})$. Suppose that a connected component $K$ of $(\mathbb{Z}_n^d,\:\bar{\omega}_t)$ satisfies that $K \cap \Xi_{t+1} \neq \emptyset$ and $K \subset \Xi_{t}$. In that case, $X_t(K)  =X_t'(K)$ implies that on $K$, they remain the same after an update, i.e., $X_{t+1} (\Xi_{t+1}) = {X}_{t+1}' (\Xi_{t+1})$.
	
	\quad Therefore, if the event $\{X_{t+1} (\Xi_{t+1}) \neq {X}_{t+1}' (\Xi_{t+1})\}$ occurs, we should have a connected component $K$ of $\bar{\omega}_t$ that satisfies $K \subsetneq \Xi_t$ as well as $K \cap \Xi_{t+1} \neq \emptyset$, which means that there is a percolation path in $\bar{\omega}_t$ that crosses between $\Xi_{t}^c$ and $\Xi_{t+1}$. Implementing the well-known fact that the crossing probability in a sub-critical percolation  decays exponentially with respect to the distance (see e.g., Theorem 6.75 of  \cite{grimmettperc}), we deduce that
	\begin{equation}\label{barrierineq1}
	\begin{split}
	\mathbb{P} &\left(  \left. X_{t+1} (\Xi_{t+1}) \neq {X}_{t+1}' (\Xi_{t+1}) \; \right|  X_t (\Xi_{t}) = {X}_t' (\Xi_{t})) \right)  \\
	&~~\leq
	\mathbb{P} \left(  \left.\Xi_{t}^c \overset{\bar{\omega}_t}{\longleftrightarrow} \Xi_{t+1}   \; \right|  X_t (\Xi_{t}) = {X}_t' (\Xi_t) \right) \leq
	\mathbb{P} \left(  \Xi_{t}^c \overset{\bar{\omega}_t}{\longleftrightarrow} \Xi_{t+1} \right)
	\leq ~ \exp \left(-c \log^{3/2} n  \right),
	\end{split}
	\end{equation}
	where $c >0$ is the constant depends on $d$ and $p$.
	
	\quad Therefore, by summing up the left hand side of (\ref{barrierineq1}) over $s$, we obtain
	\begin{equation*}
	\begin{split}
	\mathbb{P} \left(X_t (B) = X_t' (B) ~ \textnormal{for all } \, t\leq t_{\textnormal{max}} \right)
	&\geq
	\mathbb{P} \left(X_t (\Xi_{t}) = X_t' (\Xi_{t}) ~ \textnormal{for all } \, t \leq t_{\textnormal{max}} \right)\\
	&\geq 1- \left(\log^{4/3} n \right) \exp \left(-c_p \log^{3/2} n  \right).
	\end{split}
	\end{equation*}

	\noindent Finally, a crude union bound yields that
	\begin{equation}\label{speedofpropa1}
	\begin{split}
	\mathbb{P} \left( X_t = X'_t   ~ \textnormal{for all } \, t\leq t_{\textnormal{max}} \right)& \geq 1- n^d \left(\log^{4/3} n \right) \exp \left(-c_p \log^{3/2} n  \right)
	\\&\geq 1- n^{-11d},
	\end{split}
	\end{equation}
	where the last inequality holds for all $n$ sufficiently large.
\end{proof}

\quad This lemma tells us that until time $\log^{4/3}n$, we can possibly ignore the dependency between sites with distance at least $\log^3 n$.
Next, we observe that the dependency on the initial condition disappears quickly. 

\begin{lemma}\label{perfectsamp}
	Let $(X_t)$ be the Swendsen-Wang dynamics defined on $\mathbb{Z}_l^d$ and let $\mathscr{H}_{\mathbb{Z}_l^d}$ denote its  history diagram defined in Definition \ref{historydiag}. Then,
	\begin{equation*}
	\mathbb{P} \left(\mathscr{H}_{\mathbb{Z}_l^d} (0) = \emptyset \right) \geq 1- l^d (3edp)^t.
	\end{equation*}
	In particular, if $l= O(\log^5 n)$ and $t \geq 11 d p_\star \log\log n$, then we have
	\begin{equation}\label{perfectsampineq1}
	\mathbb{P} \left(\mathscr{H}_{\mathbb{Z}_l^d} (0) = \emptyset \right) \geq 1- (\log n)^{-5d}.
	\end{equation}
\end{lemma}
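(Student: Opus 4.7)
The plan is to piece together the Galton--Watson domination already set up in the proof of Claim \ref{claim:expdecay} with a crude union bound over vertices. For each $v \in \mathbb{Z}_l^d$, write $W_s^{(v)} := |\mathscr{H}_v(t-s)|$ for the size of the backwards history of $v$ after $s$ steps of the recursive construction of Definition \ref{historydiag}. Starting from $W_0^{(v)} = 1$, each non-oblivious vertex at time $t-s$ contributes to $W_{s+1}^{(v)}$ the set of vertices in its connected component of $\bar\omega_{t-s-1}$; the same cover-tree comparison used in Lemma \ref{gapofswd} and in Claim \ref{claim:expdecay} shows that this contribution is stochastically dominated, independently across the different non-oblivious vertices, by an i.i.d.\ offspring variable $\xi$ with $\mathbb{E}[\xi] \leq 3edp$ (valid whenever $edp \leq 1/2$, which is automatic from our standing assumption $p<p_0$). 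Thus $\{W_s^{(v)}\}_{s\ge 0}$ is dominated by a Galton--Watson process with mean offspring $\leq 3edp$ started from one particle.

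Next I iterate to get $\mathbb{E}[W_t^{(v)}] \leq (3edp)^t$, and then apply Markov's inequality to obtain
\begin{equation*}
\mathbb{P}\bigl(\mathscr{H}_v(0) \neq \emptyset\bigr) \;=\; \mathbb{P}\bigl(W_t^{(v)} \geq 1\bigr) \;\leq\; \mathbb{E}[W_t^{(v)}] \;\leq\; (3edp)^t.
\end{equation*}
A union bound over the $l^d$ vertices of $\mathbb{Z}_l^d$ then gives $\mathbb{P}(\mathscr{H}_{\mathbb{Z}_l^d}(0) \neq \emptyset) \leq l^d (3edp)^t$, which is the first (general) claim.

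For the particular case $l = O(\log^5 n)$, $t \geq 11 d p_\star \log\log n$, I simply plug in the values. Using $p_\star = 1/\log(1/4edp)$ and the trivial inequality $\log(1/3edp) > \log(1/4edp)$, one has
\begin{equation*}
(3edp)^t \;\leq\; \exp\!\left(-t \log\tfrac{1}{4edp}\right) \;=\; \exp\!\left(-t/p_\star\right) \;\leq\; (\log n)^{-11d},
\end{equation*}
so $l^d(3edp)^t \leq C^d (\log n)^{5d-11d} \leq (\log n)^{-5d}$ for all sufficiently large $n$, which is (\ref{perfectsampineq1}).

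There is no real obstacle here beyond bookkeeping; the only thing to be careful about is the stochastic-domination step, which should be written as a direct consequence of the argument already carried out in Claim \ref{claim:expdecay} rather than re-derived from scratch. The constants ($3edp$ versus $4edp$, and the factor $11d$ versus $10d$) are chosen with enough slack that the polylogarithmic factor $l^d$ is swallowed without issue.
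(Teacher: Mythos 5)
Your proposal is correct and follows essentially the same route as the paper: a union bound over the $l^d$ vertices, the Galton--Watson domination of the single-vertex history (exactly as in Claim \ref{claim:expdecay}) combined with Markov's inequality to get $\mathbb{P}(\mathscr{H}_v(0)\neq\emptyset)\leq(3edp)^t$, and then the substitution $(3edp)^t\leq e^{-t/p_\star}\leq(\log n)^{-11d}$ to absorb the polylogarithmic factor $l^d$. The only difference is presentational bookkeeping, not substance.
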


\vspace{1mm}

\begin{proof}
	By a union bound and the symmetry, we have
	\begin{equation}\label{perfectsampineq2}
	\begin{split}
	\mathbb{P} \left(\mathscr{H}_{\mathbb{Z}_l^d} (0) = \emptyset \right)
	\;\geq\; 1- l^d \mathbb{P} \left(\mathscr{H}_v (0) \neq \emptyset \right) .
	\end{split}
	\end{equation}
	
	\quad We then bound $\mathbb{P} \left(\mathscr{H}_v (0) \neq \emptyset \right)$ analogously as Claim \ref{claim:expdecay}, hence obtaining the estimate $$ \mathbb{P} \left(\mathscr{H}_v (0) \neq \emptyset \right) \leq (3edp)^t.$$
	For the final inequality, just note that the average offspring number of $\xi$ is smaller than $4edp$. Together with (\ref{perfectsampineq2}), we readily obtain our conclusion.
\end{proof}

\quad Based on the two fundamental properties of Lemmas \ref{speedofpropa} and \ref{perfectsamp}, we prove that the $L^1$-distance of the Swendsen-Wang dynamics from equilibrium at time $t+s$ can be bounded in terms of the $L^1$-distance at time $t$ projected onto subsets of \textit{sparse} geometry which  is defined as follows.


\begin{definition}[Sparse set]\label{sparsedef}
	Let $\log^5 n \leq m \leq n$. We say that the set $\Delta \subset \mathbb{Z}_m^d$ is \textbf{sparse} if for some $L \leq (n/\log^7 n)^d$ it can be partitioned into components $A_1,\ldots,A_L$ such that,

	\begin{enumerate}
		\item Each $A_i$ has diameter at most $\log^5 n$ in $\mathbb{Z}_m^d$.
		
		\item The $\| \,\cdot \, \|_{\infty}$-distance in $\mathbb{Z}_m^d$ between any distinct $A_i$ and $A_j$ is at least $2d \log^4 n$.
	\end{enumerate}
	We additionally define $\mathcal{S}(m) := \{\Delta \subset \mathbb{Z}_m^d :\; \Delta \textnormal{\; is sparse} \,\}$. 
\end{definition}

\quad This is a slightly modified version of the sparsity defined in Definition 3.3 of \cite{ls13}. Our goal of this subsection is to prove the following theorem:

\begin{theorem}\label{projthm}
	For $\log^5 n \leq m \leq n$, let $(X_t)$ be the Swendsen-Wang dynamcis for the Potts model on $\mathbb{Z}_m^d$ and $\pi$ be its stationary measure. Let $(11d p_{\star}) \log\log n \leq s \leq \log^{4/3}n$ and $t >0$. Then there exists some distribution $\rho$ on $\mathcal{S}(m)$ such that
	\begin{equation}\label{projeq}
	\begin{split}
	&\left\| \mathbb{P}_{x_0} \left( X_{t+s} \in \,\cdot \,\right) - \pi \right\|_{{ \textnormal{\tiny TV}}}\\
	&~~~\leq
	\int_{\mathcal{S}(m)} \left\| \mathbb{P}_{x_0} \left( X_{t}(\Delta) \in \,\cdot \,\right) - \pi_{\Delta} \right\|_{{ \textnormal{\tiny TV}}} d\rho(\Delta)
	+  3 n^{-10d}
	\end{split}
	\end{equation}
	holds true for all posible starting state $x_0$, where $p_\star := \frac{1}{\log(1/4edp)}$.
\end{theorem}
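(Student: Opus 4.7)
The approach is to identify, from the update sequence on $[t,t+s]$, a random ``update support'' $\Delta\subset\mathbb{Z}_m^d$ such that $X_{t+s}$ is a deterministic function of $X_t(\Delta)$ and this update sequence, and such that with overwhelming probability $\Delta\in\mathcal{S}(m)$.

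First I would tile $\mathbb{Z}_m^d$ by disjoint axis-aligned cubes $B_1,\dots,B_K$ of side $\lfloor\log^4 n\rfloor$, and for each $i$ let $B_i^+$ be the concentric cube of side $\lfloor\log^4 n\rfloor+2\lceil\log^3 n\rceil$. Call $B_i$ \emph{good} if both (G1) no information propagates from $(B_i^+)^c$ into $B_i$ during $[t,t+s]$ in the sense of Lemma \ref{speedofpropa}, and (G2) the backward history of $B_i$ restricted to $B_i^+$ is empty at time $t$. Lemma \ref{speedofpropa} bounds the failure probability of (G1) by $n^{-11d}$, and since $|B_i^+|=O(\log^{4d}n)$ and $s\geq 11dp_\star\log\log n$, the branching-process domination in the proof of Lemma \ref{perfectsamp} bounds the failure probability of (G2) by $(\log n)^{-5d}$. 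On $(G1)\cap(G2)$, the value $X_{t+s}(B_i)$ is a deterministic function of the update sequence on $B_i^+\times[t,t+s]$ alone, independent of $X_t$.

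Next, cluster the bad boxes: declare two bad boxes $R$-adjacent whenever their $l^\infty$-distance is at most $R:=3d\log^4 n$, and call the resulting connected components \emph{clumps}. Let $E$ be the event that every clump has at most $k_0:=\log n/(7d)$ boxes and that the total number of bad boxes is at most $(n/\log^7 n)^d$. Define $\Delta:=\bigcup_{B_i\text{ bad}}B_i^+$ on $E$ and $\Delta:=\emptyset$ on $E^c$, and let $\rho$ be the law of $\Delta$ induced by the update sequence. On $E$, each clump of bad boxes contributes a component of $\Delta$ of diameter at most $k_0 R+\log^4 n<\log^5 n$; distinct components are separated by at least $R-2\log^3 n\geq 2d\log^4 n$, and the number of components is at most the number of bad boxes, which is at most $(n/\log^7 n)^d$. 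Hence $\Delta\in\mathcal{S}(m)$.

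Finally, I would couple $(X_t)$ from $x_0$ with $(Y_t)$ from $Y_0\sim\pi$ by sharing the update sequence $\omega$ on $[t,t+s]$, and conditionally on $\omega$ (hence on $\Delta$) use an optimal coupling of $X_t(\Delta)$ and $Y_t(\Delta)\sim\pi_\Delta$; this is admissible because $\omega$ is independent of $X_t,Y_t$. On $E$, $X_{t+s}=Y_{t+s}$ whenever $X_t(\Delta)=Y_t(\Delta)$, so the coupling inequality gives
$$\|\mathbb{P}_{x_0}(X_{t+s}\in\cdot)-\pi\|_{{ \textnormal{\tiny TV}}}\leq \mathbb{P}(E^c)+\int_{\mathcal{S}(m)}\|\mathbb{P}_{x_0}(X_t(\Delta)\in\cdot)-\pi_\Delta\|_{{ \textnormal{\tiny TV}}}\,d\rho(\Delta).$$
The main obstacle will be $\mathbb{P}(E^c)\leq 3n^{-10d}$: a Peierls-type count shows that connected $R$-clumps of size $k$ containing a given box number at most $C(d)^k$, and within any such clump one can extract $\Omega(k)$ boxes whose $B_i^+$'s are pairwise disjoint so that their bad events are independent, yielding $\mathbb{P}(\exists\text{ clump of size }\geq k_0)\leq K\cdot(C(\log n)^{-5d/\alpha})^{k_0}$ which is super-polynomially small for $k_0\asymp\log n$, while a Chernoff-type bound for the number of bad boxes (with mean at most $(n/\log^7 n)^d/\log^{2d}n$) handles the cardinality constraint. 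The delicate point is extracting a well-separated subfamily of each clump to restore independence from the locally correlated bad events, and simultaneously calibrating $k_0$ so that both clump-diameter $\leq\log^5 n$ and $\mathbb{P}(E^c)\leq n^{-10d}$ hold. Remark \ref{l1l2rmk} then follows from the same argument, with $m$ in place of $n$ throughout.
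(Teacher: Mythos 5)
There is a genuine gap at the step ``on $E$, $X_{t+s}=Y_{t+s}$ whenever $X_t(\Delta)=Y_t(\Delta)$.'' Your set $\Delta=\bigcup_{B_i\text{ bad}}B_i^+$ does not, in general, contain the time-$t$ support of the map $x_t\mapsto X_{t+s}$: for a \emph{bad} box $B_i$ (say one where (G1) fails), the backward history of $B_i$ can exit $B_i^+$, and once it does so it may traverse the territory of \emph{good} boxes and survive all the way down to time $t$ at sites that lie in no bad box's enlargement, hence outside $\Delta$. The goodness of a neighboring box $B_j$ does not forbid this: (G2) for $B_j$ only says that the history \emph{started from $B_j$ at time $t+s$} dies out inside $B_j^+$, and (G1) for $B_j$ only excludes crossings of $B_j$'s own annulus; neither controls the continuation of a foreign history that arrives at some site of $B_j^+$ at an intermediate time, which is a different (fresh) backward history and can perfectly well reach time $t$. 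Consequently, agreement of $X_t$ and $Y_t$ on $\Delta$ does not force agreement of $X_{t+s}$ and $Y_{t+s}$ on the bad boxes, and the coupling inequality you write is not justified. Your per-box good/bad classification, the clump/Peierls count, and the final conditional optimal coupling are all in the right spirit, but the localization of dependence that the whole argument hinges on is exactly what is missing.

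The paper resolves this by first passing to an auxiliary \emph{barrier-dynamics} (Definition \ref{barrierdef}): the lattice is tiled into the same blocks $B_i\subset B_i^+$, but each $B_i^+$ is mapped to a separate torus $C_i^+$ on which the induced update sequence (Definition \ref{inducedupd}) is run with periodic boundary conditions, so that by construction no information can ever enter a block from outside its own $C_i^+$. For this modified chain the update support $\Delta_{\mathfrak{H}^b_s}$ (Definition \ref{updsupdef}) genuinely satisfies the functional property you want, its sparsity is a purely local question about truncated histories dying out (Lemma \ref{sparselem}, using Lemma \ref{perfectsamp} and the $E^\sharp$, $E^\flat$ events that play the role of your clumps), and the comparison with the true dynamics is made once, globally, via Lemma \ref{barrierdlem} with error $n^{-10d}$; Lemma \ref{projlem} then yields (\ref{projeq}). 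To repair your argument you would either have to enlarge $\Delta$ by the full time-$t$ footprint of the bad boxes' histories (destroying the sparsity argument), or make badness propagate along escaping histories (making the independence/Peierls step much harder) --- i.e., you would effectively be reconstructing the barrier-dynamics device.
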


\quad To prove this, we first introduce the notion of \textit{induced update sequence}, which is necessary when coupling the two copies of the chain defined on different graphs. Using this we define the \textit{barrier-dynamics}, a variant of the chain that forcibly blocks the information coming from remote sites. It turns out that the barrier-dynamics resembles the original dynamics except for a negligible error, for which our focus turns to the investigation of the barrier-dynamics. This argument will be a variant of what is done in \cite{ls13}.


\begin{definition}[Induced update sequence]\label{inducedupd}
	Let $G=(V,E)$ and $G'=(V',E')$ be two graphs which are subgraphs of a same larger graph. Let $\mathfrak{H}_{t_{\star}} = \left\lbrace (\bar{\omega}_t, (c_{v,t})_{v \in V} , \mathcal{A}_{\bar{\omega}_t} )  \right\rbrace_{t=0}^{t_{\star}-1}$ denote the update sequence for the Swendsen dynamics on $G$ until time $t_\star$. Then the \textbf{induced update sequence} $\mathfrak{H}_{t_\star}'=\left\lbrace (\bar{\omega}_t', (c_{v,t}')_{v \in V} , \mathcal{A}_{\bar{\omega}_t'}' )  \right\rbrace_{t=0}^{t_{\star}-1}$ of $\mathfrak{H}_{t_\star}$ on $G'$ is defined as follows:
	
	\begin{enumerate}
		\item The percolation configuration $\bar{\omega}_t'$ is given by the following rule:
		\begin{equation*}
		\begin{split}
		\textnormal{for each } t, ~~
		\bar{\omega}_t' (e) =
		\begin{cases}
		\bar{\omega}_t (e)  &  \text{if } e \in E ;\\
		\textnormal{i.i.d. Ber} (p)  &  \text{if } e \in E' \setminus E;
		\end{cases}
		\end{split}
		\end{equation*}

		\item Let $K$ be an arbitrary connected component in $(V', \bar{\omega}_t')$.
		
		\begin{itemize}
			\item  If $K$ is also a connected component in $(V,\bar{\omega}_t)$, then
			$$c_{v,t}' = c_{v,t} \: \:\: \:\textnormal{and} \:\:\:\:\mathcal{A}_{\bar{\omega}_t'}' (v) = \mathcal{A}_{\bar{\omega}_t} (v), \:\:\:\: \textnormal{for all } v\in K.$$

			\item If $K$ is not a connected component in $(V,\bar{\omega}_t)$, then
			$$c_{v,t}' = \textnormal{i.i.d. Unif}\{1,\ldots,q\} ~\textnormal{ for all } v\in K,$$  which is independent from everything else, and
			
			$\left. \mathcal{A}_{\bar{\omega}_t'}' \right|_K : K \rightarrow \{1,\ldots, |K| \}$ $\:$ is given by an arbitrary ordering.
		\end{itemize}
	\end{enumerate}
\end{definition}

\vspace{2mm}

\begin{definition}[Barrier-dynamics]\label{barrierdef}
	Let $(X_t)_{0\leq t \leq t_\star}$ be the Swendsen-Wang dynamcis on $\mathbb{Z}_m^d$, for $m$ that satisfies  $\log^5 n \leq m \leq n$.  Also, let  the update sequence for $(X_t)_{0\leq t \leq t_\star}$ be  $\mathfrak{H}_{t_{\star}} = \left\lbrace (\bar{\omega}_t, (c_{v,t})_{v \in V} , \mathcal{A}_{\bar{\omega}_t} )  \right\rbrace_{t=0}^{t_\star-1}$.  Then we define the \textbf{barrier-dynamics} as the following coupled Markov chain.

	\begin{enumerate}
		\item Partition the lattice into disjoint $d$-dimensional (rectangular) boxes, where each of them has side-length either $\log^4 n$ or $\log^4 n -1$. We will call these boxes as ``blocks".

		\item For each block $B_i$, let $B_i^+$ be the $d$-dimensional box of side-lengths $\log^4 n + 2 \log^3 n$ centered at $B_i$,  e.g., if $B_i$ has side-lengths $\log^4 n$, then
		\begin{equation*}
		B_i^+ = \bigcup_{u\in B_i} \left\{v: \; \|u-v\|_{\infty} \leq \log^3 n \right\}.
		\end{equation*}
		For each $i$, let $\Phi_{i}$ be a graph isomorphism mapping $B_i^+$ onto some block $C_i^+$ and $B_i$ onto $C_i \subset C_i^+$, where the $C_i^+$ blocks are pairwise disjoint.

		\item Impose periodic boundary conditions on each block $C_i^+$ and run the Swendsen-Wang dynamics where the update sequence 
		$\mathfrak{H}_{t_\star}' (C_i^+)$ is given by the induced update sequence of $\mathfrak{H}_{t_\star} \circ \Phi_i^{-1}$ on $C_i^+$.

		\item The barrier-dynamics is the Swendsen-Wang dynamics on $\cup_i C_i^+$ with the update sequence given by $\mathfrak{H}_{t_\star}^b := \{\mathfrak{H}_{t_\star}'(C_i^+)  \}_i.$
		
	\end{enumerate}

\end{definition}

\begin{remark}
	Note that in the third step of the definition, the graph of $C_i^+$ is not a subgraph of $\mathbb{Z}_m^d$. As we impose a periodic boundary condition on $C_i^+$, there are newly added edges on the boundary of $C_i^+$. Therefore, in the percolation configuration of the induced update sequence, these new edges are endowed with i.i.d. Bernoulli random variables.
\end{remark}

\quad Based on the barrier-dynamics defined as above, we can construct a randomized operator $\mathcal{G}_t$ on $\mathbb{Z}_m^d$ as follows. Starting from a given initial configuration $x_0$ on $\mathbb{Z}_m^d$, we transform $x_0$ into a configuration on the block $C_i^+$ in the obvious manner: $x_0^{(i)} = x_0(B_i^+) \circ \Phi_i^{-1}$. Then we run the barrier-dynamics until time $t$ with initial state $x_0^{(i)}$ for each block. The output of $\mathcal{G}_t$ is then obtained by projecting the result of the barrier-dynamics onto $B_i$ for each $i$, i.e., we define the color at vertex $v$  to be the color at $\Phi_i (v)$ of the barrier-dynamics, where $B_i$ is the unique block that contains $v$. In other words, we pull-back the configuration from $C_i$ onto $B_i$ for each $i$. We denote this output as $\mathcal{G}_t (x_0)$ and call $\mathcal{G}_t$  the \textbf{barrier-dynamics operator}.

\quad If the time period is not too long, then the original dynamics $X_t$ can be coupled with $\mathcal{G}_t(X_0)$ except for a tiny error. The basic reason for this is that each block $B_i$
is far enough from $\partial B_i^+$.

\begin{lemma}\label{barrierdlem}
	Suppose that $\log^5 n \leq  m \leq n.$ Set $t_{\textnormal{max}} = \log^{4/3}n$. The barrier-dynamics and the original dynamics on $\mathbb{Z}_m^d$ are coupled up to time $t_{\textnormal{max}}$ except with probability $n^{-10d}$. That is,  we have $X_t = \mathcal{G}_t (X_0)$ for all $t \leq t_{\textnormal{max}}$ with probability at least $1- n^{-10d}$, for any starting configuration $X_0$ and for any sufficiently large $n$ .
\end{lemma}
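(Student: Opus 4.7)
The plan is to show that for each block $B_i$, the original dynamics $X_t$ and the pull-back of the barrier-dynamics agree on $B_i$ throughout $[0, t_{\textnormal{max}}]$ with probability at least $1 - n^{-11d}$, and then to apply a union bound over the at most $(m/\log^4 n)^d \leq n^d$ blocks.

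Fix a block $B_i$. I would first identify, via the isomorphism $\Phi_i^{-1}$, the barrier-dynamics restricted to $C_i^+$ with a Swendsen-Wang dynamics $\widetilde{X}_t^{(i)}$ on $B_i^+$ endowed with periodic boundary. By the construction of the induced update sequence in Definition \ref{inducedupd}, the percolation variables $\bar{\omega}_t'$ driving $\widetilde{X}_t^{(i)}$ coincide with the original $\bar{\omega}_t$ on every edge of $\mathbb{Z}_m^d$ that lies inside $B_i^+$; only the new edges created at $\partial B_i^+$ by the periodic identification carry fresh $\textnormal{Ber}(p)$ variables. Moreover, the orderings $\mathcal{A}$ and the colours $c_{v,t}$ agree on any connected component entirely interior to $B_i^+$, and both chains start from the same configuration $x_0$ on $B_i^+$.

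Next I would mimic the barrier argument used in the proof of Lemma \ref{speedofpropa}. Define the nested boxes
\begin{equation*}
\Xi_0 := B_i^+, \qquad \Xi_{k+1} := \left\{ v \in \Xi_k :\, \textnormal{dist}(v, \Xi_k^c) \geq \log^{3/2} n \right\},
\end{equation*}
so that $\Xi_{t_{\textnormal{max}}} \supset B_i$ thanks to the buffer of width $\log^3 n$. On the barrier event $\mathcal{B}_i$ that, for every $0 \leq t < t_{\textnormal{max}}$, neither $\bar{\omega}_t$ nor $\bar{\omega}_t'$ contains a percolation path from $\Xi_t^c$ to $\Xi_{t+1}$, an induction on $t$ will show $X_t(\Xi_t) = \widetilde{X}_t^{(i)}(\Xi_t)$. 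The inductive step is the same as in Lemma \ref{speedofpropa}: under the barrier hypothesis every $\omega_t$-component (and its pulled-back counterpart) meeting $\Xi_{t+1}$ lies inside $\Xi_t$, and on $\Xi_t$ the two percolation environments, their connected components, the orderings $\mathcal{A}$ and the colour variables all coincide, so the updated configurations agree on $\Xi_{t+1}$.

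It remains to bound $\mathbb{P}(\mathcal{B}_i^c)$. Each individual crossing event has probability at most $\exp(-c \log^{3/2} n)$ by the exponential decay of sub-critical Bernoulli percolation crossings, exactly as in display (\ref{barrierineq1}); summing over the two environments and the $\log^{4/3} n$ time steps gives $\mathbb{P}(\mathcal{B}_i^c) \leq n^{-11d}$ for $n$ sufficiently large, and a union bound over the $\leq n^d$ blocks produces the claimed $n^{-10d}$ bound. I expect the main obstacle to be purely bookkeeping, namely verifying that the induced update sequence genuinely preserves the components, vertex labels, and colour assignments of $X_t$ inside $B_i^+$ step-by-step, so that the inductive identity $X_t(\Xi_t)=\widetilde{X}_t^{(i)}(\Xi_t)$ propagates cleanly; since this is precisely what Definition \ref{inducedupd} is designed to guarantee, the remainder reduces to the finite speed of propagation already established for Lemma \ref{speedofpropa}.
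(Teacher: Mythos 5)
Your proposal is correct and follows essentially the same route as the paper: the paper's proof simply identifies $B_i^+$ with $C_i^+$ and invokes Lemma \ref{speedofpropa} directly on each block, then takes the same union bound over the at most $n^d$ blocks. Your version merely unfolds the nested-box barrier argument behind Lemma \ref{speedofpropa} (and is careful about the extra Bernoulli edges at $\partial C_i^+$ by requiring the barrier event for both percolation environments), which is a harmless elaboration of the same proof.
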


\textit{Proof}.\quad Let $({X}_t^b)$ be the barrier-dynamics on $\cup_i C_i^+$ as defined in Definition \ref{barrierdef}. Since both the update sequence and the starting configuration of $(X_t)$ and $({X}_t^b)$ coincide on $B_i^+$ (where we identify $B_i^+$ and $C_i^+$ in the obvious way), we can apply Lemma \ref{speedofpropa} to these two processes. Therefore, we obtain
\begin{equation*}
\mathbb{P} \left(X_t (B_i) = {X}_t^b (B_i) ~ \textnormal{for all } \, t\leq t_{\textnormal{max}} \right)
\geq
1 - n^{-11d},
\end{equation*}
which holds for all $n$ sufficiently large. By a union bound over $B_i$, we get
\begin{equation*}
\pushQED{\qed}
\mathbb{P} \left( X_t = \mathcal{G}_t(X_0)   ~ \textnormal{for all } \, t\leq t_{\textnormal{max}} \right)
\geq 1- n^{-10d}. \qedhere
\popQED
\end{equation*}

\quad Thanks to Lemma \ref{barrierdlem},  we may focus on the barrier-dynamics rather than the original one when proving Theorem \ref{projthm}.  To this end, we introduce the notion of \textit{update support} and study its geometry as it is done in \cite{ls13, ls14}.

\quad As a first step towards the definition of update support, observe that the randomized operator $\mathcal{G}_t (x)$ can be regarded as a deterministic function of the starting configuration $x$ and the random update sequence $\mathfrak{H}_{t}^b$. Let us rewrite this as $\mathcal{G}_t (x) = g(x, \mathfrak{H}_{t}^b)$. Note that $g( \, \cdot \, ,\mathfrak{H}_{t}^b ) : \Omega_{\mathbb{Z}_m^d} \rightarrow \Omega_{\mathbb{Z}_m^d}$ is a deterministic function for each update sequence $\mathfrak{H}_{t}$. 

\begin{definition}[Update support]\label{updsupdef}
	Let $\mathfrak{H}_{t}^b$ be a realisation of an update sequence for the barrier-dynamics between times $[0,t]$. The \textbf{update support} of $\mathfrak{H}_{t}^b$ is the smallest subset $\Delta_{\mathfrak{H}_{t}^b} \subset \mathbb{Z}_m^d$ such that $\mathcal{G}_t (x)$ is a function of $x(\Delta_{\mathfrak{H}_{t}^b})$ for any $x$, i.e., there exists a function $f_{\mathfrak{H}_{t}^b}: \Omega_{\Delta_{\mathfrak{H}_{t}^b}} \rightarrow \Omega_{\Delta_{\mathfrak{H}_{t}^b}} $ such that
	\begin{equation*}
	g(x, \mathfrak{H}_{t}^b) = f_{\mathfrak{H}_{t}^b} ( \: x(\Delta_{\mathfrak{H}_{t}^b})) \:\:\: \textnormal{for all } \: x \in \Omega_{\mathbb{Z}_m^d}.
	\end{equation*}
	In other words, $v \notin \Delta_{\mathfrak{H}_{t}^b}$ if and only if for every initial configuration $x$, any color change at site $v$ does not affect the configuration $g(x, \mathfrak{H}_{t}^b) $ . This definition uniquely defines the update support of $\mathfrak{H}_{t}^b$.
\end{definition}

\quad Keeping Lemma \ref{perfectsamp} in mind, we may predict that  the update support shrinks considerably in a relatively short period of time, hence resulting in having sparse geometry. We prove that this indeed happens typically for the barrier-dynamics, following the approach of \cite{ls13}, Lemma 3.9.

\begin{lemma}\label{sparselem}
	Let $\mathcal{G}_s$ be the barrier-dynamics operator on $\mathbb{Z}_m^d$, let $\mathfrak{H}_s^b$ denote the update sequence of $\mathcal{G}$ up to time $s$ for some $s \geq (11d p_{\star}) \log \log n$, and $\mathcal{S}(m)$ be the family of sparse sets of $\mathbb{Z}_m^d$. Then $\mathbb{P} (\Delta_{\mathfrak{H}_s^b} \in \mathcal{S}(m)) \leq n^{-10d}$ for any sufficiently large $n$.
\end{lemma}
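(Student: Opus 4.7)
The inequality as printed, $\mathbb{P}(\Delta_{\mathfrak{H}_s^b}\in\mathcal{S}(m))\leq n^{-10d}$, is the opposite of what the proof of Theorem~\ref{projthm} requires: since the distribution $\rho$ in (\ref{projeq}) is supported on $\mathcal{S}(m)$ and is meant to represent the law of $\Delta_{\mathfrak{H}_s^b}$, the lemma must guarantee that the update support lies in $\mathcal{S}(m)$ with probability at least $1-n^{-10d}$. I therefore read this as a typographic exchange of $\in\leftrightarrow\notin$ and sketch a proof of the bound $\mathbb{P}(\Delta_{\mathfrak{H}_s^b}\notin\mathcal{S}(m))\leq n^{-10d}$, since any literal reading of the printed inequality is false (the deterministic set $\Delta=\emptyset$ already lies in $\mathcal{S}(m)$ with positive probability, and at high temperature this probability is in fact close to $1$).

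The plan is to identify $\Delta_{\mathfrak{H}_s^b}$ with a union of ``active'' blocks. Since the barrier-dynamics on each $C_i^+$ evolves independently of all other $C_j^+$, a site $v\in B_i$ can affect the output of $\mathcal{G}_s$ only if the history of $\Phi_i(v)$ inside $C_i^+$ survives to time $0$; I call $B_i$ \emph{active} in that event. Hence $\Delta_{\mathfrak{H}_s^b}\subseteq\bigcup_{i\in\mathcal{I}}B_i$ where $\mathcal{I}=\{i:B_i\text{ active}\}$, and the events $\{i\in\mathcal{I}\}$ are mutually independent by the product structure of the induced update sequences. Because $|C_i^+|\leq(2\log^4 n)^d$, Lemma~\ref{perfectsamp} applied with $s\geq(11dp_\star)\log\log n$ yields $\mathbb{P}(i\in\mathcal{I})\leq(\log n)^{-7d}$.

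To verify sparsity of $\bigcup_{i\in\mathcal{I}}B_i$, group active blocks within $\ell^\infty$-distance $2d\log^4 n$ into connected components $A_1,\ldots,A_L$; condition~(2) of Definition~\ref{sparsedef} then holds by construction. For the diameter condition~(1), note that a component of diameter exceeding $\log^5 n$ must contain a chain of $k\gtrsim\log n$ active blocks each at distance at most $2d\log^4 n$ from its successor. The number of such chains rooted at a given block is at most $C^{dk}$ (each step has at most $C^d$ neighboring blocks), and by independence the probability that all $k$ of them are active is $(\log n)^{-7dk}$. Summing over the $(n/\log^4 n)^d$ possible roots and $k\geq\log n$ yields a bound of order $n^d(C/\log^7 n)^{d\log n}=o(n^{-10d})$. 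For the count condition $L\leq(n/\log^7 n)^d$, a Chernoff estimate applied to $|\mathcal{I}|$ (a sum of independent Bernoullis of total mean $(n/\log^4 n)^d(\log n)^{-7d}=n^d(\log n)^{-11d}$) forces $|\mathcal{I}|\leq(n/\log^7 n)^d$ outside an event of probability $o(n^{-10d})$.

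The main obstacle is calibrating the constants so that the per-block decay $(\log n)^{-7d}$ from Lemma~\ref{perfectsamp} comfortably dominates both the combinatorial $C^d$ entropy factor at each chain step \emph{and} the union-bound factor $n^d$ over starting positions. This is precisely why the assumption on $s$ must scale as $(11dp_\star)\log\log n$: the multiplier $11d$ is chosen to overshoot the block volume $(\log n)^{4d}$ by a power sufficient to absorb the entropy from chain-counting, the concentration error from Chernoff, and the outer $n^d$ union bound simultaneously.
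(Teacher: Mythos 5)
You are right that the printed inequality should read $\mathbb{P}(\Delta_{\mathfrak{H}_s^b}\notin\mathcal{S}(m))\leq n^{-10d}$; the paper's own proof indeed bounds exactly that event, and your overall scheme — a small per-block probability from Lemma \ref{perfectsamp}, a chain argument to rule out components of diameter larger than $\log^5 n$, and a counting step for the number of components — is the same as the paper's (its events $E^{\flat}$ and $E^{\sharp}$). However, your key reduction is incorrect as stated. A vertex $v\in B_i$ lies in the update support iff the output of $\mathcal{G}_s$ on \emph{some} block depends on the initial spin at $v$, i.e.\ iff there is a block $B_j$ with $v\in B_j^+$ (possibly a neighbor of $B_i$, since $B_j^+$ overhangs adjacent blocks) such that $\Phi_j(v)$ belongs to the time-$0$ slice of the history of $C_j$ inside the $C_j^+$-dynamics. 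This is not the event you call ``active'': whether the history started at $(\Phi_i(v),s)$ survives to time $0$ tracks the dependence of the output \emph{at} $v$ on the initial data, not the influence \emph{of} the initial value at $v$ on the output, and it ignores the neighboring blocks entirely; so the containment $\Delta_{\mathfrak{H}_s^b}\subseteq\bigcup_{i\in\mathcal{I}}B_i$ fails. The paper's fix is to dominate $\{\Delta_{\mathfrak{H}_s^b}\cap B\neq\emptyset\}$ by $\bigcup_{\bar B\in N(B)}\{\mathscr{H}^b_{\bar B^+}(0)\neq\emptyset\}$, the survival of the \emph{whole} history of the block and its $3^d-1$ neighbors, which Lemma \ref{perfectsamp} bounds by $3^d(\log n)^{-5d}\leq(\log n)^{-4d}$ — a bound of the same quality as your $(\log n)^{-7d}$, so the rest of the scheme still closes once the event is corrected.

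The second genuine gap is the assertion that the block events are mutually independent. The induced update sequences of adjacent blocks share the randomness of $\mathfrak{H}$ on the overlaps $B_i^+\cap B_j^+$, and after the correction above each block event also involves its neighbors; independence holds only for blocks at distance at least $4$ in blocks, as the paper states explicitly. Consequently both your chain estimate (``all $k$ blocks in the chain are active with probability $(\log n)^{-7dk}$'') and the Chernoff bound for $|\mathcal{I}|$ are unjustified as written. They can be repaired by passing to well-separated subfamilies — keep, say, every fourth block along the chain, and require the $(n/\log^7 n)^d$ blocks in the counting event to be pairwise at block-distance at least $4$ — which is precisely what the paper's events $E^{\flat}$ and $E^{\sharp}$ (with the computation quoted from Lemma 3.9 of \cite{ls13}) are built to do. With these two corrections your argument coincides with the paper's proof.
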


\begin{proof}
	Let $({X_t^b})$ be the barrier dynamics on $\mathbb{Z}_m^d$ and $\mathscr{H}_{\mathbb{Z}_m^d}^b$. For a block $B$ which appears in the first step of Definition \ref{barrierdef}, define $E_B$ to be the event that $\Delta_{\mathfrak{H}_s^b} \cap B \neq \emptyset$ for a random update sequence $\mathfrak{H}_s^b$. By the definition of the barrier-dynamics, ${X}_s^b (B^+)$ is not affected by $X_0(u) \:$ if  $ u \notin \cup_{\bar{B} \in N(B)} \bar{B}^+$, where $N(B)$ denotes the collection of the block $B$ and its neighboring blocks. Therefore, $\Delta_{\mathfrak{H}_s^b} \cap B = \emptyset \:$ if $\mathscr{H}_{\bar{B}^+}^b(0)= \emptyset  $ for every $\bar{B} \in N(B) $. Hence, we can apply Lemma \ref{perfectsamp} to deduce that
	\begin{equation}\label{ebbound}
	\begin{split}
	\mathbb{P}(E_B )
	\leq \mathbb{P} \left( \underset{\bar{B}\in N(B)}{\bigcup} \left\{ \mathscr{H}_{\bar{B}^+}^b(0)\neq \emptyset \right\} \right)
	\leq
	\;3^d  (\log n)^{-5d}
	\leq (\log n )^{-4d}.
	\end{split}
	\end{equation}
	
	In the following, we define two events $E^\sharp$ and $E^\flat$ whose union dominates the target event $\{\Delta_{\mathfrak{H}_s^b} \notin \mathcal{S}(m) \}$.
	
	\begin{itemize}
		\item $E^\sharp$ : there exists a collection $\mathcal{B}$ of $(n / \log^7 n)^d$ blocks such that $E_B$ holds for every $B\in \mathcal{B}$, and the pairwise distances between any two distinct blocks in $\mathcal{B}$ are at least 4 in blocks.

		\item $E^\flat$ : there exists a sequence of blocks $(B_{i_0}, B_{i_1}, \; \ldots \; B_{i_l})$ with $l \geq l_0 := \frac{1}{3d}\log n $ such that for all $k \leq l$, $B_{i_k} \cap \Delta_{\mathfrak{H}_s^b} \neq \emptyset$ and the distance between $B_{i_k}$ and $B_{i_{k-1}}$ is at most $3d$ in blocks.
	\end{itemize}
	We first show that $\{\Delta_{\mathfrak{H}_s^b} \notin \mathcal{S}(m) \} \subset E^\sharp \cup E^\flat$. Suppose that we have $\Delta_{\mathfrak{H}_s^b} \notin \mathcal{S}(m)$ but not $E^\sharp$. Then, we partition   $\Delta_{\mathfrak{H}_s^b}$ according to the following rule:
	
	\begin{center}
		$u \in B \cap \Delta_{\mathfrak{H}_s^b}$ and $u' \in B' \cap \Delta_{\mathfrak{H}_s^b}$ belong to the same component \\
		$\Longrightarrow$  ~the distance between $B$ and $B'$ is at most $3d$ in blocks.
	\end{center}
	In other words, 	$u \in B \cap \Delta_{\mathfrak{H}_s^b}$ and $u' \in B' \cap \Delta_{\mathfrak{H}_s^b}$ are in the same component if and only if there exists a sequence of blocks $(B=B_0, B_1, \ldots, B_k=B')$ such that for each $i$ the distance between $B_i $ and $B_{i+1}$ is at most $3d$ in blocks.
	
	\quad Under this partitioning, the number of components is less than $(n/ \log^7 n)^d$, since we are not in $E^\sharp$. Therefore, we can find a component of diameter greater than $\log^5 n$ because $\Delta_{\mathfrak{H}_s^b}$ is not sparse. Therefore, in that particular component, it is possible to find a sequence of $l_0$ blocks whose distance in blocks is at most $3d$ between each step, and hence $\Delta_{\mathfrak{H}_s^b} \in E^\flat$.

	\quad Next, we verify that both $\mathbb{P}(E^\sharp)$ and $\mathbb{P}(E^\flat)$ are small. Note that the events $E_B$ and $E_{B'}$ are independent if the distance between $B $ and $B'$ is at least 4 in blocks. This is because $E_B$ and $E_{B'}$ are  determined by the update sequence on $\cup_{\bar{B} \in N(B)} \bar{B}^+$ and $\cup_{\bar{B'} \in N(B')} \bar{B'}^+$, respectively. Utilizing (\ref{ebbound}), we can bound $\mathbb{P}(E^\sharp)$ by
	\begin{equation*}
	\mathbb{P}(E^\sharp) \leq {(2n /\log^4 n)^d \choose (n/ \log^7n)^d } (\log n)^{-4d (n/\log^7 n)^d}
	< \; n^{-(n/\log^8 n)^d} < n^{-11d}.
	\end{equation*}
	
	\quad We can bound $\mathbb{P}(E^\flat)$ similarly, using the independence of $E_B$ among distant boxes. Once it is done appropriately, we obtain
	$\mathbb{P}(E^\flat) \leq n^{-11d}$, where a complete calculation can be found in Lemma 3.9 of \cite{ls13}. Therefore, we conclude the proof by summing up two estimates on $\mathbb{P}(E^\sharp)$ and $\mathbb{P}(E^\flat)$.
\end{proof}

\quad We conclude this subsection by proving Theorem \ref{projthm}. To this end, we first show that the total-variation distance from stationarity at time $t+s$ can be bounded by its projection onto the update support at time $t$. The following lemma formalizes this approach, while its proof is omitted due to similarity to Lemma 3.8 of \cite{ls13}. In what follows, we also use the abbreviated form $\mathbb{P}(\mathfrak{H}_{t}^b)$ to denote the probability of having the specific update sequence $\mathfrak{H}_{t}^b$ between times $[0,t]$.

\begin{lemma}\label{projlem}
	Let $\log^5 n \leq m \leq  n$, and let $(X_t)$ be the Swendsen-Wang dynamics on $\mathbb{Z}_m^d$. Then for any $x_0$, $t>0$ and $0\leq s \leq \log^{4/3} n$,
	\begin{equation}\label{projeq1}
	\| \mathbb{P}_{x_0} (X_{t+s} \in \, \cdot \, ) -\pi \|_{\textnormal{\tiny{TV}}}
	\leq
	\int 	\| \mathbb{P}_{x_0} (X_{t}(\Delta_{\mathfrak{H}_s^b})  \in \, \cdot \, ) -\pi_{\Delta_{\mathfrak{H}_s^b}} \|_{\tiny{\textnormal{\tiny{TV}}}} \:d\mathbb{P} (\mathfrak{H}_s^b) \:+ \: 2 n^{-10d} ,
	\end{equation}
	where $\mathfrak{H}_s^b$ denotes the update sequence of the barrier-dynamics over the time period $[t,t+s]$ and $\Delta_{\mathfrak{H}_s^b}$ is its update support.
\end{lemma}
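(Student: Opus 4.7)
The plan is to bound $\|\mathbb{P}_{x_0}(X_{t+s}\in\cdot) - \pi\|_{\textnormal{\tiny TV}}$ by coupling $(X_u)$ with a stationary copy $(Y_u)$, $Y_0 \sim \pi$, so that $Y_{t+s}\sim \pi$, and then replacing both $X_{t+s}$ and $Y_{t+s}$ by the outputs of the barrier-dynamics operator $\mathcal{G}_s$. The standard coupling inequality reduces the target quantity to $\mathbb{P}(X_{t+s} \neq Y_{t+s})$, and the update-support machinery will reduce the latter to a projection TV distance on $\Delta_{\mathfrak{H}_s^b}$ at time $t$.

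First, sample the barrier update sequence $\mathfrak{H}_s^b$ for the window $[t,t+s]$ (induced from the underlying update sequence on that window, and thus independent of the $[0,t]$-history of both chains). Since $s\leq \log^{4/3}n = t_{\textnormal{max}}$, Lemma \ref{barrierdlem} applied separately to $(X_u)$ and $(Y_u)$, together with a union bound, yields $X_{t+s} = \mathcal{G}_s(X_t)$ and $Y_{t+s} = \mathcal{G}_s(Y_t)$ simultaneously on an event of probability at least $1-2n^{-10d}$. Hence it suffices to bound $\mathbb{P}(\mathcal{G}_s(X_t)\neq \mathcal{G}_s(Y_t))$ at an additive cost of $2n^{-10d}$. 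Next, by Definition \ref{updsupdef}, the map $z\mapsto \mathcal{G}_s(z) = f_{\mathfrak{H}_s^b}(z(\Delta_{\mathfrak{H}_s^b}))$ depends on $z$ only through its projection onto $\Delta := \Delta_{\mathfrak{H}_s^b}$; consequently $\mathcal{G}_s(X_t)\neq \mathcal{G}_s(Y_t)$ forces $X_t(\Delta) \neq Y_t(\Delta)$.

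To finish, condition on $\mathfrak{H}_s^b$. Because $\mathfrak{H}_s^b$ is independent of the dynamics up to time $t$, the conditional marginals of $X_t$ and $Y_t$ remain $\mathbb{P}_{x_0}(X_t\in\cdot)$ and $\pi$ respectively. This freedom allows us to install a maximal coupling of $(X_t(\Delta), Y_t(\Delta))$ conditional on $\mathfrak{H}_s^b$, achieving $\mathbb{P}(X_t(\Delta) \neq Y_t(\Delta) \mid \mathfrak{H}_s^b) = \|\mathbb{P}_{x_0}(X_t(\Delta)\in\cdot) - \pi_\Delta\|_{\textnormal{\tiny TV}}$. Integrating over $\mathfrak{H}_s^b$ and adding the $2n^{-10d}$ slack from Lemma \ref{barrierdlem} then produces \eqref{projeq1}. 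The most delicate point is precisely this final construction: the optimal projection coupling must be installed \emph{after} $\mathfrak{H}_s^b$ (and hence $\Delta$) has been revealed, which is legitimate only because $\mathfrak{H}_s^b$ is independent of the $[0,t]$-dynamics; without that independence, re-coupling $(X_t,Y_t)$ on a data-dependent set could distort their marginal laws. Once this ordering is respected the remainder is a routine application of the coupling inequality combined with the defining property of the update support.
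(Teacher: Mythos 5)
Your argument is correct and is essentially the standard proof that the paper has in mind when it defers to Lemma 3.8 of \cite{ls13}: reveal the update sequence of the window $[t,t+s]$ (independent of the $[0,t]$-dynamics), use Lemma \ref{barrierdlem} twice to replace $X_{t+s}$ and $Y_{t+s}$ by $\mathcal{G}_s(X_t)$ and $\mathcal{G}_s(Y_t)$ at cost $2n^{-10d}$, and exploit that $\mathcal{G}_s$ depends on the configuration only through $\Delta_{\mathfrak{H}_s^b}$, coupling the projections maximally afterwards. Your emphasis on installing the maximal coupling only after $\mathfrak{H}_s^b$ is revealed, justified by its independence from the $[0,t]$-dynamics, is exactly the point that makes the argument legitimate, so no gap remains.
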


\textit{Proof of Theorem \ref{projthm}.}~ This is obtained as a direct consequence of Lemmas \ref{sparselem} and \ref{projlem}. Since we have $s \geq 10d p_\star \log\log n$, $ \Delta_{\mathfrak{H}_s^b}$ is a sparse set except with an error at most $n^{-10d}$. Keeping this in mind, let $\rho$ be the probability measure on $\mathcal{S}(m)$ defined as
\begin{equation*}
\rho(S) := \mathbb{P}( \Delta_{\mathfrak{H}_s^b} \in S \mid  \Delta_{\mathfrak{H}_s^b} \in \mathcal{S}(m) ).
\end{equation*}
\quad By plugging $\rho $ instead of $d\mathbb{P}( \mathfrak{H}_s^b)$ into (\ref{projeq1}) along with compensating the error $n^{-10d}$, we deduce (\ref{projeq})
as a conclusion. \qed

\subsection{Proof of Theorem \ref{l1l2thm}}\label{secl1l22}

Our approach is very similar to Theorem 3.1 of \cite{ls13}. In order to cover some necessary changes, we present a proof for part 1 of the theorem. However, for the second part, we refer  to the literature instead of reproducing it due to its similarity.

\quad To begin with, we introduce an elementary lemma on the $L^2$-distance between the product measures. This lemma will be useful when dealing with product chains which will occasionally appear later on.

\begin{lemma}\label{prodmsrlem}
	Let $(\mu_i)_{i=1}^k$ and $(\nu_i)_{i=1}^k$ be two collections of probability measures on a discrete state space, and let $\mu = \otimes_{i=1}^k \mu_i$ and $\nu = \otimes_{i=1}^k \nu_i $. Then the following inequality holds true:
	\begin{equation*}
	\|\mu - \nu \|^2_{L^2(\nu)} \leq \exp \left(\sum_{i=1}^k \|\mu_i - \nu_i \|^2_{L^2(\nu_i)} \right) -1.
	\end{equation*}
\end{lemma}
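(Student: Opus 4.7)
The plan is to expand the squared $L^2$ distance in the standard form and exploit the product structure to factor the resulting sum. Recall that for any two probability measures $\alpha, \beta$ on a finite state space,
\begin{equation*}
\|\alpha - \beta\|^2_{L^2(\beta)} \;=\; \sum_x \left(\frac{\alpha(x)}{\beta(x)} - 1\right)^2 \beta(x) \;=\; \sum_x \frac{\alpha(x)^2}{\beta(x)} - 1.
\end{equation*}
I would apply this to $\mu$ and $\nu$ and then use that $\mu(x)/\nu(x) = \prod_{i=1}^k \mu_i(x_i)/\nu_i(x_i)$, which splits the sum over $x = (x_1,\ldots,x_k)$ into a product over the coordinates.

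More precisely, the factorization yields
\begin{equation*}
\sum_x \frac{\mu(x)^2}{\nu(x)} \;=\; \prod_{i=1}^k \sum_{x_i} \frac{\mu_i(x_i)^2}{\nu_i(x_i)} \;=\; \prod_{i=1}^k \Bigl(1 + \|\mu_i - \nu_i\|^2_{L^2(\nu_i)}\Bigr),
\end{equation*}
so that $\|\mu - \nu\|^2_{L^2(\nu)} + 1 = \prod_{i=1}^k (1 + \|\mu_i - \nu_i\|^2_{L^2(\nu_i)})$. The final step is to apply the elementary inequality $1 + x \leq e^x$ (valid for all real $x \geq 0$) to each factor, which gives
\begin{equation*}
\prod_{i=1}^k \Bigl(1 + \|\mu_i - \nu_i\|^2_{L^2(\nu_i)}\Bigr) \;\leq\; \exp\left(\sum_{i=1}^k \|\mu_i - \nu_i\|^2_{L^2(\nu_i)}\right),
\end{equation*}
and rearranging yields the stated bound. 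There is no real obstacle here; the only ``content'' is the multiplicative structure of the $\chi^2$-type functional $1 + \|\cdot\|_{L^2}^2$ under tensorization, together with the linearization $1+x \leq e^x$ used to convert the product into the exponential of a sum.
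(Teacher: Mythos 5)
Your proposal is correct and follows exactly the paper's own argument: expand $\|\mu-\nu\|^2_{L^2(\nu)} = \sum_x \mu(x)^2/\nu(x) - 1$, factor the sum over coordinates using the product structure, and bound each factor $1+\|\mu_i-\nu_i\|^2_{L^2(\nu_i)}$ by $\exp(\|\mu_i-\nu_i\|^2_{L^2(\nu_i)})$. There is no substantive difference between the two proofs.
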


\textit{Proof}.\quad Proof is done by elementary calculations on $\|\mu - \nu \|_{L^2(\nu)}$.
\begin{align*}
\|\mu - \nu \|^2_{L^2(\nu)} &= \sum_x \frac{\mu(x)^2}{\nu(x)} -1
=\prod_{i=1}^k \left[\sum_{x_i} \frac{\mu_i(x_i)^2}{\nu_i(x_i)} \right] \:-1 \\
&=\prod_{i=1}^k
\{ \|\mu_i - \nu_i \|^2_{L^2(\nu_i)} +1 \} \:-1\leq
\exp \left(\sum_{i=1}^k \|\mu_i - \nu_i \|^2_{L^2(\nu_i)} \right) -1. \rlap{$~~~\:~~~\:~\square$}
\end{align*}

\textit{Proof of Theorem \ref{l1l2thm}, Part 1}.
~Let $m$ be a fixed integer such that $\log^5 n \leq m \leq n.$  Let $\Delta \subset \mathbb{Z}_m^d$ be a sparse set, and let $\cup_{i=1}^L A_i$ denote its partition according to Definition \ref{sparsedef}, where we have $L \leq m^d \wedge (n/\log^7 n)^d$ by definition. For each component $A_i$, define
\begin{equation*}
A_i^+ := \{v: \textnormal{dist}(v, A_i) \leq \log^3 n \}.
\end{equation*}
For each $i$, let $\psi_i$ be the graph isomorphism mapping $A_i^+$ onto $B_i^+$, where each $B_i^+$ is contained in distinct tori $\mathbb{Z}_r^d$ with $r = 3\log^5 n$. Let $\Gamma = \cup_{i=1}^L B_i.$ and let $\psi$ denote the combined information of $\psi_i$'s, i.e., $\psi|_{A_i^+}  = \psi_i$

\quad Let us define $(X_t^*)$ to be the product chain of the Swendsen-Wang dynamics on $(\mathbb{Z}_r^d)^L$, and let $\pi^*$ denote its stationary distribution. We couple $X_t^*$ and $X_t$ in a natural way as follows. For the initial configuration $x_0$ of $(X_t)$, define $x_0^*$  to be $x_0^* ( B_i^+) = x_0(A_i^+) \circ \psi_i^{-1}$ for each $i$, and endow arbitrary colors for the rest of the sites. Also, the update sequence of $(X_t^*)$ is given by the induced update sequence of $(X_t)$ as defined in Definition \ref{inducedupd}.  Then the triangle inequality implies that
\begin{equation}\label{tvineq1}
\begin{split}
\left\| \mathbb{P}_{x_0} \left( X_{t}(\Delta) \in \cdot \,\right) - \pi_{\Delta} \right\|_{{ \textnormal{\tiny TV}}}
&\leq
\left\| \mathbb{P}_{x_0} \left( X_{t}(\Delta) \in \cdot \,\right) - \mathbb{P}_{x_0^*} \left( X_{t}^*(\Gamma) \in \cdot \,\right) \right\|_{{ \textnormal{\tiny TV}}} \\
&~~+
\left\| \mathbb{P}_{x_0^*} \left( X_{t}^*(\Gamma) \in \cdot \,\right) - \pi_{\Gamma}^* \right\|_{{ \textnormal{\tiny TV}}} 
+
\left\| \pi_{\Delta} - \pi_{\Gamma}^* \right\|_{{ \textnormal{\tiny TV}}},
\end{split}
\end{equation}
where we omit the expression such as $\psi^{-1}$ since the correspondence between $\Delta$ and $\Gamma$ is clear in the current context.

\quad Note that the distance between $A_i$ and $\partial A_i^+$ is at least $\log^3 n$. By identifying $A_i$ and $B_i$ in an obvious way, Lemma \ref{speedofpropa} implies that $X_t^*$ and $X_t$ are coupled on $\cup_{i=1}^L A_i$ until time $t_{\textnormal{max}}=\log^{4/3} n$, with an error probability at most $n^{-10d}$ (this property can be proven analogously as in Lemma \ref{barrierdlem}).
This shows that
\begin{equation}\label{tvineq2}
\left\| \mathbb{P}_{x_0} \left( X_{t}(\Delta) \in \,\cdot \,\right) - \mathbb{P}_{x_0^*} \left( X_{t}^*(\Gamma) \in \,\cdot \,\right) \right\|_{{ \textnormal{\tiny TV}}} \leq n^{-10d}.
\end{equation}

\quad The third term in the r.h.s. of (\ref{tvineq1}) is split into three parts as follows.
\begin{equation}\label{tvineq3}
\begin{split}
\left\| \pi_{\Delta} - \pi_{\Gamma}^* \right\|_{{ \textnormal{\tiny TV}}}
&\leq
\left\| \mathbb{P}_{x_0^*} \left( X_{t_{\textnormal{max}}}^*(\Gamma) \in \,\cdot \,\right) - \pi_{\Gamma}^* \right\|_{{ \textnormal{\tiny TV}}} \\
&+
\left\| \mathbb{P}_{x_0^*} \left( X_{t_{\textnormal{max}}}^*(\Gamma) \in \,\cdot \,\right) -
\mathbb{P}_{x_0} \left( X_{t_{\textnormal{max}}}(\Delta) \in \,\cdot \,\right)   \right\|_{{ \textnormal{\tiny TV}}} \\	
&+
\left\| \mathbb{P}_{x_0} \left( X_{t_{\textnormal{max}}}(\Delta) \in \,\cdot \,\right) -
\pi_{\Delta} \right\|_{{ \textnormal{\tiny TV}}}.
\end{split}
\end{equation}

\noindent An analogous method as (\ref{tvineq2}) can be used to bound the second term in the r.h.s. of (\ref{tvineq3}). For the third term, we apply Lemma \ref{perfectsamp} to obtain that
\begin{equation}\label{tvineq4}
\begin{split}
\left\| \mathbb{P}_{x_0} \left( X_{t_{\textnormal{max}}}(\Delta) \in \cdot \,\right) -
\pi_{\Delta} \right\|_{{ \textnormal{\tiny TV}}}
&\leq
\left\| \mathbb{P}_{x_0} \left( X_{t_{\textnormal{max}}} \in \cdot \,\right) - \mathbb{P}_{\pi} \left( X_{t_{\textnormal{max}}} \in \cdot \,\right) \right\|_{{ \textnormal{\tiny TV}}}\\
&\leq \mathbb{P} (X_{t_{\textnormal{max}}} \textnormal{ is dependent on } X_0)  \leq n^{-10d},	
\end{split}
\end{equation}

\noindent where the last inequality is obtained by putting $l=m$ and $t=t_{\textnormal{max}}=\log^{4/3}n$ into (\ref{perfectsampineq1}). For the first term of (\ref{tvineq3}), we apply Lemma \ref{prodmsrlem} to deal with the product chain:
\begin{equation}\label{tvineq5}
\begin{split}
\| \mathbb{P}_{x_0^*} &\left( X_{t_{\textnormal{max}}}^*(\Gamma) \in \,\cdot \,\right) - \pi_{\Gamma}^* \|_{{ \textnormal{\tiny TV}}}
\leq
\frac{1}{2}\left\| \mathbb{P}_{x_0^*} \left( X_{t_{\textnormal{max}}}^*(\Gamma) \in \,\cdot \,\right) - \pi_{\Gamma}^* \right\|_{L^2(\pi^*)}\\
&~~~~~\leq
\frac{1}{2}\left[ \exp \left\{\sum_{i=1}^L  \| \mathbb{P}_{x_0^*} ( X_{t_{\textnormal{max}}}^*(B_i) \in\; \cdot \;) - \pi_{i}^* \|^2_{L^2(\pi_{i}^*)}   \right\} -1 \right]^{1/2}\\
&~~~~~\leq
\frac{1}{2}\left[ \exp \left\{L \; \| \mathbb{P}_{x_0^*} ( X_{t_{\textnormal{max}}}^*(\mathbb{Z}_r^d) \in \,\cdot \,) - \pi_r^* \|^2_{L^2(\pi_r^*)}   \right\} -1 \right]^{1/2},
\end{split}
\end{equation}

\noindent where $\pi_i^*$ and $\pi_r^*$ are shorthand notations for $\pi_{B_i}^*$ and $\pi_{\mathbb{Z}_r^d}^*$, respectively. In the  last inequality we used  the fact that projection can only decrease the $L^2$-distance while the first line is due to Cauchy-Schwarz. Now by plugging $t=t_{\textnormal{max}}=\log^{4/3}n$ into (\ref{l2ineq}) of Theorem \ref{l2thm}, we get
\begin{equation}\label{l2ineqappl}
\| \mathbb{P}_{x_0^*} ( X_{t_{\textnormal{max}}}^*(\mathbb{Z}_r^d) \in \,\cdot \,) - \pi_r^* \|_{L^2(\pi_r^*)} \leq n^{-11d}.
\end{equation}

\quad Using this combined with (\ref{tvineq5}) gives
\begin{equation}\label{tvineq6}
\left\| \mathbb{P}_{x_0^*} \left( X_{t_{\textnormal{max}}}^*(\Gamma) \in \,\cdot \,\right) - \pi_{\Gamma}^* \right\|_{{ \textnormal{\tiny TV}}}
\leq \frac{1}{2} \left[\exp \{L n^{-22d} \} -1\right]^{1/2} \leq n^{-10d},
\end{equation}

\noindent which holds for all sufficiently large $n$. Thus, we can rewrite (\ref{tvineq3}) using (\ref{tvineq4}, \ref{tvineq6}) as
\begin{equation}\label{tvineq7}
\left\| \pi_{\Delta} - \pi_{\Gamma}^* \right\|_{{ \textnormal{\tiny TV}}} \leq 3n^{-10d}.
\end{equation}

\noindent Hence, by combining (\ref{tvineq1}), (\ref{tvineq2}) and (\ref{tvineq6}), we get
\begin{equation}\label{tvineq8}
\left\| \mathbb{P}_{x_0} \left( X_{t}(\Delta) \in \,\cdot \,\right) - \pi_{\Delta} \right\|_{{ \textnormal{\tiny TV}}}
\leq
\left\| \mathbb{P}_{x_0^*} \left( X_{t}^*(\Gamma) \in \,\cdot \,\right) - \pi_{\Gamma}^* \right\|_{{ \textnormal{\tiny TV}}} + 4 n^{-10d}.
\end{equation}

\quad We derive an upper bound on the r.h.s. of (\ref{tvineq8}) in terms of $\textbf{m}_t$ similarly as what is done in (\ref{tvineq5}). Note that the diameter of $B_i^+$ is smaller than $\frac{2}{3}r = 2\log^5 n$, and hence we have
\begin{equation*}
\| \mathbb{P}_{x_0^*} ( X_{t}^*(B_i) \in\; \cdot \;) - \pi_{i}^* \|_{L^2(\pi_{i}^*)}
\leq
\;\textbf{m}_t.
\end{equation*}

\noindent Therefore, plugging this  into (\ref{tvineq5}) with replacing $t_{\textnormal{max}}$ by $t$ gives that
\begin{equation}\label{tvineq9}
\| \mathbb{P}_{x_0^*} \left( X_{t}^*(\Gamma) \in \,\cdot \,\right) - \pi_{\Gamma}^* \|_{{ \textnormal{\tiny TV}}} \leq \frac{1}{2} \left( \exp\left(L \textbf{m}^2_t \right) -1 \right)^{1/2},
\end{equation}

\noindent and this holds regardless of the initial configuration $x_0^*$. Altogether, (\ref{tvineq8}), (\ref{tvineq9}) and Theorem \ref{projthm} imply that
\begin{equation}\label{mtineqfinal}
\max_{x_0} \left\| \mathbb{P}_{x_0} \left( X_{t+s} \in \,\cdot \,\right) - \pi \right\|_{{ \textnormal{\tiny TV}}}
\leq
\frac{1}{2} \left( \exp\left(L \textbf{m}^2_t \right) -1 \right)^{1/2} +7n^{-10d}.
\end{equation}

\noindent Finally, recalling that $L \leq m^d \wedge (n/\log^7 n)^d$ and replacing $7n^{-10d}$ by $n^{-9d}$ establishes the first part of Theorem \ref{l1l2thm}. This verifies the variant version in Remark \ref{l1l2rmk} as well. \qed

\vspace{3mm}	
\textit{Proof of Theorem \ref{l1l2thm}, Part 2.}
~Since the proof is identical to that of Theorem 3.1 in \cite{ls13}, we refer to the literature rather than rewriting it in the current paper. However, we explain two minor changes that should be made for our case.

\quad Firstly, we divide the underlying lattice $\mathbb{Z}_n^d$ into blocks of side-length $3\log^5 n$, in contrast to the side-length $3\log^3 n$ blocks
in \cite{ls13}.	 Also, whenever the log-Sobolev-type inequality (Theorem 2.1 of \cite{ls13}) is used in the reference, we implement Theorem \ref{l2thm} or Lemma \ref{perfectsamp} as an alternative. It is applied when bounding the terms such as $	\left\| \mathbb{P}_{x_0} \left( X_{t_{\textnormal{max}}} \in \,\cdot \,\right) -
\pi \right\|_{{ \textnormal{\tiny TV}}} $, which can be done as (\ref{tvineq4}) and (\ref{l2ineqappl}) in our case. \qed

\section{Cutoff for the Swendsen-Wang Dynamics}\label{secfinal}

In this section we prove Theorems \ref{thm1} and  \ref{thm2}. In \S\ref{secfinal1}, we establish the existence and the location of cutoff. However, the cutoff location will be written in terms of a finite-volume spectral gap. In \S\ref{secfinal2}, we prove that the spectral gap of the finite-volume dynamics indeed converges to the infinite-volume gap, which verifies both Theorems \ref{thm1} and \ref{thm2}.

\begin{remark}\label{gaplimitrmk}
	\textnormal{In \S\ref{secglobalcoup}, we showed that the spectral gap is bounded strictly away from 0 uniformly in $n$ when $p<p_0$. On the other hand, one can also verify that the spectral gap is strictly away from 1 if $p$ is sufficiently small, whose proof is defered to Proposition \ref{specgapprop} in the appendix. Thus, if we can demonstrate that the spectral gap converges as the lattice size tends to infinity, we consequently have that the limit is strictly between $0$ and $1$.}
\end{remark}

\quad Throughout this section, let $0< p_0 < \frac{1}{4e^2d}$ denote a small constant that not only satisfies the condition given in  Theorem \ref{l2thm}, but also lies in the regime where there exists a constant $c>0$ such that $\gamma(r)$, the spectral gap of the Swendsen-Wang dynamics on $\mathbb{Z}_r^d$, lies in $[c, 1-c]$ uniformly in $r$, as discussed in the above remark. Moreover, for a given constant $0<p<p_0$, the following notations are introduced for convenience:
\begin{equation}\label{gammastar}
\gamma_\star (r) : = \log \left(\frac{1}{1-\gamma(r)} \right), ~~~~~p_\star := \left[\log \left( \frac{1}{4edp}\right) \right]^{-1}.
\end{equation}

\noindent Note that  $p_0 \leq \frac{1}{4e^2d}$ implies $p_\star \leq 1$. Then Proposition \ref{prop1} implies that $\gamma_\star(r) \geq p_\star^{-1} \geq 1.$

\subsection{Existence of Cutoff}\label{secfinal1} Our starting point is to sum up the results in the previous sections and derive a sharp bound on $\textbf{m}_t$ defined in (\ref{mtdef}), which will then naturally imply the existence of cutoff.

\begin{lemma}\label{mtestim}
	Set $r=3\log^5 n$. For every $0<p<p_0$, $18d \log\log n \leq t \leq \log^{4/3} n$ and $n$ sufficiently large,
	\begin{equation*}
	e^{-\gamma_\star(r) t \;- \;15d \gamma_\star(r)\log\log n } - n^{-9d}
	\leq
	\textbf{m}_t
	\leq
	e^{-\gamma_\star(r)t \;+\; 12d \gamma_\star(r) \log\log n }.
	\end{equation*}
\end{lemma}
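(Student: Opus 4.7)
The plan is to treat the two bounds separately. The upper bound follows from applying Theorem~\ref{l2thm} directly to $X^\dagger$ on $\mathbb{Z}_r^d$ and using the fact that marginalization onto $\Lambda$ only shrinks the $L^2$-distance. The lower bound is obtained by combining the spectral-gap lower bound on TV distance (Proposition~\ref{specgap}) with the $L^1$-to-$L^2$ reduction of Remark~\ref{l1l2rmk}, applied to $X^\dagger$ itself with $m=r$.

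For the upper bound, I invoke Theorem~\ref{l2thm} on the chain $X^\dagger$ living on $\mathbb{Z}_r^d$, whose vertex set has cardinality $r^d$ with $r=3\log^5 n$. With $C=2/\log(1/(3edp))$ as in Remark~\ref{windowrep}, the theorem yields
\begin{equation*}
\max_{x_0}\bigl\|\mathbb{P}_{x_0}(X_t^\dagger\in\cdot)-\pi^\dagger\bigr\|_{L^2(\pi^\dagger)} \;\leq\; 2\exp\bigl(-\gamma_\star(r)(t-C\log(r^d))\bigr)
\end{equation*}
whenever $t\geq C\log(r^d)$ and $n$ is large. Since $p<p_0<1/(4e^2d)$ one checks that $C<2/\log(4e/3)<1.6$, while $\log(r^d)=d\log(3\log^5 n)\leq 6d\log\log n$ for large $n$, so the exponent is at most $-\gamma_\star(r)t+9.6\,d\,\gamma_\star(r)\log\log n$. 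The factor of $2$ contributes an extra additive $\log 2$, which is absorbed into the $\log\log n$ coefficient using $\gamma_\star(r)\geq 1$, yielding at most $-\gamma_\star(r)t+12d\gamma_\star(r)\log\log n$. The hypothesis $t\geq 18d\log\log n$ amply ensures $t\geq C\log(r^d)$. Since the $L^2$-distance is monotone under marginalization (Jensen's inequality applied to $\mathbb{E}_\pi[d\mu/d\pi\mid X(\Lambda)]=d\mu_\Lambda/d\pi_\Lambda$), I conclude $\textbf{m}_t\leq \exp(-\gamma_\star(r)t+12d\gamma_\star(r)\log\log n)$.

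For the lower bound, set $s:=\lceil 11d\,p_\star\log\log n\rceil$, which satisfies the hypotheses of Theorem~\ref{l1l2thm}. Proposition~\ref{specgap} applied to $X^\dagger$ gives $d^\dagger(t+s)\geq e^{-\gamma_\star(r)(t+s)}/2$, where $d^\dagger(\cdot)$ denotes the worst-case TV distance from $\pi^\dagger$. Remark~\ref{l1l2rmk}, applied with $m=r$, furnishes the matching upper bound
\begin{equation*}
d^\dagger(t+s)\;\leq\;\tfrac{1}{2}\bigl[\exp(r^d\,\textbf{m}_t^2)-1\bigr]^{1/2}+n^{-9d}.
\end{equation*}
In the regime $e^{-\gamma_\star(r)(t+s)}\geq 4n^{-9d}$, chaining these bounds and using $\log(1+x)\geq x/2$ for $x\leq 1$ gives $r^d\,\textbf{m}_t^2\geq e^{-2\gamma_\star(r)(t+s)}/8$, hence
\begin{equation*}
\textbf{m}_t\;\geq\;\frac{e^{-\gamma_\star(r)(t+s)}}{2\sqrt{2}\,r^{d/2}}\;\geq\;\exp\!\left(-\gamma_\star(r)t-(11dp_\star+5d/2)\gamma_\star(r)\log\log n-O(1)\right),
\end{equation*}
where $r^{d/2}=3^{d/2}(\log n)^{5d/2}$ and the $O(1)$ and $(5d/2)\log\log n$ terms are absorbed via $\gamma_\star(r)\geq 1$. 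With $p_\star\leq 1$ the coefficient is at most $13.5d$, strictly below the claimed $15d$ for large $n$. In the complementary regime $e^{-\gamma_\star(r)(t+s)}<4n^{-9d}$, a short calculation (using $s\leq 11d\log\log n$ and $\gamma_\star(r)\geq 1$) shows $e^{-\gamma_\star(r)t-15d\gamma_\star(r)\log\log n}\leq n^{-9d}$, so the claimed RHS is nonpositive and the inequality is vacuous.

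The main obstacle is the lower bound. The spectral gap of $X^\dagger$ automatically gives a lower bound on the \emph{full} $L^2$-distance on $\mathbb{Z}_r^d$, but marginalization to $\Lambda$ could in principle shrink this distance dramatically if the second eigenfunction of $X^\dagger$ were poorly captured by $X(\Lambda)$. Attempting a direct spectral argument, e.g.\ testing against a suitably localized eigenfunction, would require quantifying the overlap between eigenfunctions and functions of $X(\Lambda)$, which is not readily available. The elegant shortcut is to route through the TV distance on $\mathbb{Z}_r^d$ and apply the $L^1$-to-$L^2$ reduction of \S\ref{secl1l2} to the smaller chain $X^\dagger$ itself, which is exactly the purpose of Remark~\ref{l1l2rmk}. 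Beyond this conceptual point, the remaining work is bookkeeping, but tracking the $12d$ vs.\ $15d$ coefficients and the $n^{-9d}$ slack demands some care.
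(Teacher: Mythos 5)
Your proposal is correct and follows essentially the same route as the paper: the upper bound via Theorem~\ref{l2thm} (with the constant of Remark~\ref{windowrep}) applied to the chain on $\mathbb{Z}_r^d$ together with $L^2$-contraction under projection onto $\Lambda$, and the lower bound by chaining Proposition~\ref{specgap} with Theorem~\ref{l1l2thm}/Remark~\ref{l1l2rmk} at $m=r$ and $s\asymp 11dp_\star\log\log n$. Your bookkeeping differs only cosmetically (the case split on $e^{-\gamma_\star(r)(t+s)}$ versus $4n^{-9d}$ and the use of $\log(1+x)\ge x/2$ in place of the paper's $e^x-1\le 4x$, the latter of which requires the observation $r^d\textbf{m}_t^2=o(1)$ that your variant avoids), and the constants you track land safely within the stated $12d$ and $15d$.
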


\begin{proof}
	Let $X_t^\dagger$ denote the Swendsen-Wang dynamics on $\mathbb{Z}_r^d$ with periodic boundary conditions, and let $\pi^\dagger$ be its stationary distribution. Then the r.h.s. of the desired inequality comes directly from Theorem \ref{l2thm} and Remark \ref{windowrep}.
	\begin{equation}\label{mtineq1}
	\begin{split}
	\textbf{m}_t
	\leq
	\max_{x_0} \| \mathbb{P}_{x_0} ( X_t^\dagger \in \cdot) - \pi^\dagger \|_{L^2(\pi^\dagger)}
	&\leq 2 e^{ - \gamma_\star(r) \left( t- 11p_\star \log\log n \right)  }\\
	&\leq
	e^{- \gamma_\star(r)  t \;+\; 12d \gamma_\star(r)  \log\log n}  .
	\end{split}
	\end{equation}
	
	\quad Further, note that $r^d \textbf{m}_t = o(1)$ due to the condition $t \geq 18d \log \log n$. Therefore, combining Theorem \ref{l1l2thm} with $m=r$ (see Remark \ref{l1l2rmk}) and Proposition \ref{specgap} implies that
	\begin{equation}\label{mtineq11}
	\begin{split}
	e^{-\gamma_\star(r) (t+s)}
	&\leq
	2\| \mathbb{P} (X_{t+s}^\dagger \in \; \cdot \;) - \pi^\dagger \|_{\textnormal{\tiny{TV}}} \\
	&\leq
	\left(\exp \left( r^d \textbf{m}_t^2 \right) -1\right)^{1/2} + n^{-9d}
	\leq~
	2 r^{d/2} \textbf{m}_t + n^{-9d},
	\end{split}
	\end{equation}
	where $s= 11d p_\star \log\log n$, and the last inequality is achieved by the elementary inequality $e^x -1 \leq 4x$ which holds for $x \in [0,1]$. Using the fact that $\gamma_\star(r) \geq p_\star^{-1} \geq 1 $, we deduce that
	\begin{equation}\label{mtineq2}
	\textbf{m}_t \geq e^{-\gamma_\star(r) t \;- \;15d \gamma_\star(r)\log\log n } - n^{-9d}.
	\end{equation}
	Combining the inequalities (\ref{mtineq1}) and (\ref{mtineq2}) concludes the proof.
\end{proof}

\quad We can now prove the existence of cutoff in the following theorem, establishing the cutoff location in terms of $\gamma_\star(r)$ and the $O(\log\log n)$-window.

\begin{theorem}\label{cutoffthm}
	Let $(X_t)$ be the Swendsen-Wang dynamics defined on $\mathbb{Z}_n^d$. Set $r=3\log^5 n$, $0<p<p_0$, and let  $t_\star$, $t_n^-$ and $t_n^+$ be defined as follows:
	\begin{equation*}
	t_\star = t_\star(n) := \frac{d}{2\gamma_\star(r)} \log n, ~~~ t_n^- := t_\star - 18 d \log \log n , ~~~
	t_n^+ := t_\star + 20d \log\log n.
	\end{equation*}
	Then we have the following which establishes cutoff of $(X_t)$.
	\begin{equation}\label{cutoffeq}
	\begin{split}
	&\lim_{n\rightarrow \infty} \; \max_{x_0} \| \mathbb{P}_{x_0} (X_{t_n^-} \in \;\cdot\; ) - \pi \|_{\textnormal{\tiny{TV}}} = 1;\\
	&\lim_{n\rightarrow \infty} \; \max_{x_0} \| \mathbb{P}_{x_0} (X_{t_n^+} \in \;\cdot\; ) - \pi \|_{\textnormal{\tiny{TV}}} = 0.
	\end{split}
	\end{equation}
\end{theorem}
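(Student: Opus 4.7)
\textit{Proof proposal.} Both halves of (\ref{cutoffeq}) follow by substituting the sharp bounds on $\mathbf{m}_t$ from Lemma \ref{mtestim} into the two parts of Theorem \ref{l1l2thm}. The window multipliers $18d$ and $20d$ in the definitions of $t_n^{\pm}$ are chosen precisely to absorb the $O(\gamma_{\star}(r)\log\log n)$ error terms in Lemma \ref{mtestim} together with the mandatory shift $s \asymp \log\log n$ in Theorem \ref{l1l2thm}.

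For the upper bound at $t_n^{+}$, set $s := (10 d p_{\star}) \log\log n$, the minimum shift allowed in Theorem \ref{l1l2thm} Part 1, and $t := t_n^{+} - s$. The upper estimate of Lemma \ref{mtestim}, combined with the identity $\gamma_{\star}(r)\, t_{\star} = (d/2)\log n$, yields after a routine algebraic rearrangement
\begin{equation*}
  \log \bigl[ (n/\log^7 n)^d\, \mathbf{m}_t^2 \bigr]
  \;\le\;
  \bigl( -7d - 16 d \gamma_{\star}(r) + 20 d \, p_{\star} \gamma_{\star}(r) \bigr) \log\log n.
\end{equation*}
For $p_0$ small enough that $p_{\star} \le 4/5$ (equivalently, $4edp \le e^{-5/4}$), the right-hand side is at most $-7d \log\log n$, so $(n/\log^7 n)^d \mathbf{m}_t^2 \to 0$. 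Theorem \ref{l1l2thm} Part 1 then gives the second line of (\ref{cutoffeq}).

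For the lower bound at $t_n^{-}$, I apply the lower estimate of Lemma \ref{mtestim} at $t = t_n^{-}$: using $\gamma_{\star}(r)\, t_{\star} = (d/2)\log n$, the dominant term in $\mathbf{m}_{t_n^-}$ is $n^{-d/2}(\log n)^{3 d \gamma_{\star}(r)}$, which overwhelms the additive error $n^{-9d}$ because $\gamma_{\star}(r) \ge 1$. Consequently $(n/(3\log^5 n))^d \mathbf{m}_{t_n^{-}}^2 \gtrsim (\log n)^{6 d \gamma_{\star}(r) - 5d}$, which diverges since $\gamma_{\star}(r) \ge p_{\star}^{-1} \ge 1 > 5/6$. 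Theorem \ref{l1l2thm} Part 2 then yields the first line of (\ref{cutoffeq}).

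The only real subtlety lies in the upper half: the $20d$ multiplier in $t_n^{+}$ is just large enough to simultaneously compensate the shift $s$ and the error term $12 d \gamma_{\star}(r) \log\log n$, and making the computation close forces the condition $p_{\star} \le 4/5$, which is what ultimately constrains the admissible range of $p_0$ in the high-temperature regime of this section. The lower half, by contrast, is automatic given Lemma \ref{mtestim} together with the uniform lower bound $\gamma_{\star}(r) \ge 1$.
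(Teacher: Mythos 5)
Your proposal is correct in substance and follows essentially the paper's own route: both halves are obtained by inserting the two-sided estimate of Lemma \ref{mtestim} into Theorem \ref{l1l2thm}, and your lower-bound computation at $t_n^-$ (exponent $6d\gamma_\star(r)-5d\ge d$ since $\gamma_\star(r)\ge 1$) is identical to the paper's. The one substantive difference is in the upper half: the paper takes $s=11d\log\log n$, whereas you take $s=10dp_\star\log\log n$ and, working from the packaged bound $\mathbf{m}_t\le e^{-\gamma_\star(r)t+12d\gamma_\star(r)\log\log n}$, you are forced to impose $p_\star\le 4/5$, which is strictly stronger than the $p_\star\le 1$ guaranteed by the standing choice $p_0<\frac{1}{4e^2d}$, so as written you prove the theorem on a slightly smaller range of $p$. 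This is a restriction rather than an error; in fact the paper's own display $(n/\log^7 n)^d\mathbf{m}_{t_n^+-s}^2\le\log^{-d}n$ does not follow from the stated form of Lemma \ref{mtestim} either, since $\gamma_\star(r)\ge\log\frac{1}{2edp}$ by Proposition \ref{prop1} makes the resulting exponent $6d\gamma_\star(r)-7d$ positive; what closes it is the sharper intermediate bound $\mathbf{m}_t\le 2e^{-\gamma_\star(r)(t-11p_\star\log\log n)}$ from the proof of Lemma \ref{mtestim}, which gives $\mathbf{m}_{t_n^+-s}\le 2n^{-d/2}$ whenever $9d\ge 11p_\star$, i.e.\ with no extra hypothesis when $d\ge 2$. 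So either use that intermediate bound (preferable, since it covers the stated range of $p$ for $d\ge 2$), or state explicitly that the section's free constant $p_0$ is shrunk so that $p_\star\le 4/5$; with either fix your argument is complete. A minor bookkeeping point: your $s=10dp_\star\log\log n$ sits exactly at the threshold quoted in Theorem \ref{l1l2thm}, Part 1, while the ingredient behind it (Theorem \ref{projthm}) is stated with $11dp_\star\log\log n$; taking $s=11d\log\log n$ as the paper does sidesteps this and only changes the constants in your exponent.
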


\begin{proof}
	The proof is a straightforward application of Theorem \ref{l1l2thm} and Lemma \ref{mtestim}. The latter one combined with the fact $\gamma_\star(r) \geq 1$ implies that
	\begin{equation*}
	\begin{split}
	(n/3\log^5 n)^d \textbf{m}_{t_n^-}^2 &\geq \frac{1}{3^d} \log^d n ~\longrightarrow \infty ; \\
	(n/\log^7 n)^d \textbf{m}_{t_n^+ -s}^2 &\leq \log^{-d} n ~~~\mspace{1mu}\longrightarrow 0,
	\end{split}
	\end{equation*}
	as $n$ tends to infinity, where $s:= 11d\log\log n$. Therefore, Theorem \ref{l1l2thm} shows that the two equations in (\ref{cutoffeq}) are true.
\end{proof}

\subsection{Limit of spectral gaps}\label{secfinal2}

In this final subsection, we verify Theorem \ref{thm2} and conclude the proof of Theorem \ref{thm1}. To this end, we apply Theorem \ref{cutoffthm} to varying values of $r$ to prove the convergence of $\{\gamma_\star(r) \}$.

\quad Although all our argument has been formulated in terms of $r=3\log^5 n$, it can be extended naturally to $r = \log^{4+\delta} n$ for any constant $\delta>0$, maintaining the window of size $O(\log \log n)$. (see Remark \ref{rrmk})  We can state this as follows:

\begin{corollary}\label{cutoffcor}
	Let $(X_t)$ be the Swendsen-Wang dynamics on $\mathbb{Z}_n^d$ and let $\delta>0$ be any small constant. Set $r_1 := \log^{4+\delta} n$, $0<p<p_0$ and let $\gamma_\star$ be defined as (\ref{gammastar}). Then there exists $C=C(d, \delta)>0$ such that the following holds for all $r_1 \leq r \leq r_1^2$ : For the parameters $t_\star $, $t_n^-$ and $t_n^+$ given by
	\begin{equation*}
	t_\star = t_\star(r) := \frac{d}{2\gamma_\star(r)} \log n, ~~ t_n^- := t_\star - C \log \log n , ~~
	t_n^+ := t_\star + C\log\log n,
	\end{equation*}
	we have
	\begin{equation}\label{cutoffeq1}
	\begin{split}
	&\lim_{n\rightarrow \infty} \; \max_{x_0} \| \mathbb{P}_{x_0} (X_{t_n^-} \in \;\cdot\; ) - \pi \|_{\textnormal{\tiny{TV}}} = 1;\\
	&\lim_{n\rightarrow \infty} \; \max_{x_0} \| \mathbb{P}_{x_0} (X_{t_n^+} \in \;\cdot\; ) - \pi \|_{\textnormal{\tiny{TV}}} = 0.
	\end{split}
	\end{equation}
\end{corollary}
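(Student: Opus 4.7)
The plan is to repeat the proof of Theorem \ref{cutoffthm} with the block scale $3\log^5 n$ replaced by the generic $r \in [r_1, r_1^2]$. By Remark \ref{rrmk}, the entire $L^1$-to-$L^2$ reduction of \S\ref{secl1l2}---in particular Theorem \ref{l1l2thm} and its variant in Remark \ref{l1l2rmk}---remains valid for any poly-logarithmic inner scale at least $\log^{4+\delta} n$. Hence for each such $r$, I would set
\begin{equation*}
\mathbf{m}_t^{(r)} := \max_{x_0 \in \Omega_{\mathbb{Z}_r^d}} \| \mathbb{P}_{x_0}(X_t^\dagger \in \cdot) - \pi^\dagger \|_{L^2(\pi^\dagger)},
\end{equation*}
and invoke the $r$-adapted analog of Theorem \ref{l1l2thm}, which controls $\| \mathbb{P}_{x_0}(X_{t+s} \in \cdot) - \pi \|_{\textnormal{\tiny TV}}$ on $\mathbb{Z}_n^d$ in terms of this quantity.

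The first step is to generalize Lemma \ref{mtestim} to the two-sided estimate
\begin{equation*}
e^{-\gamma_\star(r) t - C_0 \log\log n} - n^{-9d} \;\leq\; \mathbf{m}_t^{(r)} \;\leq\; e^{-\gamma_\star(r) t + C_0 \log\log n}
\end{equation*}
uniformly in $r \in [r_1, r_1^2]$, with $C_0 = C_0(d, \delta)$. The upper bound follows directly from Theorem \ref{l2thm} applied to $\mathbb{Z}_r^d$: the implicit loss $C \log |\mathbb{Z}_r^d| = O(\log r) = O(\log\log n)$ is uniform in $r \leq r_1^2$, and $\gamma_\star(r)$ is uniformly bounded above by Remark \ref{gaplimitrmk}. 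For the lower bound I would mirror the proof of Lemma \ref{mtestim}: pair Proposition \ref{specgap} on $\mathbb{Z}_r^d$ with the Remark \ref{l1l2rmk} variant of Theorem \ref{l1l2thm} at scale $m=r$ to obtain $e^{-\gamma_\star(r)(t+s)} \leq 2 r^{d/2} \mathbf{m}_t^{(r)} + n^{-9d}$, and then invert.

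With this two-sided bound in hand, (\ref{cutoffeq1}) follows by feeding $\mathbf{m}_t^{(r)}$ into the $r$-adapted Theorem \ref{l1l2thm} at $t = t_n^\pm$. For the upper half, at $t = t_n^+ - s$ with $s = 11 d p_\star \log\log n$, the upper bound on $\mathbf{m}_t^{(r)}$ yields
\begin{equation*}
\left(\tfrac{n}{r}\right)^d \bigl(\mathbf{m}_{t_n^+ - s}^{(r)}\bigr)^2 \;\leq\; (\log n)^{2d(4+\delta) - 2C\gamma_\star(r)},
\end{equation*}
which tends to zero provided $C$ is a sufficiently large multiple of $d(4+\delta)$, since $\gamma_\star(r) \geq 1$ by Proposition \ref{prop1}. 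The lower half is entirely symmetric, using the lower bound on $\mathbf{m}_t^{(r)}$ at $t = t_n^-$ together with part (2) of the $r$-adapted Theorem \ref{l1l2thm}.

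The only real obstacle is bookkeeping: the various polylogarithmic slacks---from the sparse partitioning in Definition \ref{sparsedef}, from the $L^2$-to-$L^1$ comparison, and from $\log r$ in Theorem \ref{l2thm}---must all be absorbed into a single window constant $C = C(d, \delta)$ uniformly across $r_1 \leq r \leq r_1^2$. The uniform bounds $1 \leq \gamma_\star(r) \leq \log(1/c)$ provided by Proposition \ref{prop1} and Remark \ref{gaplimitrmk} are precisely what make this uniformization possible.
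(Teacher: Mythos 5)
Your proposal is correct and takes essentially the same route as the paper, which proves the corollary simply by re-running Lemma \ref{mtestim} and Theorem \ref{cutoffthm} at the generic scale $r\in[r_1,r_1^2]$, with Remark \ref{rrmk} supplying the $r$-adapted reduction of Theorem \ref{l1l2thm} and the uniform bounds on $\gamma_\star(r)$ absorbing all polylogarithmic losses into a single window constant. One point to tighten in the write-up: for the lower half, part (2) of the $r$-adapted Theorem \ref{l1l2thm} takes as hypothesis a lower bound on the $\Lambda$-projected quantity of (\ref{mtdef}) rather than on your full-torus $\mathbf{m}_t^{(r)}$ (which dominates it), but your own chain through Proposition \ref{specgap} and Remark \ref{l1l2rmk} in fact lower-bounds the projected quantity directly, so it suffices to state the two-sided estimate for that quantity, exactly as in Lemma \ref{mtestim}.
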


\noindent Implementing this generalization, we can now prove the following proposition which is the first step towards establishing Theorem \ref{thm2}.

\begin{proposition}\label{convprop}
	Let $(X_t)$ be the Swendsen-Wang dynamics defined on $\mathbb{Z}_n^d$, set $0<p<p_0$, and let $\gamma_\star$ be defined as (\ref{gammastar}).  Then  there exists a constant $\hat{\gamma_\star} \in (0,1)$ such that
	\begin{equation*}
	|\gamma_\star(r) - \hat{\gamma_\star} | \leq  2 \; r^{-1/4 +\delta},
	\end{equation*}
	which holds for any constant $\delta > 0.$
\end{proposition}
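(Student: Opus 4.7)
The plan is to exploit Corollary \ref{cutoffcor} as follows: the corollary gives cutoff for the dynamics on $\mathbb{Z}_n^d$ at the location $\tfrac{d}{2\gamma_\star(r)}\log n$ with window $O(\log\log n)$, and this holds simultaneously for every $r\in[r_1,r_1^2]$ with $r_1=\log^{4+\delta}n$. Since the cutoff location of a given chain on $\mathbb{Z}_n^d$ is unique up to the window, applying the corollary to two different values $r,r'$ in this range must produce two locations that agree up to $O(\log\log n)$. More precisely, if $t_\star(r)<t_\star(r')$ and their gap exceeded twice the window, then at the time $t=t_\star(r')-C\log\log n$ we would simultaneously have $\|\mathbb{P}(X_t\in\cdot)-\pi\|_{\textnormal{\tiny TV}}\to 1$ (from $r'$) and $\|\mathbb{P}(X_t\in\cdot)-\pi\|_{\textnormal{\tiny TV}}\to 0$ (from $r$, using monotonicity of the distance from equilibrium under a reversible chain), a contradiction.

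First, I would fix $r$ and choose $n$ with $r=\log^{4+\delta}n$, i.e.\ $\log n = r^{1/(4+\delta)}$. Applying the above observation for any $r'\in[r,r^2]$ yields
\begin{equation*}
\left|\frac{1}{\gamma_\star(r)}-\frac{1}{\gamma_\star(r')}\right| \;\leq\; \frac{2C\log\log n}{d\log n} \;=\; O\!\left(\frac{\log r}{r^{1/(4+\delta)}}\right).
\end{equation*}
Because $\gamma(r)\in[c,1-c]$ uniformly in $r$ (see Remark \ref{gaplimitrmk}), $\gamma_\star(r)$ is bounded in a compact subinterval of $(0,\infty)$, so this converts (by clearing the denominators) into
\begin{equation*}
|\gamma_\star(r)-\gamma_\star(r')| \;\leq\; C_1\, r^{-1/(4+\delta)}\log r \qquad \text{for all }r'\in[r,r^2].
\end{equation*}

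Next I set up a chaining argument along the super-exponential sequence $r_k := r^{2^k}$. Applied termwise, the previous bound gives
\begin{equation*}
|\gamma_\star(r)-\gamma_\star(r_K)| \;\leq\; \sum_{k=0}^{K-1} C_1\, r_k^{-1/(4+\delta)}\log r_k \;=\; C_1\log r\sum_{k=0}^{K-1}2^k\, r^{-2^k/(4+\delta)}.
\end{equation*}
The series in $k$ is dominated by its first term for $r$ large, since $2^{k+1}/2^k=2$ but the exponent on $r$ doubles; hence the tail sum is at most $2 r^{-1/(4+\delta)}$. This shows that $\{\gamma_\star(r_k)\}_k$ is Cauchy, producing a limit $\hat\gamma_\star$, and yields the quantitative bound $|\gamma_\star(r)-\hat\gamma_\star|\leq C_2\, r^{-1/(4+\delta)}\log r$. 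Independence of the limit from the starting point $r$ follows by interleaving two such chains: given $r\leq \tilde r\leq r^2$, we can compare $\gamma_\star(r_k)$ with $\gamma_\star(\tilde r_k)$ using the same one-step bound at every level, so both sequences have the same limit. Finally, $\hat\gamma_\star\in(0,1)$ inherits from the uniform bounds on $\gamma(r)$, and the stated rate $r^{-1/4+\delta}$ is obtained by absorbing the $\log r$ factor and shrinking $\delta$ (the proposition's $\delta$ is free, so replacing $\delta$ by a smaller quantity $\delta'$ inside $1/(4+\delta')$ gives $r^{-1/4+\delta}$ for large $r$).

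The main obstacle is the comparison step: justifying rigorously that two applications of Corollary \ref{cutoffcor} to the same chain must give cutoff locations within $O(\log\log n)$ of each other. This requires invoking the monotonicity of $d(t)$ for a reversible chain, together with the fact that cutoff at the upper window endpoint forces the TV distance to be small at every later time, contradicting cutoff at a much later location. Once this step is handled cleanly, the Cauchy/chaining argument is a routine geometric-series computation, and the boundedness of $\gamma_\star$ keeps the reciprocal-to-direct conversion under control.
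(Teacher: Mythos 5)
Your proposal is correct and follows essentially the same route as the paper: compare the two cutoff locations furnished by Corollary \ref{cutoffcor} for block sizes $r$ and $r'\in[r,r^2]$ (the paper, too, implicitly uses monotonicity of $d(t)$ to force the two locations to agree up to the $O(\log\log n)$ window), convert this via the uniform bounds on $\gamma_\star$ into $|\gamma_\star(r)-\gamma_\star(r')|\lesssim \log\log n/\log n$, and then chain along $r^{2^k}$ to get a Cauchy sequence, the limit $\hat{\gamma_\star}$, and the polynomial rate. The only differences are expository (you spell out the window-comparison and the independence of the limit from the starting point, and you track the harmless $\log r$ factor before absorbing it into the exponent), so no gap to report.
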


\begin{proof}
	Our proof uses the approach of \cite{ls13}, Lemma 4.3. Let $r_1 = 3\log^{4+\delta} n$ and pick $r_2$ such that $r_1 \leq r_2 \leq r_1^2$. Then, Corollary \ref{cutoffcor} implies that
	\begin{equation*}
	\frac{d}{2\gamma_\star(r_1)} \log n - C \log\log n
	\leq
	\frac{d}{2\gamma_\star(r_2)} \log n + C\log\log n.
	\end{equation*}
	
	\noindent By rearranging the terms, the uniform boundedness of $\gamma_\star (r)$ (Proposition \ref{specgapprop}) gives that
	\begin{equation*}
	\gamma_\star(r_2) -\gamma_\star(r_1) \leq
	\frac{4\gamma_\star(r_1)\gamma_\star(r_2)}{d} \frac{C\log \log n}{\log n}
	\leq \: r_1^{-1/4 +\delta },
	\end{equation*}
	
	\noindent where the last inequality holds for all sufficiently large $n$. Since the role of $r_1$ and $r_2$ can clearly be reversed, we deduce that for any large $n$,
	\begin{equation*}
	\max_{r_1 \leq r \leq r_1^2}|\gamma_\star(r_1) -\gamma_\star(r) |
	\leq r_1^{-1/4 + \delta}.
	\end{equation*}
	Therefore, by iterating this inequality, we obtain
	\begin{equation*}
	\sum_{i\geq 0} |\gamma_\star(r_1^{2^i}) -\gamma_\star(r_1^{2^{i+1}}) |
	\:\leq\:
	\sum_{i\geq 0} r_1^{-2^{i-2} (1-4\delta)} \:\leq\: 2 r_1^{-1/4 +\delta} \: <\infty,
	\end{equation*}
	which implies the existence of the limit $\hat{\gamma_\star} : = \lim_{n \rightarrow \infty} \gamma_\star(r)$ as well as
	\begin{equation*}
	|\gamma_\star(r) - \hat{\gamma_\star} | \leq  2 \;r^{-1/4 + \delta}.
	\end{equation*}
	
	\noindent The property $0< \hat{\gamma_\star}<1 $ follows by Propositions  \ref{prop1} and \ref{specgapprop}, whose statements combined tell us that there are two constants $c_1 , c_2 >0$ depending on $d, p, q$ such that
	\begin{equation*}
	0< c_1 < \gamma_\star(r) < c_2 < \infty,
	\end{equation*}
	uniformly in $r$.
\end{proof}

\quad Our next goal is showing that $\hat{\gamma}:= 1 - e^{-\hat{\gamma}_\star}$ is equal to the infinite-volume spectral gap $\gamma$, which leads us to concluding the proof of Theorem \ref{thm2}.

\vspace{3mm}
\textit{Proof of Theorem \ref{thm2}.}  ~ Thanks to Proposition \ref{convprop}, it suffices to verify that $\gamma = \hat{\gamma}.$

\vspace{2mm}
\textbf{STEP 1.} $\gamma \leq \hat{\gamma}$.

\quad In order to prove $\gamma \leq \hat{\gamma}$, we need a good control on the following quantity which is just the $L^1$-variant of $\textbf{m}_t$ defined in (\ref{mtdef}):
\begin{equation}\label{mtstardef}
\textbf{m}_t^* := \max_{x_0 \,\in \, \Omega_{\mathbb{Z}_r^d}} \left\| \mathbb{P}_{x_0} \left(X^{\dagger}_t (\Lambda) \in \, \cdot \, \right) - \pi^{\dagger}_{\Lambda} \right\|_{\textnormal{\tiny{TV}}},
\end{equation}
where $r:= 3\log^5 n$, $\Lambda \subset \mathbb{Z}_r^d$ is a sub-cube of side-length $2\log^5 n$, and $X_t^\dagger$ is the Swendsen-Wang dynamics on $\mathbb{Z}_r^d$ with the stationary distribution $\pi^\dagger$, as defined in Lemma \ref{mtestim}. It turns out that $\textbf{m}_t^*$ has a lower bound which resembles that of $\textbf{m}_t$ in Lemma \ref{mtestim}.
This is shown by the following lemma whose proof is presented in Appendix \ref{secmtstarbd}.

\begin{lemma}\label{mtstarbd}
	For every $t>0$, $\textbf{m}_t^*$ satisfies the following inequality:
	\begin{equation}\label{mtstarineq2}
	\textbf{m}_t^* \geq e^{-\gamma_\star(r) t \;- \;15d \gamma_\star(r)\log\log n } - n^{-9d}.
	\end{equation}
\end{lemma}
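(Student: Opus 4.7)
The plan is to mimic the lower-bound half of the proof of Lemma \ref{mtestim}, replacing its $L^2$-based product-chain estimate by a direct TV (i.e.\ $L^1$) analogue. Proposition \ref{specgap} applied to $X^\dagger_t$ on $\mathbb{Z}_r^d$ already gives
\begin{equation*}
	\tfrac{1}{2}\, e^{-\gamma_\star(r)(t+s)} \;=\; \tfrac12(1-\gamma(r))^{t+s} \;\leq\; \max_{x_0} \bigl\| \mathbb{P}_{x_0}(X_{t+s}^\dagger \in \cdot) - \pi^\dagger \bigr\|_{\textnormal{TV}}
\end{equation*}
for every $s\geq0$, so it suffices to upper-bound the right-hand side by a multiple of $\mathbf{m}_t^\ast$.

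That upper bound comes from re-running the proof of Theorem \ref{l1l2thm} part~1 (in the generalisation of Remark \ref{l1l2rmk} with $m=r$), but replacing step \eqref{tvineq5} by the trivial subadditivity of total variation on product measures. More precisely, apply Theorem \ref{projthm} to reduce $\|\mathbb{P}_{x_0}(X_{t+s}^\dagger\in\cdot)-\pi^\dagger\|_{\textnormal{TV}}$ to an average of TV distances at time $t$ on a sparse set $\Delta\subset\mathbb{Z}_r^d$, couple with a product chain over blocks $B_1,\ldots,B_L$ of diameter at most $2\log^5 n$ (each embedded in an independent copy of $\mathbb{Z}_r^d$), and use
\begin{equation*}
\Bigl\| \mathbb{P}_{x_0^\ast}\bigl(X_t^\ast(\cup_i B_i)\in\cdot\bigr) - \pi^\ast_{\cup_i B_i}\Bigr\|_{\textnormal{TV}} \;\leq\; \sum_{i=1}^L \Bigl\|\mathbb{P}_{x_0^\ast}\bigl(X_t^\ast(B_i)\in\cdot\bigr) - \pi^\ast_{B_i}\Bigr\|_{\textnormal{TV}} \;\leq\; L\,\mathbf{m}_t^\ast.
\end{equation*}
The sparsity constraints force $L\leq |\Delta|\leq r^d = O\bigl((\log n)^{5d}\bigr)$, and every other ingredient — the speed-of-propagation coupling (Lemma \ref{speedofpropa}), the perfect-sampling bound (Lemma \ref{perfectsamp}), and the comparison $\|\pi_\Delta-\pi^\ast_\Gamma\|_{\textnormal{TV}}=O(n^{-10d})$ that underlies \eqref{tvineq7} (which itself already invokes Theorem \ref{l2thm}) — is norm-insensitive and transfers verbatim. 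This produces
\begin{equation*}
	\max_{x_0}\bigl\|\mathbb{P}_{x_0}(X_{t+s}^\dagger\in\cdot)-\pi^\dagger\bigr\|_{\textnormal{TV}} \;\leq\; L\,\mathbf{m}_t^\ast + O(n^{-10d}).
\end{equation*}

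Combining the two displays and choosing $s = 10dp_\star\log\log n$ (the smallest $s$ allowed by Theorem \ref{l1l2thm}) gives $\mathbf{m}_t^\ast \geq \tfrac{1}{2L}e^{-\gamma_\star(r)(t+s)} - O(n^{-10d}/L)$. Since $L = e^{O(\log\log n)}$ and both $p_\star\leq 1$ and $\gamma_\star(r)\geq p_\star^{-1}\geq 1$ (by Proposition \ref{prop1} and the standing choice of $p_0$), the total exponent $\gamma_\star(r)s + \log L$ is bounded by $15 d \gamma_\star(r)\log\log n$ for all sufficiently large $n$, so the desired inequality follows. For $t$ so large that $e^{-\gamma_\star(r)t-15 d\gamma_\star(r)\log\log n}\leq n^{-9d}$ the bound is vacuous, so the statement holds for every $t>0$. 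The only genuinely new step is the replacement of the $L^2$ product-chain estimate \eqref{tvineq5} by TV subadditivity; its correctness is elementary, but verifying that the auxiliary errors in the remainder of the proof of Theorem \ref{l1l2thm} all remain $O(n^{-10d})$ (in particular, that the $L^2$-local mixing bound still provides the crude estimate $\|\pi_\Delta-\pi^\ast_\Gamma\|_{\textnormal{TV}}=O(n^{-10d})$ needed at \eqref{tvineq7}) is the main piece of bookkeeping.
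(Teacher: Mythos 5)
Your proposal is correct and follows essentially the same route as the paper's own proof: the paper likewise replaces the $L^2$ product-chain step (\ref{tvineq5})/(\ref{tvineq9}) by TV subadditivity for product measures (its Lemma \ref{prodmsrlem2}), re-runs Theorem \ref{l1l2thm} with $m=r$ to get $\max_{x_0}\|\mathbb{P}_{x_0}(X_{t+s}^\dagger\in\cdot)-\pi^\dagger\|_{\textnormal{\tiny TV}}\leq r^d\,\textbf{m}_t^*+n^{-9d}$, and combines this with the spectral-gap lower bound of Proposition \ref{specgap} exactly as in (\ref{mtineq11}). The only (immaterial) deviation is your choice $s=10dp_\star\log\log n$ where the paper takes $s=11dp_\star\log\log n$ (the value actually required by Theorem \ref{projthm}); the bookkeeping absorbing $\gamma_\star(r)s+\log L$ into the $15d\gamma_\star(r)\log\log n$ term is the same, and no less careful, than the paper's.
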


\quad Let us pick $x_0^\dagger \in \Omega_\Lambda$ and $A \subset \Omega_\Lambda$ which achieve the maximum $\textbf{m}_t^*$, i.e.,
\begin{equation*}
\mathbb{P}_{x_0^\dagger} \left( X_t^\dagger (\Lambda) \in A \right) - \pi_\Lambda^\dagger (A) = \textbf{m}_t^*.
\end{equation*}

\noindent Let $Z_t$ be the Swendsen-Wang dynamics on the infinite-volume lattice $\mathbb{Z}^d$, and let $\pi^\infty$ denote its stationary distribution. Define $\Lambda^+ := \{v: \textnormal{dist}(v,\Lambda) \leq \frac{1}{3}\log^5 n  \} \subset \mathbb{Z}_r^d$, and let $\psi : \Lambda^+ \rightarrow \mathbb{Z}^d$ be a graph homomorphism that maps $\Lambda^+$ onto its isomorphic copy in $\mathbb{Z}^d$ whose center  is located at  the origin.

\quad Let $Z_0' \sim \pi^\infty$ denote a random configuration on $\mathbb{Z}^d$ distributed according to $\pi^\infty$, and let $Z_0\in \Omega_{\mathbb{Z}^d}$ be defined as follows:
\begin{equation}\label{z0def}
Z_0(v) =
\begin{cases}
x_0^\dagger (\psi^{-1}(v)), & ~~\textnormal{if } ~v\in \psi(\Lambda^+);\\
Z_0'(v),  & ~~\textnormal{otherwise}.
\end{cases}
\end{equation}

\noindent Set $s_0= 7d\log\log n$. In order to deduce the desired conclusion $\gamma
\leq \hat{\gamma}$, we control the $L^2$-distance between $\pi^\infty$ and the law of $Z_t$ with starting configuration $Z_0$.  We begin with the following inequality (cf. (\ref{scl2ineq})).
\begin{equation}\label{ztineq1}
e^{-\gamma_\star t}  \left\| \mathbb{P}_{Z_0} \left(Z_{s_0} \in \cdot\: \right) - \pi^\infty  \right\|_{L^2 (\pi^\infty)}
\geq
\left\| \mathbb{P}_{Z_0} \left(Z_{t+s_0} \in \cdot\: \right) - \pi^\infty  \right\|_{L^2 (\pi^\infty)},
\end{equation}
where $\gamma_\star $ satisfies $1-\gamma = e^{-\gamma_\star}$. By Cauchy-Schwarz, we have
\begin{equation}\label{ztineq2}
\begin{split}
\left\| \mathbb{P}_{Z_0} \left(Z_{t+s_0} \in \:\cdot\: \right) - \pi^\infty  \right\|_{L^2 (\pi^\infty)}
&\geq
\left\| \mathbb{P}_{Z_0} \left(Z_{t+s_0} \in \:\cdot\: \right) - \pi^\infty  \right\|_{\textnormal{\tiny{TV}}} \\
&\geq
\mathbb{P}_{Z_0} \left(Z_{t+s_0}(\Lambda) \in A \right) - \pi^\infty_\Lambda (A)  ,
\end{split}
\end{equation}
where we wrote $\Lambda$ (resp. $A$) instead of $\psi(\Lambda)$ (resp. $A \circ \psi^{-1}$) for convenience.


\quad Consider the coupling between $Z_t$ and $X_t^\dagger$ such that  the update sequence of $Z_t$ is given by the induced update sequence of $X_t^\dagger$ on $\Lambda^+$ translated by $\psi$. In particular, the update sequences of $Z_t$ and $X_t^+$ coincide on $\Lambda^+$ modulo $\psi$.
Under such coupling, an analogous argument as Lemma \ref{speedofpropa}  implies that
\begin{equation}\label{ztineq3}
\mathbb{P} \left(Z_{t+s_0} (\Lambda) \neq X_{t+s_0}^\dagger(\Lambda) \right) \leq n^{-9d} ~~~\textnormal{for all }~ t<\log^{3} n,
\end{equation}
since the two chains starts with the same initial configuration on $\Lambda^+$. Moreover, the weak spatial mixing property of the Potts measure at high enough temperature (see, e.g., \cite{m}) gives that
\begin{equation}\label{ztineq4}
\left\| \pi^\infty |_\Lambda - \pi^\dagger |_\Lambda  \right\|_{\textnormal{\tiny{TV}}} \leq
e^{-c \log^{3} n} \leq n^{-9d},
\end{equation}
where  $c$ is a positive constant depending on $d,\, p$.
By combining the four inequalities (\ref{ztineq1}--\ref{ztineq4}) we deduce that
\begin{equation}\label{ztineq5}
\begin{split}
e^{-\gamma_\star t}  \left\| \mathbb{P}_{Z_0} \left(Z_{s_0} \in \:\cdot\: \right) - \pi^\infty  \right\|_{L^2 (\pi^\infty)}
&\geq
\mathbb{P}_{x_0^\dagger} \left(X_{t+s_0}^\dagger(\Lambda) \in A \right) - \pi^\dagger_\Lambda (A) - 2n^{-9d}\\
&=
\textbf{m}_{t+s_0}^* - 2n^{-9d}\\
&\geq e^{-\gamma_\star(r) (t+s_0) \;- \;15d \gamma_\star(r)\log\log n } - 3n^{-9d},
\end{split}
\end{equation}
where the last inequality is due to Lemma \ref{mtstarbd}. We now upper bound the l.h.s. of (\ref{ztineq5}) by the following theorem which can be understood as an infinite-volume analogue of Theorem \ref{l2thm}.

\begin{theorem}\label{l2thminfty}
	Let $Z_0' \sim \pi^\infty$, let $Z_0$ be defined as (\ref{z0def}), and let $(Z_t)$ be the Swendsen-Wang dyamics on $\mathbb{Z}^d$ with initial configuration $Z_0$. Then there exists $p_0'=p_0'(d)>0$ such that for any $0<p<p_0'$ and $s_0:= 7d\log\log n$, we have
	\begin{equation*}
	\left\| \mathbb{P}_{Z_0} \left(Z_{s_0} \in \:\cdot\: \right) - \pi^\infty  \right\|_{L^2 (\pi^\infty)}
	\leq
	2,
	\end{equation*}
	where $\pi^\infty$ denotes the infinite-volume Potts measure, i.e., the stationary distribution for $(Z_t)$.
\end{theorem}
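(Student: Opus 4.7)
The plan is to adapt the information-percolation proof of Proposition \ref{l2prop} to the infinite-volume setting, exploiting the fact that $Z_0$ differs from the equilibrium measure $\pi^\infty$ only on the polylogarithmic-sized set $\psi(\Lambda^+)$. Couple $(Z_t)$ with a stationary copy $(Y_t)$ started from $Y_0 = Z_0' \sim \pi^\infty$ (so that by (\ref{z0def}), $Y_0$ and $Z_0$ agree on $\mathbb{Z}^d \setminus \psi(\Lambda^+)$) via the same update sequence, and build the history diagram as in Definition \ref{historydiag}. The key observation is that $Z_{s_0}(v) = Y_{s_0}(v)$ whenever $\mathscr{H}_v(0)\cap \psi(\Lambda^+) = \emptyset$, so it is natural to redefine the red clusters as
\begin{equation*}
\mathcal{C}_{\mathcal{R}'} := \bigl\{ C \text{ an information percolation cluster}:\; \mathscr{H}_C(0) \cap \psi(\Lambda^+) \neq \emptyset \bigr\},
\end{equation*}
and set $V_{\mathcal{R}'} := \bigcup_{C \in \mathcal{C}_{\mathcal{R}'}}C$, which is a.s.\ finite; all the ``action'' takes place on this finite set.

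Next I mimic the sequence of reductions (\ref{l2distwithoutunif})--(\ref{maineq4}): apply Lemma \ref{jensen} to condition on the history diagram, project onto $V_{\mathcal{R}'}$ (on which $Z_{s_0}$ and $Y_{s_0}$ may disagree conditional on $\mathcal{H}$), and combine the Miller--Peres Lemma \ref{mplem} with the coupling in Corollary \ref{couplingcor} applied to the analogue $\tilde{\Psi}_A$ of $\Psi_A$ for the new red clusters. This yields
\begin{equation*}
\bigl\| \mathbb{P}_{Z_0}(Z_{s_0} \in \cdot) - \pi^\infty \bigr\|_{L^2(\pi^\infty)}^2 \;\le\; 2\,\mathbb{E}\!\bigl[q^{|V_{\mathcal{R}'} \cap V_{\mathcal{R}'}'|}\bigr] - 1,
\end{equation*}
with $V_{\mathcal{R}'}, V_{\mathcal{R}'}'$ i.i.d.\ copies in the sense of Corollary \ref{couplingcor}.

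The right-hand side is bounded using Lemma \ref{psilem}: since $\{A \in \mathcal{C}_{\mathcal{R}'}\} \subset \{A \in \mathcal{C}_{\mathcal{R}}\}$, one still has $\tilde{\Psi}_A \le M(3edp)^{s_0-1/2}e^{-\theta\mathfrak{M}(A)}$; moreover the additional constraint $\mathscr{H}_A(0) \cap \psi(\Lambda^+) \neq \emptyset$ forces each $\mathcal{R}'$-cluster to be ``anchored'' at some $u \in \psi(\Lambda^+)$, so the sum analogous to (\ref{maineq3})--(\ref{maineq4}) becomes indexed by $u \in \psi(\Lambda^+)$ instead of over all of $\mathbb{Z}^d$. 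This produces an exponent of order $|\psi(\Lambda^+)|^2 M^2 (3edp)^{2s_0-1}$; with $|\psi(\Lambda^+)| \le (\log n)^{6d}$ and, for $p<p_0'$ sufficiently small, $(3edp)^{s_0}\le (\log n)^{-7d}$ since $s_0 = 7d\log\log n$, the exponent is $O((\log n)^{-2d}) = o(1)$, making the right-hand side at most $3/2$ for large $n$ and yielding the claimed bound of $2$.

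The main technical obstacle is the Miller--Peres reduction itself in this modified setting: in Proposition \ref{l2prop} the comparison is against the product uniform measure on the non-green region (justified because blue singletons carry i.i.d.\ uniform colors), whereas here the ``non-red'' region is distributed as $\pi^\infty$, which carries genuine correlations. The resolution is that after conditioning on the history diagram the non-red parts of $Z_{s_0}$ and $Y_{s_0}$ coincide \emph{pointwise} (both equal the same function of $Y_0|_{\mathbb{Z}^d\setminus\psi(\Lambda^+)}$ and of the common update sequence), so $\pi^\infty$ serves only as the reference on the finite set $V_{\mathcal{R}'}$; the residual correlations of $Y_0$ between the two disjoint history supports can be controlled via the weak spatial mixing of the high-temperature Potts measure (cf.\ (\ref{ztineq4})), yielding corrections of order $n^{-c}$ that are absorbed into the $o(1)$ estimate.
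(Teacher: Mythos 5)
Your overall strategy is the paper's: couple $(Z_t)$ with a stationary copy via a common update sequence, redefine red clusters as those whose history hits $\psi(\Lambda^+)$ at time $0$, and rerun the Jensen / Miller--Peres / Corollary \ref{couplingcor} chain with an analogue $\widetilde{\Psi}_A$. The genuine gap is in how you control the resulting sum over the \emph{infinite} lattice. The quantity produced by the reduction is (as in (\ref{maineq3})--(\ref{maineq4})) an exponent of the form $\sum_{v}\bigl(\sum_{A\ni v}q^{|A|}\widetilde{\Psi}_A\bigr)^2$, where $v$ now ranges over all of $\mathbb{Z}^d$. The only bound you establish is the temporal one, $\widetilde{\Psi}_A\leq M(3edp)^{s_0-1/2}e^{-\theta\mathfrak{M}(A)}$, inherited from Lemma \ref{psilem}; with that alone each per-vertex sum is uniformly of size $(3edp)^{s_0-1/2}$ but the sum over $v\in\mathbb{Z}^d$ diverges. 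Your claim that the sum ``becomes indexed by $u\in\psi(\Lambda^+)$'' is not justified: $\widetilde{\Psi}_A$ is defined as a supremum of conditional probabilities and is strictly positive for \emph{every} finite $A$, however far from $\Lambda^+$ (a history can reach $\Lambda^+$ in $s_0$ steps with small but nonzero probability), and Corollary \ref{couplingcor} is stated for indicators indexed by subsets, not by anchor points, so there is no free re-indexing. What actually makes the infinite sum converge in the paper is the additional \emph{spatial} decay in Lemma \ref{psilem2}, $\widetilde{\Psi}_A\leq M e^{-\theta\mathfrak{M}(A)}e^{-\theta(\|A\|-r_0)}$, which quantifies the anchoring you invoke informally, together with the truncation Lemma \ref{Rlem} ($\mathbb{P}(R>r)\leq n^{-10d}$) and a monotone-convergence limit $R_0\to\infty$ that makes the conditional measures and the projection step legitimate in infinite volume; the sum is then split over $v\in B_r$ (temporal bound) and $v\in B_r^c$ (spatial bound), as in (\ref{zscomp3})--(\ref{zscomp4}). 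Your proposal contains neither the spatial estimate nor the truncation, and these are precisely where the work of the infinite-volume extension lies.

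A secondary issue is your treatment of the reference measure. The paper does not collapse the classification to red/non-red: it keeps the red/blue/green trichotomy, and since any singleton cluster must be oblivious at the last update (cf.\ Remark \ref{bluermk}), blue vertices carry i.i.d.\ uniform colors, so after conditioning on the green histories the comparison is against the product uniform measure exactly as in Lemma \ref{uniflem}, via Lemma \ref{uniflem2} and the estimate $\widetilde{\eta}(\emptyset)\geq 1/2$. Your proposed fix---controlling residual $\pi^\infty$-correlations between the disjoint history supports by weak spatial mixing as in (\ref{ztineq4})---does not work as stated, because disjoint time-zero footprints of red and green histories can be adjacent, so no mixing-in-distance factor is available; and it is also unnecessary once the blue/green structure is retained.
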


\quad One can prove Theorem \ref{l2thminfty} by implementing the information percolation framework in the infinite-volume domain $\mathbb{Z}^d$. This is done similarly as in Theorem \ref{l2thm}, while some difficulties arise due to the infinite nature of the domain $\mathbb{Z}^d$. We discuss the details in Appendix \ref{secl2thminfty}.

\quad Implementing Theorem \ref{l2thminfty}, the equation (\ref{ztineq5}) implies that
\begin{equation}\label{ztineq6}
3^{1/t}e^{-\gamma_\star }
\geq \left[e^{-\gamma_\star(r) (t+s_0) \;- \;15d \gamma_\star(r)\log\log n } - 3n^{-9d}\right]^{1/t} ,
\end{equation}

\noindent which holds for all $t<\log^{3} n $. Then, substituting $t=\log^{1/2}n$ and letting $n\rightarrow \infty$ gives that
\begin{equation*}
e^{-\gamma_\star} \geq e^{-\hat{\gamma_\star}},
\end{equation*}
since $ e^{-\gamma_\star(r) (\log^{1/2} n +12d\log\log n)} \gg 3n^{-9d}$. Thus we deduce the desired conclusion $\gamma \leq \hat{\gamma}$.

\vspace{3mm}
\textbf{STEP 2.} $\gamma \geq \hat{\gamma}$.

\quad The second part is shown by utilizing the variational characterization (\ref{specgapvar}) of the spectral gap. To this end, we first note the fact that the transition matrix of the Swendsen-Wang dynamics on any finite graph is non-negative definite (e.g., Remark 4.4 of \cite{ullrichphd}). This naturally extends to the  infinite-volume dynamics, implying that the transition kernel is non-negative definite.

\quad Using the variational characterization, write $\gamma$ as
\begin{equation*}
\gamma = \inf_{\substack{f \in L^2(\pi^\infty) \\ f \neq 0 }} \frac{\mathcal{E}_\infty (f,f)}{\textnormal{Var}_\infty (f)},
\end{equation*}

\noindent where $\mathcal{E}_\infty$ and $\textnormal{Var}_\infty$ denote the Dirichlet form and the variance in terms of $\pi^\infty$, respectively. For any $f\in L^2 (\pi^\infty)$, we can pick a sequence $\{f_n\}$ of finitely supported (i.e., the value of $f_n$ depends only on spins at finitely many sites) $L^2(\pi^\infty)$-functions such that $f_n \rightarrow f$ in $L^2(\pi^\infty)$.   In this case, we also have that
\begin{equation*}
\mathcal{E}_\infty (f_n  , f_n) \rightarrow \mathcal{E}_\infty (f,f) ~~\textnormal{and}~~ \textnormal{Var}_\infty (f_n) \rightarrow \textnormal{Var}_\infty (f),
\end{equation*}
as $n\rightarrow \infty$. Therefore, for any $\epsilon >0$, we can pick a finitely supported $g\in L^2(\pi^\infty)$ such that
\begin{equation*}
\gamma + \frac{\epsilon}{2} \geq
\frac{\mathcal{E}_\infty (g,g)}{\textnormal{Var}_\infty (g)}.
\end{equation*}

\noindent Then, due to the convergence of Gibbs measures as the underlying volume tends to infinity, there exists $M>0$ such that for all $m \geq M$,
\begin{equation}\label{gammapluse}
\gamma + \epsilon \geq \frac{\mathcal{E}_m (g,g)}{\textnormal{Var}_m (g)},
\end{equation}
where  $\mathcal{E}_m$ and $\textnormal{Var}_m$ denote the Dirichlet form and the variance in terms of the stationary distribution on $\mathbb{Z}_m^d$, respectively. Since the r.h.s. of (\ref{gammapluse}) is greater than or equal to $\gamma(m)$, we obtain that
\begin{equation*}
\gamma + \epsilon \geq \hat{\gamma} ,
\end{equation*}
as $m$ tends to infinity. This  holds for all $\epsilon >0$, so we deduce that $\gamma \geq \hat{\gamma}$. \qed

\begin{remark}\label{rmkspecgapgen}
	In the proof of Theorem \ref{thm2}, we did not use any property specific to the Swendsen-Wang dynamics, and hence the theorem can be generalized to other types of Markov chains on spin systems. For instance, our method yields the same result for the Potts Glauber dynamics, implying that the cutoff location of the Potts Glauber dynamics in Theorem 3 of \cite{ls14} can be written in terms of the infinite-volume spectral gap. 
	
	In general, one can show that by following the proof of Theorem \ref{thm2}, the finite-volume spectral gap converges to the infinite-volume gap if the Markov chain has the following properties:
	\begin{itemize}
		\item Information does not spread too fast. (Analogue of Lemma \ref{speedofpropa})
		
		\item Dependence on the initial condition wears off quickly enough to deduce an exponential decay of the $L^2$-distance from stationarity. (Analogue of Theorems \ref{l2thm} and \ref{l2thminfty})
	\end{itemize}
\end{remark}

\quad We finally conclude the proof of Theorem \ref{thm1}, which comes as a direct consequence of Theorem \ref{cutoffthm} and Proposition \ref{convprop}. \vspace{3mm}

\textit{Proof of Theorem \ref{thm1}.}\quad By Theorem \ref{cutoffthm}, the Swendsen-Wang dynamics on $\mathbb{Z}_n^d$ has cutoff at
\begin{equation*}
t_\star = \frac{d}{2\gamma_\star(r)} \log n,
\end{equation*}
where $r = 3\log^5 n$ and with $O(\log\log n)$-window. Note that by Proposition \ref{convprop}, we have
\begin{equation*}
\begin{split}
\left| \frac{d}{2\gamma_\star(r)} \log n \: -
\:\frac{d}{2\gamma_\star} \log n
\right|
&\leq
\frac{d}{2 \gamma_\star(r) \; \gamma_\star} |\gamma_\star(r) - \gamma_\star| \log n \\
&\leq
\frac{2d}{ \gamma_\star(r) \; \gamma_\star}  \log^{-1/4 + \delta} n ~ =~ o(1),
\end{split}
\end{equation*}
as one sets $\delta$ to satisfy $\delta < 1/4$, where $\gamma_\star $ is given by $1-\gamma = e^{-\gamma_\star}$.  Therefore, the cutoff locations stated in Theorems \ref{thm1} and \ref{cutoffthm} coincide with the same $O(\log \log n)$-window, and this concludes the proof of Theorem \ref{thm1}.
\qed

\section{Appendix}

\subsection{Upper bound on the Spectral Gap}\label{secupperbd}

We establish an upper bound on the spectral gap of the Swendsen-Wang dynamics, which has been assumed in proving the main theorems. Our approach is to investigate the edge Swendsen-Wang dynamics (see \S2.2) instead of the original one.  As mentioned at the end of \S2.1, we impose that the parameters $p$ and $\beta$ to satisfy the equation $p=1- e^{-\beta}$.

\quad We begin with a lemma that sheds light on the relationship between the edge Swendsen-Wang dynamics and the original chain.

\begin{lemma}\label{ullrichlem}
	Let $G=(V,E)$ be a graph. Let $\gamma$ (resp. $\widetilde{\gamma}$) be the spectral gap  the Swendsen-Wang dynamics (resp. edge SW dynamics) defined on G. Then we have
	\begin{equation*}
	\gamma ~ =  ~ \widetilde{\gamma}.
	\end{equation*}
\end{lemma}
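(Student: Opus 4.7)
The strategy is to factor both transition kernels through the Edwards--Sokal coupling of \S\ref{secprelim1} and then exploit the classical linear algebra fact that $AB$ and $BA$ share the same nonzero spectrum. First I would set up the factorization explicitly: let $A$ denote the $|\Omega_V|\times|\Sigma_E|$ stochastic kernel encoding the percolation step of \S\ref{secprelim2} (so that $A(\sigma,\omega)$ is the probability of generating the edge configuration $\omega$ from the Potts configuration $\sigma$ in step~1), and let $B$ be the $|\Sigma_E|\times|\Omega_V|$ stochastic kernel encoding the recoloring step. By the construction of the two dynamics, the transition matrix of the Swendsen--Wang dynamics on $\Omega_V$ equals $P=AB$, while the transition matrix of the edge Swendsen--Wang dynamics on $\Sigma_E$ equals $\widetilde P = BA$.

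Next I would symmetrize using the Edwards--Sokal joint measure $\nu$. Writing $A(\sigma,\omega)=\nu(\sigma,\omega)/\nu_\Omega(\sigma)$ and $B(\omega,\sigma)=\nu(\sigma,\omega)/\nu_\Sigma(\omega)$, where $\nu_\Omega,\nu_\Sigma$ are the marginals of $\nu$ (proportional to $\pi$ and $\phi$ respectively), let $N$ denote the $|\Omega_V|\times|\Sigma_E|$ matrix with entries $\nu(\sigma,\omega)$ and set $M:=D_{\nu_\Omega}^{-1/2} N D_{\nu_\Sigma}^{-1/2}$. A direct computation then gives
\[
D_{\nu_\Omega}^{1/2}\,P\,D_{\nu_\Omega}^{-1/2} \;=\; MM^{\top}, \qquad D_{\nu_\Sigma}^{1/2}\,\widetilde P\,D_{\nu_\Sigma}^{-1/2} \;=\; M^{\top}M.
\]
Thus $P$ and $\widetilde P$ are similar to $MM^\top$ and $M^\top M$ respectively, so both are positive semidefinite with spectra contained in $[0,1]$, and by the singular value decomposition of $M$ the matrices $MM^\top$ and $M^\top M$ share the same nonzero eigenvalues with matching multiplicities. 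Since both chains are ergodic at $\beta<\infty$, the eigenvalue $1$ is simple in each spectrum; therefore the second-largest eigenvalues must agree, $\lambda_2(P)=\lambda_2(\widetilde P)$, and by positive semidefiniteness this yields $\gamma = 1-\lambda_2(P) = 1-\lambda_2(\widetilde P) = \widetilde\gamma$.

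The argument contains no genuine obstacle; the main technical care is in the bookkeeping between $\nu$ and its marginals $\pi,\phi$ when writing out $A,B$, and in verifying that $1$ is a simple eigenvalue of both $P$ and $\widetilde P$. Both points follow from the Edwards--Sokal correspondence together with the standard irreducibility and aperiodicity of the Swendsen--Wang chain at finite $\beta$.
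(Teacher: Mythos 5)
Your proposal is correct, and it is essentially the argument the paper invokes: the paper does not spell out a proof but cites Ullrich (Lemma 2.6 of \cite{ullrichphd}), whose proof is exactly the Edwards--Sokal factorization $P=AB$, $\widetilde P=BA$ together with the fact that $MM^\top$ and $M^\top M$ share the same nonzero spectrum. You have simply written out in full the ``similarity between the transition matrices'' that the paper alludes to, including the correct handling of positive semidefiniteness and simplicity of the eigenvalue $1$.
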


\quad The main idea behind this lemma is the similarity between the transition matrices of the two Markov chains. For a proof, see e.g., Lemma 2.6 of \cite{ullrichphd}.

\quad Let $(\omega_t^1)$ and $(\omega_t^0)$ be the two copies of the edge Swendsen-Wang dynamics on $(\mathbb{Z}/n\mathbb{Z})^d$ with initial configurations $\omega_0^1 \equiv 1$ and $\omega_0^0 \equiv 0$, respectively. It is well-known that there exists a coupling between the two that satisfies
\begin{equation}\label{optimalcoupeq}
\| \mathbb{P}(\omega_t^1 \in \; \cdot \;) - \mathbb{P}(\omega_t^0 \in \; \cdot \;) \|_{\textnormal{\tiny{TV}}}
=
\mathbb{P} ( \omega_t^1 \neq \omega_t^0).
\end{equation}

\quad We take such an optimal coupling $(\omega_t^1, \omega_t^0)$. (For more explanation on coupling inequality and optimal coupling, see e.g., \cite{lpwmcmt}.) Investigating this pair, we show that the spectral gap of the dynamics lies strictly away from $1$ uniformly in $n$.

\begin{proposition}\label{specgapprop}
	Consider the Swendsen-Wang dynamics on the $d$-dimensional torus $(\mathbb{Z}/n\mathbb{Z})^d$. Then  for every $0<p<(2d)^{-5/2}$, the spectral gap ${\gamma}(n)$ of the process satisfies
	\begin{equation*}
	{\gamma}(n)~ \leq ~1 - p\left(1 - \frac{1}{q} -\frac{2dp^2}{q} \right) ~<~1~~~~\textnormal{for all}~n.
	\end{equation*}
\end{proposition}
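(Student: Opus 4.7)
The approach is to apply Lemma~\ref{ullrichlem} to reduce to the edge Swendsen-Wang dynamics and then use the variational characterization of the second eigenvalue with a single-edge test function. Fix an edge $e_0=(uv)$ and let $f(\omega)=\omega(e_0)$. The edge SW transition rule directly gives
\[
Pf(\omega) = \mathbb{E}[\omega_1(e_0)\mid\omega_0=\omega] = p/q + p(1-1/q)\mathbbm{1}\{u\stackrel{\omega}{\leftrightarrow}v\},
\]
since, after recoloring each $\omega$-cluster uniformly, the edge $e_0$ is monochromatic automatically if $u$ and $v$ lie in the same component and otherwise only with probability $1/q$. Setting $\alpha := \phi(\omega(e_0)=1)$ and $\rho := \phi(u\leftrightarrow v)$, stationarity forces $\alpha = p/q + p(1-1/q)\rho$; combined with the observation $\mathbbm{1}\{u\leftrightarrow v\}\,\omega(e_0) = \omega(e_0)$ this yields, after a short computation, that $\widetilde f := f - \alpha$ satisfies $\langle P\widetilde f,\widetilde f\rangle_\phi = p(1-1/q)\alpha(1-\rho)$ while $\|\widetilde f\|_\phi^2 = \alpha(1-\alpha)$. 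Since the SW transition matrix is non-negative definite (as noted after Theorem~\ref{thm2}), the variational formula for $\lambda_2$ delivers
\[
\gamma \leq 1 - \frac{p(1-1/q)(1-\rho)}{1-\alpha}.
\]

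An elementary algebraic rearrangement then shows that the target inequality $\gamma \leq 1 - p(1 - 1/q - 2dp^2/q)$ is equivalent to the connectivity estimate
\[
\rho - \alpha \;\leq\; \frac{2dp^2(1-\alpha)}{q-1}.
\]
Since $\rho - \alpha = \phi(u\leftrightarrow v,\,\omega(e_0)=0)$ is the $\phi$-probability that $u$ and $v$ are joined by an open path of length at least $3$ avoiding $e_0$, I would estimate it via the standard stochastic domination $\phi\preceq\mu_{\mathrm{Ber}(p)}$ (valid for $q\geq 1$) followed by a union bound over self-avoiding paths from $u$ to $v$ in $\mathbb{Z}^d$. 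Since adjacent sites in $\mathbb{Z}^d$ share no common neighbor (the equation $\pm e_i = e_k\pm e_j$ has no solution in standard basis vectors), the shortest such detour has length $3$, and explicit enumeration shows there are exactly $2(d-1)$ such paths (of the form $u\to u\pm e_i\to u\pm e_i+e_k\to v$ with $i\neq k$), each open under Bernoulli$(p)$ with probability $p^3$; the contribution of paths of length $l\geq 5$ is bounded by the geometric tail $\sum_{l\geq 5}(2d)^l p^l = O((2dp)^5)$, which is negligible under the hypothesis $p < (2d)^{-5/2}$.

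The principal obstacle is completing this final estimate: verifying that $2(d-1)p^3$ plus the geometric remainder is at most $2dp^2(1-\alpha)/(q-1)$ uniformly in $n$. Using $1-\alpha \geq 1-p$ to bound the right-hand side from below, this reduces to an elementary inequality on $p$, $d$, and $q$; the exponent $5/2$ in the hypothesis $p < (2d)^{-5/2}$ is engineered precisely so that paths of length $\geq 5$ are strictly lower order while the leading $2(d-1)p^3$ term remains controlled by $2dp^2(1-\alpha)/(q-1)$, ensuring the bound holds uniformly in the system size $n$.
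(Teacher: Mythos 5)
Your variational setup is sound: the computation $Pf(\omega)=p/q+p(1-1/q)\mathbbm{1}\{u\leftrightarrow v\}$, the identities $\langle P\widetilde f,\widetilde f\rangle_\phi=p(1-1/q)\alpha(1-\rho)$ and $\|\widetilde f\|_\phi^2=\alpha(1-\alpha)$, the bound $\gamma\leq 1-\lambda_2\leq 1-\frac{p(1-1/q)(1-\rho)}{1-\alpha}$, and the algebraic reduction to $\rho-\alpha\leq\frac{2dp^2(1-\alpha)}{q-1}$ are all correct, and this is a genuinely different (static, equilibrium) route from the paper's. However, the final step has a genuine gap: the estimate you propose, namely stochastic domination $\phi\preceq\mathrm{Ber}(p)$ plus path counting, yields only $\rho-\alpha\leq 2(d-1)p^3+O((2dp)^5)$, a bound with \emph{no} $q$-dependence, whereas the target right-hand side $\frac{2dp^2(1-\alpha)}{q-1}$ tends to $0$ as $q\to\infty$ with $p,d$ fixed. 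Since the hypothesis $p<(2d)^{-5/2}$ does not involve $q$, the ``elementary inequality on $p$, $d$, and $q$'' you defer to is simply false for large $q$: take $d=2$, $p=1/40<4^{-5/2}$, $q=10^4$; then $2(d-1)p^3\approx 3\cdot 10^{-5}$ while $\frac{2dp^2(1-\alpha)}{q-1}\lesssim 3\cdot 10^{-7}$. The exponent $5/2$ is engineered only to make the geometric tail $\sum_{l\geq 5}(2dp)^l$ lower order than $p^3$; it cannot make $p^3$ beat $p^2/q$ uniformly in $q$. The inequality $\rho-\alpha\leq\frac{2dp^2(1-\alpha)}{q-1}$ is plausibly true, but proving it requires a $q$-dependent connectivity estimate for the equilibrium random-cluster/Potts measure (e.g., via the Edwards--Sokal coupling: on a length-$3$ detour the two intermediate vertices must also share the color of $u,v$, each costing roughly a factor $1/q$ at high temperature), which is strictly more than the domination by $\mathrm{Ber}(p)$ you invoke.

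This is precisely the difficulty the paper's proof circumvents. There one runs the \emph{edge} dynamics from the all-open and all-closed configurations under an optimal coupling and tracks the event $\{\omega_t^1(e)=1,\ u\overset{\omega_t^0}{\centernot\longleftrightarrow}v\}$: the factor $1/q$ comes for free from the recoloring step (distinct clusters of $u$ and $v$ receive the same color with probability exactly $1/q$), and the Bernoulli$(p)$ path-counting bound $p+2dp^3$ is applied only to the freshly re-percolated configuration, which is genuinely dominated by $\mathrm{Ber}(p)$. The resulting persistence probability $\{p(1-\frac1q-\frac{2dp^2}{q})\}^t$ lower-bounds the total-variation distance through the optimal coupling, and Proposition \ref{specgap} converts this into the eigenvalue bound, uniformly in $n$. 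So either adopt that dynamic argument, or supply a genuinely $q$-dependent bound on $\phi(u\leftrightarrow v,\ \omega(e_0)=0)$; as written, your last step does not close.
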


\begin{proof}
	Consider an optimal coupling $(\omega_t^1, \omega_t^0)$ of the edge Swendsen-Wang dynamics on $\mathbb{Z}_n^d$ that satisfies (\ref{optimalcoupeq}), each starting from the all-open and all-closed configuration, respectively. Pick any vertex $u \in \mathbb{Z}_n^d$ and one of its neighbor $v$, and let $\{ u \overset{\omega_t^0}{\longleftrightarrow} v \}$ denote the event that there exists an open path in $\omega_t^0$ that connects $u$ and $v$. Then for each $t$, our goal is to derive a lower bound of the following probability:
	\begin{equation}\label{goalprob}
	\mathbb{P} \left( \left. \omega_{t+1}^1(e) = 1 , \:  u \overset{\omega_{t+1}^0}{\centernot\longleftrightarrow} v \:\; \right|\: \omega_{t}^1(e)=1, \: u \overset{\omega_{t}^0}{\centernot\longleftrightarrow} v  \right),
	\end{equation}
	where $e$ denotes the edge $(uv)$.

	\quad We start with an observation which is clear by the definition of our chain.
	\begin{equation}\label{goalprob1}
	\mathbb{P} \left( \omega_{t+1}^1 (e)=1 \; \mid \omega_t^1 (e)=1 \right) = p.
	\end{equation}
	
	\noindent On the other hand, conditioned on $u \overset{\omega_{t}^0}{\centernot\longleftrightarrow} v $, we have $u \overset{\omega_{t+1}^0}{\longleftrightarrow} v $ if and only if the open clusters of $u$ and $v$ are assigned with the same color and the percolation configuration at time $t+1$ connects the pair of vertices. The probability of the latter event can be bounded by a rough estimate as follows: for $\omega \sim \textnormal{Perc}(\mathbb{Z}_n^d,p)$,
	\begin{equation*}
	\mathbb{P} \left(u \overset{\omega}{\longleftrightarrow} v \right)
	\leq p + (2d-2)p^3 + \sum_{l\geq 5} (2dp)^l \leq p+ 2dp^3.
	\end{equation*}
	For the first inequality, we used the fact that the number of length-one and length-three paths between $u$ and $v$ are $1$ and $2d-2$, respectively, and the number of length-$l$ paths is bounded by $(2d)^l$. The second inequality holds for all $p$ such that $p<(2d)^{-5/2}$. Since $\omega \sim \textnormal{Perc}(\mathbb{Z}_n^d,p)$ stochastically dominates $\omega_{t+1}^0 $ in a natural way, we have
	\begin{equation}\label{goalprob2}
	\mathbb{P} \left( \left.  u \overset{\omega_{t+1}^0}{\longleftrightarrow} v \:\; \right|\:u \overset{\omega_{t}^0}{\centernot\longleftrightarrow} v  \right)
	\leq \frac{1}{q} (p+ 2dp^3).
	\end{equation}
	
	\quad Now we derive a lower bound on (\ref{goalprob}) using (\ref{goalprob1}) and (\ref{goalprob2}).
	\begin{equation*}
	\begin{split}
	\mathbb{P} &\left( \left. \omega_{t+1}^1(e) = 1 , \:  u \overset{\omega_{t+1}^0}{\centernot\longleftrightarrow} v \:\; \right|\: \omega_{t}^1(e)=1, \: u \overset{\omega_{t}^0}{\centernot\longleftrightarrow} v  \right)\\
	&~~~	\geq
	~	\mathbb{P} \left( \left. \omega_{t+1}^1(e) = 1 \: \right|\: \omega_{t}^1(e)=1 \right)
	~ -~
	\mathbb{P} \left( \left.   u \overset{\omega_{t+1}^0}{\longleftrightarrow} v \:\; \right|\:  u \overset{\omega_{t}^0}{\centernot\longleftrightarrow} v  \right)\\
	&~~~
	\geq~ p - \frac{1}{q} (p+2dp^3) = p\left( 1-\frac{1}{q} -\frac{2dp^2}{q} \right).
	\end{split}
	\end{equation*}
	Then since the starting configurations satisfy $\omega_{0}^1(e)=1$ and $ \: u \overset{\omega_{0}^0}{\centernot\longleftrightarrow} v $, 
	\begin{equation*}
	\mathbb{P}( \omega_t^1 \neq  \omega_t^0) ~\geq ~
	\mathbb{P} \left( \omega_{t}^1(e)=1, \: u \overset{\omega_{t}^0}{\centernot\longleftrightarrow} v \right) ~ \geq ~
	\left\{ p\left( 1-\frac{1}{q} -\frac{2dp^2}{q} \right) \right\}^t,
	\end{equation*}
	and this also gives the bound on the total-variation distance between the law of the two copies as the pair is an optimal coupling. Therefore, by utilizing the second inequality of Proposition \ref{specgap} and by Lemma \ref{ullrichlem},  we obtain that
	\begin{equation*}
	{\gamma}~ \leq ~1 - p\left(1 - \frac{1}{q} -\frac{2dp^2}{q} \right),
	\end{equation*}
	as one tends $t$ to infinity. Morever, it is an estimate that holds uniformly in $n$, hence concluding the proof.
\end{proof}

\quad Although not used in this paper, we can derive an upper bound of the spectral gap for low temperature Swendsen-Wang dynamics using an analogous method as in Proposition \ref{specgapprop}. The following corollary illustrates how it is generalized.

\begin{corollary}
	Consider the Swendsen-Wang dynamics on the $d$-dimensional torus $(\mathbb{Z}/n\mathbb{Z})^d$. If $p>0$ satisfies $p >\frac{1}{q}$, the spectral gap ${\gamma}$ of the given process satisfies
	\begin{equation*}
	{\gamma}~ \leq ~1 - p + \frac{1}{q}.
	\end{equation*}
	In particular, ${\gamma}$ is strictly smaller than $1$, uniformly in $n$.
\end{corollary}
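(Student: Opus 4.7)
The plan is to follow the strategy of Proposition~\ref{specgapprop}, adapting it to the low-temperature regime. First, by Lemma~\ref{ullrichlem} it suffices to bound the spectral gap of the edge Swendsen--Wang dynamics. I will couple two copies $(\omega_t^1)$ and $(\omega_t^0)$ started from the extremal configurations $\omega_0^1 \equiv 1$ and $\omega_0^0 \equiv 0$ using the optimal coupling realising \eqref{optimalcoupeq}, so that $\mathbb{P}(\omega_t^1 \neq \omega_t^0) = \|\mathbb{P}(\omega_t^1 \in \cdot) - \mathbb{P}(\omega_t^0 \in \cdot)\|_{\textnormal{TV}}$. Fixing an edge $e=(uv)$, I will work with the observable $a_t := \mathbb{P}(\omega_t^1(e)=1) - \mathbb{P}(\omega_t^0(e)=1)$, which satisfies $a_0=1$ and $|a_t| \le \|\mathbb{P}(\omega_t^1 \in \cdot) - \mathbb{P}(\omega_t^0 \in \cdot)\|_{\textnormal{TV}} \le 2 d(t)$ by the triangle inequality.

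A direct one-step calculation for the edge dynamics yields
\[
\mathbb{P}\bigl(\omega_{t+1}(e) = 1 \bigm| \omega_t\bigr) \;=\; \tfrac{p}{q} + p\left(1-\tfrac{1}{q}\right)\mathbbm{1}\{u\leftrightarrow v \text{ in } \omega_t\},
\]
which gives the recursion $a_{t+1} = p(1-\tfrac{1}{q})\, b_t$, where $b_t := \mathbb{P}(u\leftrightarrow v \text{ in } \omega_t^1) - \mathbb{P}(u\leftrightarrow v \text{ in } \omega_t^0)$. The heart of the argument will be an inductive proof that $b_t \ge a_t$ for every $t \ge 0$; writing $b_t - a_t = \mathbb{P}(u\leftrightarrow v,\,\omega_t^1(e)=0) - \mathbb{P}(u\leftrightarrow v,\,\omega_t^0(e)=0)$, this is the statement that connection of $u$ to $v$ via a non-$e$ path remains more likely when started from the larger configuration. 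Granted this, iterating gives $a_t \ge (p(1-1/q))^t$, and since $p \le 1$ implies $p(1-1/q) \ge p - 1/q$, one obtains $a_t \ge (p-1/q)^t$.

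Combining $2 d(t) \ge |a_t| \ge (p-1/q)^t$ with the second inequality in Proposition~\ref{specgap} gives $\mu_{\min}^{-1}(1-\gamma)^t \ge 2d(t) \ge (p-1/q)^t$, and taking $t \to \infty$ (so that $\mu_{\min}^{1/t} \to 1$ for each fixed $n$) yields $1 - \gamma \ge p - 1/q$, which is the asserted bound $\gamma \le 1 - p + 1/q$, uniform in $n$. The ``in particular'' clause is then immediate from the hypothesis $p > 1/q$.

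The main obstacle will be establishing the monotonicity step $b_t \ge a_t$: as highlighted in the remark following Definition~\ref{globalcoupling}, the global coupling of the edge SW dynamics is \emph{not} pointwise monotone, so this cannot follow from trajectory-level stochastic domination of $\omega_t^1$ over $\omega_t^0$. I expect instead to establish it at the level of marginal distributions by induction on $t$, exploiting the Edwards--Sokal joint representation so that each step reduces to an FKG-type inequality for independent $p$-percolation on the monochromatic subgraph of a uniform colouring. Should the clean inequality $b_t \ge a_t$ prove delicate, the slack $p(1-1/q) \ge p - 1/q$ affords a cruder weaker bound $b_t \ge \bigl(\tfrac{p-1/q}{p(1-1/q)}\bigr) a_t$, which may be easier to extract directly from the joint law of $(\omega_t^1, \omega_t^0)$ while still sufficing to close the argument.
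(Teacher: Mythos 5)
Your reduction to the edge dynamics via Lemma \ref{ullrichlem}, the one-step identity $\mathbb{P}(\omega_{t+1}(e)=1 \mid \omega_t) = \tfrac{p}{q} + p(1-\tfrac1q)\mathbbm{1}\{u\leftrightarrow v \text{ in } \omega_t\}$, and the concluding step via Proposition \ref{specgap} are all fine, but the heart of your argument --- the inequality $b_t \ge a_t$ (or even the weaker $b_t \ge c\,a_t$) --- is left entirely unproven, and it is precisely the kind of statement that the non-monotonicity of the Swendsen--Wang dynamics obstructs. Your proposed inductive/FKG route would in effect require the SW kernel to preserve stochastic domination between the laws started from the all-open and all-closed configurations (or at least to preserve the ordering of the probabilities of the increasing events $\{u\leftrightarrow v\}$ and $\{\omega(e)=1\}$ jointly), and no such preservation result is available; the failure of pointwise monotonicity noted in the remark after Definition \ref{globalcoupling} is exactly the warning sign. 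Moreover the induction has nothing to close on: your recursion expresses $a_{t+1}$ in terms of $b_t$, but you have no companion lower bound on $b_{t+1}$ in terms of time-$t$ quantities, so "induction on $t$" does not get off the ground. As it stands, the proposal reduces the corollary to an open correlation/monotonicity claim rather than proving it.

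The paper's proof sidesteps any comparison of the two marginal laws. Under one coupling of the two copies it tracks the single joint event $E_t := \{\omega_t^1(e)=1\}\cap\{u \overset{\omega_t^0}{\centernot\longleftrightarrow} v\}$ and uses the elementary bound $\mathbb{P}(A\cap B)\ge \mathbb{P}(A)-\mathbb{P}(B^c)$ at each step: on $E_t$ the edge $e$ is open in $\omega_t^1$, so $u,v$ lie in the same cluster and $\mathbb{P}(\omega_{t+1}^1(e)=1\mid\mathcal F_t)=p$ exactly, while on $\{u \centernot\leftrightarrow v \text{ in } \omega_t^0\}$ reconnection at time $t+1$ forces the two distinct clusters of $u$ and $v$ to receive the same colour, so $\mathbb{P}(u\leftrightarrow v \text{ in } \omega_{t+1}^0\mid\mathcal F_t)\le \tfrac1q$ (this is the only change from Proposition \ref{specgapprop}, where the extra percolation-connection factor is kept). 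Hence $\mathbb{P}(E_t)\ge (p-\tfrac1q)^t$, and since $E_t\subset\{\omega_t^1\neq\omega_t^0\}$ and the coupling is optimal, this lower-bounds the total-variation distance, after which Proposition \ref{specgap} and $t\to\infty$ give $\gamma\le 1-p+\tfrac1q$ with no monotonicity input whatsoever. If you want to salvage your marginal-observable approach, you would need to replace the unproven step $b_t\ge a_t$ by a joint-event argument of this kind.
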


\textit{Proof}. ~The proof is identical as it is done in Proposition \ref{specgapprop}, except for the estimate in (\ref{goalprob2}). In the current case, we use a more obvious estimate:
\begin{equation*}
\pushQED{\qed}
\mathbb{P} \left( \left.  u \overset{\omega_{t+1}^0}{\longleftrightarrow} v \:\; \right|\:u \overset{\omega_{t}^0}{\centernot\longleftrightarrow} v  \right)
~\leq~ \frac{1}{q} . \qedhere
\popQED
\end{equation*}

\subsection{Proof of Lemma \ref{mtstarbd}}\label{secmtstarbd}
We first introduce the following simple property of product measures.
\begin{lemma}\label{prodmsrlem2}
	Let $\{\mu_i \}_{i=1}^k$, $\{\nu_i \}_{i=1}^k$ be collections of probability measures on a  state space,  let $\mu = \otimes_{i=1}^k \mu_i$ and $\nu = \otimes_{i=1}^k \nu_i $. Then the following inequality holds true:
	\begin{equation*}
	\|\mu - \nu \|_{\textnormal{\tiny{TV}}}~ \leq~ \sum_{i=1}^k \|\mu_i - \nu_i \|_{\textnormal{\tiny{TV}}} .
	\end{equation*}
\end{lemma}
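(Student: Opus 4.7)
The plan is to prove this by the standard coupling argument, which is the cleanest route. Recall the coupling characterization of total variation: for any two probability measures $\mu_i$ and $\nu_i$ on a common space, there exists a coupling $(X_i, Y_i)$ with marginals $\mu_i$ and $\nu_i$ such that $\mathbb{P}(X_i \neq Y_i) = \|\mu_i - \nu_i\|_{\textnormal{\tiny TV}}$ (this is exactly the optimal coupling used in the coupling inequality cited earlier in the paper, e.g.\ around \eqref{optimalcoupeq}).

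For each $i = 1, \ldots, k$ I would take such an optimal coupling $(X_i, Y_i)$ of $\mu_i$ and $\nu_i$, and then construct a coupling of $\mu$ and $\nu$ by declaring the pairs $(X_i, Y_i)$ to be mutually independent. Since the $X_i$ are independent with $X_i \sim \mu_i$ the joint law of $(X_1, \ldots, X_k)$ equals $\mu = \otimes_i \mu_i$, and similarly for the $Y_i$'s and $\nu$. Hence this is a valid coupling of $\mu$ and $\nu$. The coupling inequality then gives
\begin{equation*}
\|\mu - \nu\|_{\textnormal{\tiny TV}} \;\leq\; \mathbb{P}\bigl( (X_1,\ldots,X_k) \neq (Y_1,\ldots,Y_k) \bigr) \;=\; \mathbb{P}\Bigl( \bigcup_{i=1}^k \{X_i \neq Y_i\} \Bigr).
\end{equation*}
A union bound over $i$ together with $\mathbb{P}(X_i \neq Y_i) = \|\mu_i - \nu_i\|_{\textnormal{\tiny TV}}$ produces the desired estimate $\sum_{i=1}^k \|\mu_i - \nu_i\|_{\textnormal{\tiny TV}}$.

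This is a completely standard fact with essentially no obstacle; the only thing to be careful about is that we are using the \emph{existence} of the optimal coupling (rather than the half-sum formulation of TV), and that independence of the pairs $(X_i,Y_i)$ preserves both product marginals simultaneously. An equivalent proof by induction on $k$ via the telescoping identity $\mu - \nu = \sum_{i=1}^k \bigl(\mu_1 \otimes \cdots \otimes \mu_i \otimes \nu_{i+1} \otimes \cdots \otimes \nu_k\bigr) - \bigl(\mu_1 \otimes \cdots \otimes \mu_{i-1} \otimes \nu_i \otimes \cdots \otimes \nu_k\bigr)$ combined with the fact that TV distance is unchanged under tensoring with a common factor would also work, but the coupling argument above is shorter.
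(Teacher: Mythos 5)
Your coupling proof is correct: the optimal couplings $(X_i,Y_i)$ exist, making them mutually independent does give a coupling of $\mu=\otimes_i\mu_i$ and $\nu=\otimes_i\nu_i$, and the coupling inequality plus a union bound over $\{X_i\neq Y_i\}$ yields exactly the claimed bound. However, it is a genuinely different route from the paper's. The paper argues directly on the $L^1$ sums: it inserts the hybrid term $\nu_1(x_1)\prod_{i\geq 2}\mu_i(x_i)$, applies the triangle inequality to peel off $\sum_{x_1}|\mu_1(x_1)-\nu_1(x_1)|$ plus the same quantity for the remaining $k-1$ coordinates, and iterates $k$ times --- this is precisely the telescoping decomposition you mention only as an aside at the end of your proposal. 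What each approach buys: the paper's computation is entirely self-contained (no appeal to the existence of optimal couplings, just the half-sum formula for TV already stated in \S\ref{secprelim3}), while your argument is shorter once one grants the optimal-coupling characterization (which the paper does invoke elsewhere, near (\ref{optimalcoupeq})) and it transfers without change to non-discrete state spaces, where the summation-based proof would need to be rewritten with densities or a common dominating measure. Both are complete; just note that your main argument quietly uses attainment of the infimum in the coupling characterization, which is immediate in the discrete setting the paper works in.
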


\begin{proof}
	We can prove this lemma by the following simple observation:
	\begin{equation*}
	\begin{split}
	&\sum_{x_1,\ldots,x_k} \left|\prod_{i=1}^k \mu_i (x_i) - \prod_{i=1}^k \nu_i (x_i) \right|\\
	&\leq
	\sum_{x_1,\ldots,x_k} |\mu_1 (x_1) - \nu_1 (x_1)| \prod_{i=2}^k \mu_i (x_i) +\sum_{x_1,\ldots,x_k} \left| \nu_1 (x_1) \prod_{i=2}^k \mu_i (x_i) - \prod_{i=1}^k \nu_i (x_i) \right|\\
	&= \sum_{x_1} | \mu_1(x_1) - \nu_1(x_1) | ~+  \sum_{x_2,\ldots,x_k} \left| \prod_{i=2}^k \mu_i (x_i) - \prod_{i=2}^k \nu_i (x_i) \right|
	\end{split}
	\end{equation*}
	Iterating this for $k$ times draws the conclusion.
\end{proof}

\quad Therefore, by following the proof of Theorem \ref{l1l2thm} along with the application of the previous lemma to (\ref{tvineq9}), we obtain that
\begin{equation*}
\max_{x_0} \left\| \mathbb{P}_{x_0} \left( X_{t+s} \in \,\cdot \,\right) - \pi \right\|_{{ \textnormal{\tiny TV}}}
\leq
\left( m^d \wedge (n/\log^7 n)^d \right) \textbf{m}^*_t  +n^{-9d},
\end{equation*}
where $X_t$ is the Swendsen-Wang dynamics on $\mathbb{Z}_m^d$ with $\log^5 n \leq m \leq n$ and $s =11dp_\star \log\log n $. Then, as what we did in (\ref{mtineq11}), pick $m=r:= 3\log^5 n$ and deduce that
\begin{equation}\label{mtstarineq1}
e^{-\gamma_\star(r) (t+s)}
\leq
2\| \mathbb{P} (X_{t+s}^\dagger \in \; \cdot \;) - \pi^\dagger \|_{\textnormal{\tiny{TV}}}
\leq
2r^d \textbf{m}_t^* +2n^{-9d},
\end{equation}
and hence
\begin{equation*}
\pushQED{\qed}
\textbf{m}_t^* \geq e^{-\gamma_\star(r) t \;- \;15d \gamma_\star(r)\log\log n } - n^{-9d}. \qedhere
\popQED
\end{equation*}

\subsection{Proof of Theorem \ref{l2thminfty}}\label{secl2thminfty}

\subsubsection{Information percolation on $\mathbb{Z}^d$}\label{secl2thminfty1} Let $(Z_t)_{t=0}^{t_\star}$ denote the Swendsen-Wang dynamics on $\mathbb{Z}^d$ with initial configuration $Z_0$ defined in (\ref{z0def}), and let $\pi^\infty$ denote its stationary distribution. Also, let $\Lambda$, $\Lambda^+$ and $\psi$ be defined as in (\ref{mtstardef}, \ref{z0def}). Note that the boxes $\Lambda$, $\Lambda^+$ are centered at the origin.
Throughout this section, we write $\Lambda$ (resp. $\Lambda^+$) instead of $\psi(\Lambda)$ (resp. $\psi(\Lambda^+)$) if there is no ambituity.

\quad We choose an analogous approach as  in Theorem \ref{l2thm}, while adjusting the argument to the infinite-volume setting. 
We draw the history diagram on  the space-time slab $\mathbb{Z}^d \times [0, t_\star]$, but use a modified definition of classifying the information percolation clusters.

\quad Let $\widetilde{\mathfrak{H}}_{t_\star}$ denote the update sequence of $(Z_t)_{t=0}^{t_\star}$ and let $\widetilde{\mathscr{H}}_v$ be the update history of $v\in \mathbb{Z}^d$ defined as in Definition \ref{historydiag}. We adopt the same graph structure $ u \sim_i v$ on $\mathbb{Z}^d$ as in Definition \ref{graphstruc}: $u \sim_i v$ if and only if $\widetilde{\mathscr{H}}_u(t+ \frac{1}{2}) \cap  \widetilde{\mathscr{H}}_v(t+\frac{1}{2})  \neq \emptyset$ for some integer $t$.

\begin{definition}[Information percolation clusters for $Z_t$]
	Let $C \subset \mathbb{Z}^d$ be a connected component in the graph $(\mathbb{Z}^d , \sim_i)$. Then,
	\begin{enumerate}
		\item $C$ is marked \textsc{Red} if $\widetilde{\mathscr{H}}_C (0) \cap \Lambda^+ \neq \emptyset$;
		
		\item $C$ is marked \textsc{Blue} if $\widetilde{\mathscr{H}}_C (0) \cap \Lambda^+ = \emptyset$ and $|C|=1$;
		
		\item $C$ is marked \textsc{Green} if otherwise, i.e., if $\widetilde{\mathscr{H}}_C (0) \cap \Lambda^+ = \emptyset$ and $|C| \geq 2$.
	\end{enumerate}
	
\end{definition}

\quad Intuition for this modified definition is straightforward: if we consider the stationary chain $Z_t'$  with the initial configuration $Z_0'$, then $Z_t (C) = Z_t' (C)$ as long as $\widetilde{\mathscr{H}}_C (0) \cap \Lambda^+ = \emptyset$, since the starting configurations $Z_0$ and $Z_0'$ can possibly differ only on $\Lambda^+$. Therefore, even if the history survives until $t=0$, the two chains are still coupled if it does not intersect with $\Lambda^+$ at $t=0$.

\quad Set $\widetilde{V}:= \mathbb{Z}^d$, and let $\widetilde{\mathcal{C}}_\mathcal{R}$ denote the collection of red clusters. Let  $\widetilde{V}_\mathcal{R}:= \cup_{C\in \widetilde{\mathcal{C}}_\mathcal{R}} C $ be the union of red clusters, and define $\widetilde{\mathcal{C}}_\mathcal{B}$, $\widetilde{V}_\mathcal{B}$, $\widetilde{\mathcal{C}}_\mathcal{G}$ and $\widetilde{V}_\mathcal{G}$ analogously. 
Moreover, let $\widetilde{\mathscr{H}}_{\mathcal{R}} := \widetilde{\mathscr{H}}_{\widetilde{V}_{\mathcal{R}}}$, and define $\widetilde{\mathscr{H}}_{\mathcal{G}}$, $\widetilde{\mathscr{H}}_{\mathcal{B}}$ similarly.
To introduce the analog of $\Psi_A$ in (\ref{psidef}), let $\widetilde{\mathscr{H}}_A^- := \bigcup \{\widetilde{\mathscr{H}}_v : v\in \widetilde{V} \setminus A \}~ = \widetilde{\mathscr{H}}_{\widetilde{V}\setminus A}  $, and set
\begin{equation}\label{psitildedef}
\widetilde{\Psi}_A := \sup_{\widetilde{\mathscr{H}}_A^-: \: \widetilde{\mathscr{H}}_A^- \in \widetilde{\mathscr{H}}_{com}(A) } \mathbb{P} \left( A \in \widetilde{\mathcal{C}}_{\mathcal{R}} \mid \widetilde{\mathscr{H}}_A^-, \mspace{5mu} \{A \in \widetilde{\mathcal{C}}_{\mathcal{R}  }\} \cup \{A \subset \widetilde{V}_{\mathcal{B}} \} \right),
\end{equation}
where $\widetilde{\mathscr{H}}_A^- \in \widetilde{\mathscr{H}}_{com}(A)$ is the shorthand notation meaning $\widetilde{\mathscr{H}}_A^- \cap \,A \times \{t_\star-\frac{1}{2} \} =\emptyset  $, which imposes a compatibility condition on $\widetilde{\mathscr{H}}_A^-$. Then we have the following infinite-volume analog of Lemma \ref{psilem}, whose proof is presented in the final subsection.

\begin{lemma}\label{psilem2}
	Set $r_0 := \frac{5}{2} \log^5 n$ and let $A \neq \emptyset$ be an arbitrary subset of $\widetilde{V}$. For any $\theta >0$, there exist constants $M=M(\theta)$ and $p_0 = p_0 (\theta, d)$ such that for any $p < p_0$,
	\begin{equation*}
	\widetilde{\Psi}_A \leq M e^{-\theta \mspace{3mu}\mathfrak{M}(A)}  \left[(3edp)^{t_\star -\frac{1}{2}} \wedge e^{-\theta (|| A || - r_0) } \right],
	\end{equation*}
	where $\mathfrak{M}(A)$ is the size of the smallest connected subgraph containing $A$, and $||A||$ is defined by	 	$||A|| := \max_{x\in A} ||x||_\infty$.
\end{lemma}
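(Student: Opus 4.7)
The argument mirrors that of Lemma \ref{psilem}, adapted to reflect (i) the new \textsc{Red} definition which requires $\widetilde{\mathscr{H}}_A$ to hit $\Lambda^+$ at time $0$ rather than merely surviving to time $0$, and (ii) the infinite-volume setting $\mathbb{Z}^d$. Because $\Lambda^+$ lies in the $\ell^\infty$-ball of radius $r_0$ about the origin while some vertex of $A$ sits at distance $\|A\|$, the spatial projection of the history of $A$ must span a distance of at least $d_0 := (\|A\| - r_0)_+$ to reach $\Lambda^+$, producing the additional $e^{-\theta(\|A\| - r_0)}$ decay.

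First, I would carry over the ratio decomposition of $\widetilde{\Psi}_A$, the denominator bound $\mathbb{P}(A \subset \widetilde{V}_{\mathcal{B}(A)}^* \mid \widetilde{\mathscr{H}}_A^- = \mathcal{X}) \geq (1-p)^{d|A|}$ (translation-invariance of $\mathbb{Z}^d$, as in (\ref{lowerbddenom})), and the independence identity collapsing the numerator to $\mathbb{P}(A \in \widetilde{\mathcal{C}}_{\mathcal{R}(A)}^*)$ (as in (\ref{numtrans})) verbatim from the proof of Lemma \ref{psilem}. Setting $W_t := |\widetilde{\mathscr{H}}_A(t_\star - t)|$ and letting $T$ be the largest integer time with $|\widetilde{\mathscr{H}}_A(t)| = 2$ (or $-1$), the survival-from-$T$-to-$0$ event still has conditional probability at most $2(3edp)^T$ by the same Galton-Watson domination of Claim \ref{claim:expdecay}.

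The new ingredient is the strengthened spatial constraint. On $\{A \in \widetilde{\mathcal{C}}_{\mathcal{R}(A)}^*\}$, the spatial projection $\bigcup_s \widetilde{\mathscr{H}}_A(s + \tfrac{1}{2}) \subset \mathbb{Z}^d$ is a connected subgraph containing both $A$ and a vertex of $\Lambda^+$, so it has at least $\max(\mathfrak{M}(A),\, d_0 + 1) \geq (\mathfrak{M}(A) + d_0)/2$ vertices: containing $A$ forces size $\geq \mathfrak{M}(A)$, while containing a path from the extremal vertex of $A$ (at distance $\|A\|$) down to $\Lambda^+$ forces size $\geq d_0 + 1$. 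The same vertex-edge bookkeeping as in Lemma \ref{psilem} --- (vertices) $=$ (edges) $+$ (components) summed over the non-trivial layers $j = 1, \ldots, t_\star - T$ --- then upgrades the constraint to $\sum_{j=1}^{t_\star - T} W_j \geq (\mathfrak{M}(A) + d_0)/2$. Running the Chernoff/MGF chain (\ref{mainlemcomputation0})--(\ref{mainlemcomputaiton4}) with this stronger bound and with the auxiliary exponent $\lambda$ doubled (so that $e^{-\lambda(\mathfrak{M}(A)+d_0)/2}$ produces the desired $e^{-\theta(\mathfrak{M}(A)+d_0)}$ after Markov), together with a correspondingly enlarged $C = C(\lambda, d, p)$, yields
\begin{equation*}
\mathbb{P}(A \in \widetilde{\mathcal{C}}_{\mathcal{R}(A)}^*) \leq M' \,(3edp)^{t_\star - 1/2}\, e^{-\theta(\mathfrak{M}(A) + d_0)}
\end{equation*}
for suitable $M' = M'(\theta)$ and $p_0 = p_0(\theta, d)$.

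Dividing by the denominator and absorbing $(1-p)^{-d|A|}$ into the exponent via the $\widetilde{\lambda} \mapsto \theta$ rescaling at the end of Lemma \ref{psilem}'s proof gives $\widetilde{\Psi}_A \leq M e^{-\theta \mathfrak{M}(A)} (3edp)^{t_\star - 1/2} e^{-\theta d_0}$. Since $p < p_0$ guarantees $(3edp)^{t_\star - 1/2} \leq 1$ and $e^{-\theta d_0} \leq 1$, the product $(3edp)^{t_\star - 1/2} e^{-\theta d_0}$ is bounded by $(3edp)^{t_\star - 1/2} \wedge e^{-\theta d_0}$, and $e^{-\theta d_0} \leq e^{-\theta(\|A\| - r_0)}$ trivially (both when $\|A\| > r_0$ and when $\|A\| \leq r_0$), yielding the claimed inequality. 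The main obstacle is the spatial-projection-is-connected claim: this follows inductively from the construction of $\widetilde{\mathscr{H}}_v$ (each backward step attaches an entire percolation cluster to a vertex already in the history) combined with the hypothesis $A \in \widetilde{\mathcal{C}}_{\mathcal{R}(A)}^*$, which forces the per-vertex spatial projections for distinct $v \in A$ to glue into a single connected component of $\mathbb{Z}^d$.
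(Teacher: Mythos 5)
There is a genuine gap at the step where you ``upgrade'' the spatial constraint to the layers above $T$, i.e.\ the claim $\sum_{j=1}^{t_\star-T} W_j \geq \tfrac12\bigl(\mathfrak{M}(A)+d_0\bigr)$ with $d_0=(\|A\|-r_0)_+$. The full spatial projection of $\widetilde{\mathscr{H}}_A$ is indeed connected and must touch $\Lambda^+$, but the portion of the history that travels toward $\Lambda^+$ can lie entirely \emph{below} time $T$: once the width has dropped to $2$, the surviving thin strand may drift the whole distance $\|A\|-r_0$ during the remaining $T$ steps before time $0$. Concretely, take $A$ a pair of adjacent vertices far from the origin, so that $T$ is close to (or equal to) $t_\star$; then the layers $j=1,\dots,t_\star-T$ contribute essentially nothing, yet $A$ can perfectly well lie in $\widetilde{\mathcal{C}}_{\mathcal{R}(A)}^*$ because its history reaches $\Lambda^+$ entirely after shrinking to width $2$. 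Hence the indicator you insert into the Chernoff/MGF chain is not implied by the red event, and the computation \eqref{mainlemcomputation0}--\eqref{mainlemcomputaiton4} with the doubled $\lambda$ does not yield the asserted product bound $M'(3edp)^{t_\star-1/2}e^{-\theta(\mathfrak{M}(A)+d_0)}$. Extracting the survival factor $2(3edp)^T$ \emph{and} the spatial-displacement decay simultaneously would require a genuinely new branching estimate (e.g.\ bounding $\mathbb{E}\bigl[\mathbbm{1}_{\{\text{survival below }T\}}\,e^{2\lambda\sum_{j>t_\star-T}W_j}\bigr]$ for the two-particle process started at time $T$), which your write-up does not supply.

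The paper avoids this entirely: it never attempts a product bound. It proves only $\widetilde{\Psi}_A \leq M e^{-\theta(\mathfrak{M}(A)+\|A\|-r_0)}$, using the constraint on the sum over \emph{all} layers, $W_1+\cdots+W_{t_\star}\geq \max\bigl(\mathfrak{M}(A),\,\|A\|-r_0\bigr)\geq \tfrac12\bigl(\mathfrak{M}(A)+\|A\|-r_0\bigr)$, together with the plain MGF bound $\mathbb{E}\bigl[e^{2\lambda(W_1+\cdots+W_{t_\star})}\bigr]\leq 2^{|A|}\leq e^{\mathfrak{M}(A)}$ (no splitting at $T$ at all), and it gets the alternative bound $Me^{-\theta\mathfrak{M}(A)}(3edp)^{t_\star-\frac12}$ by invoking Lemma \ref{psilem}, whose argument applies verbatim since the new red event implies $\widetilde{\mathscr{H}}_A(0)\neq\emptyset$; the stated minimum is then immediate. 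Your surrounding scaffolding (ratio decomposition, denominator bound $(1-p)^{\,\deg\cdot|A|}$ with the lattice degree $2d$ rather than $d$, reduction to $\mathbb{P}(A\in\widetilde{\mathcal{C}}_{\mathcal{R}(A)}^*)$, and the final absorption of constants) is fine, so the proof can be repaired by replacing the flawed localized constraint with the paper's two separate bounds and taking the minimum.
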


\quad We introduce one more lemma that restricts our attention to a finite domain. Let $r : = 3 \log^5 n$ and define $R$ to be the following random variable:
\begin{equation}\label{Rdefn}
R := \max \{||v||_\infty : \: v  \in \widetilde{V}_\mathcal{R} \} \vee r.
\end{equation}

\noindent The following lemma indicates that it is unlikely to have $R>r$ at time $s \asymp \log\log n$. Its proof is postponed to \S \ref{subsecRlem}.

\begin{lemma}\label{Rlem}
	Let $(Z_t)_{t=0}^{s}$ be the Swendsen-Wang dynamics on $\widetilde{V}$ defined as above, where $s =\Theta(\log \log n)$. Then at time $s$, the random variable $R$ defined in (\ref{Rdefn}) satisfies
	\begin{equation}
	\mathbb{P}(R>r) \leq n^{-10d}.
	\end{equation} 
\end{lemma}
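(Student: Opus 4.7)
The plan is a two-level union bound. The event $\{R > r\}$ occurs iff some vertex $v \in \mathbb{Z}^d$ with $\|v\|_\infty > r$ belongs to a red cluster, so
\[
\mathbb{P}(R > r) \;\leq\; \sum_{v:\, \|v\|_\infty > r}\; \sum_{A \ni v} \mathbb{P}(A \in \widetilde{\mathcal{C}}_\mathcal{R}) \;\leq\; \sum_{v:\, \|v\|_\infty > r}\; \sum_{A \ni v} \widetilde{\Psi}_A.
\]
The last bound $\mathbb{P}(A \in \widetilde{\mathcal{C}}_\mathcal{R}) \leq \widetilde{\Psi}_A$ follows from the definition of $\widetilde{\Psi}_A$ in~(\ref{psitildedef}) exactly as in the finite-volume identity used around~(\ref{etaeq1}): conditioning on $\widetilde{\mathscr{H}}_A^-$ and further on the event $\{A \in \widetilde{\mathcal{C}}_\mathcal{R}\} \cup \{A \subset \widetilde{V}_\mathcal{B}\}$ (which contains $\{A \in \widetilde{\mathcal{C}}_\mathcal{R}\}$) can only increase the probability, and then taking the supremum over $\widetilde{\mathscr{H}}_A^-$.

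Next, I would apply Lemma~\ref{psilem2} using its \emph{spatial} term, $\widetilde{\Psi}_A \leq M e^{-\theta \mathfrak{M}(A)} e^{-\theta(\|A\| - r_0)}$. The key point is that every $A \ni v$ with $\|v\|_\infty > r$ automatically satisfies $\|A\| \geq \|v\|_\infty > r > r_0$, so the spatial factor is always active. Choosing $\theta$ large enough that $2ed\, e^{-\theta} < 1$, together with the standard combinatorial bound $|\{A \ni v : \mathfrak{M}(A) = k\}| \leq (2ed)^k$ used in~(\ref{sumpsi}), reduces the inner sum by a geometric series to
\[
\sum_{A \ni v} \widetilde{\Psi}_A \;\leq\; C(\theta, d) \, e^{-\theta(\|v\|_\infty - r_0)}.
\]

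The outer sum is then organised by $\ell^\infty$-distance from the origin. Since $|\{v \in \mathbb{Z}^d : \|v\|_\infty = j\}| = O(j^{d-1})$, the total is at most $C \sum_{j > r} j^{d-1} e^{-\theta(j - r_0)}$; enlarging $\theta$ once more absorbs the polynomial factor, and the tail sum is dominated by $\exp(-\Omega(r - r_0)) = \exp(-\Omega(\log^5 n))$, which is vastly smaller than $n^{-10d}$ for all large $n$. This is essentially a direct application of Lemma~\ref{psilem2}; the only delicate choice --- built into Lemma~\ref{psilem2} itself --- is fixing $r_0 = \tfrac{5}{2}\log^5 n$ strictly smaller than $r = 3\log^5 n$, so that the buffer $r - r_0 = \tfrac{1}{2}\log^5 n$ yields super-polynomial decay well beyond the $n^{-10d}$ target. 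Note also that the $t_\star$-term in Lemma~\ref{psilem2} is not needed here: even the short time horizon $s = \Theta(\log\log n)$ allows information to travel only $O(\log\log n)$ sites, so purely spatial decay suffices.
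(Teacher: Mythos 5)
Your argument is correct in substance but takes a genuinely different route from the paper. The paper proves Lemma \ref{Rlem} directly, in the spirit of Lemma \ref{speedofpropa}: if a red vertex $v$ with $\|v\|_\infty=l>r$ exists, the chained histories force a space--time open connection from the column of $v$ to $\Lambda^+\times\{0\}$, and a pigeonhole over the $s$ percolation layers produces, in a single configuration $\bar\omega_{s'}$, an open crossing of an annulus of width of order $(l-r_0)/s$; the exponential decay of subcritical crossing probabilities then gives a bound of the form $s\,l\,e^{-c(l-r_0)/s}$, which is summed over $v$. You instead union over candidate red sets $A\ni v$ and invoke the spatial term of Lemma \ref{psilem2}, $\widetilde\Psi_A\le M e^{-\theta\mathfrak M(A)}e^{-\theta(\|A\|-r_0)}$, together with the $(2ed)^k$ count of sets with $\mathfrak M(A)=k$; your reduction $\mathbb P(A\in\widetilde{\mathcal C}_{\mathcal R})\le\widetilde\Psi_A$ mirrors the paper's own step at (\ref{etaeq1}). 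Since Lemma \ref{psilem2} is proved independently of Lemma \ref{Rlem}, there is no circularity, and your route avoids the annulus/pigeonhole argument entirely (the spatial displacement is already encoded in the proof of Lemma \ref{psilem2} through $W_1+\cdots+W_{t_\star}\ge\|A\|-r_0$); the price is a heavier reliance on the cluster machinery. Both routes yield decay far below $n^{-10d}$ (yours $e^{-\Omega(\log^5 n)}$, the paper's $e^{-\Omega(\log^5 n/\log\log n)}$).

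One step deserves care. The inclusion $\{v\in\widetilde V_{\mathcal R}\}\subset\bigcup_{A\ni v}\{A\in\widetilde{\mathcal C}_{\mathcal R}\}$, with the union over \emph{finite} $A$, presupposes that the red cluster containing $v$ is almost surely finite: the event $\{A\in\widetilde{\mathcal C}_{\mathcal R}\}$ requires $A$ to be a maximal $\sim_i$-component, so an infinite red cluster would escape the union. Finiteness does hold at high temperature, but it is exactly the kind of spatial control this lemma is meant to provide, so it should not be assumed silently. The clean repair is to union instead over the event $\{A\in\widetilde{\mathcal C}^*_{\mathcal R(A)}\}$, i.e.\ redness as seen when exposing only the histories of $A$, as in the proof of Lemma \ref{psilem2}: if $v$ is red there is a finite chain $v=v_0\sim_i v_1\sim_i\cdots\sim_i v_m$ with $\widetilde{\mathscr H}_{v_m}(0)\cap\Lambda^+\neq\emptyset$, and $A=\{v_0,\dots,v_m\}$ realizes this event regardless of maximality or finiteness of the true cluster; the proof of Lemma \ref{psilem2} bounds $\mathbb P\left(A\in\widetilde{\mathcal C}^*_{\mathcal R(A)}\right)\le e^{-\lambda(\|A\|-r_0)}e^{-(\lambda-1)\mathfrak M(A)}$, which is all your computation needs. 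Finally, your closing heuristic that information travels only $O(\log\log n)$ sites in $s$ steps is not literally true (a single Swendsen--Wang step can transmit information across an entire percolation cluster, which can have diameter of order $\log n$), but nothing in your argument uses it.
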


\quad Keeping Lemmas \ref{psilem2} and \ref{Rlem} in mind, we continue by comparing the $L^2$ distance of the dynamics at time $s \asymp \log\log n$ from its stationarity. Due to Lemma \ref{jensen}, we get
\begin{equation}\label{zscomp1}
\begin{split}
&\| \mathbb{P}_{Z_0}( Z_{s} \in \cdot) - \mathbb{P}_{Z_0'}(Z_{s}' \in \cdot) \|_{L^2(\pi^\infty)}^2 \leq
\mathbb{E} \left[	\left\| \mathbb{P}_{Z_0}( Z_{s} \in \cdot \mspace{3mu} \vert \widetilde{\mathscr{H}}_{\mathcal{G}}) - \mathbb{P}_{Z_0'}(Z_s \in \cdot \mspace{3mu} \vert \widetilde{\mathscr{H}}_{{\mathcal{G}}}) \right\|_{L^2(\widehat{\pi}^\infty)}^2 \right] \\
&~~~~~~~~~~~~~~~=
\lim_{R_0 \rightarrow \infty}\mathbb{E} \left[	\left\| \mathbb{P}_{Z_0}( Z_{s} \in \cdot \mspace{3mu} \vert \widetilde{\mathscr{H}}_{\mathcal{G}}) - \mathbb{P}_{Z_0'}(Z_s \in \cdot \mspace{3mu} \vert \widetilde{\mathscr{H}}_{{\mathcal{G}}}) \right\|_{L^2(\widehat{\pi}^\infty)}^2 \: \mathbbm{1}_{\{R\leq R_0\}}  \right] \\
&~~~~~~~~~~~~~~~\leq
\limsup_{R_0 \rightarrow \infty}\,
\mathbb{E}_R \left[ \sup_{\widetilde{\mathscr{H}}_{\mathcal{G}}} 	\left\| \widehat{\mu}-\widehat{\pi}_0^\infty \right\|_{L^2(\widehat{\pi}_0^\infty)}^2\right] ,
\end{split}
\end{equation}

\noindent where $\widehat{\pi}^\infty$  is the shorthand notation for $\pi^\infty(\: \cdot \: \vert \widetilde{\mathscr{H}}_{\mathcal{G}})$, and the measures $\widehat{\mu}$ and $\widehat{\pi}_0^\infty$ are defined by
\begin{equation*}
\begin{split}
\widehat{\mu}(\cdot)& := \mathbb{P}_{Z_0} \left. \left( Z_s \in \cdot \mspace{3mu} \right\vert \widetilde{\mathscr{H}}_{\mathcal{G}}, \,R\leq R_0 \right);\\
\widehat{\pi}_0^\infty(\cdot)& :=  \mathbb{P}_{Z_0'}\left. \left(Z_s \in \cdot \mspace{3mu} \right\vert \widetilde{\mathscr{H}}_{\mathcal{G}}, \, R \leq R_0\right).
\end{split}
\end{equation*}
Also, $\mathbb{E}_R$ denotes the expectation over the randomness of $R$. The  equation in the third line is due to monotone convergence theorem.

\quad Let $R_0 \geq r$ be a fixed number. Conditioned on both $\widetilde{\mathscr{H}}_\mathcal{G} $ and the event $\{R \leq R_0 \}$, $Z_s$ and $Z_s'$ are coupled on $B_{R_0}^c \cup \widetilde{V}_\mathcal{G}$, where $B_l := \{ v: ||v||_\infty \leq l \}$. Therefore, the integrand inside the r.h.s. of (\ref{zscomp1}) can be written as
\begin{equation}\label{zscomp2}
\begin{split}
\sup_{\widetilde{\mathscr{H}}_{\mathcal{G}}}	\left\|  \widehat{\mu} - \widehat{\pi}_0^\infty \right\|_{L^2(\widehat{\pi}^\infty)}^2 
= \sup_{\widetilde{\mathscr{H}}_{\mathcal{G}}} \left\| \widetilde{\mu} - \widetilde{\pi} \right\|_{L^2(\widetilde{\pi})}^2,
\end{split}
\end{equation}
where $\widetilde{\mu}$ and $\widetilde{\pi}$ are defined as 
\begin{equation}\label{tilmupidef}
\begin{split}
\widetilde{\mu}(\cdot)& := \mathbb{P}_{Z_0} \left. \left( Z_s(B_{R_0}\setminus \widetilde{V}_{\mathcal{G}}) \in \cdot \mspace{3mu} \right\vert \widetilde{\mathscr{H}}_{\mathcal{G}}, \,R\leq R_0 \right);\\
\widetilde{\pi}(\cdot)& :=  \mathbb{P}_{Z_0'}\left. \left(Z_s(B_{R_0}\setminus \widetilde{V}_{\mathcal{G}}) \in \cdot \mspace{3mu} \right\vert \widetilde{\mathscr{H}}_{\mathcal{G}}, \, R \leq R_0\right).
\end{split}
\end{equation}

\noindent We now state an analog of Lemma \ref{uniflem}  that enables us to work with the uniform distribution instead of the complicated measure $\widetilde{\pi}$. Its proof turns out to be similar to that of Lemma \ref{uniflem}, which is postponed to \S\ref{secuniflem2}.
\begin{lemma}\label{uniflem2}
	Let $R_0 \geq r$. For every subset $S \subset B_{R_0}$, define $\nu_S$ to be the uniform distribution on $\{1,\ldots,q\}^S$. Then there exists $p_0' = p_0' (d)$ such that for any $0<p<p_0'$ and $s\geq (6d)\log\log n$, we have the following inequality: conditioned on $\widetilde{\mathscr{H}}_\mathcal{G}$ and $\{R \leq R_0 \}$,
	\begin{equation*}
	\| \widetilde{\mu} - \widetilde{\pi} \|^2_{L^2(\widetilde{\pi})}
	~	\leq ~2\| \widetilde{\mu} - \nu \|^2_{L^2(\nu )} +1,
	\end{equation*}
	where $\nu$ is the shorthand notation for $\nu_{B_{R_0}\setminus \widetilde{V}_\mathcal{G}} $.
\end{lemma}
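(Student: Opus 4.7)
\emph{Proof plan.} The argument parallels Lemma \ref{uniflem}, with the infinite-volume technicalities handled via the spatial-decay refinement in Lemma \ref{psilem2}. Expanding the $L^2$-norm gives
\begin{equation*}
\|\widetilde{\mu} - \widetilde{\pi}\|^2_{L^2(\widetilde{\pi})} = \sum_{x} \widetilde{\mu}(x)^2/\widetilde{\pi}(x) - 1,
\end{equation*}
and $\widetilde{\pi}$ can be sampled by first choosing the union $\widetilde{R} \subset B_{R_0}\setminus \widetilde{V}_\mathcal{G}$ of red clusters according to some law $\eta$, then sampling colors on $\widetilde{R}$, and finally painting the remaining (blue) vertices with i.i.d.\ uniform colors. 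Taking $\widetilde{R} = \emptyset$ gives the pointwise bound $\widetilde{\pi}(x) \geq \eta(\emptyset)\, \nu(x)$, so the desired inequality reduces to showing $\eta(\emptyset) \geq \tfrac{1}{2}$ under the hypothesis $s \geq 6d \log\log n$.

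Since $\{\widetilde{V}_\mathcal{R} = \emptyset\} \subset \{R \leq R_0\}$ automatically, one has
\begin{equation*}
\eta(\emptyset) = \frac{\mathbb{P}(\widetilde{V}_\mathcal{R}=\emptyset \mid \widetilde{\mathscr{H}}_\mathcal{G})}{\mathbb{P}(R \leq R_0 \mid \widetilde{\mathscr{H}}_\mathcal{G})} \geq \mathbb{P}(\widetilde{V}_\mathcal{R}=\emptyset \mid \widetilde{\mathscr{H}}_\mathcal{G}),
\end{equation*}
and by a union bound it is enough to show $\sum_{A \neq \emptyset} \widetilde{\Psi}_A \leq \tfrac{1}{2}$. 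Applying Lemma \ref{psilem2} with $\theta = \log(4ed)$, I will split this sum into a ``local'' part containing $A$ with $\|A\| \leq 2r_0$ and a ``remote'' part with $\|A\| > 2r_0$.

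For the local part, bound $\widetilde{\Psi}_A \leq M(3edp)^{s-\frac{1}{2}} e^{-\theta \mathfrak{M}(A)}$; the standard estimate $|\{A \ni v : \mathfrak{M}(A)=k\}| \leq (2ed)^k$ combined with summing over $v \in B_{2r_0}$ (which has cardinality $O(\log^{5d} n)$) yields a bound of order $\log^{5d} n \cdot (3edp)^{s-\frac{1}{2}}$, which is $o(1)$ once $s \geq 6d \log\log n$ and $p$ is small enough that $\log(1/3edp)$ exceeds a fixed constant depending on $d$. For the remote part, use $\widetilde{\Psi}_A \leq M e^{-\theta(\|A\|-r_0)} e^{-\theta \mathfrak{M}(A)} \leq M e^{-\theta \|A\|/2} e^{-\theta \mathfrak{M}(A)}$ and factor the sum as $\sum_{v:\|v\|_\infty > 2r_0} e^{-\theta \|v\|_\infty / 2} \sum_{A\ni v} e^{-\theta \mathfrak{M}(A)}$; the inner sum is $O(1)$ by the usual $(2ed)^k e^{-\theta k}$ bound, while the outer sum decays super-polynomially in $r_0$. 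Combining these two estimates gives $\eta(\emptyset) \geq \tfrac{1}{2}$, and substituting $\widetilde{\pi} \geq \tfrac{1}{2}\nu$ back into the $L^2$-expansion yields $\|\widetilde{\mu} - \widetilde{\pi}\|^2_{L^2(\widetilde{\pi})} \leq 2\|\widetilde{\mu} - \nu\|^2_{L^2(\nu)} + 1$.

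The main obstacle is the remote contribution: on $\mathbb{Z}^d$ there are infinitely many candidate sets $A$ which could form a red cluster whose history reaches $\Lambda^+$ at time $0$, and the torus-style bound $M(3edp)^{s-\frac{1}{2}}$ alone produces a divergent sum. The refined estimate $e^{-\theta(\|A\|-r_0)}$ in Lemma \ref{psilem2}, which captures the exponential cost of a red-cluster history traveling in from distance $\|A\|-r_0$ to $\Lambda^+$, is precisely the infinite-volume input that makes this sum summable and is not required in the torus setting of Lemma \ref{psilem}.
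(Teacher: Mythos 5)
Your proposal is correct and follows essentially the same route as the paper: reduce to showing $\widetilde{\eta}(\emptyset)\geq \tfrac12$ via the mixture representation $\widetilde{\pi}(x)\geq \widetilde{\eta}(\emptyset)\,\nu(x)$, then union bound by $\sum_{A\neq\emptyset}\widetilde{\Psi}_A$ and split into a local part controlled by the $(3edp)^{s-\frac12}$ factor and a remote part controlled by the $e^{-\theta(\|A\|-r_0)}$ decay of Lemma \ref{psilem2} (your treatment of the $\{R\leq R_0\}$ conditioning via the inclusion $\{\widetilde{V}_\mathcal{R}=\emptyset\}\subset\{R\leq R_0\}$ is a mild shortcut where the paper instead invokes Lemma \ref{Rlem}). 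Only a cosmetic fix is needed: on $\mathbb{Z}^d$ the connected-set count is $(2e\cdot 2d)^k=(4ed)^k$ rather than $(2ed)^k$, so take $\theta$ slightly larger than $\log(4ed)$ to keep the geometric sums convergent.
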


\noindent Applying this lemma to (\ref{zscomp1}, \ref{zscomp2}), we focus on bounding the following:
\begin{equation*}
\mathbb{E}_R \left[ \sup_{\widetilde{\mathscr{H}}_{{\mathcal{G}}}} \left\| \widetilde{\mu} - \nu  \right\|_{L^2(\nu)}^2  \right].
\end{equation*}

\quad Similarly to \S\ref{secinfoperc2}, we proceed by implementing Lemmas \ref{mplem}, \ref{couplinglem} and Corollary \ref{couplingcor}. Therefore, we obtain first by Lemma \ref{mplem} that
\begin{equation}\label{mpeqinfty2}
\begin{split}
\mathbb{E}_R  \left[ \sup_{\widetilde{\mathscr{H}}_{{\mathcal{G}}}} \left\| \widetilde{\mu} - \nu  \right\|_{L^2(\nu)}^2\right]&\leq
\mathbb{E}_R  \left[ \;\sup_{\widetilde{\mathscr{H}}_\mathcal{G} } \mathbb{E} \left. \left[ q^{|\widetilde{V}_\mathcal{R} \cap \widetilde{V}_{\mathcal{R}'}| }\: \right\vert \widetilde{\mathscr{H}}_\mathcal{G}, \{R\leq R_0 \} \right] \right]-1\\
&\leq
(1+n^{-9}) \:\sup_{\widetilde{\mathscr{H}}_{\mathcal{G}} }  \mathbb{E} \left. \left[ q^{|\widetilde{V}_\mathcal{R} \cap \widetilde{V}_{\mathcal{R}'}| }\: \right\vert \widetilde{\mathscr{H}}_\mathcal{G} \right] -1,
\end{split}
\end{equation}
where the expectation $\mathbb{E}$ is taken over the randomness of $\widetilde{V}_\mathcal{R}$ and $\widetilde{V}_{\mathcal{R}'}$, the i.i.d. copies of red vertices. The second inequality comes from Lemma \ref{Rlem}. Then, Corollary \ref{couplingcor} implies that under the same conditioning,
\begin{equation*}
|\widetilde{V}_\mathcal{R} \cap \widetilde{V}_{\mathcal{R}'}|
\preceq
\sum_{\substack{A \cap A' \neq \emptyset \\ A , A' \subset B_{R_0}\setminus \widetilde{V}_\mathcal{G}}} |A\cup A'| \:\widetilde{J}_{A,A'},
\end{equation*}
where $\{\widetilde{J}_{A,A'}\}$ are the independent indicators such that $\mathbb{P}(\widetilde{J}_{A,A'} =1) = \widetilde{\Psi}_A \widetilde{\Psi}_{A'}$. Thus, the same series of calculations as (\ref{maineq2}, \ref{maineq3}) give  that
\begin{equation}\label{zscomp5}
\begin{split}
\sup_{\widetilde{\mathscr{H}}_\mathcal{G} } \mathbb{E} \left. \left[ q^{|\widetilde{V}_\mathcal{R} \cap \widetilde{V}_{\mathcal{R}'}| }\: \right\vert \widetilde{\mathscr{H}}_\mathcal{G}\right]
&\leq
\exp \left\{ \sum_{v\in B_{R_0}} \left(\sum_{v\in A \subset B_{R_0} } q^{|A|} \widetilde{\Psi}_A  \right)^2  \right\}\\
&\leq
\exp \left\{ \sum_{v\in \widetilde{V}} \left(\sum_{A \ni v } q^{|A|} \widetilde{\Psi}_A  \right)^2  \right\}.
\end{split}
\end{equation}

\noindent We split the summation over $v\in \widetilde{V}$ in the exponent to two parts, $v\in B_{r}$ and $v \in \widetilde{V} \setminus B_{r}$ (Recall that $r= 3\log^5 n$). For the first part, we implement Lemma \ref{psilem2} to deduce that
\begin{equation}\label{zscomp3}
\begin{split}
\sum_{v\in B_{r}} \Big( \sum_{A \ni v} q^{|A|} \widetilde{\Psi}_A \Big)^2
&\leq
M^2 (2r +1)^d \Big( \sum_{k \geq 1}  \sum_{\substack{A \ni v \\ {\mathfrak{M}(A) = k}}} q^k e^{-\theta k} (4edp)^{s-2} \Big)^2\\
&\leq
M^2 (2r +1)^d \Big( \sum_{k \geq 1} (4ed)^k q^k e^{-\theta k} (4edp)^{s-2} \Big)^2  \\
&\leq M^2 (\log n)^{11d}(4edp)^{2s-4} ~<~ \frac{1}{6},
\end{split}
\end{equation}
where we choose large enough $\theta$ that makes the summation over $k$ smaller than $1$, and $p$ is  accordingly small in order to satisfy the conditions of Lemma \ref{psilem2}. Note that in the second inequality, we use the same bound as (\ref{sumpsi}) on the number of $A\ni v$ such that $\mathfrak{M}(A)=k$. The last inequality is obtained if we set $s\geq 6d\log\log n$ with $n$ being large enough.

\quad The second part is derived similarly but utilizing the alternative bound on $\widetilde{\Psi}_A$. Let $\partial B_l$ denote the boundary points of $B_l$, i.e., the points having $l^\infty$-distance exactly $l$ from the origin. Then we obtain that
\begin{equation}\label{zscomp4}
\begin{split}
\sum_{v\in B_{r}^c} &\Big( \sum_{A \ni v} q^{|A| } \widetilde{\Psi}_A  \Big)^2
\leq
M^2 \sum_{l\geq r} \sum_{v\in \partial B_l} \bigg( \sum_{k \geq 1} \sum_{\substack{A \ni v \\ \mathfrak{M}(A) =k}} q^k e^{-\theta k}   \bigg)^2 e^{-2\theta(l-r_0) } \\
&~\leq
M^2 \sum_{l\geq r} 2d (2l+1)^{d-1} e^{-\theta l /3} \bigg( \sum_{k \geq 1} (4e^{-\theta +1}dq)^k 
\bigg)^2 \leq
\sum_{l\geq r} e^{-\theta l/6}
~<~ \frac{1}{6},
\end{split}
\end{equation}
where $\theta, \:p$ are chosen to be large and small respectively as in (\ref{zscomp3}), and the inequalities in the last line holds true for all $n$ sufficiently large.

\quad Thus, combining the equations (\ref{mpeqinfty2}--\ref{zscomp4}) give us that
\begin{equation*}
\mathbb{E}_R  \left[ \sup_{\widetilde{\mathscr{H}}_{{\mathcal{G}}}} \left\| \widetilde{\mu} - \nu  \right\|_{L^2(\nu)}^2 \right]
\leq
(1+n^{-9}) \sup_{\widetilde{\mathscr{H}}_\mathcal{G} } \mathbb{E} \left. \left[ q^{|\widetilde{V}_\mathcal{R} \cap \widetilde{V}_{\mathcal{R}'}| } \right\vert \widetilde{\mathscr{H}}_\mathcal{G} \right]
-1\leq
e^{1/3} -1.
\end{equation*}

\noindent Therefore, the equation (\ref{zscomp1}) and Lemma \ref{uniflem2} imply that
\begin{equation*}
\begin{split}
&\| \mathbb{P}_{Z_0}( Z_{s} \in \cdot) - \mathbb{P}_{Z_0'}(Z_{s}' \in \cdot) \|_{L^2(\pi^\infty)}^2\leq
\lim_{R_0 \rightarrow \infty } \mathbb{E}_R \left[
\sup_{\widetilde{\mathscr{H}}_{{\mathcal{G}}}} \left\|
\widetilde{\mu} - \widetilde{\pi}  \right\|^2_{L^2(\widetilde{\pi})}  \; \mathbbm{1}_{\{R\leq R_0\} } \right]
\leq ~ 2,
\end{split}
\end{equation*}
concluding the proof of Theorem \ref{l2thminfty}. \qed

\subsubsection{Proof of Lemma \ref{uniflem2}}\label{secuniflem2}

Recall the definitions of the measures $\widetilde{\mu}, \widetilde{\pi}$ in (\ref{tilmupidef}). Note that the $L^2$-distance between $\widetilde{\mu}$ and $\widetilde{\nu}$ can be written as
\begin{equation*}
\| \widetilde{\mu} - \widetilde{\pi}\|^2_{L^2 (\widetilde{\pi})} =
\sum_{x} \frac{\widetilde{\mu}(x)^2}{\widetilde{\pi}(x)} ~-1	
=\sum_{x} \frac{\widetilde{\mu}(x)^2}{{\nu}(x)} \frac{\nu(x)}{\widetilde{\pi}(x)} -1,	
\end{equation*}
where the sum is taken over
$x \in \Omega_{B_{R_0} \setminus \widetilde{V}_\mathcal{G} }$. Thus, it suffices to show that
\begin{equation*}
\nu(x) \leq 2\widetilde{\pi}(x)
\end{equation*}
for all $x$. Analogously as in the proof of Lemma \ref{uniflem}, we can interpret $\widetilde{\pi}$ as follows: we first sample a subset $S \subset B_{R_0} \setminus \widetilde{V}_\mathcal{G} $ of red vertices (we denote this probability as $\widetilde{\eta}(S)$), generate the configuration on $S$ via some law $\widetilde{\varphi}_S $, and sample the configuration on $B_{R_0} \setminus (\widetilde{V}_\mathcal{G} \cup S)$ according to the uniform distribution. In other words, we have
\begin{equation*}
\widetilde{\pi}(x) = \sum_{S \subset B_{R_0} \setminus \widetilde{V}_\mathcal{G}} \widetilde{\eta} (S) \widetilde{\varphi}_S (x_S) q^{-|S^-|},
\end{equation*}
where $S^- := B_{R_0} \setminus (\widetilde{V}_\mathcal{G} \cup S)$. In particular, we get
\begin{equation*}
\widetilde{\pi}(x) \geq \widetilde{\eta}(\emptyset) \nu(x),
\end{equation*}
and hence it suffices to verify that $\widetilde{\eta}(\emptyset) \geq 1/2$.

\quad Note that we can obtain the following similarly as Claim \ref{boundeta}:
\begin{equation}\label{etatilde1}
\begin{split}
\widetilde{\eta}(\{\emptyset\}^c) 
&\leq
\sum_{A \neq \emptyset} \mathbb{P}(A =  \widetilde{V}_\mathcal{R} \,\vert\, \widetilde{\mathscr{H}}_\mathcal{G}, \, R\leq R_0 )\\
&\leq
(1+n^{-9d})\sum_{A \neq \emptyset} \mathbb{P}(A\in \widetilde{\mathcal{C}}_\mathcal{R} \,\vert\, \widetilde{\mathscr{H}}_\mathcal{G} )\leq
(1+n^{-9d}) \sum_{v\in B_{R_0}} \sum_{A \ni v} \widetilde{\Psi}_A,
\end{split}
\end{equation}
where the second inequality is due to Lemma \ref{Rlem}. We split the above sum into $v \in B_r$ and $v\in \widetilde{V} \setminus B_r$. Following the same series of calculations  as (\ref{zscomp3}, \ref{zscomp4}), we deduce that 
\begin{equation}\label{etatilde2}
\begin{split}
\sum_{v\in B_{R_0}} \sum_{A \ni v} \widetilde{\Psi}_A 
~\leq~
M^2 (\log n)^{11d} (4edp)^{2s-4} + \frac{1}{6}.
\end{split}
\end{equation}
Thus, (\ref{etatilde1}, \ref{etatilde2}) with $s\geq (6d) \log\log n $ imply that
$\widetilde{\eta} (\{\emptyset|\}^c) \leq 1/2.$
\qed

\subsubsection{Proof of Lemma \ref{Rlem}}\label{subsecRlem}

Our proof goes similar to Lemma \ref{speedofpropa}, using the subcriticality of percolation.  Recall that $r := 3 \log^5 n$, $||\Lambda^+ || \leq r_0 =: \frac{5}{2}\log^5 n$ and $s =\Theta(\log\log n)$.

\quad If $R> r$, then we have a vertex $v$ such that $||v||=l>r$, and that $\widetilde{\mathscr{H}}_v(0) \cap \Lambda^+ \neq \emptyset$, implying that there exist percolation paths over the time period from $0$ to $s_\star$ that connect $v $  to $\Lambda^+$. Thus, there exists integers $s' \in [0,s)$ and $r' \in (r_0,\, l]$ such that there is an open path in $\bar{\omega}_{s'}$ that connects $B_{r'}$ to $(B_{r''})^c$, where $r'' : = r' + (l -r_0 )/s$. This implies that
\begin{equation*}
\mathbb{P}(\widetilde{\mathscr{H}}_v (0) \cap \Lambda^+ \neq \emptyset) \leq s\, l\, \exp \left(- c \:\frac{l-r_0}{s
} \right),
\end{equation*}
where $c >0$ is a positive constant depending on $p,d$.  Thus, we obtain that
\begin{equation*}
\begin{split}
\mathbb{P} &( R > r) \leq \sum_{v \in \widetilde{V}\setminus B_{r}} \mathbb{P} (\widetilde{\mathscr{H}}_v (0) \cap \Lambda^+ \neq \emptyset)
~	\leq~
\sum_{l> r} \sum_{v \in \partial B_l} s\, l\, e^{- \frac{c(l-r_0)}{s} } \\
&~~~~~~~~~~\leq \sum_{l> r} e^{- \frac{cl}{8s} }
~\leq
~\frac{e^{- \frac{cr}{8s} } }{1-e^{-c/8s}}
~\leq~
\frac{16s}{c} e^{- \frac{cr}{8s} }
~\leq~ n^{-10d},
\end{split}
\end{equation*}
where the inequalities in the last line hold for all sufficiently large $n$, and for the second one from the end we used the fact that $1-e^{-x} \geq x/2$ for $x\in [0,1]$.  \qed

\subsubsection{Proof of Lemma \ref{psilem2}}\label{secl2thminfty3}
Due to Lemma \ref{psilem}, it suffices to show that there exist $M, p_0$ such that for all $0<p<p_0$, we have
\begin{equation}\label{psitildebd}
\widetilde{\Psi}_A \leq M e^{-\theta \mspace{3mu}(\mathfrak{M}(A) + \mspace{3mu}|| A ||-r_0)}.
\end{equation}
Proceeding analogously as in the proof of Lemma \ref{psilem}, we focus on bounding
\begin{equation*}
\mathbb{P} \left(A \in \widetilde{\mathcal{C}}_{\mathcal{R}(A)}^* \right),
\end{equation*}
where $ \widetilde{\mathcal{C}}_{\mathcal{R}(S)}^*$ denotes the collection of red clusters that arise when exposing the joint histories of $S$. Set $W_t = |\widetilde{\mathscr{H}}_A (t_\star - t)|$ for each integer $t\geq0$. By the same argument from the proof of Lemma \ref{psilem}, the number of spatial edges in $\widetilde{\mathscr{H}}_A(t+\frac{1}{2})$ is at least $W_{t_\star - t}$. If  $ A \in \widetilde{\mathcal{C}}_{\mathcal{R}(A)}^*$,  we have
\begin{equation*}
W_1 + W_2 +\, \ldots \,+W_{t_\star} \geq \mathfrak{M}(A) +t_\star -1 \geq \mathfrak{M}(A),
\end{equation*}
since the history of $A$ should spatially connect until $t=0$. Moreover, there should be a space-time path in $\widetilde{\mathscr{H}}_A$ that connect each point of $A$ at time $t_\star$ to a point in $\Lambda^+ $ at time zero. This implies that
\begin{equation*}
W_1 + W_2 +\, \ldots\, W_{t_\star} \geq ||A||-||\Lambda^+|| \geq ||A|| - r_0,
\end{equation*}
where $r_0:= \frac{5}{2}\log^5 n.$ Therefore, we obtain that
\begin{equation}\label{psitildecomp1}
\begin{split}
\mathbb{P}\left(A \in \widetilde{\mathcal{C}}_{\mathcal{R}(A)}^* \right)
&\leq
\mathbb{E} \left[\mathbbm{1}_{ \{W_1 + \ldots + W_{t_\star} \: \geq\: \frac{1}{2} (\mathfrak{M}(A) + ||A|| -r_0) \}}  \right]\\
&\leq
e^{-\lambda (\mathfrak{M}(A) + ||A|| -r_0 )} \mathbb{E} \left[\exp (2\lambda(W_1 + \ldots + W_{t_\star})) \right],
\end{split}
\end{equation}
which holds for all $\lambda >0$, where we used $\mathbbm{1} \{X\geq x\} \leq e^{2\lambda (X-x)}$ to deduce the last line. Proceeding similarly as (\ref{gwbound}) using the Galton-Watson branching process representation (\ref{gwrep}) for $W_t$, we get
\begin{equation*}
\mathbb{E} \left[ \left. e^{3 \lambda W_{t+1}} \right| W_t  \right]
\leq
\bigg[ 1 + \sum_{k \geq 1} e^{3(k+1) \lambda} (edp)^k \bigg]^{W_t}
\leq  e^{2dp e^{6\lambda +1} W_t} 
\leq e^{\lambda W_t},
\end{equation*}
where we picked small enough $p$ such that $2dp \leq \lambda e^{-6\lambda-1}$. Implementing the same argument as (\ref{gwbound}), we deduce that
\begin{equation*}
\begin{split}
\mathbb{E} \left[ e^{2\lambda (W_1 + \ldots +  W_{t_\star })  }  \right]
&\leq
\mathbb{E} \left[ e^{2\lambda (W_1 + \ldots +  W_{t_\star -1})}  e^{\lambda W_{t_\star-1} }  \right] \\
&\leq
\mathbb{E} \left[ e^{2\lambda (W_1 + \ldots +  W_{t_\star -2})}  e^{3\lambda W_{t_\star-1} }  \right],
\end{split}
\end{equation*}
and iterating this inequality gives

\begin{equation}\label{psitildecomp2}
\mathbb{E} \left[ e^{2\lambda (W_1 + \ldots +  W_{t_\star })  }  \right]
\leq
\mathbb{E} \left[e^{3\lambda W_1} \right] \leq
\left(1+ 4dpe^{3\lambda +1} \right)^{W_0} \leq 2^{|A|}.
\end{equation}

\noindent By combining (\ref{psitildecomp1}) and (\ref{psitildecomp2}) , we get
\begin{equation*}
\mathbb{P}\left(A \in \widetilde{\mathcal{C}}_{\mathcal{R}(A)}^* \right)
\leq
e^{-\lambda (||A||-r_0)} e^{-(\lambda -1) \mathfrak{M}(A)}.
\end{equation*}

\noindent Therefore, the equations (\ref{quotientrep}--\ref{numtrans}) from Lemma \ref{psilem} followed by some adjustment of constants imply (\ref{psitildebd}), and hence the desired conclusion.
\qed

\section*{Acknowledgements}
We are grateful to Evita Nestoridi and Insuk Seo for fruitful discussions. 

\bibliographystyle{plain}
\bibliography{swdcutoff}

\vspace{15mm}
\end{document}